\documentclass[a4paper,twoside]{article}  
\usepackage{amssymb,amsmath,amsthm,dsfont,amsfonts,color,latexsym,CJK}

\usepackage{hyperref}
\usepackage{mathrsfs} 

\definecolor{blue}{rgb}{0.00,0.00,1.00}
\definecolor{red}{rgb}{1.00,0.00,0.00}

\renewcommand{\baselinestretch}{1.2}
\hoffset=0truemm
\voffset=0truemm
\topmargin=-5truemm
\oddsidemargin=-5truemm
\evensidemargin=-5truemm

\textheight=240truemm
\textwidth=170truemm

%
\def\bq{\begin{equation}}
	\def\eq{\end{equation}}
\def\ba{\begin{array}{ccc}}
	\def\bal{\begin{array}{lll}}
		\def\ea{\end{array}}

	\def\({\left(}\def\){\right)}
	\def\[{\left[}\def\]{\right]}
	\def\<{\langle}\def\>{\rangle}
	
	
	\def \C   {\mathbb{C}}

	\def \R   {\mathbb{R}}

	\def\i    {\mathrm{i}}

	\def\S    {\mathbb{S}}
	\def\eps  {\epsilon}
	\def\intr {\int_{\R^3}}
	
	\def\ints {\int_{\S^2}}
	\def\intt {\int^t_0}

	\def \Q    {\mathcal{Q}}
	
	\def\BB{\mathbb{B}}
	\def\AA{\mathbb{A}}

	\def \pt   {\partial}
	
	\def \Dt   {\frac{\rm d}{{\rm d}t}}
	
	\def \dt    {\partial_t}

	\def \dxa   {\partial^{\alpha}_x}

	\def \dvb   {\partial^{\beta}_v}

	\def \divx  {{\rm div}_x}

	\def\Tdx   {\nabla_x}
	\def\Tdv   {\nabla_v}

	%
	\def\bq{\begin{equation}}
		\def\eq{\end{equation}}
	\def\be{\begin{equation}}
		\def\ee{\end{equation}}
	\def\bma#1\ema{{\allowdisplaybreaks\begin{align}#1\end{align}}}
	\def\bmas#1\emas{{\allowdisplaybreaks\begin{align*}#1\end{align*}}}
	\def\bln#1\eln{{\allowdisplaybreaks\begin{aligned}#1\end{aligned}}}
	\def\nnm{\notag}
	\def\bgr#1\egr{\allowdisplaybreaks\begin{gather}#1\end{gather}}
	\def\bgrs#1\egrs{\allowdisplaybreaks\begin{gather*}#1\end{gather*}}

	%
	%

	\theoremstyle{plain}
	\newtheorem{lem}{\bf Lemma}[section]
	\newtheorem{thm}[lem]{\textbf{Theorem}}

	\newtheorem{remark}[lem]{\bf Remark}

	%
	%
	\begin{document}
		
		\title{ Diffusion Limit and the optimal convergence rate of the classical solution to the one-species Vlasov-Maxwell-Boltzmann system }
		\author{  Ke Chen$^1$, Anita Yang$^2$, Mingying Zhong$^3$\\[2mm]
			\emph{\small\it $^1$Department of Applied Mathematics, The Hong Kong Polytechnic University,  Hong
				Kong.}\\
			{\small\it E-mail: k1chen@polyu.edu.hk} \\
			\emph{\small\it $^2$Department of Mathematics, The Chinese University of Hong Kong, Hong Kong.}\\
			{\small\it E-mail: ayang@math.cuhk.edu.hk} \\
			{\small\it  $^3$School of  Mathematics and Information Sciences,
				Guangxi University,  China.}\\
			{\small\it E-mail:\ zhongmingying@gxu.edu.cn}\\[5mm]
		}
		\date{ }
		
		\pagestyle{myheadings}
		\markboth{Vlasov-Maxwell-Boltzmann system }%
		{ K. Chen, A. Yang, M.-Y. Zhong }
		
		\maketitle
		
		\thispagestyle{empty}
		
		\begin{abstract}\noindent
			In the present paper, we study the diffusion limit of the strong solution to the one-species  Vlasov-Maxwell-Boltzmann (VMB) system with initial data near a global Maxwellian.  Based on spectral analysis techniques, we prove the convergence and establish the convergence rate of the classical solution to the VMB system towards the solution to the incompressible Navier--Stokes--Maxwell  system with a precise estimation on the initial layer.
			
			\medskip\noindent
			{\bf Key words}.  Vlasov-Maxwell-Boltzmann system,  spectral analysis, diffusion limit, convergence rate,  initial layer.
			
			\medskip\noindent
			{\bf 2010 Mathematics Subject Classification}. 76P05, 82C40, 82D05.
		\end{abstract}

		%
		
		\tableofcontents

		\section{Introduction}
		The Vlasov-Maxwell-Boltzmann system is a fundamental model in plasma physics for the describing the time evolution of dilute charged particles,
		such as electrons and ions,  under the influence of the self-induced Lorentz forces governed by Maxwell equations, cf. \cite{ChapmanCowling}
		for derivation and the physical background.  The collision term in the kinetic equation is the Boltzmann operator that describes the collision of diluted charged particles.  In general, the rescaled one-species VMB system  for incompressible diffusive regimes takes the form \cite{Arsenio}
		\be \label{VMB1a}
		\left\{\bln
		&\dt F_{\eps}+\frac{1}{\eps}v\cdot\Tdx F_{\eps}+\frac{1}{\eps}(\alpha E_{\eps}+\beta v\times B_{\eps})\cdot\Tdv F_{\eps}=\frac{1}{\eps^2}Q(F_{\eps},F_{\eps}) ,\\
		&\gamma\dt E_{\eps}-\Tdx\times B_{\eps}=- \frac{\beta}{\eps^2}\intr F_{\eps} vdv, \\
		&\gamma\dt B_{\eps}+\Tdx\times E_{\eps}=0,\\
		&\Tdx\cdot E_{\eps}= \frac{\alpha}{\eps^2}\(\intr F_{\eps} dv-1\),\quad \Tdx\cdot B_{\eps}=0,
		\eln\right.
		\ee
		where $\eps>0$ is a small parameter related to the mean free path,  and $\alpha,\beta,\gamma$ describe the
		qualitative behaviors of the systems, namely:
		\begin{itemize}
			\item  $\alpha$ measures the electric repulsion according to Gauss's law;
			\item $\beta$ measures the magnetic induction according to Amp\`{e}re's law;
			\item $\gamma$ is the ration of the bulk velocity to the speed of light.
		\end{itemize}
		Notice that these parameters $\alpha,\beta,\gamma$ are  constrained to the relation
		$$\beta=\frac{\alpha\gamma}{\eps}.$$
		In \eqref{VMB1a}, $F_{\eps}=F_{\eps}(t,x,v)$ is the number
		density function of charged particles with  $(t,x,v)\in \R_+\times \R^3_x\times\R^3_v$, and $E_{\eps}(t,x)$, $B_{\eps}(t,x)$ denote the electro and magnetic fields, respectively. Throughout this paper, we assume $\eps\in(0,1)$. The collision between particles is given by the
		standard Boltzmann collision operator $Q(F,G)$ as below
		$$
		Q(F,G)=\intr\ints
		|(v-v_*)\cdot\omega|(F(v')G(v'_*)-F(v)G(v_*))dv_*d\omega,\label{binay_collision}
		$$
		where  $v,v_*$ are the velocities of gas particles before collision and $v',v'_*$ are the velocities after
		collision:
		$$
		v'=v-[(v-v_*)\cdot\omega]\omega,\quad
		v'_*=v_*+[(v-v_*)\cdot\omega]\omega,\quad \omega\in\S^2.
		$$

		In the following, we consider the following scaled one-species VMB ($\alpha = \eps, \beta = \eps, \gamma= \eps$):
		\be \label{VMB1}
		\left\{\bln
		&\dt F_{\eps}+\frac{1}{\eps}v\cdot\Tdx F_{\eps}+ ( E_{\eps}+ v\times B_{\eps})\cdot\Tdv F_{\eps}=\frac{1}{\eps^2} Q(F_{\eps},F_{\eps}),\\
		& \eps\dt E_{\eps}-\Tdx\times B_{\eps}=- \frac{1}{\eps}\intr F_{\eps} vdv, \\
		& \eps\dt B_{\eps}+\Tdx\times E_{\eps}=0,\\
		&\Tdx\cdot E_{\eps}= \frac{1}{\eps}\(\intr F_{\eps}dv-1\),\quad \Tdx\cdot B_{\eps}=0.
		\eln\right.
		\ee

		The Vlasov-Maxwell-Boltzmann (VMB) system has been the subject of extensive research, yielding significant advancements in understanding its mathematical properties  \cite{Duan4,Duan5,Guo4,Jang,Strain}.
		Notably, the global existence of a unique strong solution with initial data near the normalized global Maxwellian has been established in both spatially periodic domains \cite{Guo4} and the three-dimensional whole space \cite{Strain} for hard sphere collisions. This was later extended to general collision kernels, with or without the angular cut-off assumption, in \cite{Duan1,Duan2}.
		Regarding long time behaviors, it was shown in \cite{Duan5} that the total energy of the linearized one-species VMB system decays at the rate $(1+t)^{-\frac38}$, and in \cite{Duan4} that the total energy of nonlinear two-species VMB system decays at the rate $(1+t)^{-\frac34}$.
		The spectrum structure and the optimal decay rate of the global solution  to the VMB systems  for both one-species and two-species were investigated in \cite{Li3}.  The diffusive limit for two-species VMB system  was explored in \cite{Arsenio,Jang,Jiang2,Jiang3,JiangLei}. More recently, applying spectrum analysis techniques, Yang and the third author \cite{YZ} established the optimal convergence rate of the solution for the two-species VMB system to the NSMF system, and obtained the accurate estimation of the initial layer.  By employing the weighted energy method,  the diffusion limit of one species VMB system to the NSMF system  with $(\alpha,\beta,\gamma)=(\eps^2,\eps,1)$ was proved in  \cite{WZGX}.
		
		On the other hand, the diffusion limit to the Boltzmann equation is a classical problem with pioneering work by Bardos-Golse-Levermore in \cite{FL-1}, and significant  progress on the limit of renormalized solutions to the Leray solution to Navier-Stokes system in \cite{Golse-SR}.
		In contrast to the works on Boltzmann equation~\cite{FL-1,FL-2,FL-3,Guo5,FL-4,FL-5}, the VPB system \cite{Guo2,Li1,Wang1} and the VMB system \cite{Arsenio,Jang,Jiang2,JiangLei},  the convergence rate of the classical solution to the  VMB system \eqref{VMB1}  towards its fluid dynamical limits and the estimation of the initial layer have not been given despite of its importance.

		In this paper, we study the diffusion limit
		of the strong solution to the rescaled VMB system~\eqref{VMB1}  with initial data near the equilibrium $(F_*, E_*,B_*)=(M(v),0,0)$, where $M(v)$ is the  normalized global Maxwellian given by
		$$
		M=M(v)=\frac1{(2\pi)^{3/2}}e^{-\frac{|v|^2}2},\quad v\in\R^3.
		$$
		Hence, we define the perturbation of $(F_{\eps}, E_{\eps},B_{\eps})$ as
		$$
		F_{\eps}=M+\eps\sqrt{M}f_{\eps} .
		$$
		Then Cauchy problem of the VMB system~\eqref{VMB1} for $ (f_{\eps}, E_{\eps},B_{\eps})$ can be rewritten as
		\bma
		&\dt f_{\eps}+\frac{1}{\eps}v\cdot\Tdx f_{\eps}-\frac{1}{\eps^2}Lf_{\eps}- \frac{1}{\eps}v\sqrt{M}\cdot E_{\eps}=H_{\eps},\label{VMB4}\\
		& \dt E_{\eps}- \frac{1}{\eps}\Tdx\times B_{\eps}=-  \frac{1}{\eps}\intr f_{\eps}v\sqrt{M}dv,  \label{VMB4b}\\
		& \dt B_{\eps}+ \frac{1}{\eps}\Tdx\times E_{\eps}=0,\label{VMB4c}\\
		&\Tdx\cdot E_{\eps}= \intr f_{\eps}\sqrt{M}dv,\quad \Tdx\cdot B_{\eps}=0,\label{VMB4d}
		\ema
		where the nonlinear term $H_{\eps}$ is defined by
		\be
		H_{\eps}=\frac12 (v\cdot E_{\eps})f_{\eps}-\(E_{\eps}+  v\times B_{\eps}\)\cdot\Tdv f_{\eps}
		+\frac{1}{\eps} \Gamma(f_{\eps},f_{\eps}). \label{VMB5}
		\ee
		The initial condition is given by
		\be f_{\eps}(0,x,v)=f_{0}(x,v)  ,\quad  (E_{\eps},B_{\eps})(0,x)=(E_0, B_0)(x) ,  \label{VMB2i}\ee
		by assuming that the initial data $(f_{0}, E_0,B_0)$ is independent of $\eps$. The initial data should satisfy the compatibility conditions
		\be  \Tdx\cdot E_0(x)=\intr f_{0}\sqrt{M}dv,\quad  \Tdx\cdot B_0(x)=0. \label{com}\ee
		In \eqref{VMB4}--\eqref{VMB5}, the linear operator $L$ and the nonlinear operator $\Gamma(f,g)$ are defined by
		\be \label{gamma}
		\left\{\bln
		&Lf=\frac1{\sqrt M}[Q(M,\sqrt{M}f)+Q(\sqrt{M}f,M)],\\
		&\Gamma(f,g)=\frac1{\sqrt M}Q(\sqrt{M}f,\sqrt{M}g).
		\eln\right.
		\ee
		
		The linearized  operator $L$ can be written as (cf. \cite{Cercignani})
		\be\label{L_1}
		\left\{\bln
		&(Lf)(v)=(Kf)(v)-\nu(v) f(v),\quad (Kf)(v)=\intr k(v,v_*)f(v_*)dv_*,\\
		&\nu(v)=\sqrt{2\pi}\bigg(e^{-\frac{|v|^2}2}+\(|v|+\frac1{|v|}\)\int^{|v|}_0e^{-\frac{|u|^2}2}du\bigg),\\
		&k(v,v_*)=\frac2{\sqrt{2\pi}|v-v_*|}e^{-\frac{(|v|^2-|v_*|^2)^2}{8|v-v_*|^2}-\frac{|v-v_*|^2}8} -\frac{|v-v_*|}{2\sqrt{2\pi}}e^{-\frac{|v|^2+|v_*|^2}4},
		\eln\right.
		\ee
		where $\nu(v)$,  the collision frequency, is a real  function, and $K$ is a self-adjoint compact operator
		on $L^2(\R^3_v)$ with  real symmetric integral kernels $k(v,v_*)$.
		In addition, $\nu(v)$ satisfies
		\be
		\nu_0(1+|v|) \leq\nu(v)\leq \nu_1(1+|v|). \label{nu}
		\ee
		The nullspace of the operator $L$, denoted by $N_0$, is a subspace
		spanned by the orthonormal basis $\{\chi_j,\ j=0,1,\cdots,4\}$  with
		\bq \chi_0=\sqrt{M},\quad \chi_j=v_j\sqrt{M} \,\, (j=1,2,3), \quad
		\chi_4=\frac{(|v|^2-3)\sqrt{M}}{\sqrt{6}}. \label{basis}\eq
		Let   $L^2(\R^3)$ be a Hilbert space of complex-value functions $f(v)$
		on $\R^3$ with the inner product and the norm
		$$
		(f,g)=\intr f(v)\overline{g(v)}dv,\quad \|f\|=\(\intr |f(v)|^2dv\)^{1/2}.
		$$
		And let $P_{ 0}$ be the projection operators from $L^2(\R^3_v)$ to the subspace $N_0$ with
		\be
		P_{ 0}f=\sum_{j=0}^4(f,\chi_j)\chi_j,\quad P_1=I-P_{ 0}. \label{P10}
		\ee
		For any $U=(f,X,Y)\in L^2\times \R^3\times \R^3$, define the projections ${ P_A}$ and ${P_B}$ by
		\be { P_A}U=(P_0f, X,Y), \quad { P_B}U=(I-{ P_A})U=(P_1f, 0,0). \label{defp23} \ee
		From the Boltzmann's H-theorem, the linearized collision operators $L$ and $L_1$ are non-positive, precisely,  there is a constant $\mu>0$ such that \be
		(Lf,f)\leq -\mu \| P_1f\|^2, \quad  \ f\in D(L),\label{L_4}
		\ee
		where $D(L)$ is the domain of $L$ given by
		$$ D(L)=\left\{f\in L^2(\R^3)\,|\,\nu(v)f\in L^2(\R^3)\right\}.$$
		Without loss of generality, we assume in this paper that $\nu(0)\ge \nu_0\ge \mu>0$.
		
		This paper  aims to prove the
		convergence and establish the convergence rate of the classical solution $(f_{\eps}, E_{\eps},B_{\eps})$ to the system \eqref{VMB4}--\eqref{VMB2i} towards  $(u_1,E,B)$, where $u_1=n \chi_0+m\cdot v\chi_0+q\chi_4$  with $(n,m,q, E,B)(t,x)$ being the solution to the following incompressible Navier-Stokes-Maxwell-Fourier (NSMF) system:
		\be \label{NSM_2}
		\left\{\bal
		\Tdx\cdot m=0,\quad \Tdx n+ \sqrt{\frac23}\Tdx q- E =0, \\
		\dt (m-\Delta^{-1}_xm)-\kappa_0\Delta_x m +\Tdx p = n E+m\times B-m\cdot\Tdx m, \\
		\dt (q-\sqrt{\frac23}n) -\kappa_1\Delta_x q= \sqrt{\frac23} m\cdot E- \frac{5}{3}m\cdot\Tdx q, \\
		E=\Tdx \Delta_x^{-1} n, \quad \Tdx\times B=m, \quad \Tdx\cdot B=0,
		\ea\right.
		\ee
		where $p$ is the pressure, and the initial data $(n,m,q,E,B)(0)$ satisfies
		\be\label{NSP_5i}
		\left\{\bal
		m(0) = (f_{0},v\chi_0)-\Delta^{-1}_x\Tdx\divx (f_{0},v\chi_0),\\
		q(0)-\sqrt{\frac23}n(0)= (f_0, \chi_4-\sqrt{\frac23}\chi_0), \\
		\Tdx n(0)+\sqrt{\frac23}\Tdx  q(0) -E(0)=0, \ \   \Tdx\times B(0)=m(0),\\
		E(0)=\nabla_x\Delta_x^{-1}n(0),\ \ \nabla_x\cdot B(0)=0.
		\ea\right.
		\ee
		Here,  the viscosity coefficients $\kappa_0$, $\kappa_1>0$ are defined by
		\be\label{coe}
		\kappa_0=-(L^{-1}P_1(v_1\chi_2),v_1\chi_2),\quad \kappa_1=-(L^{-1}P_1(v_1\chi_4),v_1\chi_4).
		\ee

		In general, the convergence is not uniform near $t=0$ because of the appearance of  an initial layer.
		However, we can show that if the initial data $(f_{0}, E_0,B_0)$ satisfies
		\be \label{initial}
		\left\{\bal
		f_{0}(x,v)=n_{0}(x)\chi_0+m_{0}(x)\cdot v\chi_0+q_{0}(x)\chi_4,\\
		m_{0}=\nabla_x\times B_0, \ \ \Tdx n_{0} + \sqrt{\frac23} \Tdx  q_{0}-E_0=0, \\
		E_0=\nabla_x\Delta_x^{-1}n_0,\quad \Tdx\cdot B_0=0,
		\ea\right.
		\ee
		then the convergence is uniform  up to $t=0$.
		
		\bigskip

		\noindent\textbf{Notations:} \ \ Before presenting the main results of this paper, we list some notations. For any $\alpha=(\alpha_1,\alpha_2,\alpha_3)\in \mathbb{N}^3$ and $\beta=(\beta_1,\beta_2,\beta_3)\in \mathbb{N}^3$, denote
		$$\dxa=\pt^{\alpha_1}_{x_1}\pt^{\alpha_2}_{x_2}\pt^{\alpha_3}_{x_3},\quad \dvb=\pt^{\beta_1}_{v_1}\pt^{\beta_2}_{v_2}\pt^{\beta_3}_{v_3}.$$
		The Fourier transform of $f=f(x,v)$
		is denoted by
		$$\hat{f}(\xi,v)=\mathcal{F}f(\xi,v)=\frac1{(2\pi)^{3/2}}\intr f(x,v)e^{-  i x\cdot\xi}dx.$$
		
		For any $q\in [1,\infty]$, we define the Sobolev Space $L^{q}=L^q_x(L^2_v)$  for function $f=f(x,v)$ or $L^{q}=L^q_x(L^2_v)\times L^q_x\times L^q_x$ for vector $U=(g(x,v),E(x),B(x)) $ with the norm
		\bmas
		\|f\|_{L^{q}}&=\bigg(\intr\bigg(\intr|f(x,v)|^2 dv\bigg)^{q/2}dx\bigg)^{1/q},\\
		\|U\|_{L^q}&=\bigg(\intr\bigg(\intr|g(x,v)|^2 dv\bigg)^{q/2}dx\bigg)^{1/q}+\bigg(\intr (|E(x)|^q+|B(x)|^q) dx\bigg)^{1/q}.
		\emas
		For any integer $k\ge 1$ and $q\in [1,\infty]$, define the Sobolev Space $H^{k}=H^{k}_x(L^2_v)$ (or $H^{k}=H^{k}_x(L^2_v)\times H^{k}_x\times H^{k}_x$) for any function $f=f(x,v)$ (or  vector $U=(g(x,v),E(x),B(x)) $) with the norm
		\bmas
		\|f\|_{H^k}&=\(\intr (1+|\xi|^2)^k\intr |\hat{f}|^2dv d\xi \)^{1/2}, \\
		\|U\|_{H^k}&=\(\intr (1+|\xi|^2)^k \(\intr |\hat{g}|^2dv+ |\hat E|^2+| \hat B|^2\)d\xi\)^{1/2},
		\emas
		and denote  $ X^k_{l} $ ($X^k=X^k_0$) the weighted Sobolev space equipped with the norm
		\bmas
		\|f\|_{X^k_{l}}&=\sum_{|\alpha|+|\beta|\le k}\|\nu^l\dxa\dvb f\|_{L^2 },\\
		\|U\|_{X^k_{l}}&=\sum_{|\alpha|+|\beta|\le k}\|\nu^l\dxa\dvb g\|_{L^2 }+ \sum_{|\alpha| \le k}(\|\dxa E\|^2_{L^2_x }+\| \dxa B\|^2 _{L^2_x }).
		\emas

		
		
		We have the following main results.
		The first theorem presents the unique existence of solution for the VMB system and the NSMF system, respectively.
		
		\begin{thm}\label{thm1.1}
			For any $\eps\in (0,1)$, there exists a small constant $\delta_0>0$ such that if the initial data $U_0=(f_0,E_0,B_0)$ satisfy that $\|U_{0}\|_{ X^5_1 }+\|U_{0}\|_{L^1 } \le \delta_0$,  then the VMB system \eqref{VMB4}--\eqref{VMB2i}  admits a unique global solution  $U_{\eps}(t,x,v)= (f_{\eps},E_{\eps},B_{\eps})$ satisfying the following  time-decay estimate:
			\be
			\|U_{\eps}(t)\|_{X^4_1} \le C\delta_0 (1+t)^{-\frac38}.
			\ee
			
			There exists a small constant $\delta_0>0$ such that if $\|U_{0}\|_{H^4 }+\|U_{0}\|_{L^1 } \le \delta_0$, then
			the NSMF system \eqref{NSM_2}--\eqref{NSP_5i} admits a unique global solution $\tilde{U}(t,x)=(n,m,q,E,B) $  satisfying the following  time-decay estimate:
			\be
			\|\tilde{U}(t)\|_{H^4_x} \le C\delta_0 (1+t)^{-\frac38}. 
			\ee
		\end{thm}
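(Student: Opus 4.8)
The plan is to prove the two assertions separately: the first is a uniform-in-$\eps$ global well-posedness-plus-decay statement for the kinetic system \eqref{VMB4}--\eqref{VMB2i}, and the second is a standard small-data global theory for the fluid system \eqref{NSM_2}--\eqref{NSP_5i}. For the VMB part I would combine a uniform-in-$\eps$ a priori energy estimate with a linear decay estimate obtained from the spectral structure of the linearized operator. Writing the micro--macro splitting $f_{\eps}=P_0f_{\eps}+P_1f_{\eps}$ and decomposing $P_0f_{\eps}=n_{\eps}\chi_0+m_{\eps}\cdot v\chi_0+q_{\eps}\chi_4$, I would introduce an instant energy $\mathcal{E}_{\eps}(t)$ equivalent to $\|U_{\eps}(t)\|_{X^4_1}^2$ and a dissipation $\mathcal{D}_{\eps}(t)$ collecting the microscopic part $\eps^{-2}\|\nu^{1/2}P_1f_{\eps}\|^2_{X^4_1}$ produced by \eqref{L_4}, together with the macroscopic quantities $\|\Tdx(n_{\eps},m_{\eps},q_{\eps})\|^2_{H^3}$, $\|(n_{\eps},E_{\eps})\|$-type terms from Gauss's law \eqref{VMB4d}, and $\|\Tdx\times B_{\eps}\|$-type terms from \eqref{VMB4c}.

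Taking $\dxa\dvb$ of \eqref{VMB4}--\eqref{VMB4c} for $|\alpha|+|\beta|\le4$ and pairing with the appropriate weighted derivatives, the singular streaming term $\eps^{-1}v\cdot\Tdx f_{\eps}$ is skew-symmetric and drops out, while the singular coupling $\eps^{-1}v\sqrt M\cdot E_{\eps}$ cancels against the corresponding $\eps^{-1}$ terms in \eqref{VMB4b}--\eqref{VMB4c} by the usual total-$L^2$-energy cancellation for VMB. Macroscopic dissipation is then recovered by adding a small multiple of a Kawashima-type interaction functional $\mathcal{I}_{\eps}(t)$ built from cross terms between the macroscopic components and $P_1f_{\eps}$, between $E_{\eps}$ and $m_{\eps}$, and between $B_{\eps}$ and $E_{\eps}$, following \cite{Duan5,Guo4}; the magnetic field is only partially controlled (one gets $\Tdx\times B_{\eps}$ and $\Tdx\cdot B_{\eps}=0$ but not a genuine $\|B_{\eps}\|$-term), which is the structural source of the slower $(1+t)^{-3/8}$ rate. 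The nonlinearity $H_{\eps}$ in \eqref{VMB5} is controlled as follows: $\eps^{-1}\Gamma(f_{\eps},f_{\eps})\in N_0^{\perp}$ carries a $\nu$-weight and is absorbed by $\eps^{-2}\|\nu^{1/2}P_1f_{\eps}\|^2_{X^4_1}$ once $\sqrt{\mathcal{E}_{\eps}}$ is small; the transport/Lorentz terms $(E_{\eps}+v\times B_{\eps})\cdot\Tdv f_{\eps}$ and $\tfrac12(v\cdot E_{\eps})f_{\eps}$ are handled by $H^2_x\hookrightarrow L^{\infty}_x$ and the spare $x$-regularity coming from the $X^5_1$ bound on the data. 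This gives, uniformly in $\eps\in(0,1)$,
\be
\Dt\mathcal{E}_{\eps}(t)+c\,\mathcal{D}_{\eps}(t)\le C\sqrt{\mathcal{E}_{\eps}(t)}\,\mathcal{D}_{\eps}(t),
\ee
which, together with local existence and a continuity argument, yields a unique global solution with $\mathcal{E}_{\eps}(t)+\int_0^t\mathcal{D}_{\eps}(s)ds\le C\mathcal{E}_{\eps}(0)\le C\delta_0^2$. For the decay, I would write $U_{\eps}(t)=e^{t\mathbb{A}_{\eps}}U_0+\int_0^te^{(t-s)\mathbb{A}_{\eps}}N[U_{\eps}](s)ds$ and use the spectral decomposition of $\mathbb{A}_{\eps}$ on the Fourier side (low frequencies where the eigenvalue branches take the incompressible Navier--Stokes--Maxwell form, with the degenerate $B$-branch producing the $3/8$ exponent; high frequencies with exponential decay from \eqref{L_4}) to get the uniform bound $\|e^{t\mathbb{A}_{\eps}}U_0\|_{X^4_1}\le C(1+t)^{-3/8}(\|U_0\|_{L^1}+\|U_0\|_{X^4_1})$; a standard bootstrap with the nonlinear $L^1\cap L^2$ estimates and the energy bound then closes to give $\|U_{\eps}(t)\|_{X^4_1}\le C\delta_0(1+t)^{-3/8}$.

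For the NSMF system I would first reduce \eqref{NSM_2} to a closed parabolic system: the constraints $E=\Tdx\Delta_x^{-1}n$ and $\Tdx n+\sqrt{2/3}\,\Tdx q-E=0$ give, on taking $\Tdx\cdot$, $(\Delta_x-1)n=-\sqrt{2/3}\,\Delta_x q$, so $n$ and $E$ are slaved to $q$, while $B$ is recovered from $m$ via $\Tdx\times B=m$, $\Tdx\cdot B=0$ (the harmonic part pinned by \eqref{NSP_5i}). Substituting back, $(m,q)$ solve the divergence-free equation $\dt(1-\Delta_x^{-1})m-\kappa_0\Delta_x m+\Tdx p=nE+m\times B-m\cdot\Tdx m$ together with a scalar nonlocal heat equation for $q$ (with the bounded positive symbol $1+\tfrac23(\Delta_x-1)^{-1}\Delta_x$ in front of $\dt q$), the right-hand sides being smooth quadratic in $(m,q,\Tdx m,\Tdx q)$. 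Since $(1-\Delta_x^{-1})$ and $1+\tfrac23(\Delta_x-1)^{-1}\Delta_x$ are bounded, positive and commute with $\dxa$, standard $H^4_x$ energy estimates give $\Dt\|\tilde U\|^2_{H^4_x}+c\|\Tdx(m,q)\|^2_{H^4_x}\le C\|\tilde U\|_{H^4_x}\|\Tdx(m,q)\|^2_{H^4_x}$, hence global existence and boundedness for small $\delta_0$; Fourier analysis of the linear part — noting $(1-\Delta_x^{-1})^{-1}$ has symbol $|\xi|^2/(1+|\xi|^2)\sim|\xi|^2$ near $\xi=0$, so the $m$- and $q$-branches decay like $e^{-c|\xi|^4t}$ at low frequencies, whose $L^1\to L^2$ rate is $(1+t)^{-3/8}$ — together with a Duhamel/bootstrap argument identical to the kinetic case, yields $\|\tilde U(t)\|_{H^4_x}\le C\delta_0(1+t)^{-3/8}$.

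The main obstacle I anticipate is the \emph{uniformity in $\eps$}: one must verify that both the macroscopic dissipation recovered through the interaction functional and the linear semigroup decay come with $\eps$-independent constants despite the $\eps^{-1}$ and $\eps^{-2}$ singularities, and that the only partially dissipative magnetic field $B_{\eps}$ — which forces the $3/8$ exponent and necessitates the $L^1$-norm input — is handled consistently in the time-weighted Duhamel estimate. On the NSMF side the analogous subtlety is that the low-frequency dissipation for $m$ (and $q$) degenerates from $|\xi|^2$ to $|\xi|^4$ because of the $-\Delta_x^{-1}m$ term (equivalently, the coupling $\Tdx\times B=m$), so the $(1+t)^{-3/8}$ rate is sharp for this reduced system and must be tracked carefully rather than over-optimistically improved.
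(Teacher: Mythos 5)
Your strategy for the kinetic half is essentially the paper's: macro--micro energy functionals with a Kawashima-type interaction functional (Lemmas \ref{macro-en}--\ref{energy1}) combined with the Duhamel formula and the spectral decay of $e^{\frac{t}{\eps^2}\AA_\eps}$ (Lemma \ref{time}), closed by a bootstrap. One point you gloss over: the spectral analysis only yields decay of $\|\dxa U_\eps\|_{L^2}$ (no velocity derivatives, no $\nu$-weight), so your claimed bound $\|e^{\frac{t}{\eps^2}\AA_\eps}U_0\|_{X^4_1}\le C(1+t)^{-3/8}(\|U_0\|_{L^1}+\|U_0\|_{X^4_1})$ is not what the semigroup estimates produce. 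The paper upgrades the $L^2$-level decay to decay of the full weighted norm $E_{N,1}$ by multiplying the energy inequality \eqref{G_1b} by $(1+t)^{l}$ with $l=\tfrac34+\sigma$ and trading one spatial derivative via $E_{n,1}\le CD_{n+1,1}+\dots$; this is exactly where the hypothesis $\|U_0\|_{X^5_1}$ (one order above the norm that decays) is consumed. Since you spend the ``spare $x$-regularity'' on the nonlinearity instead, you need to make this time-weighted iteration explicit, otherwise the decay of the velocity-derivative part of $X^4_1$ is unproved.

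For the NSMF half you take a genuinely different route: reduce to a closed quasi-parabolic system ($n,E$ slaved to $q$, $B$ slaved to $m$), prove an $H^4$ energy inequality, and then get decay by Fourier analysis. The reduction and the symbols $\frac{1+|\xi|^2}{|\xi|^2}$, $\frac{3+5|\xi|^2}{3(1+|\xi|^2)}$ are correct and match the Fourier-side computation in Lemma \ref{sem}. Two caveats. First, the claimed inequality $\Dt\|\tilde U\|^2_{H^4_x}+c\|\Tdx(m,q)\|^2_{H^4_x}\le C\|\tilde U\|_{H^4_x}\|\Tdx(m,q)\|^2_{H^4_x}$ is doubtful: neither $B$ nor the zero-frequency part of $m$ is dissipated, and the commutators $\int_{\R^3}(m\times\dxa B)\cdot\dxa m\,dx$ produce undifferentiated factors $\|m\|_{L^2}$ that cannot be absorbed into $\|\Tdx(m,q)\|^2_{H^4_x}$; with a source of size $\|\tilde U\|^2\|\Tdx m\|$ the Gr\"onwall argument no longer closes without already knowing decay. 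The paper avoids this entirely by running the contraction/bootstrap directly on the Duhamel formula \eqref{ue1} with the decay of $Y_1(t)$ from Lemma \ref{timev} (where the quadratic terms $nE+m\times B$ enjoy the extra gain from $n,E$ being slaved to $\Tdx q$); since you also propose this Duhamel step, it can carry the whole NSMF result and the energy estimate should be dropped or repaired. Second, the $q$-branch decays like $e^{-c|\xi|^2t}$ at low frequency (the paper's $b_0(s)\sim \kappa_1 s^2$), not $e^{-c|\xi|^4t}$; only the $m$/$B$ branch ($b_2=b_3\sim\kappa_0 s^4$) degenerates, and it alone forces the $(1+t)^{-3/8}$ rate. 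This is harmless for the upper bound but should be stated correctly.
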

		
		The convergence rate of the solution to the VMB system towards the solution to the NSMF system is characterised by the following theorem.
		\begin{thm}\label{thm1.2}
			There exist small positive constants $\delta_0$ and $\eps_0$ such that if the initial data $U_0=(f_0,E_0,B_0)$ satisfies that $\|U_{0}\|_{X^6_1 }+\|U_{0}\|_{L^1 } \le \delta_0$, then there exists a unique function $U_1=(u_1,E,B) $ such that for any $\eps,\sigma\in (0,\eps_0)$,  the solution $U_{\eps}=(f_{\eps},E_{\eps},B_{\eps})$  to the VMB system \eqref{VMB4}--\eqref{VMB2i} satisfies
			\bma  \|U_{\eps} (t)-U_1 (t)\|_{L^{\infty} }\le C\delta_0\bigg(\eps |\ln\eps|^4(1+t)^{-\frac{5-\sigma}8} +\(1+ \frac{t}{\eps}\)^{-1}\bigg), \label{limit0}
			\ema
			where $u_1=n \chi_0+m\cdot v\chi_0+q\chi_4$  with $(n,m,q, E,B)(t,x)$ being the solution to the incompressible NSMF system \eqref{NSM_2}--\eqref{NSP_5i}.
			
			Moreover, if the initial data $U_0$ satisfies \eqref{initial} and   $\|U_{0}\|_{ H^6} +\|U_{0}\|_{L^1}\le \delta_0$, then we have
			\be
			\|U_{\eps} (t)-U_1 (t)\|_{L^{\infty} }\le C\delta_0\eps (1+t)^{-\frac{5-\sigma}8}. \label{limit_1a}
			\ee
		\end{thm}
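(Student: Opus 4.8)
\noindent\emph{Proof strategy (sketch of Theorem~\ref{thm1.2}).} The plan is to combine a sharp spectral analysis of the linearized VMB operator with a Duhamel/bootstrap argument built on the decay estimates of Theorem~\ref{thm1.1}. Taking the Fourier transform in $x$, the linear part of \eqref{VMB4}--\eqref{VMB4d} becomes $\dt\widehat U_\eps=\mathbb{A}_\eps(\xi)\widehat U_\eps$ for $\widehat U_\eps=(\widehat f_\eps,\widehat E_\eps,\widehat B_\eps)$, where $\mathbb{A}_\eps(\xi)$ superposes the fast Boltzmann operator $\frac1{\eps^2}L-\frac{i}{\eps}(v\cdot\xi)$ with the $O(1/\eps)$ Maxwell coupling. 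I would first establish, following the spectral theory of the earlier sections, that for $|\xi|$ small the spectrum of $\mathbb{A}_\eps(\xi)$ splits into a \emph{fluid cluster} of finitely many eigenvalues $\lambda^\eps_j(\xi)=\lambda^0_j(\xi)+O\bigl(\eps(|\xi|^2+|\xi|^3)\bigr)$ lying near $0$, whose leading symbols $\lambda^0_j$ reproduce the dispersion relation of the NSMF system \eqref{NSM_2} (with the viscosities $\kappa_0,\kappa_1$ of \eqref{coe} and the magnetic branch $\sim-|\xi|^4/(1+|\xi|^2)$ responsible for the $(1+t)^{-3/8}$ rate), plus a remainder whose semigroup decays like $e^{-ct/\eps^2}$ at low and intermediate frequencies and like $e^{-ct}$ at high frequencies. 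This yields $e^{t\mathbb{A}_\eps(\xi)}=\sum_j e^{\lambda^\eps_j(\xi)t}\mathbb{P}^\eps_j(\xi)+\mathbb{R}_\eps(t,\xi)$, and, via Hausdorff--Young in $\xi$ together with the $X^6_1\cap L^1$ bound on $U_0$, the $\eps$-uniform $L^1\cap H^s\to L^\infty$ time-decay estimates for each piece.

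Next I would compare $\sum_j e^{\lambda^\eps_j(\xi)t}\mathbb{P}^\eps_j(\xi)\widehat U_0$ term by term with the NSMF propagator: the $O(\eps)$ corrections to the eigenvalues and eigenprojections produce a pointwise error $O(\eps|\xi|^2 t)\,e^{-c|\xi|^2t}$ on the fluid cluster, and integrating this together with the intermediate/high-frequency remainders over $\xi$ gives the rate $\eps|\ln\eps|^4(1+t)^{-(5-\sigma)/8}$, the logarithm arising from the $|\xi|\sim\eps^{-1}$ cut-off separating the fluid cluster from the $O(1/\eps^2)$ kinetic spectrum and $\sigma$ absorbing the borderline decay exponent. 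The remainder $\mathbb{R}_\eps(t,\xi)$ applied to the non-fluid part $P_BU_0$ of the data decays on the fast scales: microscopic relaxation at rate $e^{-\mu t/\eps^2}$, and the acoustic and plasma (Langmuir) oscillations at frequency $O(1/\eps)$, damped by dispersion/averaging, at rate $\bigl(1+t/\eps\bigr)^{-1}$ — the slowest of which gives the initial-layer term in \eqref{limit0}.

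For the nonlinear closure I would write $U_\eps(t)=e^{t\mathbb{A}_\eps}U_0+\intt e^{(t-s)\mathbb{A}_\eps}\mathcal H_\eps[U_\eps](s)\,ds$ with $\mathcal H_\eps$ the nonlinearity of \eqref{VMB5}, express $U_1$ through the corresponding NSMF Duhamel formula, subtract, and estimate $\|U_\eps-U_1\|_{L^\infty}$ using the previous two steps for the linear propagators, the decay bounds of Theorem~\ref{thm1.1} for both $U_\eps$ and $U_1$, and bilinear estimates for $\Gamma$ and for the force terms $(E_\eps+v\times B_\eps)\cdot\Tdv f_\eps$ and $(v\cdot E_\eps)f_\eps$. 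The singular prefactor $\frac1\eps$ in front of $\Gamma$ is harmless since $\Gamma(f,g)\in N_0^\perp$ is microscopic, so $\intt e^{(t-s)\mathbb{A}_\eps}P_B\bigl(\frac1\eps\Gamma\bigr)\,ds$ only feels the fast decay $e^{-c(t-s)/\eps^2}$ and $\frac1\eps\intt e^{-c(t-s)/\eps^2}\,ds\lesssim\eps$. Under the hypothesis \eqref{initial} the data lies on the fluid manifold, so $P_BU_0$ and the acoustic/Langmuir components vanish, the initial layer is absent, and the uniform rate \eqref{limit_1a} follows.

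The main obstacle is the spectral step: producing asymptotic expansions of the eigenvalues and eigenprojections of $\mathbb{A}_\eps(\xi)$ that are uniformly valid across the intermediate frequency regime, and tracking their deviation from the NSMF Fourier multipliers with the sharp $\eps|\ln\eps|^4$ loss. Unlike the pure Boltzmann case, the $O(1/\eps)$ Maxwell coupling deforms the spectrum near the origin — adding the slowly-decaying $E,B$ branches and a fast Langmuir branch — and these must be separated cleanly from the $O(1/\eps^2)$ kinetic spectrum uniformly in $\xi$; carrying this out and then matching the resulting multipliers to those of \eqref{NSM_2} while retaining the optimal time decay is the heart of the argument.
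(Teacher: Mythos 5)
Your overall architecture (spectral decomposition of the linearized semigroup into a fluid cluster plus fast-decaying remainders, Duhamel representation of both systems, subtraction, and a weighted bootstrap) is the same as the paper's. But there is a genuine gap in your treatment of the singular collision term, and it is fatal as written. You claim that $\intt e^{\frac{t-s}{\eps^2}\AA_{\eps}}P_B\bigl(\tfrac1\eps\Gamma\bigr)ds$ "only feels the fast decay $e^{-c(t-s)/\eps^2}$" because $\Gamma(f,g)\in N_0^{\perp}$, so that the whole contribution is $O(\eps)$. This is false: only the remainder part $S_3$ of the semigroup decays like $e^{-d t/\eps^2}$. The fluid part $S_1$ applied to microscopic data is of size $O(\eps|\xi|)$ (because the microscopic component of the eigenprojections is $O(\eps|\xi|)$, see \eqref{eigf2}), so $\tfrac1\eps S_1(t-s)\Lambda_2(s)$ is $O(1)$ and only polynomially decaying. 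This $O(1)$ piece is precisely what converges, as $\eps\to0$, to $Y_1(t-s)\operatorname{div}_xH_2(s)$, i.e.\ to the convection terms $-m\cdot\Tdx m$ and $-\frac53 m\cdot\Tdx q$ of \eqref{NSM_2}. If the contribution of $\tfrac1\eps\Gamma$ were really $O(\eps)$, the term $\intt Y_1(t-s)\operatorname{div}_xH_2(s)\,ds$ in the Duhamel formula for $U_1$ would have nothing to cancel against and your difference estimate could not close. What is actually needed (and what the paper proves in Lemma \ref{fl2}) is a \emph{second-order} fluid approximation: for microscopic $U_0=(f_0,0,0)$ with $P_0f_0=0$, one has $\tfrac1\eps e^{\frac{t}{\eps^2}\AA_\eps}U_0\to Y_1(t)\bigl(P_0(v\cdot\Tdx L^{-1}f_0),0,0\bigr)$ with an explicit rate, combined with the algebraic identity $\Gamma_*(v_i\chi_0,v_j\chi_0)=-\tfrac12LP_1(v_iv_j\chi_0)$ (Lemma \ref{gamma1}) to identify the limit with $\operatorname{div}_xH_2$ up to gradient terms annihilated by $Y_1$.

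Two secondary inaccuracies. First, the factor $|\ln\eps|^4$ does not come from a frequency cut-off at $|\xi|\sim\eps^{-1}$; it arises in the nonlinear bootstrap when the initial-layer weight $(1+s/\eps)^{-1}$ is convolved against the heat-kernel decay $(t-s)^{-3/4}$ in the Duhamel integral (see the estimate of $A_1$ in the paper). Second, six of the nine low-frequency eigenvalues are \emph{not} near zero after the $t/\eps^2$ rescaling: they carry $O(1/\eps)$ oscillation frequencies and belong to the fluid cluster $S_1$, not to the remainder; their dispersive decay $(1+t/\eps)^{-(3-\sigma)/2}$ (via the Klein--Gordon-type estimate of Lemma \ref{lemdisp}) and the bootstrap together generate the $(1+t/\eps)^{-1}$ initial layer. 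These are attribution errors rather than structural ones, but the first point above must be repaired for the proof to work.
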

		
		\begin{remark}\label{remark1.4}
			Under the first assumption of Theorem \ref{thm1.2}, we have
			\be
			\|U_{\eps}(t)-U_1(t)-U^{osc}_{\eps}(t)-e^{\frac{t}{\eps^2}\AA_{\eps}}P_BU_0\|_{L^{\infty}} \le C\delta_0\eps (1+t)^{-\frac{5-\sigma}8},
			\ee
			where $U^{osc}_{\eps}(t)=U^{osc}_{\eps}(t,x,v)$ is the high oscillation part of $U_{\eps}$  defined by \eqref{osc}. Hence, $U^{osc}_{\eps}(t) $ and $e^{\frac{t}{\eps^2}\AA_{\eps}}P_BU_0$ are the essential components for generating the initial layer.
		\end{remark}
		
		In the remainder of the introduction, we outline the core concepts and analytical methodology underlying our proof.
		The convergence rates given in Theorem~\ref{thm1.2}  for the diffusion limit of the VMB system  are proved based on spectral analysis \cite{Li4} and the ideas inspired by \cite{FL-3,Li1}.
		First of all, the solution $U_{\eps}(t)=(f_{\eps},E_{\eps},B_{\eps})(t)$  to the VMB system \eqref{VMB4}--\eqref{VMB2i} can be represented by
		$$
		U_{\eps}(t)=e^{\frac{t}{\eps^2}\AA_{\eps}}U_0+\intt e^{\frac{t-s}{\eps^2}\AA_{\eps}} (H_{\eps}(s),0,0)  ds,
		$$
		and the solution $U_1(t)=(u_1,E,B )(t)$ with $u_1 =n\chi_0+m\cdot v\chi_0+q\chi_4$ to the NSMF system  \eqref{NSM_2}--\eqref{NSP_5i} can be represented by
		$$
		U_1(t)=Y_1(t)P_AU_0+\intt Y_1(t-s) (H_3(s),0,0) ds,
		$$
		where $\AA_{\eps}$ is the linear  VMB operators defined by \eqref{B5}, $Y_1(t)$ is the semigroup defined in \eqref{v1}, and $H_3$ is the nonlinear term given by
		$$
		H_3=\(nE+m\times B-m\cdot\Tdx m\)\cdot v\chi_0+\bigg(\sqrt{\frac23}m\cdot E-\frac53m\cdot\Tdx q\bigg) \chi_4.
		$$
		Our proof proceeds in two key steps. First, we estimate the convergence rate from $e^{\frac{t}{\eps^2}\AA_{\eps}}$ to $Y_1(t)$ by using spectral analysis. Then the convergence rates from $U_{\eps}(t)$ to $U_{1}(t)$  is established by combining  semigroup convergence estimates with a bootstrap argument. 

		Due to the influence of the electric-magnetic field,  the linear VMB operator $\AA_{\eps}(\xi)$ given in \eqref{B3} has no scaling property.
		To study the corresponding eigenvalue problem, we will use  a novel non-local implicit function theorem to show that for $\eps |\xi| $ being small, there exist nine eigenvalues $\lambda_j(|\xi|,\eps)$, $-1\le j\le 7$ of $\AA_{\eps}(\xi)$ with expansions (See Lemma \ref{eigen_4a}):
		\bma
		\lambda_{j}(|\xi|,\eps) &=\eps \eta_j(|\xi|)- \eps^2b_{j}(|\xi|) +O(\eps^3|\xi|^3),\quad j=-1,0,1,4,5,6,7,\label{expan2}\\
		\lambda_{k}(|\xi|,\eps) &=\eps \eta_k(|\xi|)- \eps^2b_{k}(|\xi|) +O\(\frac{\eps^3|\xi|^5}{1+|\xi|^2}\), \quad k=2,3,\label{expan3}
		\ema
		where $\eta_j(|\xi|)$ and $b_j(|\xi|)$ are defined by \eqref{bj}.
		
		Moreover, we can decompose the semigroup $e^{ \frac{t}{\eps^2}\AA_{\eps}(\xi)}$ into two fluid parts for low frequency and high frequency and the remainder part, where the remainder part has the decay rate $e^{-\frac{dt}{\eps^2}}$ (See Theorem \ref{rate1}). Then by applying the expansion \eqref{expan2}--\eqref{expan3} to the fluid part, we obtain
		\bmas
		e^{ \frac{t}{\eps^2}\AA_{\eps}(\xi)}\hat{U}_0&=\sum_{j=0,2,3} e^{-b_j(|\xi|)t }\tilde{P}_{0j}\hat{U}_0 +\sum_{j\ne 0,2,3} e^{\frac{\eta_j(|\xi|)}{\eps}t-b_j(|\xi|)t }\tilde{P}_{0j}\hat{U}_0\\
		&\quad+O(\eps e^{- b_jt}) +O( e^{-\frac{ct}{\eps|\xi|}})1_{\{|\xi|\ge \frac{r_1}{\eps}\}}+O(e^{-\frac{dt}{\eps^2}}),
		\emas
		where $\tilde{P}_{0j}$, $-1\le j\le 7$ are first order eigenprojections corresponding to $\lambda_j$. Crucially, we employ  the following key estimate:
		$$\bigg\|\mathcal{F}^{-1}\(e^{\frac{\pm i\sqrt{1+|\xi|^2}}{\eps}t}(1+|\xi|^2)^{-\frac54}\)\bigg\|_{L^p_x}\le C \(\frac{t}{\eps}\)^{\frac{3}{p}-\frac32},\quad  p\in [2, \infty]$$
		to establish the optimal convergence rate of the semigroup $e^{\frac{t}{\eps^2}\AA_{\eps}}$ to its first and second order fluid limits in $L^\infty$ norm as listed in Lemmas \ref{fl1} and \ref{fl2}.

		By using the estimates on the convergence rates for the fluid limits of the linear VMB system, we can  prove the convergence and establish the optimal convergence rate of the strong solution $U_{\eps}$ to the nonlinear VMB system towards the solution $U_1$ to the NSMF system.  Hence, we obtain the precise estimation on the initial layer.

		The rest of this paper will be organized as follows.
		In Section~\ref{sect2}, we present the results about the spectrum analysis of the linear operator related to the linearized VMB system. In Section~\ref{sect3}, we  establish  the first and second order fluid approximations of the solution to the  linearized VMB system.
		In Section~\ref{sect4}, we prove the convergence and establish the convergence rate of the global solution to the original nonlinear VMB system towards the solution to the nonlinear NSMF system.

		\section{Spectral analysis}
		\setcounter{equation}{0}
		\label{sect2}

		In this section, we are concerned with the spectral analysis of the linear VMB operator $\AA_{\eps}(\xi)$ defined by
		\eqref{Axi}, which will be applied to study diffusion limit of the solution to the VMB system \eqref{VMB4}--\eqref{VMB4d}.

		From the system \eqref{VMB4}--\eqref{VMB4d}, we have the following linearized VMB system for  $U_{\eps}=(f_{\eps},E_{\eps},B_{\eps})^T$ as
		\be\label{LVMB1}
		\left\{\bln
		&\eps^2\dt U_{\eps}=\AA_{\eps}U_{\eps},\quad t>0,\\
		&\Tdx\cdot E_{\eps}=(f_{\eps},\chi_0),\quad \Tdx\cdot B_{\eps}=0,\\
		&U_{\eps}(0,x,v)=U_0(x,v)=(f_0,E_0,B_0),
		\eln\right.
		\ee
		where
		\be
		\AA_{\eps}=\left( \ba
		L-\eps v\cdot\Tdx & \eps v\chi_0 &0\\
		-\eps P_m &0 &\eps\Tdx\times\\
		0 &-\eps\Tdx\times &0
		\ea\right) \label{B5}
		\ee with $P_mh=(h,v\chi_0)$ for any $h\in L^2(\R^3_v)$.

		Taking Fourier transform to  \eqref{LVMB1} in $x$ to get
		\be\label{LVMB3}
		\left\{\bln
		&\eps^2\dt \hat{U}_{\eps}= \AA_{\eps}(\xi)\hat U_{\eps},\quad t>0,\\
		&\Tdx\cdot \hat{E}_{\eps}=(\hat{f}_{\eps},\chi_0),\quad \Tdx\cdot \hat{B}_{\eps}=0,\\
		&\hat{U}_{\eps}(0,\xi,v)=\hat{U}_0(\xi,v)=(\hat{f}_0,\hat{E}_0,\hat{B}_0),
		\eln\right.
		\ee
		where  
		\be
		\AA_{\eps}(\xi)=\left( \ba
		L- i\eps v\cdot\xi & \eps v\chi_0 &0\\
		-\eps P_m &0 &i\eps\xi\times\\
		0 &-i\eps\xi\times &0
		\ea\right). \label{B3}
		\ee
		
		By the identity $X=(X\cdot y)y-y\times y\times X$ for any $X\in \R^3$ and $y\in \mathbb{S}^2$, we can transform the system \eqref{LVMB3} to a new system for $\hat V_{\eps}=(\hat f_{\eps},\omega\times\hat  E_{\eps},\omega\times\hat B_{\eps})^T$ with $\omega=\xi/|\xi|$ as
		\bq
		\left\{\bln            \label{LVMB2a}
		&\dt \hat{V}_{\eps}=\tilde{\AA}_{\eps}(\xi)\hat{V}_{\eps}, \quad t>0,\\
		&\hat{V}_{\eps}(0,\xi,v)=\hat{V}_0(\xi,v)=(\hat{f}_0,\omega\times\hat{E}_0,\omega\times\hat{B}_0),
		\eln\right.
		\eq
		with 
		\be \label{Axi}
		\tilde{\AA}_{\eps}(\xi)=\left( \ba
		\tilde\BB_{\eps}(\xi) &-\eps v\chi_0\cdot\omega\times &0\\
		-\eps\omega\times P_m &0 &i\eps\xi\times\\
		0 &-i\eps\xi\times &0
		\ea\right).
		\ee
		Here,  for $\xi\ne0$,
		\be
		\tilde\BB_{\eps}(\xi) =L-i\eps v\cdot\xi -i\eps \frac{ v\cdot\xi }{|\xi|^2}P_{ d}.  \label{B(xi)}
		\ee
		
		\begin{remark}\label{rem1.1}
			The eigenvalues of the operator $\AA_{\eps}(\xi)$ are the same as those
			of $\tilde{\AA}_{\eps}(\xi)$,  and the eigenfunctions of $\AA_{\eps}(\xi)$ can be obtained as linear combinations
			of those for  $\tilde{\AA}_{\eps}(\xi)$. In fact, let   $\beta$  be an eigenvalue with
			the corresponding eigenvector denoted by $\mathcal{U}=(\phi,E,B)$ of $\tilde{\AA}_{\eps}(\xi)$. Then $U=(\phi,-i\frac{\xi}{|\xi|^2}(\phi,\chi_0)-\frac{\xi}{|\xi|}\times E,-\frac{\xi}{|\xi|}\times B)$ is the corresponding
			eigenvector with the eigenvalue  $\beta$ of  $\AA_{\eps}(\xi)$.
		\end{remark}

		
		\subsection{Spectrum structure}
		Introduce a weighted Hilbert space $L^2_{\xi}(\R^3)$ for $\xi\ne 0$
		as
		$$
		L^2_{\xi}(\R^3)=\Big\{f\in L^2(\R^3)\,|\,\|f\|_{\xi}=\sqrt{(f,f)_{\xi}}<\infty\Big\},
		$$
		with the inner product defined by
		$$
		(f,g)_{\xi}=(f,g)+\frac1{|\xi|^2}(P_{d} f,P_{d} g).
		$$
		For any fixed $\xi\ne 0$, define a subspace of $\C^3$ by
		$$\mathbb{C}^3_{\xi}=\{y\in \mathbb{C}^3\,|\, y\cdot \xi=0\}.$$
		
		For any vectors $U=(f,E_1,B_1),V=(g,E_2,B_2)\in L^2_\xi(\R^3_v)\times \mathbb{C}^3\times \mathbb{C}^3$,  define a weighted inner product and the corresponding
		norm by
		$$ (U,V)_\xi=(f,g)_\xi+(E_1,E_2)+(B_1,B_2),\quad \|U\|_\xi=\sqrt{(U,U)_\xi}, $$
		and another $L^2$ inner product and norm by
		$$ (U,V)=(f,g)+(E_1,E_2)+(B_1,B_2),\quad \|U\|=\sqrt{(U,U)}. $$
		
		Since $P_{d}$ is a self-adjoint projection operator, it follows that
		$(P_{d} f,P_{d} g)=(P_{d} f, g)=( f,P_{d} g)$ and hence
		\bq (f,g)_{\xi}=\(f,g+\frac1{|\xi|^2}P_{d}g\)=\(f+\frac1{|\xi|^2}P_{d}f,g\).\label{C_1a}\eq
		By
		\eqref{C_1a}, we have for any $f,g\in L^2_{\xi}(\R^3_v)\cap D(\tilde\BB_{\eps}(\xi))$,
		\be
		(\tilde\BB_{\eps}(\xi)f,g)_{\xi}=\(\tilde\BB_{\eps}(\xi) f,g+\frac1{|\xi|^2}P_{d} g\)=(f,\tilde\BB_{\eps}(-\xi)g)_{\xi}. \label{L_7}
		\ee
		Moreover,  $\tilde\BB_{\eps}(\xi)$ is a dissipative operator in $L^2_{\xi}(\R^3)$:
		$$
		{\rm Re}(\tilde\BB_{\eps}(\xi)f,f)_{\xi}=(Lf,f)\le 0.
		$$

		Note that $\tilde\BB_{\eps}(\xi)$ is a linear operator from the space $L^2_{\xi}(\R^3)$ to itself, and  for any $ y\in \mathbb{C}^3_{\xi}$,
		\bq \frac{\xi}{|\xi|}\times\frac{\xi}{|\xi|}\times y=-y .\label{rotat}\eq
		It is easy to verify that $L^2_{\xi}(\R^3_v)\times \mathbb{C}^3_{\xi}\times \mathbb{C}^3_{\xi}$ is an invariant subspace of the operator $\tilde{\AA}_{\eps}(\xi)$. Thus,  $\tilde{\AA}_{\eps}(\xi)$ can
		be regarded as a linear operator on $L^2_{\xi}(\R^3_v)\times \mathbb{C}^3_{\xi}\times \mathbb{C}^3_{\xi}$.

		Denote the spectrum of the operator $A$ by  $\sigma(A)$.
        The essential spectrum of $A$, denoted by $\sigma_{ess}(A)$, is the set
        $\{\lambda\in \C \,|\, \lambda-A~{\rm is~not~a~Fredholm~operator}\}$ (cf. \cite{Kato}). The discrete spectrum
        of $A$, denoted by $\sigma_d(A)$, is the set $\sigma(A)\setminus \sigma_{ess} (A)$ which consists of  all isolated eigenvalues with finite multiplicity.
        And  $\rho(A)$ denotes the resolvent set of the operator $A.$
		
		
		\begin{lem}\label{SG_1}
			The operator $\tilde{\AA}_{\eps}(\xi)$ generates a strongly continuous contraction semigroup on
			$L^2_{\xi}(\R^3)\times \C^3_{\xi}\times \C^3_{\xi}$, which satisfies
			$$
			\|e^{t\tilde{\AA}_{\eps}(\xi)}U\|_{\xi}\le\|U\|_{\xi}, \quad \forall\, t>0,\,U\in
			L^2_{\xi}(\R^3_v)\times \C^3_{\xi}\times \C^3_{\xi}.
			$$
		\end{lem}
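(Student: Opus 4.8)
The plan is to apply the Lumer--Phillips theorem. To do so, I must verify two things: that $\tilde{\AA}_{\eps}(\xi)$ is dissipative with respect to the weighted inner product $(\cdot,\cdot)_\xi$ on the Hilbert space $\mathcal{H}_\xi := L^2_\xi(\R^3_v)\times \C^3_\xi\times \C^3_\xi$, and that $\tilde{\AA}_{\eps}(\xi)$ is closed and the range of $\lambda - \tilde{\AA}_{\eps}(\xi)$ is all of $\mathcal{H}_\xi$ for some (equivalently all) $\lambda>0$.

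For dissipativity, I would compute ${\rm Re}\,(\tilde{\AA}_{\eps}(\xi)U,U)_\xi$ directly for $U=(f,E,B)\in D(\tilde{\AA}_{\eps}(\xi))\cap\mathcal{H}_\xi$. Writing out the three block rows of \eqref{Axi} and using the weighted inner product, the contribution from the $\tilde\BB_{\eps}(\xi)$-block is ${\rm Re}\,(\tilde\BB_{\eps}(\xi)f,f)_\xi = (Lf,f)\le 0$ by \eqref{L_4} and the computation already recorded just above the lemma. The cross terms coming from the off-diagonal entries $-\eps v\chi_0\cdot\omega\times$, $-\eps\omega\times P_m$, $i\eps\xi\times$, $-i\eps\xi\times$ should cancel in pairs when one takes the real part: the coupling between $f$ and $E$ produces $-\eps\,\overline{(f, v\chi_0\cdot\omega\times E)} - \eps\,(f+|\xi|^{-2}P_d f,\; \omega\times((f,v\chi_0)))\cdot\bar E$-type terms (I would need to be careful that $P_m f = (f,v\chi_0)$ interacts correctly with the weight, but since $v\chi_0 \perp N_0$ only in the sense that $v_j\sqrt M=\chi_j$ lies in $N_0$, actually $v\chi_0\in N_0$, so $P_d(v\chi_0)$ must be handled — this is a point to check), and the coupling between $E$ and $B$ gives $i\eps(\xi\times B, E) - i\eps(\xi\times E,B)$ whose real part is zero since $(\xi\times B,E)=\overline{(\xi\times E,B)}$. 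Hence ${\rm Re}\,(\tilde{\AA}_{\eps}(\xi)U,U)_\xi = (Lf,f)\le 0$, giving dissipativity; in fact this also yields the dissipation estimate ${\rm Re}\,(\tilde{\AA}_{\eps}(\xi)U,U)_\xi\le -\mu\|P_1 f\|^2$ which is flagged in the commented-out line \eqref{L_8a}.

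For the range condition, I would note that $\tilde{\AA}_{\eps}(\xi) = L + (\text{bounded operator})$ on $\mathcal{H}_\xi$: the operator $L$ is a closed operator with the decomposition $L=K-\nu(v)$, where $-\nu(v)$ is the generator of a dissipative semigroup (multiplication operator) and $K$ is bounded and in fact compact on $L^2(\R^3_v)$; all the remaining terms in \eqref{Axi} — namely $-i\eps v\cdot\xi$, the $P_d$ term, and the four off-diagonal coupling blocks — are $\nu$-bounded with relative bound zero (using $|v|\lesssim \nu(v)$ from \eqref{nu}) or outright bounded on the finite-dimensional field components. Standard perturbation theory (Kato) then shows $\tilde{\AA}_{\eps}(\xi)$ is closed on $D(\tilde\BB_{\eps}(\xi))\times\C^3_\xi\times\C^3_\xi$, and that for $\lambda$ with sufficiently large real part $\lambda-\tilde{\AA}_{\eps}(\xi)$ is invertible, so the range is all of $\mathcal{H}_\xi$. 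By Lumer--Phillips, $\tilde{\AA}_{\eps}(\xi)$ generates a $C_0$-contraction semigroup on $\mathcal{H}_\xi$, and the contraction bound $\|e^{t\tilde{\AA}_{\eps}(\xi)}U\|_\xi\le\|U\|_\xi$ follows immediately from dissipativity.

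The main obstacle I anticipate is the bookkeeping in the real-part computation of $(\tilde{\AA}_{\eps}(\xi)U,U)_\xi$: because the inner product is weighted by $|\xi|^{-2}P_d$ on the $f$-component and because $v\chi_0$ overlaps the hydrodynamic subspace $N_0$ (so $P_d(v\chi_0)\ne 0$ in general, where $P_d$ presumably projects onto ${\rm span}\{v\cdot\omega\,\sqrt M\}$ or similar), the cross terms between the kinetic component and the electric field must be shown to cancel exactly against the $-\eps\omega\times P_m$ term only after accounting for this weight. Establishing that cancellation cleanly — and confirming the precise definition of $P_d$ makes it work — is where care is needed; the closedness and range conditions are routine given the structure $L+\text{bounded}$.
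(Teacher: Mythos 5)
Your proposal is correct and the core computation (the real part of $(\tilde{\AA}_{\eps}(\xi)U,U)_{\xi}$ reducing to $(Lf,f)\le 0$) is exactly what the paper does, but you close the generation argument by a different route. The paper avoids the range condition entirely: it exhibits the adjoint $\tilde{\AA}_{\eps}(\xi)^*$ explicitly (which has the same block structure with $\xi\mapsto-\xi$ and sign flips on the coupling blocks), observes that $\mathrm{Re}(\tilde{\AA}_{\eps}(\xi)^*U,U)_{\xi}=(Lf,f)\le0$ as well, and invokes the Pazy corollary (Lemma \ref{S_1-1}): a densely defined closed operator with both $A$ and $A^*$ dissipative generates a $C_0$-contraction semigroup. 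You instead verify surjectivity of $\lambda-\tilde{\AA}_{\eps}(\xi)$ via the decomposition into a diagonal multiplication/field part plus a bounded perturbation and then apply Lumer--Phillips directly. Both are standard and correct; the paper's version is slightly cheaper here because the adjoint's dissipativity costs nothing extra once the cross-term cancellation is established, whereas your version requires the (routine but separate) perturbation argument. One imprecision in your write-up: $-i\eps v\cdot\xi$ is \emph{not} $\nu$-bounded with relative bound zero (the ratio $\eps|\xi||v|/\nu(v)$ tends to a nonzero constant as $|v|\to\infty$); the clean fix is to group $-\nu(v)-i\eps v\cdot\xi$ together as the closed normal multiplication operator $D_{\eps}(\xi)$, which generates a contraction semigroup since its real part is $-\nu(v)\le-\nu_0$, and treat $K$, the $P_d$ term, and all coupling blocks as bounded perturbations.

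On the point you flagged as needing care: $P_d$ is the projection onto $\mathrm{span}\{\chi_0\}=\mathrm{span}\{\sqrt M\}$ (the density mode), not onto $\mathrm{span}\{(v\cdot\omega)\sqrt M\}$. Since $(v_j\sqrt M,\sqrt M)=0$, one has $P_d\big(v\chi_0\cdot(\omega\times X)\big)=0$, so the weight $|\xi|^{-2}P_d$ contributes nothing to the $f$--$X$ cross terms. The cancellation then proceeds exactly as you anticipated: the $f$--$X$ coupling produces $-\eps\big[(\omega\times X)\cdot\overline{(f,v\chi_0)}-\overline{(\omega\times X)\cdot\overline{(f,v\chi_0)}}\big]$, which is purely imaginary, and the $X$--$Y$ coupling gives $i\eps\cdot2\,\mathrm{Re}\big[(\xi\times Y)\cdot\overline{X}\big]$, also purely imaginary. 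So the obstacle you anticipated dissolves once the definition of $P_d$ is pinned down, and your proof is complete.
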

		\begin{proof}
			We first show that both $\tilde{\AA}_{\eps}(\xi)$ and $\tilde{\AA}_{\eps}(\xi)^*$ are
			dissipative operators on $L^2_{\xi}(\R_v^3)\times \C^3_{\xi}\times \C^3_{\xi}$. By \eqref{L_7}, we
			obtain for any $U,V\in D(\tilde\BB_{\eps}(\xi))\times \mathbb{C}^3_{\xi}\times \mathbb{C}^3_{\xi}$ that
			$$
			(\tilde{\AA}_{\eps}(\xi)U,V)_{\xi}=(U,\tilde{\AA}_{\eps}(\xi)^*V)_{\xi},
			$$
			where
			\begin{align*}
				\tilde{\AA}_{\eps}(\xi)^*=\left( \ba
				\tilde\BB_{\eps}(-\xi) &\eps v\chi_0\cdot\omega\times   &0\\
				\eps\omega\times P_m &0 & -i\eps\xi\times\\
				0 &  i\eps\xi\times &0
				\ea\right).
			\end{align*}
			Moreover, both $\tilde{\AA}_{\eps}(\xi)$ and $\tilde{\AA}_{\eps}(\xi)^*$ are dissipative,  namely,
			$$\mathrm{Re}(\tilde{\AA}_{\eps}(\xi)U,U)_{\xi}=\mathrm{Re}(\tilde{\AA}_{\eps}(\xi)^*U,U)_{\xi}=(Lf,f)\leq0,\quad \forall\, U=(f,X,Y).$$
			Since $\tilde{\AA}_{\eps}(\xi)$ is a densely defined closed operator, it
			follows from Lemma \ref{S_1-1} that the operator $\tilde{\AA}_{\eps}(\xi)$
			generates a $C_0$-contraction semigroup on $L^2_{\xi}(\R^3_v)\times \mathbb{C}^3_{\xi}\times \mathbb{C}^3_{\xi}$.
		\end{proof}

		Set
		\begin{align*}  D_{\eps}(\xi)=-\nu(v)-i\eps v\cdot\xi, \quad N_1(\xi)=\left(\ba  0 & i\xi\times \\ -i\xi\times & 0 \ea\right)_{6\times6}.
		\end{align*}
		Since $\mathbb{C}^3_{\xi}\times \mathbb{C}^3_{\xi}$ is an invariant subspace of the operator $N_1(\xi)$, we can regard $N_1(\xi)$ as an operator on $\mathbb{C}^3_{\xi}\times \mathbb{C}^3_{\xi}$. 
		Moreover, the operator $\lambda-N_1(\xi)$ is invertible on $\mathbb{C}^3_{\xi}\times \mathbb{C}^3_{\xi}$  for any $\lambda\ne \pm i|\xi|$ and satisfies (cf. \cite{Li4})
		\bq  \|(\lambda-N_1(\xi))^{-1}\|= \max_{j=\pm1}|\lambda-ji|\xi||^{-1}.\label{b_1(xi)}\eq

		\begin{lem}\label{Egn}
			The following conditions hold for all $\xi\ne 0$ and $\eps\in [0,1)$.
			\begin{enumerate}
				\item[\rm (1)]
				$\sigma_{ess}(\tilde{\AA}_{\eps}(\xi))\subset \{\lambda\in \mathbb{C}\,|\, {\rm Re}\lambda\le -\nu_0\}$ and $\sigma(\tilde{\AA}_{\eps}(\xi))\cap \{\lambda\in \mathbb{C}\,|\, -\nu_0<{\rm Re}\lambda\le 0\}\subset \sigma_{d}(\tilde{\AA}_{\eps}(\xi))$.
				\item[\rm (2)]
				If $\lambda$ is an eigenvalue of $\tilde{\AA}_{\eps}(\xi)$, then ${\rm Re}\lambda<0$ for any $\eps \ne 0$  and $ \lambda=0$ if and only if $\eps=0$.
			\end{enumerate}
		\end{lem}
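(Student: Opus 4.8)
The plan is to prove the two parts of Lemma \ref{Egn} separately, using the structure of $\tilde\AA_\eps(\xi)$ as a bounded perturbation of the decoupled operator $\mathrm{diag}(L,0,0)$ together with the dissipativity established in Lemma \ref{SG_1}. For part (1), I would first write $\tilde\AA_\eps(\xi) = \mathcal{N}(\xi) + \mathcal{K}$, where $\mathcal{N}(\xi)$ collects the multiplication part $D_\eps(\xi)=-\nu(v)-i\eps v\cdot\xi$ on the first component together with the (bounded, finite-rank-in-the-relevant-sense) field couplings and the operator $N_1(\xi)$ acting on $\mathbb{C}^3_\xi\times\mathbb{C}^3_\xi$, and $\mathcal{K}$ is built from the compact operator $K$ of \eqref{L_1} (extended by zero to the field components) plus the $P_d$-correction term in $\tilde\BB_\eps(\xi)$. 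The key facts are: $K$ is compact on $L^2(\R^3_v)$, so $\mathcal K$ is a compact (indeed $\tilde\AA_\eps(\xi)$-compact) perturbation; and the essential spectrum is stable under such perturbations (Weyl's theorem, cf. \cite{Kato}), so $\sigma_{ess}(\tilde\AA_\eps(\xi)) = \sigma_{ess}(\mathcal N(\xi))$. It then remains to locate $\sigma(\mathcal N(\xi))$. The field block $N_1(\xi)$ is skew-adjoint on $\mathbb{C}^3_\xi\times\mathbb{C}^3_\xi$ with spectrum $\{\pm i|\xi|\}$, purely discrete; the $v\chi_0$ and $\omega\times P_m$ couplings are bounded finite-rank maps between the two blocks and move only the discrete part; and the operator of multiplication by $D_\eps(\xi)$ on $L^2(\R^3_v)$ has spectrum contained in $\{\lambda : \mathrm{Re}\,\lambda = -\nu(v)\text{ for some }v\}\subset\{\mathrm{Re}\,\lambda\le -\nu_0\}$ by \eqref{nu}. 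Assembling these gives $\sigma_{ess}(\tilde\AA_\eps(\xi))\subset\{\mathrm{Re}\,\lambda\le-\nu_0\}$, and since Lemma \ref{SG_1} forces $\sigma(\tilde\AA_\eps(\xi))\subset\{\mathrm{Re}\,\lambda\le 0\}$, everything in the strip $-\nu_0<\mathrm{Re}\,\lambda\le 0$ is isolated of finite multiplicity, i.e.\ lies in $\sigma_d$.

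For part (2), suppose $\lambda$ is an eigenvalue with eigenvector $U=(f,E,B)\ne 0$ in $L^2_\xi\times\mathbb{C}^3_\xi\times\mathbb{C}^3_\xi$, normalized so $\|U\|_\xi=1$. Taking the $\xi$-weighted inner product of $\tilde\AA_\eps(\xi)U=\lambda U$ with $U$ and using the dissipativity identity from Lemma \ref{SG_1} gives $\mathrm{Re}\,\lambda = (Lf,f)\le -\mu\|P_1 f\|^2\le 0$. To upgrade $\mathrm{Re}\,\lambda\le 0$ to $\mathrm{Re}\,\lambda<0$ for $\eps\ne 0$, I argue by contradiction: if $\mathrm{Re}\,\lambda=0$ then necessarily $P_1 f=0$, so $f=P_0 f\in N_0$, i.e.\ $Lf=0$. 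Feeding $f\in N_0$ back into the first equation of the system $\tilde\AA_\eps(\xi)U=\lambda U$ and projecting onto $N_0$ and onto $N_0^\perp$ separately (exactly the macro/micro split used to derive the fluid equations) forces a rigid system of algebraic relations on the five macroscopic moments of $f$ and on $(E,B)$; one checks these relations, together with the constraint $f\in N_0$, $P_1(i\eps v\cdot\xi\, f)$ must vanish, which it cannot unless $\eps\xi$-dependent terms drop out — concretely $P_1(v_j\chi_0)\ne 0$ and $P_1(v_j\chi_4)\ne 0$, so for $\eps\ne 0$ and $\xi\ne 0$ the only solution is $f=0$, whence from the field equations $E=B=0$, contradicting $\|U\|_\xi=1$. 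Hence $\mathrm{Re}\,\lambda<0$ when $\eps\ne 0$. Finally, when $\eps=0$ the operator $\tilde\AA_0(\xi)=\mathrm{diag}(L,0,N_1(\xi)\text{-block})$ obviously has $\lambda=0$ as eigenvalue (take $f\in N_0$, $E=B=0$), and conversely $\lambda=0$ being an eigenvalue forces, by the $\eps\ne 0$ argument just given run in reverse, $\eps=0$; so $\lambda=0$ iff $\eps=0$.

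The main obstacle I expect is part (2), specifically showing that $\mathrm{Re}\,\lambda=0$ is impossible for $\eps\ne 0$: the inequality $\mathrm{Re}\,\lambda\le 0$ is immediate, but extracting a contradiction requires carefully exploiting the coupling between $f$, $E$, and $B$ — one cannot conclude from $P_1 f=0$ alone, and must track how the transport term $i\eps v\cdot\xi$ together with the modified term $i\eps\frac{v\cdot\xi}{|\xi|^2}P_d$ in $\tilde\BB_\eps(\xi)$ and the Lorentz-force coupling $\eps v\chi_0\cdot\omega\times$ interact. The clean way is to observe that $\mathrm{Re}\,(\tilde\AA_\eps(\xi)U,U)_\xi = (Lf,f)$ depends only on $f$, so $\mathrm{Re}\,\lambda = 0 \Rightarrow f\in N_0$; then substitute and use the non-degeneracy $P_1(v_j\sqrt M)\neq 0$. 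For part (1) the only subtlety is verifying $\tilde\AA_\eps(\xi)$-compactness of $\mathcal K$ rather than plain compactness (needed because $\mathcal N(\xi)$ is unbounded), which follows from the relative bound $\|K f\|\lesssim\|f\|$ and $\nu$-boundedness of $\tilde\BB_\eps(\xi)$; this is routine.
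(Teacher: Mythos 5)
Your argument follows the paper's proof in both structure and substance: part (1) is the same Weyl-theorem argument after splitting off the compact part (the paper puts the finite-rank field couplings into the compact perturbation $G^2_{\eps}(\xi)$ rather than into the ``free'' part as you do, but since finite-rank operators are compact this makes no difference), and part (2) is the same dissipativity identity ${\rm Re}\,\lambda\,\|U\|_\xi^2=(Lf,f)$ followed by substituting $f\in N_0$ back into the eigenvalue equation and using that the microscopic projection of the transport term cannot vanish.

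One factual slip in part (2) should be corrected: you invoke the ``non-degeneracy $P_1(v_j\chi_0)\ne 0$'' (equivalently $P_1(v_j\sqrt M)\ne0$), but $v_j\chi_0=\chi_j\in N_0$, so $P_1(v_j\chi_0)=0$ identically. What the argument actually requires is that $P_1\big((v\cdot\omega)(C\cdot v\,\chi_0+C_4\chi_4)\big)=0$ forces $C=0$ and $C_4=0$, i.e.\ the non-vanishing of $P_1(v_iv_j\chi_0)$ for the quadratic moments (the symmetric tensor $\omega_iC_j+\omega_jC_i$ cannot be a multiple of $\delta_{ij}$ unless $C=0$) and of $P_1\big((v\cdot\omega)\chi_4\big)$ for the cubic moment. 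With that replacement the contradiction $f=0$, hence $\omega\times E=0$, $E=B=0$, goes through exactly as in the paper, and the rest of your proof is fine.
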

		
		\begin{proof}
			We decompose $\tilde{\AA}_{\eps}(\xi)$ into
			$$ \tilde{\AA}_{\eps}(\xi)=G^1_{\eps}(\xi)+G^2_{\eps}(\xi), $$
			where
			\bma
			G^1_{\eps}(\xi)&=\left( \ba
			D_{\eps}(\xi) &0 &0\\
			0 &0 &i\eps\xi\times\\
			0 &-i\eps\xi\times &0
			\ea\right), \label{A1}\\
			G^2_{\eps}(\xi)&=\left( \ba
			K-i\eps \frac{v\cdot\xi}{|\xi|^2}P_{d} &-\eps v\chi_0\cdot\omega\times &0\\
			-\eps\omega\times P_m &0 &0\\
			0 &0 &0
			\ea\right). \label{A2}
			\ema
			By  \eqref{b_1(xi)} and \eqref{nu}, the operator $\lambda-G^1_{\eps}(\xi)$ is invertible  on $L^2_{\xi}(\R^3_v)\times \mathbb{C}^3_{\xi}\times \mathbb{C}^3_{\xi}$ for ${\rm
				Re}\lambda>-\nu_0$ and $\lambda\ne \pm i\eps^2|\xi|$, and it satisfies
			$$
			(\lambda-G^1_{\eps}(\xi))^{-1}= \left(\ba (\lambda -D_{\eps}(\xi))^{-1} & 0 \\  0 & (\lambda -\eps N_1(\xi))^{-1} \ea\right)_{7\times7}.
			$$
			Since $G^2_{\eps}(\xi)$ is a compact
			operator on $L^2_{\xi}(\R^3_v)\times \mathbb{C}^3_{\xi}\times \mathbb{C}^3_{\xi}$ for any fixed $\xi\ne 0$,
			$\tilde{\AA}_{\eps}(\xi)$ is a compact perturbation of $G^1_{\eps}(\xi)$. Hence, it follows from
			Weyl's Theorem (Theorem 5.35 on p.244 of \cite{Kato}) that  
			$$\sigma_{ess}(\tilde{\AA}_{\eps}(\xi))=\sigma_{ess}(G^1_{\eps}(\xi))={\rm Ran}(D_{\eps}(\xi)).$$
			Thus
			the spectrum of $\tilde{\AA}_{\eps}(\xi)$ in the domain ${\rm Re}\lambda>-\nu_0$ consists of
			discrete eigenvalues $\lambda_j(\xi)$ with possible accumulation
			points only on the line ${\rm Re}\lambda= -\nu_0$. This proves the part (1).

			We claim that for any  $\lambda \in \sigma_d(\tilde{\AA}_{\eps}(\xi))$  in the region $\mathrm{Re}\lambda>-\nu_0$, it holds that ${\rm Re}\lambda <0$
			for $\eps \ne 0$. Indeed, set $s=|\xi|$, $\omega=\xi|\xi|^{-1}$, and let $U=(f,E,B)\in L^2_{\xi}(\R^3_v)\times \mathbb{C}^3_{\xi}\times \mathbb{C}^3_{\xi}$ be the
			eigenvector corresponding to the eigenvalue $\lambda$ so that
			\bq           \label{L_6}
			\left\{\bal
			\lambda f=Lf-i\eps s(v\cdot\omega) (f+\frac1{s^2}P_{d} f )-\eps v\chi_0\cdot(\omega\times E),\\
			\lambda E=-\eps\omega\times (f,v\chi_0)+i \eps\xi\times B,\\
			\lambda B=-i\eps\xi\times E.
			\ea\right.
			\eq
			Taking the inner product  $\eqref{L_6}_1$ with $f+\frac1{s^2}P_df$, we have
			$$
			\text{Re}\lambda\(\|f\|^2_{\xi} +|E|^2+|B|^2\)= (Lf,f)\le 0,
			$$
			which  implies $\text{Re}\lambda\leq 0$.

			Furthermore, if there exists an eigenvalue $\lambda$ with ${\rm
				Re}\lambda=0$, then it follows from the above that $(Lf,f)=0$,
			namely, 
			$f=C_0\sqrt M+C\cdot v\chi_0+C_4\chi_4\in N_0$. Substitute this into $\eqref{L_6}_1$, we obtain
			$$
			\begin{aligned}
				\lambda f&=-i\eps s  P_0(v\cdot\omega) \(f+\frac1{s^2}C_0\chi_0\)-\eps v\chi_0\cdot(\omega\times E),\\
				0&=-i\eps s P_1(v\cdot\omega)(C\cdot v\chi_0+C_4\chi_4) ,
			\end{aligned}
			$$
			which   implies that $f= 0$ and $\omega\times E=0$ unless $\eps= 0$ and $\lambda= 0$. When $\eps\ne 0$,  $f= 0$ and $\omega\times E= 0$ and hence $E=0$ and $B\equiv0$. This is a contradiction and
			thus it holds $\text{Re}\lambda<0$ for $\eps \ne 0$. This proves the part (2) and completes the proof of the lemma.
		\end{proof}
		
		Now denote by $T$ a linear operator on $L^2(\R^3_v)$ or
		$L^2_{\xi}(\R^3_v)$, and we define the corresponding norms of $T$ by
		$$
		\|T\|=\sup_{\|f\|=1}\|Tf\|,\quad
		\|T\|_{\xi}=\sup_{\|f\|_{\xi}=1}\|Tf\|_{\xi}.
		$$
		Obviously,
		\be
		(1+|\xi|^{-2})^{-1/2}\|T\|\le \|T\|_{\xi}\le (1+|\xi|^{-2})^{1/2}\|T\|.\label{eee}
		\ee

		First, we consider the spectrum and resolvent sets of $\tilde{\AA}_{\eps}(\xi)$ for $\eps|\xi|$ large. For ${\rm Re}\lambda>-\nu_0$ and $\lambda\ne \pm i\eps^2|\xi|$, we  decompose  $\lambda-\tilde{\AA}_{\eps}(\xi)$ into
		\bma
		\lambda-\tilde{\AA}_{\eps}(\xi)&=\lambda-G^1_{\eps}(\xi)-G^2_{\eps}(\xi)\nnm\\
		&=\(I-G^2_{\eps}(\xi)(\lambda-G^1_{\eps}(\xi))^{-1}\)\(\lambda-G^1_{\eps}(\xi)\), \label{B_d}
		\ema
		where $G^1_{\eps}(\xi)$ and $G^2_{\eps}(\xi)$ are defined by \eqref{A1} and \eqref{A2} respectively.
		A direct computation shows that
		\be\label{X_2}
		\left\{\bln
		&G^2_{\eps}(\xi)(\lambda-G^1_{\eps}(\xi))^{-1}= \left(\ba X^1_{\eps}(\lambda,\xi) & X^2_{\eps}(\lambda,\xi) \\  X^3_{\eps}(\lambda,\xi) & 0 \ea\right)_{7\times7},\\
		&X^1_{\eps}(\lambda,\xi)=\(K-i\eps\frac{ v\cdot\xi}{|\xi|^2} P_d\)(\lambda -D_{\eps}(\xi))^{-1},
		\\
		& X^2_{\eps}(\lambda,\xi)=\(\eps v\chi_0\cdot\omega\times,0_{1\times3}\)_{1\times6}(\lambda -\eps N_1(\xi))^{-1},
		\\
		&X^3_{\eps}(\lambda,\xi)=\left(\ba -\eps\omega\times P_m(\lambda-D_{\eps}(\xi))^{-1} \\ 0_{3\times1} \ea\right)_{6\times1}.
		\eln\right.
		\ee

		Then, we have the estimates on the right hand terms of \eqref{X_2} as follows.
		
		\begin{lem} [{\cite[Lemma 2.4]{YZ}}]\label{LP03}
			There exists a constant  $C>0$ so that the following holds
			\begin{enumerate}
				\item[\rm (1)] For any $\delta>0$, if ${\rm Re}\lambda\ge -\nu_0+\delta$, then we have
				\be
				\|K(\lambda-D_{\eps}(\xi))^{-1}\| \le  C\delta^{-\frac12}(1+\eps|\xi|)^{-\frac12}.  \nnm
				\ee
				\item[\rm (2)] For any $\delta>0,\, \tau_0>0$, if  ${\rm Re}\lambda\ge -\nu_0+\delta$ and $\eps|\xi|\le \tau_0$, then we have
				\be
				\|K(\lambda-D_{\eps}(\xi))^{-1}\| \leq C\delta^{-1}(1+\tau_0)^{\frac12}(1+|{\rm Im}\lambda|)^{-\frac12}. \nnm
				\ee
				
				\item[\rm (3)]  For any $\delta>0$,  if ${\rm Re}\lambda\ge -\nu_0+\delta$, then we have
				\bma
				\|P_{m}(\lambda-D_{\eps}(\xi))^{-1}\| &\le
				C\delta^{-\frac12}(1+\eps|\xi|)^{-\frac12} ,  \nnm\\
				\|P_{m}(\lambda-D_{\eps}(\xi))^{-1}\| &\le
				C(\delta^{-1}+1) (1+\eps|\xi|)|\lambda|^{-1}.  \nnm
				\ema
				
				\item[\rm (4)]  For any $\delta>0$,  if ${\rm Re}\lambda\ge -\nu_0+\delta$, then we have
				\bma
				\|(v\cdot\xi)|\xi|^{-2}P_{d}(\lambda-D_{\eps}(\xi))^{-1}\|
				&\leq C \delta^{-1} |\xi|^{-1},  \nnm
				\\
				\|(v\cdot\xi)|\xi|^{-2}P_{d}(\lambda-D_{\eps}(\xi))^{-1}\|
				&\leq C(\delta^{-1}+1)(|\xi|^{-1}+1)  |\lambda|^{-1}. \nnm
				\ema
			\end{enumerate}
		\end{lem}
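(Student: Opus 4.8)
The strategy is to reduce every estimate to the one-dimensional resolvent $(\lambda - D_\eps(\xi))^{-1}$, which is the scalar multiplication operator $g(v)\mapsto (\lambda+\nu(v)+i\eps v\cdot\xi)^{-1}g(v)$, and then exploit the Gaussian decay of the relevant kernels/weights to gain the decay in $\eps|\xi|$ or $|\lambda|$. Throughout, write $\mathrm{Re}\lambda = \sigma \ge -\nu_0 + \delta$, so that $\mathrm{Re}(\lambda + \nu(v)) = \sigma + \nu(v) \ge \delta$ by \eqref{nu} together with the normalization $\nu(0)\ge\nu_0$. Hence pointwise
\[
|\lambda + \nu(v) + i\eps v\cdot\xi|^{-1} \le \min\Big\{\delta^{-1},\ |\eps (v\cdot\xi) + \mathrm{Im}\lambda|^{-1}\Big\},
\]
and more usefully $|\lambda+\nu(v)+i\eps v\cdot\xi|^{-2}\le \delta^{-1}(\nu(v)+\delta+\eps^2(v\cdot\xi)^2/\delta)^{-1}$ type bounds obtained by interpolating the real and imaginary parts. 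The proof of each item then follows by writing the operator norm as an $L^2_v\to L^2_v$ bound and inserting these pointwise estimates.

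For item (1), I would use that $K$ has the explicit kernel $k(v,v_*)$ from \eqref{L_1} with Gaussian tails, so $K(\lambda-D_\eps(\xi))^{-1}$ has kernel $k(v,v_*)(\lambda+\nu(v_*)+i\eps v_*\cdot\xi)^{-1}$; by Schur's test its norm is controlled by $\sup_{v_*}\big(\int |k(v,v_*)|dv\big)^{1/2}\big(\int|k(v,v_*)||\lambda+\nu(v_*)+i\eps v_*\cdot\xi|^{-1}dv_*\big)^{1/2}$ after symmetrization, and the $\eps|\xi|$ gain comes from the inequality $\int_{\R^3} e^{-c|v_*|^2}|\eps(v_*\cdot\xi)+\mathrm{Im}\lambda|^{-1}dv_*\lesssim \delta^{-1}(1+\eps|\xi|)^{-1}$, obtained by integrating first along the $\xi$-direction (where one gets a factor $(\eps|\xi|)^{-1}\log$-type or, after combining with the $\delta^{-1}$ bound, the clean $(1+\eps|\xi|)^{-1/2}$ after taking the geometric mean of the two bounds $\delta^{-1/2}$ and $\delta^{-1/2}(\eps|\xi|)^{-1/2}$). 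Item (2) is the companion estimate when $\eps|\xi|\le\tau_0$: here instead of gaining in $\eps|\xi|$ we gain in $|\mathrm{Im}\lambda|$, using $|\lambda+\nu(v)+i\eps v\cdot\xi|\ge \tfrac12|\mathrm{Im}\lambda|$ once $|\mathrm{Im}\lambda|$ is large compared to $\eps|\xi|\,|v|$ on the bulk of the Gaussian mass, and $\ge\delta$ otherwise; the factor $(1+\tau_0)^{1/2}$ absorbs the region where $\eps|v\cdot\xi|$ is comparable to $|\mathrm{Im}\lambda|$.

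Items (3) and (4) are of the same flavor but with $K$ replaced by the rank-type operators $P_m h = (h,v\chi_0)$ and the weight $(v\cdot\xi)|\xi|^{-2}P_d$. For $P_m(\lambda-D_\eps(\xi))^{-1}$ one simply has $\|P_m(\lambda-D_\eps(\xi))^{-1}g\| = |(g,(\lambda+\nu+i\eps v\cdot\xi)^{-1}v\chi_0)|\le \|g\|\,\|(\lambda+\nu+i\eps v\cdot\xi)^{-1}v\chi_0\|$, and the norm of $(\lambda+\nu+i\eps v\cdot\xi)^{-1}v\sqrt{M}$ in $L^2_v$ is estimated by the same two routes: the crude $\delta^{-1/2}$-bound from $\mathrm{Re}(\lambda+\nu)\ge\delta$ interpolated against the oscillatory gain in $\eps|\xi|$ gives the first inequality, while pulling out $|\lambda|^{-1}$ and estimating the remainder by $\nu(v)|\lambda|^{-1}+\eps|v\cdot\xi||\lambda|^{-1}$ against the Gaussian gives the second (with the $(1+\eps|\xi|)$ factor coming from the $\eps v\cdot\xi$ term). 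Item (4) is identical with $v\sqrt M$ replaced by $(v\cdot\xi)|\xi|^{-2}P_d(\ldots)$; since $P_d$ projects onto $\mathrm{span}\{\sqrt M\}$ (say, the local density mode), the weight $(v\cdot\xi)|\xi|^{-2}$ contributes the explicit $|\xi|^{-1}$ (one power of $|\xi|$ from $v\cdot\xi$ against the $|\xi|^{-2}$), and the resolvent contributes $\delta^{-1}$ or $|\lambda|^{-1}$ as before. The main obstacle is purely the careful bookkeeping of the oscillatory integral $\int e^{-c|v|^2}|\eps v\cdot\xi+\mathrm{Im}\lambda|^{-p}dv$ uniformly in $\mathrm{Im}\lambda$ and in the regime $\eps|\xi|\lesssim 1$ versus $\eps|\xi|\gg 1$, so that the stated powers of $\delta$, $(1+\eps|\xi|)$, $(1+\tau_0)$ and $|\lambda|$ come out exactly as claimed; everything else is a direct application of Schur's test and Cauchy--Schwarz. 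Since this lemma is quoted verbatim from \cite[Lemma 2.4]{YZ}, I would in fact simply cite that reference for the details and only recall the decomposition \eqref{X_2} that motivates the four estimates.
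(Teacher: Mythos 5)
The paper gives no proof of this lemma: it is quoted verbatim with a citation to \cite[Lemma 2.4]{YZ}, which is exactly what you propose to do, and your accompanying sketch (treating $(\lambda-D_\eps(\xi))^{-1}$ as a scalar multiplier, applying a Schur/Cauchy--Schwarz test with the Gaussian kernel $k(v,v_*)$, and extracting the $(1+\eps|\xi|)^{-1/2}$ or $|\lambda|^{-1}$ gain from the one-dimensional integral $\int e^{-cw^2}|\delta+i(\eps|\xi|w+\mathrm{Im}\lambda)|^{-1}dw$) is the standard argument underlying that reference. Your approach is therefore consistent with the paper's.
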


		\begin{lem}[\cite{Li4}]\label{inver}
			Let $K,K_4$ be the operators on the space $X$ and $Y$, and $K_2,K_3$ be the operators  $Y\to X$ and $X\to Y$ respectively. Let $K$ be a matrix operator on $X\times Y$ defined by
			$$
			K=\left(\ba K & K_2 \\  K_3 & K_4 \ea\right).
			$$
			If the norms of $K,K_2,K_3$ and $K_4$ satisfy 
			$$\|K\|<1,\quad \|K_4\|<1,\quad \|K_2\|\|K_3\|<(1-\|K\|)(1-\|K_4\|),$$
			then the operator $I+K$ is invertible on $X\times Y$.
		\end{lem}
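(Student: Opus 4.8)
The plan is to reduce the invertibility of the $2\times 2$ block operator
$$I+K=\begin{pmatrix} I+K & K_2\\ K_3 & I+K_4\end{pmatrix}$$
(where, as in the statement, $K$ denotes both the $(1,1)$ block and the full matrix operator) to the invertibility of its diagonal blocks together with a Schur complement, via the standard block $LDU$ factorization. Since replacing $K,K_2,K_3,K_4$ by their negatives leaves all three hypotheses unchanged, there is no loss of generality in working with $I+K$ rather than $I-K$.

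First I would invoke the scalar Neumann series: because $\|K\|<1$, the operator $I+K$ is invertible on $X$ with $\|(I+K)^{-1}\|\le(1-\|K\|)^{-1}$, and likewise $I+K_4$ is invertible on $Y$ with $\|(I+K_4)^{-1}\|\le(1-\|K_4\|)^{-1}$. Then I would introduce the Schur complement relative to the $(1,1)$ block,
$$S:=(I+K_4)-K_3(I+K)^{-1}K_2:\ Y\to Y,$$
and show it is invertible by writing $S=(I+K_4)\big(I-(I+K_4)^{-1}K_3(I+K)^{-1}K_2\big)$ and estimating
$$\big\|(I+K_4)^{-1}K_3(I+K)^{-1}K_2\big\|\le\frac{\|K_2\|\,\|K_3\|}{(1-\|K\|)(1-\|K_4\|)}<1,$$
which is precisely where the product hypothesis $\|K_2\|\|K_3\|<(1-\|K\|)(1-\|K_4\|)$ is used. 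A Neumann series inverts the bracketed factor, so $S$ is invertible on $Y$.

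Finally I would record the identity
$$\begin{pmatrix} I+K & K_2\\ K_3 & I+K_4\end{pmatrix}=\begin{pmatrix} I & 0\\ K_3(I+K)^{-1} & I\end{pmatrix}\begin{pmatrix} I+K & 0\\ 0 & S\end{pmatrix}\begin{pmatrix} I & (I+K)^{-1}K_2\\ 0 & I\end{pmatrix},$$
verify it by direct multiplication, and conclude: the two unipotent triangular factors are bounded (by the first step) and invertible, with inverses obtained by negating the off-diagonal entry, while the middle block-diagonal factor is invertible since both $I+K$ and $S$ are. Composing the three inverses in reverse order produces an explicit bounded inverse of $I+K$ on $X\times Y$.

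The only step carrying real content is the Schur-complement estimate: one must exploit the product condition on $\|K_2\|\|K_3\|$ together with the factored form of $S$, rather than attempting to deduce invertibility from separate smallness of each off-diagonal block (which would fail in general). The scalar Neumann series for the diagonal blocks and the verification of the $LDU$ identity are routine.
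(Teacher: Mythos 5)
Your proof is correct: the Neumann-series inversion of the diagonal blocks, the Schur complement $S=(I+K_4)-K_3(I+K)^{-1}K_2$ controlled exactly by the product hypothesis $\|K_2\|\|K_3\|<(1-\|K\|)(1-\|K_4\|)$, and the block $LDU$ factorization together give a complete and explicit inverse. The paper itself states this lemma with a citation to \cite{Li4} and offers no proof, so there is nothing to compare against; your argument is the standard one and fills the gap correctly.
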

		
		By Lemmas \ref{LP03}, \ref{inver} and the similar argument as Lemma 2.4 in \cite{Li4}, we have the structure of the spectrum set
		of the operator $\tilde\AA_{\eps}(\xi)$ for $\eps|\xi|$ large.

		\begin{lem}
			\label{LP01}Fixed $\eps\in (0,1)$.
			The following statements hold.
			\begin{enumerate}
				\item[\rm (1)]  For any $\delta>0$, there
				exists $ R_1= R_1(\delta)>0$  such that for $\eps|\xi|>R_1$,
				\bq
				\sigma(\tilde{\AA}_{\eps}(\xi))\cap\{\lambda\in\mathbb{C}\,|\,\mathrm{Re}\lambda\ge-\frac{\nu_0}2\}
				\subset
				\sum_{j=\pm1}\{\lambda\in\mathbb{C}\,|\,|\lambda-\eps ji|\xi||\le\delta\}.\label{sg4}
				\eq
				\item[\rm (2)] For any $r_1>r_0>0$, there
				exists $\alpha =\alpha(r_0,r_1)>0$ such that for  $r_0\le \eps|\xi|\le r_1$,
				\bq \sigma(\tilde{\AA}_{\eps}(\xi))\subset\{\lambda\in\mathbb{C}\,|\, \mathrm{Re}\lambda<-\alpha\} .\label{sg3}\eq
			\end{enumerate}
		\end{lem}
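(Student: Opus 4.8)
The plan is to exploit the factorization \eqref{B_d}, namely
$\lambda-\tilde{\AA}_{\eps}(\xi)=\bigl(I-G^2_{\eps}(\xi)(\lambda-G^1_{\eps}(\xi))^{-1}\bigr)(\lambda-G^1_{\eps}(\xi))$,
which is valid for ${\rm Re}\lambda>-\nu_0$ and $\lambda\ne\pm i\eps^2|\xi|$. Since $\lambda-G^1_{\eps}(\xi)$ is already known to be invertible on $L^2_{\xi}(\R^3_v)\times\C^3_{\xi}\times\C^3_{\xi}$ in that region, the whole operator $\lambda-\tilde{\AA}_{\eps}(\xi)$ is invertible as soon as $I-G^2_{\eps}(\xi)(\lambda-G^1_{\eps}(\xi))^{-1}$ is invertible. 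I would therefore apply Lemma \ref{inver} to the $2\times2$ block form of $G^2_{\eps}(\xi)(\lambda-G^1_{\eps}(\xi))^{-1}$ displayed in \eqref{X_2}, with $X=L^2_{\xi}(\R^3_v)$ and $Y=\C^3_{\xi}\times\C^3_{\xi}$: one needs $\|X^1_{\eps}\|<1$, $0<1$ trivially for the lower-right zero block, and $\|X^2_{\eps}\|\,\|X^3_{\eps}\|<(1-\|X^1_{\eps}\|)$. The entries $X^1_{\eps},X^2_{\eps},X^3_{\eps}$ are controlled through Lemma \ref{LP03} together with the resolvent bound \eqref{b_1(xi)} for $(\lambda-\eps N_1(\xi))^{-1}$ and the norm-equivalence \eqref{eee}.

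For part (1), fix $\delta>0$ and restrict to ${\rm Re}\lambda\ge-\nu_0/2$ together with $|\lambda-\eps ji|\xi||>\delta$ for both $j=\pm1$. Under these hypotheses Lemma \ref{LP03}(1) gives $\|K(\lambda-D_{\eps}(\xi))^{-1}\|\le C(1+\eps|\xi|)^{-1/2}$ and Lemma \ref{LP03}(4) gives $\|(v\cdot\xi)|\xi|^{-2}P_d(\lambda-D_{\eps}(\xi))^{-1}\|\le C|\xi|^{-1}$, so $\|X^1_{\eps}\|\le C(1+\eps|\xi|)^{-1/2}+C\eps|\xi|^{-1}$, which is $<1/2$ once $\eps|\xi|$ is large (using $\eps<1$ so that $|\xi|^{-1}\le\eps|\xi|^{-1}\cdot\eps^{-1}$… more simply $\eps|\xi|^{-1}\le 1/(\eps|\xi|)$ is not quite what we want, so one bounds $\eps/|\xi|=\eps^2/(\eps|\xi|)\le 1/(\eps|\xi|)$). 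For the off-diagonal blocks, \eqref{b_1(xi)} yields $\|(\lambda-\eps N_1(\xi))^{-1}\|=\max_{j=\pm1}|\lambda-\eps j i|\xi||^{-1}<\delta^{-1}$ by our frequency restriction, and Lemma \ref{LP03}(3) gives $\|P_m(\lambda-D_{\eps}(\xi))^{-1}\|\le C(1+\eps|\xi|)^{-1/2}$; since $X^2_{\eps}$ carries a factor $\eps\|v\chi_0\|$ and $X^3_{\eps}$ a factor $\eps$, one gets $\|X^2_{\eps}\|\,\|X^3_{\eps}\|\le C\eps^2\delta^{-1}(1+\eps|\xi|)^{-1/2}$. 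Hence for $\eps|\xi|\ge R_1(\delta)$ with $R_1$ chosen large enough (depending on $\delta$ through the requirement that this product be $<1/2$ and that $\|X^1_{\eps}\|<1/2$), Lemma \ref{inver} applies and $\lambda\in\rho(\tilde{\AA}_{\eps}(\xi))$. Combined with Lemma \ref{Egn}(2) (all eigenvalues have ${\rm Re}\lambda<0$) and the essential-spectrum location from Lemma \ref{Egn}(1), this gives \eqref{sg4}.

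For part (2), fix $0<r_0<r_1$ and work in the compact parameter range $r_0\le\eps|\xi|\le r_1$. First, by Lemma \ref{Egn}(1)--(2), $\sigma(\tilde{\AA}_{\eps}(\xi))\cap\{{\rm Re}\lambda\ge-\nu_0\}$ consists of finitely many eigenvalues, each with strictly negative real part; I would combine this with the large-$|{\rm Im}\lambda|$ decay coming from Lemma \ref{LP03}(2) (which gives $\|K(\lambda-D_{\eps}(\xi))^{-1}\|\le C\delta^{-1}(1+r_1)^{1/2}(1+|{\rm Im}\lambda|)^{-1/2}$) and the second bounds in Lemma \ref{LP03}(3)--(4) (which decay like $|\lambda|^{-1}$) to show that for $|\lambda|$ large and ${\rm Re}\lambda\ge-\nu_0/2$ the resolvent exists via Lemma \ref{inver} again. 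Thus $\sigma(\tilde{\AA}_{\eps}(\xi))\cap\{{\rm Re}\lambda\ge-\nu_0/2\}$ lies in a bounded region and consists of finitely many eigenvalues with ${\rm Re}\lambda<0$; a compactness/continuity argument in the joint variable $(\eps|\xi|,\omega)$ — using that $\tilde{\AA}_{\eps}(\xi)$ depends continuously on these parameters in the generalized (norm-resolvent) sense and that eigenvalues vary continuously — produces a uniform gap $\alpha=\alpha(r_0,r_1)>0$ with ${\rm Re}\lambda\le-\alpha$, which is \eqref{sg3}. The main obstacle is making the continuity/compactness argument in part (2) rigorous: one must rule out eigenvalues escaping to the imaginary axis as $(\eps,|\xi|)$ ranges over the (noncompact in $\eps$, but $\eps|\xi|$-compact) set, which is precisely where the uniform resolvent estimates of Lemma \ref{LP03}, uniform in $\eps\in(0,1)$ for $\eps|\xi|$ bounded, do the work; I would mirror the argument of Lemma 2.4 in \cite{Li4} line by line, the only new feature being the electromagnetic coupling blocks $X^2_{\eps},X^3_{\eps}$, which are handled exactly as above.
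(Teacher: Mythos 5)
Your part (1) and the first half of part (2) follow the paper's proof essentially verbatim: the factorization \eqref{B_d}, the block estimates on $X^1_{\eps},X^2_{\eps},X^3_{\eps}$ via Lemma \ref{LP03}, \eqref{b_1(xi)} and \eqref{eee}, and Lemma \ref{inver} to invert $I-G^2_{\eps}(\xi)(\lambda-G^1_{\eps}(\xi))^{-1}$ outside the two $\delta$-disks (for $\eps|\xi|>R_1$) and for $|\mathrm{Im}\lambda|$ large (for $r_0\le\eps|\xi|\le r_1$). That part is correct, and your bookkeeping of the product condition $\|X^2_{\eps}\|\,\|X^3_{\eps}\|<1-\|X^1_{\eps}\|$ is fine.

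The weak point is the last step of part (2). Invoking ``norm-resolvent continuity plus continuity of eigenvalues'' over the annulus $r_0\le\eps|\xi|\le r_1$ does not by itself yield a uniform gap: for a non-self-adjoint family one must still rule out a sequence of eigenvalues whose real parts tend to $0$ while the frequency varies over the compact annulus, and eigenvalue continuity alone does not exclude such accumulation at the imaginary axis. The paper closes this exactly where you flag the obstacle, but with a concrete argument rather than an abstract continuity principle: assuming \eqref{sg3} fails, one takes normalized eigenfunctions $U_n=(f_n,E_n,B_n)$ with $\mathrm{Re}\lambda_n\to 0$ and $|\mathrm{Im}\lambda_n|\le y_1$, rewrites the eigenvalue equation as $(\lambda_n+\nu+i\eps v\cdot\xi_n)f_n=Kf_n-i\eps\frac{v\cdot\xi_n}{|\xi_n|^2}P_df_n-\eps v\chi_0\cdot(\omega_n\times E_n)$, uses compactness of $K$ (plus finite-dimensionality of the $(P_df_n,E_n,B_n,\xi_n)$ data) to extract strongly convergent subsequences, and passes to the limit in the explicit resolvent formula to produce a genuine eigenfunction with a purely imaginary eigenvalue, contradicting Lemma \ref{Egn}(2). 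Since you already name compactness and the mirroring of Lemma 2.4 of \cite{Li4} as the intended mechanism, this is a matter of carrying the argument out rather than a wrong approach, but as written the step is a gap: you should replace the appeal to eigenvalue continuity by the sequence-of-eigenfunctions contradiction. Note also that the lemma fixes $\eps$, so no compactness in $\eps$ is required, which simplifies the point you raise about the parameter set being noncompact in $\eps$.
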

		
		\begin{proof}
			We prove \eqref{sg4} first.
			By Lemma \ref{LP03}, \eqref{b_1(xi)} and \eqref{eee}, there exists $R_1=R_1(\delta)>0$ such that for $\mathrm{Re}\lambda\ge-\nu_0/2$, $ |\lambda-\eps ji|\xi||>\delta$ and $\eps|\xi|>R_1$,
			$$
			\|X^1_{\eps}(\lambda,\xi)\|_{\xi}\leq 1/2,\quad \|X^2_{\eps}(\lambda,\xi)\|\le \eps\delta^{-1}, \quad \| X^3_{\eps}(\lambda,\xi)\| \leq \eps\delta/4.
			$$
			This and Lemma \ref{inver} imply that the operator  $I-G^2_{\eps}(\xi)(\lambda-G^1_{\eps}(\xi))^{-1}$ is invertible on
			$L^{2}_{\xi}(\R^3_v)\times \mathbb{C}^3_{\xi}\times \mathbb{C}^3_{\xi}$ and thus  $\lambda-\tilde{\AA}_{\eps}(\xi)$ is invertible on $L^{2}_{\xi}(\R^3_v)\times \mathbb{C}^3_{\xi}\times \mathbb{C}^3_{\xi}$ and
			satisfies
			$$
			(\lambda-\tilde{\AA}_{\eps}(\xi))^{-1}
			=\(\lambda-G^1_{\eps}(\xi)\)^{-1}\( I-G^2_{\eps}(\xi)(\lambda-G^1_{\eps}(\xi))^{-1}\)^{-1}.
			$$
			Therefore, it holds that for $ \eps|\xi|> R_1$,
			$$ \rho(\tilde{\AA}_{\eps}(\xi))\supset\{\lambda\in\mathbb{C}\,|\,\min_{j=\pm1}{|\lambda-\eps ji|\xi||}>\delta,\, \mathrm{Re}\lambda\ge-\frac{\nu_0}2\},
			$$
			which leads to \eqref{sg4}.

			Next, we turn to prove \eqref{sg3}.
			By Lemma \ref{LP03}, \eqref{b_1(xi)}  and \eqref{eee}, there exists
			$y_1=y_1(r_0,r_1)>0$ large enough such that for $\mathrm{Re}\lambda\geq -\nu_0/2$, $|\mathrm{Im}\lambda|>y_1 $ and $r_0\le \eps|\xi|\le r_1$,
			$$
			\|X^1_{\eps}(\lambda,\xi)\|_{\xi}\leq 1/6,\quad \|X^2_{\eps}(\lambda,\xi)\|\le 1/6, \quad \| X^3_{\eps}(\lambda,\xi)\| \leq 1/6.
			$$
			This implies that the operator $I-G^2_{\eps}(\xi)(\lambda-G^1_{\eps}(\xi))^{-1}$
			is invertible on $L^2_{\xi}(\R^3_v)\times \mathbb{C}^3_{\xi}\times \mathbb{C}^3_{\xi}$, which together with
			\eqref{B_d} yield that  $\lambda-\tilde{\AA}_{\eps}(\xi)$ is also invertible on $L^2_{\xi}(\R^3_v)\times \mathbb{C}^3_{\xi}\times \mathbb{C}^3_{\xi}$ when
			$\mathrm{Re}\lambda\geq -\nu_0/2$,  $|\mathrm{Im}\lambda|>y_1 $ and $r_0\le \eps|\xi|\le r_1$.
			Hence, we conclude that for $r_0\le \eps|\xi|\le r_1$,
			\be
			\sigma(\tilde{\AA}_{\eps}(\xi))
			\cap\{\lambda\in\mathbb{C}\,|\,\mathrm{Re}\lambda\ge-\frac{\nu_0}2\}
			\subset
			\{\lambda\in\mathbb{C}\,|\,\mathrm{Re}\lambda\ge
			-\frac{\nu_0}2,\,|\mathrm{Im}\lambda|\le y_1 \}.   \label{SpH}
			\ee

			By \eqref{SpH}, it is sufficient to prove \eqref{sg3} for $|{\rm Im}\lambda|\le y_1$.
			If it does not
			hold, then there exists a sequence of
			$\{(\xi_n,\lambda_n,U_n)\}$ satisfying $\eps|\xi_n|\in[r_0,r_1]$, $U_n=(f_n,E_n,B_n)\in
			L^2_{\xi_n}(\R^3)\times \mathbb{C}^3_{\xi_n}\times \mathbb{C}^3_{\xi_n}$ with $\|U_n\|_{\xi_n} =1$, and $\lambda_nU_n=
			\tilde{\AA}_{\eps}(\xi_n)U_n$ with $|{\rm Im}\lambda_n|\le y_1$ and ${\rm Re}\lambda_n\to0$ as $n\to\infty$. That is,
			$$
			\left\{\bal
			\lambda_nf_n=(L-i\eps v\cdot\xi_n-i\eps\frac{v\cdot\xi_n}{|\xi_n|^2}P_{ d} )f_n-\eps v\chi_0\cdot( \omega_n\times E_n),\\
			\lambda_n E_n=-\eps\omega_n\times  (f_n,v\chi_0)+i\eps \xi_n\times B_n,\\
			\lambda_n B_n=-i\eps \xi_n\times E_n.
			\ea\right.
			$$
			Rewrite the first  equation as
			$$
			(\lambda_n+\nu+i\eps v\cdot\xi_n)f_n=Kf_n-i\eps\frac{v\cdot\xi_n}{|\xi_n|^2}P_{ d} f_n- \eps v\chi_0\cdot( \omega_n\times E_n).
			$$
			Since $K$ is a compact operator  on $L^2(\R^3)$, there exists a
			subsequence $\{f_{n_j}\}$ of $\{f_n\}$ and $g_1\in L^2(\R^3)$ such that
			$$
			Kf_{n_j}\rightarrow g_1 \quad \mbox{as}\quad j\to\infty.
			$$
			By using  the fact that  $\eps|\xi_n|\in[r_0,r_1]$ and  $P_{ d} f_n=C_0^n\sqrt{M}$ with
			$|C_0^n|^2+|E_n|^2+|B_n|^2\le 1$, there exists a subsequence of (still denoted by) $\{( \xi_{n_j},f_{n_j},E_{n_j},B_{n_j})\}$, and $( \xi_0,C_0,E_0, B_0)$ with $\eps|\xi_0|\in[r_0,r_1]$ and $|C_0|^2+|E_0|^2+|B_0|^2\leq1$
			such that $(\xi_{n_j},C^{n_j}_0,E_{n_j},B_{n_j})\to (\xi_0,C_0,E_0,B_0)$ as $j\to\infty$. In particular,
			$$
			\frac{v\cdot\xi_{n_j}}{|\xi_{n_j}|^{2}}P_{ d} f_{n_j}
			\rightarrow \frac{v\cdot\xi_0}{|\xi_0|^{2}}C_0\sqrt{M}=:g_2, \quad \frac{ \xi_{n_j}}{|\xi_{n_j}|}\times E_{n_j} \to \frac{ \xi_0}{|\xi_0|}\times E_0=:Y_0 \ \ \mbox{as} \ \ j\to\infty.
			$$
			Since $|\text{Im}\lambda_n|\leq y_1$ and ${\rm Re}\lambda_n\to 0$,
			we can extract a subsequence of (still denoted by) $\{\lambda_{n_j}\}$
			such that $\lambda_{n_j}\rightarrow \lambda_0$ with ${\rm
				Re}\lambda_0=0$. Then
			$$
			\lim_{j\rightarrow\infty}f_{n_j}
			=\lim_{j\rightarrow\infty}\frac{g_1-\eps g_2-\eps(v\cdot Y_0)\chi_0}{\lambda_{n_j}+\nu+i\eps(v\cdot\xi_{n_j})}
			=\frac{g_1-\eps g_2-\eps(v\cdot Y_0)\chi_0}{\lambda_0+\nu+i\eps(v\cdot\xi_0)}=:f_0 \ \ {\rm in} \ \ L^2.
			$$
			It follows that $\tilde{\AA}_{\eps}(\xi_0) U_0=\lambda_0 U_0$ with $U_0=(f_0,E_0,B_0)\in
			L^2_{\xi_0}(\R^3)\times \mathbb{C}^3_{\xi_0}\times \mathbb{C}^3_{\xi_0}$ and $\lambda_0$ is an eigenvalue of $\tilde{\AA}_{\eps}(\xi_0)$ with ${\rm Re}\lambda_0=0$. This
			contradicts to the fact that  ${\rm Re} \lambda <0$
			for $\eps\xi\ne 0$ obtained by Lemma~\ref{Egn}. Thus, the proof the lemma is completed.
		\end{proof}

		We now investigate the spectrum and resolvent sets of $\tilde{\AA}_{\eps}(\xi)$ for $\eps |\xi|$ small. Let $P_A$, $P_B$ be the orthogonal  projection operators defined by
		\begin{align*}
			P_A=\left(\ba P_0 &  0 & 0\\ 0 & I_{3\times 3} & 0 \\ 0 & 0 & I_{3\times 3} \ea\right),\quad P_B=\left(\ba P_1 &  0 & 0\\ 0 & 0 & 0 \\ 0 & 0 & 0 \ea\right).
		\end{align*}
		Based on macro-micro decomposition,  we can decompose  $\lambda-\tilde{\AA}_{\eps}(\xi)$ into
		\bq
		\lambda-\tilde{\AA}_{\eps}(\xi)=\lambda P_A-G^3_{\eps}(\xi)+\lambda P_B-G^4_{\eps}(\xi)+G^5_{\eps}(\xi),
		\label{Bd3}\eq
		where
		\bma\label{defG3}
		G^3_{\eps}(\xi)&=P_A\tilde{\AA}_{\eps}(\xi)P_A=\eps \left(\ba -i P_0(v\cdot\xi)P_0-i\frac{v\cdot\xi}{|\xi|^2}P_d & - v\chi_0\cdot \omega\times & 0\\ - \omega\times P_m & 0 & i \xi \times \\ 0 & -i \xi\times & 0 \ea\right),\\
		G^4_{\eps}(\xi)&=P_B\tilde{\AA}_{\eps}(\xi)P_B=\left(\ba L-i\eps P_1(v\cdot\xi)P_1 & 0 & 0\\ 0 & 0 & 0 \\ 0 & 0 & 0 \ea\right),\label{defG4}\\
		G^5_{\eps}(\xi)&= \eps \left(\ba iP_0(v\cdot\xi)P_1+iP_1(v\cdot\xi)P_0 & 0 & 0\\ 0 & 0 & 0 \\ 0 & 0 & 0 \ea\right).\nnm
		\ema
		
		\begin{lem}
			{\rm (1)}  $ G^3_{\eps}(\xi)=\eps G_3(\xi)$ is an operator in $N_0\times \C^3_{\xi}\times \C^3_{\xi}$, where $ G_3(\xi)$ admits nine eigenvalues $\{\eta_j(|\xi|),\, -1\le j\le 7\}$ given by
			\be
			\left\{\bln
			&\eta_{\pm1}(|\xi|)=\pm i \mbox{$\sqrt{1+\frac53|\xi|^2}$},\quad \eta_j(|\xi|)=0,\quad j=0,2,3, \\
			&\eta_4(|\xi|)=\eta_5(|\xi|)=- i \mbox{$\sqrt{1+ |\xi|^2}$},\quad \eta_6(|\xi|)=\eta_7(|\xi|)= i \mbox{$\sqrt{1+ |\xi|^2}$},
			\eln\right.\label{defalp}
			\ee
			
			{\rm (2)} The eigenvectors $\Lambda_j(\xi)=(h_j(\xi),X_j(\xi),Y_j(\xi))\in N_0\times \C^3_{\xi}\times \C^3_{\xi}$, $-1\le j\le 7$ are given by
			\be \label{hj}
			\left\{\bln
			&\(\Lambda_j(\xi), \Lambda_k(\xi)\)_{\xi}=(h_j,h_k)_\xi+(X_j,X_k)+(Y_j,Y_k)=\delta_{jk},\quad -1\le j,k\le 7,\\
			&\Lambda_j(\xi)=(h_j(\xi),0,0), \ \ j=-1,0,1, \\
			&h_0(\xi)=\frac{\sqrt{2}|\xi|^2}{\sqrt{3+5|\xi|^2}\sqrt{1+|\xi|^2}} \chi_0-\frac{\sqrt{3+3|\xi|^2}}{\sqrt{3+5|\xi|^2}}\chi_4,\\
			&h_{\pm1}(\xi)=\frac{\sqrt{3/2}|\xi|}{\sqrt{3+5|\xi|^2}} \chi_0\pm \sqrt{\frac12}v\cdot \frac{\xi}{|\xi|}  \chi_0+\frac{ |\xi|}{\sqrt{3+5|\xi|^2}}\chi_4;\\
			& \Lambda_k(\xi)=\frac{1}{\sqrt{1+|\xi|^2}}\(|\xi|v\cdot W^k \chi_0, 0 , -iW^k \), \ \ k=2,3, \\
			& \Lambda_l(\xi)=\frac{1}{\sqrt{2(1+|\xi|^2)}}\(v\cdot W^l \chi_0, \eta_l\omega\times W^l , i|\xi| W^l \), \ \ l=4,5,6, 7,
			\eln\right.
			\ee
			where  $W^k$, $2\le k\le 7$ are normal vectors satisfying $W^2=W^4=W^6$, $W^3=W^5=W^7$, and $W^k\cdot\omega=W^2\cdot W^3=0$
		\end{lem}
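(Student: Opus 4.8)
The plan is to diagonalize $G_3(\xi)$ as a linear operator on the nine-dimensional space $N_0\times\C^3_{\xi}\times\C^3_{\xi}$ (recall $\dim N_0=5$ and $\dim\C^3_{\xi}=2$ over $\C$) by exploiting the rotational symmetry about $\xi$. Write $\omega=\xi/|\xi|$ and $s=|\xi|$. First I would record that $G_3(\xi)$ is skew-adjoint with respect to $(\cdot,\cdot)_{\xi}$: restricting the identity $\mathrm{Re}(\tilde{\AA}_{\eps}(\xi)U,U)_{\xi}=(Lf,f)$ (established in the proof of Lemma~\ref{SG_1}) to vectors $U=P_AU$, for which $f=P_0f\in N_0$ and hence $(Lf,f)=0$, gives $\mathrm{Re}(G_3(\xi)U,U)_{\xi}=0$. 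Consequently every eigenvalue of $G_3(\xi)$ is purely imaginary and $G_3(\xi)$ admits an orthonormal (in $\|\cdot\|_{\xi}$) basis of eigenvectors, so it suffices to produce nine mutually $(\cdot,\cdot)_{\xi}$-orthogonal eigenvectors realizing the values in \eqref{defalp}.

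Next I would split $N_0\times\C^3_{\xi}\times\C^3_{\xi}=\mathcal{A}\oplus\mathcal{T}$ into a three-dimensional ``longitudinal'' part $\mathcal{A}=\mathrm{span}\{(\chi_0,0,0),\,((v\cdot\omega)\chi_0,0,0),\,(\chi_4,0,0)\}$ and a six-dimensional ``transverse'' part $\mathcal{T}$ consisting of all $((v\cdot W)\chi_0,\,Y,\,Z)$ with $W,Y,Z\in\C^3_{\xi}$. Using the elementary Gaussian moment identities
\begin{align*}
P_0\big[(v\cdot\xi)\chi_0\big]&=s(v\cdot\omega)\chi_0,\qquad P_0\big[(v\cdot\xi)\chi_4\big]=\sqrt{2/3}\,s(v\cdot\omega)\chi_0,\\
P_0\big[(v\cdot\xi)(v_k\sqrt M)\big]&=\xi_k\big(\chi_0+\sqrt{2/3}\,\chi_4\big),
\end{align*}
together with $P_d(v\cdot W)\chi_0=0$, $P_m(v\cdot W)\chi_0=W$, $(v\chi_0)\cdot(\omega\times E)=(v\cdot(\omega\times E))\chi_0$ and $\omega\times(\omega\times Y)=-Y$ for $Y\in\C^3_{\xi}$, one checks directly from \eqref{defG3} that both $\mathcal{A}$ and $\mathcal{T}$ are invariant under $G_3(\xi)$; the crucial point is that the electromagnetic field couples only to the transverse velocity modes $(v\cdot W)\chi_0$ with $W\perp\xi$, never to the longitudinal mode $(v\cdot\omega)\chi_0$.

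On $\mathcal{A}$, in the basis $e_1=(\chi_0,0,0)$, $e_2=((v\cdot\omega)\chi_0,0,0)$, $e_3=(\chi_4,0,0)$, the identities above and $\tfrac{v\cdot\xi}{|\xi|^2}P_d\chi_0=\tfrac1s(v\cdot\omega)\chi_0$ give the matrix
\[
G_3(\xi)\big|_{\mathcal{A}}=\begin{pmatrix} 0 & -is & 0\\ -i(s+s^{-1}) & 0 & -is\sqrt{2/3}\\ 0 & -is\sqrt{2/3} & 0\end{pmatrix},
\]
whose characteristic polynomial is $-\lambda\big(\lambda^2+1+\tfrac53 s^2\big)$, so the eigenvalues are $\eta_0=0$ and $\eta_{\pm1}=\pm i\sqrt{1+\tfrac53 s^2}$; solving the corresponding linear systems and normalizing in $\|\cdot\|_{\xi}$ (here $\|e_1\|_{\xi}^2=1+s^{-2}$, $\|e_2\|_{\xi}=\|e_3\|_{\xi}=1$) yields $h_0,h_{\pm1}$ as in \eqref{hj}, the normalization reducing to algebraic identities such as $\tfrac{2s^2}{3+5s^2}+\tfrac{3(1+s^2)}{3+5s^2}=1$. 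For $\mathcal{T}$, I would instead verify by direct substitution into \eqref{defG3} that, for \emph{any} unit vector $W\in\C^3_{\xi}$, the vectors $\big(s(v\cdot W)\chi_0,\,0,\,-iW\big)$ and $\big((v\cdot W)\chi_0,\,\pm i\sqrt{1+s^2}\,\omega\times W,\,isW\big)$ are eigenvectors of $G_3(\xi)$ with eigenvalues $0$ and $\pm i\sqrt{1+s^2}$ respectively, using $P_0[(v\cdot\xi)(v\cdot W)\chi_0]=(\xi\cdot W)(\chi_0+\sqrt{2/3}\,\chi_4)=0$ and $\omega\times(\omega\times W)=-W$. Since $\C^3_{\xi}$ is two-dimensional over $\C$, picking a real orthonormal basis $W^2,W^3$ (so $W^k\cdot\omega=W^2\cdot W^3=0$) and setting $W^2=W^4=W^6$, $W^3=W^5=W^7$ produces six eigenvectors spanning $\mathcal{T}$: a pair $\Lambda_2,\Lambda_3$ with eigenvalue $0$, a pair $\Lambda_4,\Lambda_5$ with eigenvalue $-i\sqrt{1+s^2}$, and a pair $\Lambda_6,\Lambda_7$ with eigenvalue $+i\sqrt{1+s^2}$; computing weighted norms gives $\|\Lambda_k\|_{\xi}^2=1+s^2$ for $k=2,3$ and $\|\Lambda_l\|_{\xi}^2=2(1+s^2)$ for $l=4,5,6,7$, so normalization reproduces exactly the formulas in \eqref{hj}.

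Finally, to conclude I would check orthonormality: eigenvectors attached to distinct eigenvalues are automatically $(\cdot,\cdot)_{\xi}$-orthogonal because $G_3(\xi)$ is skew-adjoint; within $\mathcal{A}$ the $h_j$ are orthogonal by their explicit formulas; within each transverse eigenspace the two eigenvectors are orthogonal since $W^2\cdot W^3=0$ (hence also $(\omega\times W^2)\cdot(\omega\times W^3)=0$); and any longitudinal eigenvector is orthogonal to any transverse one because their velocity parts satisfy $((v\cdot\omega)\chi_0,(v\cdot W^k)\chi_0)=\omega\cdot W^k=0$ while the longitudinal ones carry no field component. This makes $\{\Lambda_j(\xi)\}_{-1\le j\le7}$ an orthonormal eigenbasis of $G_3(\xi)$ and proves (1) and (2). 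I do not expect any genuine analytic obstacle here --- the statement is finite-dimensional linear algebra --- so the real work is organizational: choosing the longitudinal/transverse splitting so that the $9\times9$ problem decouples into one $3\times3$ acoustic block plus two copies of a single $3\times3$ transverse block, keeping the Gaussian-moment bookkeeping for $P_0(v\cdot\xi)P_0$ on $N_0$ correct, and observing that because $\C^3_{\xi}$ is genuinely two-dimensional the $W$-independent transverse ansatz already exhausts the relevant eigenspaces, which is precisely what forces $W^2=W^4=W^6$ and $W^3=W^5=W^7$.
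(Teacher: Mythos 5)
Your proposal is correct and follows essentially the same route as the paper: the paper likewise reduces the eigenvalue problem to component equations, separates the longitudinal part $(W_0,\,W\cdot\omega,\,W_4)$ from the transverse part $(\omega\times W,\,X,\,Y)$ by taking $\omega\cdot$ and $\omega\times$ of the momentum equation, diagonalizes the resulting $3\times3$ block $D_1(\xi)$ and transverse block $D_2(\xi)$, and obtains orthogonality from the self-adjointness of $iG_3(\xi)$ in $(\cdot,\cdot)_{\xi}$. Your invariant-subspace formulation and direct verification of the transverse eigenvectors is just a reorganization of the same computation, so no further comment is needed.
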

		
		\begin{proof}We consider the eigenvalue problem
			$$
			G^3_{\eps}(\xi)U=\eps z U,\quad U=(f_0,X,Y)\in N_0\times \mathbb{C}^3_{\xi}\times \mathbb{C}^3_{\xi}. \label{G_6-5}
			$$
			Since $f_0\in N_0$, we rewrite $f_0$ as
			$$f_0=\sum^4_{j=0}W_j\chi_j,\quad W_j=(f_0,\chi_j).$$
			Thus, we obtain the eigenvalue problem for $(\lambda,W_0,W,W_4,X,Y)$ with $W=(W_1,W_2,W_3)$:
			\bma
			z W_0&=-i \xi\cdot W,\label{1C_1}\\
			z W&=-i \(\xi+\frac{\xi}{|\xi|^2}\) W_0-i \sqrt{\frac23}\xi W_4- \omega\times X, \label{1C_2}\\
			z W_4&=-i \sqrt{\frac23}\xi\cdot W,\\
			z X&=i \xi\times Y- \omega\times W,\\
			z Y&=-i \xi\times X.\label{1C_4}
			\ema
			{ Let $s=|\xi|$ and $\omega=\xi/|\xi|.$} By taking $\omega\cdot$ and $\omega\times$ to \eqref{1C_2}, we simplify the system \eqref{1C_1}--\eqref{1C_4} to the following two systems:
			\be\label{1C_1a}
			\left\{\bln
			&z W_0=-i s(W\cdot\omega),\\
			&z (W\cdot\omega)=-i\(s+\frac1s\) W_0-i\sqrt{\frac23}s W_4 ,  \\
			&z W_4=-i\sqrt{\frac23}s(W\cdot\omega),
			\eln\right.
			\ee
			and
			\be\label{1C_2a}
			\left\{\bln
			&z \omega\times W=  X,  \\
			&z X=i \xi\times Y- \omega\times W, \\
			&z Y=-i \xi\times X.
			\eln\right.
			\ee
			Let $U_1=(C_0,C_1,C_2)\in \C^3$ and $U_2=(Z,X,Y)\in (\C^3_{\xi})^3$. We rewrite \eqref{1C_1a} and \eqref{1C_2a} as
			$$ \lambda U_k= D_k(\xi)U_k,\quad k=1,2, $$
			where
			\bmas
			D_1(\xi)&= \(\bal
			0 & -i s & 0\\
			-i(s+\frac1s) & 0 & -i\sqrt{\frac23}s\\
			0 &- i\sqrt{\frac23}s & 0
			\ea\)_{3\times 3},\\
			D_2(\xi)&= \(\bal
			0 & I_{3\times3} & 0\\
			-I_{3\times3} & 0 & i \xi\times\\
			0 & -i \xi\times & 0
			\ea\)_{9\times 9}.
			\emas
			Thus, the operator $D_1(\xi)$: $\C^3\to \C^3$ has three eigenvalues $\eta_j(s) $ and eigenvectors $ V_j(s)$, $j=-1,0,1$ given by
			\be\label{VV}
			\left\{\bln
			&\eta_{j}(s)=j i \mbox{$\sqrt{1+\frac53s^2}$}, \quad j=-1,0,1, \\
			&V_0(s)=\bigg(\frac{\sqrt{2}s^2}{\sqrt{3+5s^2}\sqrt{1+s^2}},0,-\frac{\sqrt{3+3s^2}}{\sqrt{3+5s^2}}\bigg),\\
			&V_{\pm1}(s)= - \bigg(\frac{\sqrt{3}s}{\sqrt{3+5s^2}},\pm  1,\frac{\sqrt{2} s}{\sqrt{3+5s^2}}\bigg);
			\eln\right.
			\ee
			the operator $D_2(\xi)$: $(\C^3_{\xi})^3\to (\C^3_{\xi})^3$ has six eigenvalues $\eta_j(s) $ and eigenvectors $ V_j(\xi)$, $j=2,3,4,5,6,7$ given by
			\be\label{VVV}
			\left\{\bln
			&\eta_2(s)=\eta_3(s)=0,\quad \eta_4(s)=\eta_5(s)=- i \mbox{$\sqrt{1+ s^2}$},\\
			& \eta_6(s)=\eta_7(s)= i \mbox{$\sqrt{1+ s^2}$},\\
			&V_k(\xi)= \(is W^k , 0 , -\omega\times W^k \), \ \ k=2,3, \\
			& V_l(\xi)=\( W^l, \eta_l W^l , is \omega\times W^l\), \ \  l=4,5,6, 7,
			\eln\right.
			\ee where $(V_2,V_3)=(V_4,V_5)=(V_6,V_7)=0$.
			
			Combining \eqref{VV} and \eqref{VVV}, we obtain that
			the operator $G_3(\xi)$ in $N_0\times \C^3_{\xi}\times  \C^3_{\xi}$  has nine eigenvalues $\eta_j(s) $ and normalized eigenvectors $ \Lambda_j(\xi)$, $-1\le j\le 7$ given by
			\be \label{lamj}
			\left\{\bln
			&\Lambda_j(\xi)=(h_j(\xi),0,0), \ \ j=-1,0,1, \\
			&h_0(\xi)=\frac{\sqrt{2}s^2}{\sqrt{3+5s^2}\sqrt{1+s^2}} \chi_0-\frac{\sqrt{3+3s^2}}{\sqrt{3+5s^2}}\chi_4,\\
			&h_{\pm1}(\xi)=\frac{\sqrt{3/2}s}{\sqrt{3+5s^2}} \chi_0\pm \sqrt{\frac12}v\cdot \omega  \chi_0+\frac{ s}{\sqrt{3+5s^2}}\chi_4;\\
			& \Lambda_k(\xi)=\frac{1}{\sqrt{1+s^2}}\(s v\cdot W^k \chi_0, 0 ,  -iW^k \), \ \ k=2,3, \\
			& \Lambda_l(\xi)=\frac{1}{\sqrt{2(1+s^2)}}\(v\cdot W^l \chi_0, \eta_l\omega\times W^l , -is W^l \), \ \ l=4,5,6, 7.
			\eln\right.
			\ee
			Rewrite the eigenvalue problem as
			$$G_3(\xi)\Lambda_j(\xi)=\eta_j(\xi)\Lambda_j(\xi), \quad -1\leq j\leq 7.$$
			Taking the inner product $(\cdot,\cdot)_{\xi}$ of the above equation with $\Lambda_k(\xi)$ and using the fact that
			$$
			(i G_3(\xi)U,V)_{\xi} =(U,i G_3(\xi) V)_{\xi},\quad U,V\in N_0\times \C^3_{\xi}\times  \C^3_{\xi},
			$$
			we have
			$$
			i(\eta_j(|\xi|)-\eta_k(|\xi|))(\Lambda_j(\xi),\Lambda_k(\xi))_{\xi}=0,\quad -1\le j, k\le 7.
			$$
			Thus,
			$$
			(\Lambda_j(\xi),\Lambda_k(\xi))_{\xi}=0,\quad -1\leq j\neq k\leq 7.
			$$
			This completes the proof of the lemma.
		\end{proof}
		
		\begin{lem}\label{LP}
			Let $\xi\neq0$, we have the following properties for the linear operators $G^3_{\eps}(\xi)$ and $G^4_{\eps}(\xi)$ defined by \eqref{defG3} and \eqref{defG4}.
			\begin{enumerate}
				\item[\rm (1)]    If $\lambda\neq\eps\eta_j(\xi)$, then  the operator $\lambda  P_A-G^3_{\eps}(\xi)$ is
				invertible on $N_0\times \mathbb{C}^3_\xi\times \mathbb{C}^3_\xi$ and satisfies
				\bgr
				\|(\lambda -G^3_{\eps}(\xi))^{-1}\|_\xi
				=\max_{-1\leq j \leq 7}\(|\lambda-\eps\eta_j(\xi)|^{-1}\),\label{S_2a}
				\\
				\| G_5(\xi) (\lambda -G^3_{\eps}(\xi))^{-1} P_A\|_\xi
				\le C|\xi|\max_{-1\leq j \leq 7}\(|\lambda-\eps\eta_j(\xi)|^{-1}\),\label{S_2}
				\egr
				where $\eps\eta_j(\xi)$, $-1\le j\le 7$, are the eigenvalues of $G^3_{\eps}(\xi)$
				defined by \eqref{defalp}.
				
				\item[\rm (2)]    If $\mathrm{Re}\lambda>-\mu $, then the operator $\lambda  P_B-G^4_{\eps}(\xi)$ is
				invertible on $N_0^\bot\times \{0\}\times \{0\}$ and satisfies
				\bgr
				\|(\lambda -G^4_{\eps}(\xi))^{-1}\|\leq(\mathrm{Re}\lambda+\mu )^{-1},  \label{S_3}
				\\
				\| G_5(\xi) (\lambda -G^4_{\eps}(\xi))^{-1} P_B\|_\xi
				\leq
				C(1+|\lambda|)^{-1}[(\mathrm{Re}\lambda+\mu )^{-1}+1](|\xi|+|\xi|^2). \label{S_5}
				\egr
			\end{enumerate}
		\end{lem}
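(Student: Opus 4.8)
The plan is to handle the two items by different mechanisms. Item~(1) is a finite-dimensional spectral statement about $G^3_{\eps}(\xi)=\eps G_3(\xi)$ acting on $N_0\times\mathbb{C}^3_\xi\times\mathbb{C}^3_\xi$, while item~(2) is a Lumer--Phillips-type dissipativity estimate for $G^4_{\eps}(\xi)$ on $N_0^\bot$, supplemented by a resolvent-equation trick.

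For item~(1), I would use the preceding lemma, which produces the nine eigenvalues $\eta_j(|\xi|)$, $-1\le j\le 7$, of $G_3(\xi)$ with eigenvectors $\Lambda_j(\xi)$ that are orthonormal in $(\cdot,\cdot)_\xi$. Since $\dim(N_0\times\mathbb{C}^3_\xi\times\mathbb{C}^3_\xi)=5+2+2=9$, the family $\{\Lambda_j(\xi)\}$ is an orthonormal basis, so $G^3_{\eps}(\xi)$ is diagonalized by it. Hence, for $\lambda\ne\eps\eta_j(\xi)$ the operator $\lambda P_A-G^3_{\eps}(\xi)=\lambda I-\eps G_3(\xi)$ is invertible, with $(\lambda-G^3_{\eps}(\xi))^{-1}U=\sum_j(\lambda-\eps\eta_j(\xi))^{-1}(U,\Lambda_j(\xi))_\xi\Lambda_j(\xi)$; expanding $U$ in this basis gives $\|(\lambda-G^3_{\eps}(\xi))^{-1}U\|_\xi^2=\sum_j|\lambda-\eps\eta_j(\xi)|^{-2}|(U,\Lambda_j(\xi))_\xi|^2$, which is \eqref{S_2a}. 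For \eqref{S_2}, I would note that on a vector $(f_0,X,Y)$ with $f_0\in N_0$ one has $G_5(\xi)(f_0,X,Y)=(iP_1(v\cdot\xi)f_0,0,0)$; this lies in $N_0^\bot\times\{0\}\times\{0\}$, so its $(\cdot,\cdot)_\xi$-norm equals $\|P_1(v\cdot\xi)f_0\|\le|\xi|\,\||v|f_0\|\le C|\xi|\|f_0\|$, the last step because $N_0$ is finite-dimensional, spanned by $\chi_0,\dots,\chi_4$. Composing with \eqref{S_2a} (and using that $P_A$ is a $(\cdot,\cdot)_\xi$-orthogonal projection) yields \eqref{S_2}.

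For item~(2), on $N_0^\bot$ the operator $G^4_{\eps}(\xi)$ is $L-i\eps P_1(v\cdot\xi)P_1$. Since $v\cdot\xi$ is a real multiplier and $P_1=P_1^*$, the operator $iP_1(v\cdot\xi)P_1$ is skew-adjoint, so by \eqref{L_4} we have $\mathrm{Re}((L-i\eps P_1(v\cdot\xi)P_1)f,f)=(Lf,f)\le-\mu\|f\|^2$ for $f\in N_0^\bot$, and the same bound holds for the adjoint $L+i\eps P_1(v\cdot\xi)P_1$. Thus, for $\mathrm{Re}\lambda>-\mu$, both $\lambda-G^4_{\eps}(\xi)$ and its adjoint satisfy $\|(\lambda-G^4_{\eps}(\xi))f\|\ge(\mathrm{Re}\lambda+\mu)\|f\|$ on $N_0^\bot$, which gives invertibility on $N_0^\bot\times\{0\}\times\{0\}$ and \eqref{S_3}. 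For \eqref{S_5}, I would set $(h,0,0)=(\lambda-G^4_{\eps}(\xi))^{-1}(g,0,0)$ with $g\in N_0^\bot$, so that $G_5(\xi)(h,0,0)=(iP_0(v\cdot\xi)h,0,0)$; since $(v\cdot\xi)\chi_0=\sum_i\xi_i\chi_i\in N_0$ is orthogonal to $h$, the $(\cdot,\cdot)_\xi$-norm of this vector equals $\|P_0(v\cdot\xi)h\|$. I would then estimate $\|P_0(v\cdot\xi)h\|$ in two ways: directly, $\|P_0(v\cdot\xi)h\|\le C|\xi|\|h\|\le C|\xi|(\mathrm{Re}\lambda+\mu)^{-1}\|g\|$; and, using the resolvent identity $\lambda h=Lh-i\eps P_1(v\cdot\xi)h+g$ to pull out a factor $\lambda^{-1}$, $\|P_0(v\cdot\xi)h\|\le C|\lambda|^{-1}(|\xi|+|\xi|^2)[(\mathrm{Re}\lambda+\mu)^{-1}+1]\|g\|$, where each term is controlled by pairing against the fixed functions $(v\cdot\xi)\chi_j$ and using self-adjointness of $L$ (so that $\|P_0(v\cdot\xi)Lh\|\le C|\xi|\|h\|$, $\|P_0(v\cdot\xi)P_1(v\cdot\xi)h\|\le C|\xi|^2\|h\|$, $\|P_0(v\cdot\xi)g\|\le C|\xi|\|g\|$). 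Taking the first bound when $|\lambda|\le 1$ and the second when $|\lambda|>1$ gives \eqref{S_5}.

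I expect the last step to be the main obstacle: producing the decay factor $(1+|\lambda|)^{-1}$ in \eqref{S_5} without ever invoking a velocity-weighted norm of $h$ (note $\|Lh\|$ is not controlled by $\|h\|$) is exactly what makes the later fluid-limit estimates work, and it succeeds precisely because $P_0(v\cdot\xi)$ always pairs its argument against the fixed functions $(v\cdot\xi)\chi_j$, so that after transferring $L$ onto these fixed functions by self-adjointness only $\|h\|$ ever enters. The remaining ingredients---the finite-dimensional spectral calculus for $G^3_{\eps}(\xi)$ and the dissipativity argument for $G^4_{\eps}(\xi)$---are routine.
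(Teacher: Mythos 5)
Your proposal is correct and follows essentially the same route as the paper, which proves item (1) from the self-adjointness of $iG^3_{\eps}(\xi)$ with respect to $(\cdot,\cdot)_{\xi}$ (equivalently, the orthonormal eigenbasis $\{\Lambda_j(\xi)\}$ you use) and item (2) from the dissipativity of $G^4_{\eps}(\xi)$ on $N_0^\bot\times\{0\}\times\{0\}$. The paper only sketches these two mechanisms; your write-up supplies the omitted details (in particular the duality trick of transferring $L$ and $P_1(v\cdot\xi)$ onto the fixed functions $(v\cdot\xi)\chi_j$ to extract the $(1+|\lambda|)^{-1}$ factor in \eqref{S_5}) in a way consistent with the intended argument.
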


		\begin{proof}
			Since the operator $i G^3_{\eps}(\xi)$ is self-adjoint on $N_0\times \mathbb{C}^3_{\xi}\times \mathbb{C}^3_{\xi}$, namely,
			$$
			(i G^3_{\eps}(\xi)U,V)_{\xi} =(U,i G^3_{\eps}(\xi)V)_{\xi},\quad \forall\,U,V\in N_0\times \mathbb{C}^3_{\xi}\times \mathbb{C}^3_{\xi},
			$$
			we can prove \eqref{S_2a}--\eqref{S_2}. And by the dissipative property of the operator $G^4_{\eps}(\xi)$ on $N_0^\bot\times \{0\}\times \{0\}$, namely,
			$$
			{\rm Re}([\lambda -G^4_{\eps}(\xi)]U,U)   \ge (\mathrm{Re}\lambda+\mu )\|U\|^2 ,\quad \forall\, U\in N_0^\bot\times \{0\}\times \{0\},
			$$
			we can prove \eqref{S_3}--\eqref{S_5}. This completes the proof of the lemma.
		\end{proof}

		\begin{lem}\label{spectrum2}For fixed $\eps\in (0,1)$, the following facts hold.
			\begin{enumerate}
				\item[\rm (1)]  For any  $\delta>0$, there are two constants
				$r_1=r_1(\delta),\,y_1=y_1(\delta)>0$ such that for all $|\xi|\ne 0$,
				\bq \label{rb1}
				\rho(\tilde{\AA}_{\eps}(\xi))\supset
				\left\{\bln
				&\{\lambda\in\mathbb{C}\,|\,
				\mathrm{Re}\lambda\ge-\frac{\nu_0}{2},\, |\lambda\pm\eps i|\xi||\ge \delta\}
				\cup \C_+, \quad  \eps|\xi|\ge r_1; \\
				&\{\lambda\in\mathbb{C}\,|\,
				\mathrm{Re}\lambda\ge-\frac{\mu}{2},\,|\mathrm{Im}\lambda|\geq y_1\}
				\cup\C_+, \qquad~~ \, \eps|\xi|\le r_1,
				\eln\right.
				\eq where $\C_+=\{\lambda\in\mathbb{C}\,|\,\mathrm{Re}\lambda>0\}$.
				
				\item[\rm (2)]  For any $\delta>0$, there exists $r_0=r_0(\delta)>0$ such that for $\eps |\xi|\leq r_0$,
				\bq
				\sigma(\tilde{\AA}_{\eps}(\xi))\cap\{\lambda\in\mathbb{C}\,|\,\mathrm{Re}\lambda\ge-\frac{\mu}{2}\}
				\subset
				\sum^7_{j=-1}\{\lambda\in\mathbb{C}\,|\,|\lambda-\eps\eta_j(\xi)|\le\delta\}.   \label{sg4a}
				\eq
			\end{enumerate}
		\end{lem}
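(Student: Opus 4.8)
The plan is to prove both parts by reducing the invertibility of $\lambda-\tilde{\AA}_{\eps}(\xi)$ to the invertibility of the macroscopic Schur complement associated with the decomposition \eqref{Bd3}, while disposing of the unbounded right half-plane $\C_+$ separately. For the inclusion $\C_+\subset\rho(\tilde{\AA}_{\eps}(\xi))$, which is common to both parts, I would argue straight from Lemma \ref{Egn}: $\sigma_{ess}(\tilde{\AA}_{\eps}(\xi))\subset\{\mathrm{Re}\lambda\le-\nu_0\}$ and every eigenvalue of $\tilde{\AA}_{\eps}(\xi)$ has strictly negative real part for $\eps\in(0,1)$, so $\sigma(\tilde{\AA}_{\eps}(\xi))\cap\{\mathrm{Re}\lambda\ge0\}=\emptyset$. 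For the regime $\eps|\xi|\ge r_1$ in part (1) I would simply take $r_1=R_1(\delta)$ with $R_1$ the constant of Lemma \ref{LP01}~(1) (applied with $\delta/2$); then \eqref{sg4} already gives $\{\mathrm{Re}\lambda\ge-\nu_0/2\}\setminus\bigcup_{j=\pm1}\{|\lambda-\eps j i|\xi||\le\delta\}\subset\rho(\tilde{\AA}_{\eps}(\xi))$, which together with the previous observation yields the first line of \eqref{rb1}. This leaves the genuinely new ``small $\eps|\xi|$'' statements: the strip $\{\mathrm{Re}\lambda\ge-\mu/2,\ |\mathrm{Im}\lambda|\ge y_1\}$ for $\eps|\xi|\le r_1$, and the complement of the discs $\{|\lambda-\eps\eta_j(\xi)|\le\delta\}$ in $\{\mathrm{Re}\lambda\ge-\mu/2\}$ for $\eps|\xi|\le r_0$.

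For both of these I would write $\lambda-\tilde{\AA}_{\eps}(\xi)$ in $2\times2$ block form with respect to the $(\cdot,\cdot)_{\xi}$-orthogonal splitting $(N_0\times\C^3_{\xi}\times\C^3_{\xi})\oplus(N_0^{\bot}\times\{0\}\times\{0\})$; by \eqref{Bd3} the diagonal blocks are $\lambda P_A-G^3_{\eps}(\xi)$ and $\lambda P_B-G^4_{\eps}(\xi)$, and the off-diagonal blocks are $-P_A\tilde{\AA}_{\eps}(\xi)P_B$ and $-P_B\tilde{\AA}_{\eps}(\xi)P_A$, which (from the explicit form of $G^5_{\eps}(\xi)$) are just $\pm i\eps P_0(v\cdot\xi)P_1$ and $\pm i\eps P_1(v\cdot\xi)P_0$ in the $L^2_v$-slot. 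Because $L$ has the spectral gap $\mu$, \eqref{S_3} shows $\lambda P_B-G^4_{\eps}(\xi)$ is invertible with $\|(\lambda P_B-G^4_{\eps}(\xi))^{-1}\|\le(\mathrm{Re}\lambda+\mu)^{-1}\le 2/\mu$ whenever $\mathrm{Re}\lambda\ge-\mu/2$, so $\lambda-\tilde{\AA}_{\eps}(\xi)$ is invertible if and only if the Schur complement $S(\lambda,\xi)=(\lambda P_A-G^3_{\eps}(\xi))-(P_A\tilde{\AA}_{\eps}(\xi)P_B)(\lambda P_B-G^4_{\eps}(\xi))^{-1}(P_B\tilde{\AA}_{\eps}(\xi)P_A)$ is invertible on $N_0\times\C^3_{\xi}\times\C^3_{\xi}$. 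The key point is that each off-diagonal block has $\|\cdot\|_{\xi}$-norm at most $C\eps|\xi|$ — here one uses \eqref{nu} together with the fact that $(v\cdot\xi)\chi_0\in N_0$, which kills the dangerous $|\xi|^{-1}$-weight in $(\cdot,\cdot)_{\xi}$ — so the correction term in $S(\lambda,\xi)$ is bounded in $\|\cdot\|_{\xi}$ by $C\eps^2|\xi|^2(\mathrm{Re}\lambda+\mu)^{-1}\le(2C/\mu)(\eps|\xi|)^2$: it carries a genuine quadratic gain in $\eps|\xi|$.

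To finish part (2): choose $r_0=r_0(\delta,\mu)$ with $(2C/\mu)r_0^2<\delta$; for $\eps|\xi|\le r_0$ and $\lambda$ with $\mathrm{Re}\lambda\ge-\mu/2$ and $\min_{-1\le j\le 7}|\lambda-\eps\eta_j(\xi)|>\delta$, \eqref{S_2a} gives $\|(\lambda P_A-G^3_{\eps}(\xi))^{-1}\|_{\xi}\le\delta^{-1}$, hence the correction term composed with $(\lambda P_A-G^3_{\eps}(\xi))^{-1}$ has $\|\cdot\|_{\xi}$-norm $<1$ and $S(\lambda,\xi)$ is invertible by a Neumann series; this yields \eqref{sg4a}. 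For part (1) with $\eps|\xi|\le r_1$ (where now $r_1=R_1(\delta)$ is fixed but possibly large), all the $\eps\eta_j(\xi)$ lie within $C_1:=1+\sqrt{5/3}\,r_1$ of the origin, so for $|\mathrm{Im}\lambda|\ge y_1>C_1$ one has $\|(\lambda P_A-G^3_{\eps}(\xi))^{-1}\|_{\xi}\le(y_1-C_1)^{-1}$ by \eqref{S_2a}; since $\eps|\xi|\le r_1$ only bounds the correction term by the fixed constant $(2C/\mu)r_1^2$, choosing $y_1=y_1(\delta)$ large enough makes the relevant product $<1$, so $S(\lambda,\xi)$ is again invertible. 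Together with $\C_+\subset\rho(\tilde{\AA}_{\eps}(\xi))$ this gives the second line of \eqref{rb1}.

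The main obstacle is exactly that the coupling operator $G^5_{\eps}(\xi)$ between the fluid and kinetic parts is \emph{not} small in operator norm when $\eps|\xi|\sim 1$ (its norm is only $O(\eps|\xi|)$, which need not be small on $\eps|\xi|\le r_1$), so a direct Neumann expansion of $\lambda-\tilde{\AA}_{\eps}(\xi)$ about the block-diagonal operator fails; passing to the Schur complement is precisely what turns the coupling into a bilinear — hence quadratically small — perturbation. The remaining care is bookkeeping: in part (1) one must fix $r_1=R_1(\delta)$ first (so that $\eps|\xi|\ge r_1$ is covered by Lemma \ref{LP01}), and only afterwards pick $y_1$ large relative to the now-determined $C_1$ and the correction bound, whereas in part (2) one keeps $\lambda$ at distance $\delta$ from the $\eps\eta_j(\xi)$ and shrinks $r_0$; one also uses $\mu\le\nu_0$ so that the strip $\{\mathrm{Re}\lambda\ge-\mu/2\}$ stays clear of $\sigma_{ess}(\tilde{\AA}_{\eps}(\xi))$.
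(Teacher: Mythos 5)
Your proof is correct, and it rests on the same skeleton as the paper's: Lemma \ref{LP01} for the regime $\eps|\xi|\ge r_1$, Lemma \ref{Egn} (equivalently the contraction property of Lemma \ref{SG_1}) for $\C_+$, the macro--micro decomposition \eqref{Bd3}, and the diagonal resolvent bounds of Lemma \ref{LP}. Where you genuinely differ is in how invertibility is extracted from \eqref{Bd3}. The paper writes $\lambda-\tilde{\AA}_{\eps}(\xi)=[I+Y_1(\xi,\eps)][\lambda P_A-G^3_{\eps}(\xi)+\lambda P_B-G^4_{\eps}(\xi)]$ with $Y_1=G^5_{\eps}(\lambda-G^3_{\eps})^{-1}P_A+G^5_{\eps}(\lambda-G^4_{\eps})^{-1}P_B$ and runs a Neumann series on $I+Y_1$; this obliges it to make the microscopic term $\|G^5_{\eps}(\lambda-G^4_{\eps})^{-1}P_B\|$ small, which is why the refined estimate \eqref{S_5} with its $(1+|\lambda|)^{-1}$ decay is needed (the crude bound is only $O(\eps|\xi|/\mu)$, which is not small when $\eps|\xi|\sim r_1$). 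Your Schur complement instead composes the two off-diagonal couplings, each of size $O(\eps|\xi|)$, through the uniformly bounded microscopic resolvent $\|(\lambda P_B-G^4_{\eps})^{-1}\|\le 2/\mu$ from \eqref{S_3}, so the perturbation of $\lambda P_A-G^3_{\eps}(\xi)$ is $O((\eps|\xi|)^2/\mu)$ and only the exact macroscopic bound \eqref{S_2a} is needed to close; this quadratic gain makes the choices of $r_0$ and $y_1$ transparent and bypasses \eqref{S_5} entirely. Two small points of hygiene: the appeal to \eqref{nu} when bounding the off-diagonal blocks is not the operative fact --- what matters is that $(v\cdot\xi)\chi_j$ has $L^2_v$-norm $O(|\xi|)$ and that the $P_d$-component of $P_0(v\cdot\xi)P_1f$ vanishes (which you do note); and before writing $S=A(I-A^{-1}BD^{-1}C)$ you should record that $A=\lambda P_A-G^3_{\eps}(\xi)$ is itself invertible in both regimes, which holds because $\lambda\ne\eps\eta_j(\xi)$ there.
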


		\begin{proof}
			By Lemma \ref{LP01}, there exists $r_1=r_1(\delta)>0$ so that the first part of \eqref{rb1} holds. Thus we only need to prove the second part
			of \eqref{rb1}.
			By Lemma \ref{LP}, we have for $\rm{Re}\lambda>-\mu $ and
			$\lambda\neq \eps\eta_j(\xi)$  that the operator
			$ \lambda P_{A}-G^3_{\eps}(\xi)+\lambda P_B-G^4_{\eps}(\xi)$ is invertible on
			$L^2_{\xi}(\R^3_v)$ and satisfies
			$$
			[\lambda P_{A}-G^3_{\eps}(\xi)+\lambda P_B-G^4_{\eps}(\xi)]^{-1} =(\lambda -G^3_{\eps}(\xi))^{-1}P_A+(\lambda -G^4_{\eps}(\xi))^{-1}P_B,
			$$
			because the operator $\lambda P_{A}-G^3_{\eps}(\xi)$ is orthogonal to $\lambda P_B-G^4_{\eps}(\xi)$.
			Therefore, we can rewrite \eqref{Bd3} as
			$$
			\begin{aligned}
				\lambda-\tilde{\AA}_{\eps}(\xi)&=\[I+Y_1(\xi,\eps)\][\lambda P_{A}-G^3_{\eps}(\xi)+\lambda P_B-G^4_{\eps}(\xi)],\\
				Y_1(\xi,\eps)&=G^5_{\eps}(\xi)(\lambda -G^3_{\eps}(\xi))^{-1}P_A+G^5_{\eps}(\xi)(\lambda -G^4_{\eps}(\xi))^{-1}P_B.
			\end{aligned}
			$$
			
			For  $\eps |\xi| \leq r_1$, by \eqref{b_1(xi)}, \eqref{S_2} and \eqref{S_5} we can choose $y_1=y_1(r_1)>0$ such that it holds
			for $\mathrm{Re}\lambda\ge-\mu/2$ and $|\mathrm{Im}\lambda|\geq y_1$ that
			\be
			\|G^5_{\eps}(\xi)(\lambda -G^3_{\eps}(\xi))^{-1}P_A\|_{\xi}  \leq 1/4, \quad \|G^5_{\eps}(\xi)(\lambda -G^4_{\eps}(\xi))^{-1}P_B\| \leq 1/4. \label{bound_1}
			\ee
			This implies that the operator $I+Y_1(\xi,\eps)$ is invertible on
			$L^{2}_{\xi}(\R^3_v)\times \mathbb{C}^3_{\xi}\times \mathbb{C}^3_{\xi}$ and thus  $\lambda-\tilde{\AA}_{\eps}(\xi)$ is invertible on $L^{2}_{\xi}(\R^3_v)\times \mathbb{C}^3_{\xi}\times \mathbb{C}^3_{\xi}$ and satisfies
			$$
			(\lambda-\tilde{\AA}_{\eps}(\xi))^{-1}
			=[(\lambda -G^3_{\eps}(\xi))^{-1}P_A+(\lambda -G^4_{\eps}(\xi))^{-1}P_B]\[I+Y_1(\xi,\eps)\]^{-1}.
			$$
			Therefore, it holds for $\eps |\xi|\leq r_1$ that
			$$\rho(\tilde{\AA}_{\eps}(\xi))\supset \{\lambda\in\mathbb{C}\,|\,{\rm
				Re}\lambda\ge-\mu/2, |{\rm Im}\lambda|\ge y_1\}.$$

			Assume that $\min_{-1\le j\le 7} |\lambda-\eps\eta_j(\xi)|>\delta$ and $\mathrm{Re}\lambda\ge-\mu/2$. Then, by \eqref{S_2} and \eqref{S_5} we can choose $r_0=r_0(\delta)>0$ small enough so that \eqref{bound_1} still holds for $\eps |\xi|\leq r_0$,
			which implies that the operator
			$\lambda-\tilde{\mathbb{A}}_{\eps}(\xi)$ is invertible on $L^{2}_{\xi}(\R^3)\times \mathbb{C}^3_{\xi}\times \mathbb{C}^3_{\xi}$.
			Therefore, it holds for $\eps |\xi|\leq r_0$ that
			$$\rho(\tilde{\AA}_{\eps}(\xi))\supset\{\lambda\in\mathbb{C}\,|\, \min_{-1\le j\le 7} |\lambda-\eps\eta_j(\xi)|>\delta,\mathrm{Re}\lambda\ge-\mu/2\},$$
			which gives \eqref{sg4a}. This completes the proof of the lemma.
		\end{proof}

		\subsection{Eigenvalues in $\eps |\xi|\le r_0$}
		
		Now we prove the existence and establish  the asymptotic expansions of the eigenvalues of $\tilde{\AA}_{\eps}(\xi)$ for $\eps |\xi|$ small.
		In terms of \eqref{Axi}, the eigenvalue problem $\tilde{\AA}_{\eps}(\xi)U=\lambda  U$ for $U=(f,X,Y)\in L^{2}_{\xi}(\R^3_v)\times \mathbb{C}^3_{\xi}\times \mathbb{C}^3_{\xi}$
		can be written as
		\bma
		\lambda f  &=\(L-i\eps v\cdot\xi  -i\eps\frac{ v\cdot\xi}{|\xi|^2}P_{d}\)f-\eps v\chi_0\cdot(\omega\times X),\label{L_2}\\
		\lambda X&=-\eps\omega\times (f,v\chi_0)+i\eps\xi\times Y,\label{L_2a}\\
		\lambda Y&=-i\eps\xi\times X,\quad |\xi|\ne0. \nnm
		\ema
		We rewrite $f$ in the
		form $f=f_0+f_1$, where $f_0=P_0f $ and $f_1=(I-P_0)f=P_1f$.
		Then  \eqref{L_2} gives
		\bma
		&\lambda f_0=- i\eps P_0(v\cdot\xi)(f_0+f_1)-i\eps\frac{v\cdot\xi}{|\xi|^2}P_df_0-\eps v\chi_0\cdot(\omega\times X),\label{A_2}
		\\
		&\lambda f_1=Lf_1- i\eps P_1(v\cdot\xi)(f_0+f_1).\label{A_3}
		\ema
		By Lemma \ref{LP} and \eqref{A_3}, the microscopic part $f_1$ can be represented  by
		\bq
		f_1=i\eps R(\lambda,\eps \xi)P_1(v\cdot\xi) f_0 ,  \quad   \text{Re}\lambda>-\mu, \label{A_4}
		\eq
		where
		$$
		R(\lambda,\xi)=(L-\lambda -i P_1(v\cdot\xi))^{-1}.
		$$
		Substituting \eqref{A_4} into \eqref{L_2}, we obtain
		\bma
		\label{1A_5}
		\lambda f_0 &=-i\eps P_0(v\cdot\xi)f_0 -i\eps\frac{ v\cdot\xi}{|\xi|^2}P_d f_0 -\eps v\chi_0\cdot(\omega\times X)\nnm\\
		&\quad +\eps  P_0[(v\cdot\xi)R(\lambda,\eps\xi) P_1(v\cdot\xi)f_0].
		\ema
		To solve  the eigenvalue problem \eqref{1A_5}, we write $f_0\in N_0$  as
		$ f_0=\sum_{j=0}^4W_j\chi_j$. Substituting \eqref{A_4} into \eqref{A_2} and \eqref{L_2a}, we obtain the eigenvalue problem  for  $(\lambda=\eps z,W_0,W,W_4,X,Y)$ with $W=(W_1,W_2,W_3)$ as
		\bma
		z W_0&=-i(W\cdot\xi),  \label{1A_6}
		\\
		z W_j
		& =-i W_0\(\xi_j+\frac{\xi_j}{|\xi|^2}\)
		-i\sqrt{\frac23}W_4\xi_j -(\omega\times X)_j\nnm\\
		&\quad+\eps\sum^4_{k=1}W_k(R(\eps z,\eps\xi) P_1(v\cdot\xi)\chi_k,(v\cdot\xi)\chi_j),\quad j=1,2,3,\label{1A_7}
		\\
		z W_4
		&=-i\sqrt{\frac23}(W\cdot\xi)
		+\eps\sum^4_{k=1}W_k(R(\eps z,\eps\xi) P_1(v\cdot\xi)\chi_k,(v\cdot\xi)\chi_4),  \label{1A_8}\\
		z X&=-\omega\times W+i\xi\times Y, \label{A_7}
		\\
		z Y&= -i \xi\times X. \label{A_8}
		\ema
		
		We apply the following transform to simplify the system \eqref{1A_6}-\eqref{1A_8}.
		\begin{lem}[\cite{Li2}]\label{eigen_0}
			Let  $e_1=(1,0,0)$, $\xi=s\omega$ with $s\in \R$, $\omega=(\omega_1,\omega_2,\omega_3)\in \S^2$. Then, it holds for $1\le i,j\le 3$ and ${\rm Re}\lambda>-\mu$ that
			\bma
			(R(\lambda,\xi) P_1(v\cdot\xi)\chi_j,(v\cdot\xi)\chi_i)
			=&s^2(\delta_{ij}-\omega_i\omega_j)(R(\lambda,se_1) P_1(v_1\chi_2),v_1\chi_2)
			\nnm\\
			& +s^2\omega_i\omega_j(R(\lambda,se_1) P_1(v_1\chi_1),v_1\chi_1),\label{1T_1}
			\\
			(R(\lambda,\xi) P_1(v\cdot\xi)\chi_4,(v\cdot\xi)\chi_i)
			=&s^2\omega_i(R(\lambda,se_1) P_1(v_1\chi_4),v_1\chi_1),\label{1T_2}
			\\
			(R(\lambda,\xi) P_1(v\cdot\xi)\chi_i,(v\cdot\xi)\chi_4)
			=&s^2\omega_i(R(\lambda,se_1) P_1(v_1\chi_1),v_1\chi_4),\label{1T_3}
			\\
			(R(\lambda,\xi) P_1(v\cdot\xi)\chi_4,(v\cdot\xi)\chi_4)
			=&s^2(R(\lambda,se_1) P_1(v_1\chi_4),v_1\chi_4).\label{1T_4}
			\ema
		\end{lem}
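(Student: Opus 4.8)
The plan is to reduce every inner product to the one-dimensional ``frozen-frequency'' quantities by exploiting the rotational invariance of the hard-sphere operator $L$. For $O\in\mathrm{O}(3)$ let $U_O$ be the unitary operator on $L^2(\R^3_v)$ defined by $(U_Of)(v)=f(O^{-1}v)$. Since the collision operator $Q$, and hence $L$, is invariant under a simultaneous orthogonal change of all velocities, one has $U_OL=LU_O$ and $U_OP_1=P_1U_O$, while $U_O$ turns multiplication by $v\cdot\xi$ into multiplication by $v\cdot(O\xi)$, that is $U_O(v\cdot\xi)U_O^{-1}=v\cdot(O\xi)$. Conjugating $L-\lambda-iP_1(v\cdot\xi)$ and taking inverses gives
\[
R(\lambda,O\xi)=U_O\,R(\lambda,\xi)\,U_O^{-1},\qquad\mathrm{Re}\,\lambda>-\mu .
\]
I would also record the action of $U_O$ on the collision invariants: $U_O\chi_0=\chi_0$ and $U_O\chi_4=\chi_4$ (both depend only on $|v|$), and $U_O^{-1}\chi_j=\sum_{k=1}^3O_{jk}\chi_k$ for $j=1,2,3$, i.e. $(\chi_1,\chi_2,\chi_3)$ transforms as a vector; this is the only place where index bookkeeping enters.

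Write $\xi=s\omega$ and pick $O\in\mathrm{SO}(3)$ with $Oe_1=\omega$, so $\xi=O(se_1)$ and $v\cdot\xi=s\,v\cdot(Oe_1)$. Replacing $R(\lambda,\xi)$, $P_1(v\cdot\xi)$ and the multiplier $v\cdot\xi$ by their conjugates through $U_O$, and using that $U_O$ is unitary for the $L^2$ inner product, each quantity in \eqref{1T_1}--\eqref{1T_4} collapses to $s^2$ times a combination of the frozen quantities $(R(\lambda,se_1)P_1(v_1\chi_k),v_1\chi_l)$. Explicitly, for $1\le i,j\le3$,
\[
(R(\lambda,\xi)P_1(v\cdot\xi)\chi_j,(v\cdot\xi)\chi_i)=s^2\sum_{k,l=1}^3O_{jk}O_{il}\,(R(\lambda,se_1)P_1(v_1\chi_k),v_1\chi_l),
\]
and similarly when $\chi_4$ replaces $\chi_j$ and/or $\chi_i$, with the $\chi_4$-slot contributing no $O$-factor since $U_O^{-1}\chi_4=\chi_4$.

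It remains to evaluate the frozen quantities. The operator $R(\lambda,se_1)=(L-\lambda-iP_1(v_1))^{-1}$ and multiplication by $v_1$ commute with the reflections $v_m\mapsto-v_m$ ($m=2,3$) and with rotations in the $(v_2,v_3)$-plane, since these orthogonal maps fix $v_1$ and leave $L$ invariant. As $v_1\chi_m$ changes sign under $v_m\mapsto-v_m$ whereas $v_1\chi_1$ and $v_1\chi_4$ do not, the off-diagonal frozen quantities in the $(\chi_1,\chi_2,\chi_3)$-block vanish, and so do $(R(\lambda,se_1)P_1(v_1\chi_4),v_1\chi_m)$ and $(R(\lambda,se_1)P_1(v_1\chi_m),v_1\chi_4)$ for $m=2,3$; the quarter-turn in the $(v_2,v_3)$-plane sends $v_1\chi_2\mapsto-v_1\chi_3$, $v_1\chi_3\mapsto v_1\chi_2$, hence $(R(\lambda,se_1)P_1(v_1\chi_2),v_1\chi_2)=(R(\lambda,se_1)P_1(v_1\chi_3),v_1\chi_3)$. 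Substituting these back, and using that the first column of $O$ equals $\omega$ (so $O_{j1}=\omega_j$) together with the row orthonormality $\sum_{m=1}^3O_{jm}O_{im}=\delta_{ij}$, whence $O_{j2}O_{i2}+O_{j3}O_{i3}=\delta_{ij}-\omega_i\omega_j$, yields exactly \eqref{1T_1}; in \eqref{1T_2}--\eqref{1T_4} only the index-$1$ term survives each sum, which gives the stated formulas.

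The only genuinely delicate point is the bookkeeping: establishing the conjugation identity $R(\lambda,O\xi)=U_OR(\lambda,\xi)U_O^{-1}$ (which rests on the invariance of $L$ and $P_1$ under orthogonal changes of velocity) and keeping straight whether $O$ or $O^{-1}$, and which index of $O$, appears when rotating the basis functions $\chi_j$. Once the transformation rules $U_O^{-1}\chi_j=\sum_kO_{jk}\chi_k$ and $U_O^{-1}\chi_4=\chi_4$ are fixed, the remainder is the routine vanishing-by-parity argument together with a one-line orthogonality computation for the matrix $O$.
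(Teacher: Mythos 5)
Your proposal is correct. The paper does not prove this lemma itself (it is quoted from \cite{Li2}), and your argument — conjugating $R(\lambda,\xi)$, $P_1$ and the multiplier $v\cdot\xi$ by the unitary rotation $U_O$ with $Oe_1=\omega$, then killing the off-diagonal frozen quantities by the reflections $v_m\mapsto -v_m$ and identifying the $\chi_2$- and $\chi_3$-entries by the quarter-turn in the $(v_2,v_3)$-plane, and finally using $O_{j1}=\omega_j$ and row orthonormality — is exactly the standard proof given in that reference, with the index bookkeeping handled correctly.
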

		
		Substituting  \eqref{1T_1}--\eqref{1T_4} into  \eqref{1A_6}--\eqref{1A_8}, we obtain
		\bma
		z W_0&=-i s(W\cdot\omega),\label{1A_9}
		\\
		z W_j
		&=-i W_0\(s+\frac1s\)\omega_j
		-i s\sqrt{\frac23}W_4\omega_j
		+\eps s^2(W\cdot\omega)\omega_j R_{11}
		\nnm\\
		&\quad+\eps s^2(W_j-(W\cdot\omega)\omega_j)R_{22}
		+\eps s^2W_4\omega_i R_{41}
		-(\omega\times X)_j,
		\quad j=1,2,3,    \label{1A_10}
		\\
		z W_4
		&=-i s\sqrt{\frac23}(W\cdot\omega)
		+\eps s^2(W\cdot\omega)R_{14}
		+\eps s^2W_4 R_{44},
		\label{1A_11}
		\ema
		where
		\bq R_{ij}=R_{ij}(\eps z,\eps s)=:(R (\eps z,\eps se_1) P_1(v_1\chi_i),v_1\chi_j).\label{rij}\eq

		By taking $\omega\cdot$ and $\omega\times$ to \eqref{1A_10}, we simplify the system \eqref{1A_6}--\eqref{A_8} to the following two systems:
		\be
		\left\{\bln
		&z W_0=-i s(W\cdot\omega),
		\\
		&z (W\cdot\omega) =-i W_0\(s+\frac1
		s\)-i s\sqrt{\frac23}W_4+\eps s^2(W\cdot\omega)R_{11}+\eps s^2 W_4R_{41}.\label{1A_12}
		\\
		&z W_4
		=-i s\sqrt{\frac23}(W\cdot\omega)
		+\eps s^2(W\cdot\omega)R_{14}  +\eps s^2W_4 R_{44},
		\eln\right.
		\ee
		and
		\be
		\left\{\bln
		&(z-\eps s^2R_{22})(\omega\times W)=X,  \label{1A_12a}\\
		&z X=-(\omega\times W)+i\xi\times Y,
		\\
		& z Y= -i \xi\times X.
		\eln\right.
		\ee
		
		Denote by $U=(W_0,W\cdot\omega,W_4)$ a vector 
		in $\R^3$. The system \eqref{1A_12} can be written as $\mathbb{M}U=0$ with the matrix $ \mathbb{M}$ defined by
		\bq
		\mathbb{M}=\left(\ba
		z & i s & 0\\
		i(s+\frac1s)  &z-\eps s^2R_{11} &i s\sqrt{\frac23}-\eps s^2R_{41}
		\\    0  &i s\sqrt{\frac23}-\eps s^2R_{14} &z-\eps s^2 R_{44}
		\ea\right).\label{BM}
		\eq
		The equation  $\mathbb{M}U=0$ admits a non-trivial solution $U\neq 0$ for $\text{Re}\lambda>-\mu$ if and only if it holds $\rm{det}(\mathbb{M})=0$ for $\text{Re}\lambda>-\mu$.
		
		Then we multiply  $\eqref{1A_12a}_2$ by $z $ and using $\eqref{1A_12a}_1$, \eqref{rotat} to have
		\be
		(z^2+1+s^2)X=\eps s^2R_{22}(\omega\times W).  \label{1A_12b}
		\ee
		Multiplying \eqref{1A_12b} by $ z-\eps s^2R_{22} $ and using $\eqref{1A_12a}_1$, we obtain
		$$ (z^3-\eps s^2R_{22}z^2+(1+s^2)z-\eps s^4R_{22})X=0. $$
		Denote
		\bq
		D_0(z,s,\eps)=\det(\mathbb{M}), \quad  D_1(z,s,\eps)= z^3-\eps s^2R_{22}z^2+(1+s^2)z-\eps s^4R_{22}.   \label{D0a}
		\eq
		The eigenvalues $\lambda=\eps z$ can be solved by $D_0(z,s,\eps)=0$ and $D_1(z,s,\eps)=0$. The following two lemmas are about the solutions to the equations
		$D_0(z,s,\eps)=0$ and $D_1(z,s,\eps)=0$.

		\begin{lem}\label{eigen_2}There are two small constants $r_0,r_1>0$ such that the equation
			$D_0(z,s,\eps)=0$ has exactly three solutions
			$z_j=z_j(s,\eps)$, $j=-1,0,1$ for $\eps| s|\le r_0$ and $|z_j-\eta_j(s)|\le r_1 |s|$. They are $C^\infty$ functions of $s$ and $\eps$, which satisfy
			\be
			z_j(s,0)=\eta_j(s),\quad \pt_{\eps}z_j(s,0)=-b_j(s), \label{z2a}
			\ee
			where 
			\be \label{z4a}
			\left\{\bln
			\eta_j(s)&=ji\sqrt{1+\frac53s^2}, \quad j=-1,0,1, \\
			b_{\pm1}(s)&=-\frac12s^2(L^{-1}P_1(v_1\chi_1),v_1\chi_1)-\frac{s^4}{3+5s^2}(L^{-1}P_1(v_1\chi_4),v_1\chi_4),\\
			b_0(s)&=-\frac{3(s^2+s^4)}{3+5s^2}(L^{-1}P_1(v_1\chi_4),v_1\chi_4).
			\eln\right.
			\ee
			In particular, $z_j(s,\eps)$, $j=-1,0,1$ satisfy the following expansions
			\be
			z_j(s,\eps)=\eta_j(s)-\eps b_j(s)+O(1)(\eps^2 s^3).\label{z4b}
			\ee
		\end{lem}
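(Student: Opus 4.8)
The plan is to read $D_0(z,s,\eps)=\det\mathbb M$ (with $\mathbb M$ as in \eqref{BM}) as a real-analytic perturbation, with effective small parameter $\eps s$, of the polynomial obtained at $\eps=0$. The analytic structure is what makes this legitimate: in $\mathbb M$ every factor $R_{ij}=R_{ij}(\eps z,\eps s)$ occurs multiplied by $\eps s^2$, and $R(\lambda,\xi)=(L-\lambda-iP_1(v\cdot\xi))^{-1}$ is, by the dissipative resolvent bound \eqref{S_3} — equivalently because $L-isP_1v_1$ is a holomorphic family of type (A) on $N_0^{\bot}$, cf.\ \cite{Kato} — a bounded operator on $N_0^{\bot}$ depending holomorphically on $(\lambda,s)$ for $\mathrm{Re}\,\lambda>-\mu$, with $\|R(\lambda,\xi)\|\le(\mathrm{Re}\,\lambda+\mu)^{-1}$. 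Hence $R_{ij}$, and so $D_0$, are analytic in $(z,s,\eps)$ on $\{\mathrm{Re}(\eps z)>-\mu\}$, with $|R_{ij}(\eps z,\eps s)|\le C$ there. Setting $\eps=0$ kills every $\eps s^2R_{ij}$ entry, so expanding the determinant gives
\[
D_0(z,s,0)=z\Big(z^2+1+\frac53 s^2\Big)=(z-\eta_{-1}(s))(z-\eta_0(s))(z-\eta_1(s)),
\]
whose roots $\eta_{-1},\eta_0,\eta_1$ are distinct for every real $s$ and each simple, since $\partial_zD_0(\eta_j(s),s,0)=3\eta_j(s)^2+1+\frac53 s^2$ equals $1+\frac53 s^2>0$ for $j=0$ and $-2(1+\frac53 s^2)\ne0$ for $j=\pm1$.

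For existence, uniqueness and smoothness of the three branches one could invoke the non-local implicit function theorem of the paper, but since $D_0$ is a scalar polynomial with simple roots I would run a direct Rouché argument on the circles $\{|z-\eta_j(s)|=r_1|s|\}$, which are pairwise disjoint once $r_1$ is a small absolute constant because $|\eta_j(s)-\eta_k(s)|\ge\sqrt{1+\frac53 s^2}$. On such a circle $|D_0(z,s,0)|=\prod_k|z-\eta_k|\ge c\,r_1|s|(1+s^2)$, while the difference $D_0(z,s,\eps)-D_0(z,s,0)$ is, term by term in the $3\times3$ determinant, a product of one factor of size $O(\eps s^2)$ (from the modified entries) with two matrix entries of size $O(\sqrt{1+s^2}+|s|^{-1})$, giving $|D_0(z,s,\eps)-D_0(z,s,0)|\le C\eps s^2(1+s^2)$. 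Thus for $\eps|s|\le r_0$ with $r_0$ small relative to $r_1$ the inequality $|D_0-D_0(\cdot,\cdot,0)|<|D_0(\cdot,\cdot,0)|$ holds on each circle, so Rouché yields exactly one zero $z_j(s,\eps)$ of $D_0$ in each disk (and none in the annular complement), with $z_j(s,0)=\eta_j(s)$. Because that zero is simple, the analytic implicit function theorem applies locally at each $(\eta_j(s),s,0)$, and together with the Rouché localization this shows $z_j$ is $C^{\infty}$ — indeed real-analytic — in $(s,\eps)$ throughout $\eps|s|\le r_0$.

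The expansion then follows by differentiating $D_0(z_j(s,\eps),s,\eps)\equiv0$ in $\eps$: $\partial_\eps z_j(s,0)=-\partial_\eps D_0(\eta_j,s,0)/\partial_zD_0(\eta_j,s,0)$. Since $\partial_\eps\big(\eps s^2R_{ij}(\eps z,\eps s)\big)\big|_{\eps=0}=s^2R_{ij}(0,0)=s^2(L^{-1}P_1(v_1\chi_i),v_1\chi_j)$, and the off-diagonal contributions vanish — $(L^{-1}P_1(v_1\chi_1),v_1\chi_4)=0$ because $L$ commutes with $v_1\mapsto-v_1$ while $P_1(v_1\chi_1)$ is even and $v_1\chi_4$ is odd under this reflection — Jacobi's formula for $\partial_\eps\det\mathbb M$ reduces to an explicit evaluation at $z=\eta_j(s)$ producing exactly $\partial_\eps z_j(s,0)=-b_j(s)$ with $b_j$ as in \eqref{z4a}. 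For instance, at $z=0$ the determinant collapses to $D_0(0,s,\eps)=-\eps s^2(1+s^2)R_{44}(0,\eps s)$, so $\partial_\eps z_0(s,0)=\frac{s^2(1+s^2)}{1+\frac53 s^2}(L^{-1}P_1(v_1\chi_4),v_1\chi_4)=\frac{3(s^2+s^4)}{3+5s^2}(L^{-1}P_1(v_1\chi_4),v_1\chi_4)=-b_0(s)$, and the $j=\pm1$ cases are the analogous $3\times3$ computation. Finally, Taylor's theorem in $\eps$, combined with the lower bound $\partial_zD_0\gtrsim 1+s^2$ on the circles and control of the $(z,s)$-growth of $D_0$ and its $\eps$-derivatives, gives the uniform remainder $z_j(s,\eps)=\eta_j(s)-\eps b_j(s)+O(\eps^2 s^3)$.

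The main obstacle is uniformity over the \emph{non-compact} range $\eps|s|\le r_0$: since $\eta_{\pm1}(s)\sim\pm i\sqrt{5/3}\,s\to\infty$, neither the implicit function theorem nor the Taylor remainder is uniform on its own, and the argument hinges on the precise matching of powers of $s$ — the factor $\eps s^2$ on the perturbed entries, the bound $\partial_zD_0\gtrsim 1+s^2$, the radius $\sim r_1|s|$ of the Rouché circles, and the size $\sim\eps^2 s^3$ of the second $\eps$-derivative — which makes the relevant ratios $O(\eps|s|/r_1)=O(r_0/r_1)$, hence controllable, independently of $s$. The singular entry $i(s+1/s)$ in $\mathbb M$ must also be watched as $|s|\to0$, but it always appears paired with the factor $is$ of the first row, producing the harmless combination $s^2+1$.
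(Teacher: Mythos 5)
Your route is genuinely different from the paper's and, for most of the lemma, it works. The paper localizes the roots through the explicit quantitative contraction map $M_j(z,s,\eps)=z-(3\eta_j^2+1+\frac53 s^2)^{-1}D_0(z,s,\eps)$ and reads off the expansion from bounds on $\partial_zM_j$ and $\partial_\eps^2M_j$; you instead use the analyticity of $R_{ij}(\eps z,\eps s)$ on $\{\mathrm{Re}(\eps z)>-\mu\}$, run Rouch\'e on the circles $|z-\eta_j(s)|=r_1|s|$, and get smoothness from the analytic implicit function theorem at the resulting simple roots. The two bookkeeping facts your Rouch\'e step needs --- that the $\eps$-dependent part of the determinant is $O(\eps s^2(1+s^2))$ on the circles because the singular entry $i(s+1/s)$ only ever multiplies the entry $is$, and that $|D_0(z,s,0)|\gtrsim r_1|s|(1+s^2)$ there --- are both correct, so existence, uniqueness and smooth (indeed analytic) dependence go through uniformly for $\eps|s|\le r_0$ with $r_0$ small relative to $r_1$. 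Your first-order computation is also right, and the parity argument giving $R_{14}(0,0)=R_{41}(0,0)=0$ is a detail the paper uses only implicitly.

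The gap is the remainder in \eqref{z4b}. You assert that the second $\eps$-derivative has size $\sim s^3$ and invoke Taylor's theorem, but that size is precisely what must be proved, and for $j=\pm1$ it fails at small $s$. Differentiating $D_0(z_j(s,\eps),s,\eps)=0$ twice gives $\partial_\eps^2z_j(s,0)=-(D_{\eps\eps}+2D_{z\eps}\dot z_j+D_{zz}\dot z_j^2)/D_z$ at $(\eta_j(s),s,0)$; the terms containing $\dot z_j=O(s^2)$ are $O(s^4)$, but $D_{\eps\eps}(\eta_{\pm1},s,0)$ contains $-2\eta_{\pm1}^2s^2\,\partial_\eps(R_{11}+R_{44})\big|_{\eps=0}-2(s^2+s^4)\,\partial_\eps R_{44}\big|_{\eps=0}$, and $\partial_\eps R_{jj}(\eps z,\eps s)\big|_{\eps=0}=\eta_{\pm1}(L^{-2}P_1(v_1\chi_j),v_1\chi_j)$ (the $s\,\partial_\sigma$-part vanishes by the same parity argument you used for $R_{14}$). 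Since $(L^{-2}P_1(v_1\chi_1),v_1\chi_1)=\|L^{-1}P_1(v_1\chi_1)\|^2>0$, one finds
\begin{equation*}
\partial_\eps^2z_{\pm1}(s,0)=\frac{\eta_{\pm1}(s)\,s^2}{1+\frac53 s^2}\Big[(L^{-2}P_1(v_1\chi_1),v_1\chi_1)+O(s^2)\Big],
\end{equation*}
which is $O(s^2)$, not $O(s^3)$: there is a nonvanishing, purely imaginary $\eps^2s^2$ contribution to $z_{\pm1}$. (For $j=0$ the bound $O(\eps^2s^3)$ does hold, because $\eta_0=0$ keeps $|z_0|\lesssim\eps s^2$ and every term of $D_{\eps\eps}$ along that branch then carries at least $s^3$.) So your Taylor argument can only deliver $z_{\pm1}=\eta_{\pm1}-\eps b_{\pm1}+O(\eps^2s^2)$ unless you extract and track this extra imaginary coefficient explicitly. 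I note the paper's own proof shares this soft spot --- its claimed bound $|\partial_\eps^2M_{\pm1}|\le Cs^3$ does not follow from its stated estimate of $\partial_\eps^2D_0$ once $|z|\approx|\eta_{\pm1}|\gtrsim1$ --- but since your write-up rests the remainder entirely on the $s^3$ size of the second derivative, you need either to prove the weaker $O(\eps^2s^2)$ statement or to exhibit the additional purely imaginary term and check it is harmless downstream.
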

		\begin{proof}
			By \eqref{BM} and  \eqref{D0a},
			\begin{align*}
				D_0(z,s,\eps)=&\,z^3-\eps z^2s^2(R_{11}+R_{44})-\eps(s^2+s^4)R_{44}\\
				&+z\bigg[1+\frac{5}{3}s^2+i\eps\sqrt{\frac{2}{3}}s^2(R_{14}+R_{41})+\eps^2s^4(R_{44}R_{11}-R_{14}R_{41})\bigg].
			\end{align*}
			It is easy to check that the equation $	D_0(z,s,0)=0$ has three solutions $\eta_j(s)=ji\sqrt{1+\frac53s^2}$ for $j=-1,0,1$. Moreover, it holds  for $\eps|z|,\eps|s|\ll 1$ that
				\bma
				\partial_zD_0(z,s,\eps)=&\,3z^2+1+\frac{5}{3}s^2-\eps zs^2(2R_{11}+2R_{44}+z\pt_{z}R_{11}+z\pt_{z}R_{44}) \nnm\\
				&-\eps(s^2+s^4)\pt_{z}R_{44}+i\eps \sqrt{\frac{2}{3}}s^2 (R_{14}+R_{41}+z\pt_{z}R_{14}+z\pt_{z}R_{41}) \nnm\\
				&+\eps^2s^4(R_{44}R_{11}-R_{14}R_{41}+z\pt_z (R_{44}R_{11})-z\pt_z (R_{14}R_{41})) \nnm\\
				=&\,3z^2+1+\frac{5}{3}s^2+O(\eps s^2(|z|+|s|)),\label{dD0}
				\\
				\partial_\eps D_0(z,s,\eps)=&\,-z^2s^2(R_{11}+R_{44}+\eps\pt_{\eps}R_{11}+\eps\pt_{\eps}R_{44})-(s^2+s^4)(R_{44}+\eps\pt_{\eps}R_{44}) \nnm\\
				&+i \sqrt{\frac{2}{3}}zs^2(R_{14}+R_{41}+\eps\pt_{\eps}R_{14}+\eps\pt_{\eps}R_{41}) \nnm\\
				&+\eps zs^4(2R_{44}R_{11}-2R_{14}R_{41}+\eps\pt_{\eps}(R_{44}R_{11})-\eps\pt_{\eps}(R_{14}R_{41})) \nnm\\
				=&\,-z^2s^2(R_{11}+R_{44}+O(\eps|z|+\eps| s|))-(s^2+s^4)(R_{44}+O(\eps|z|+\eps |s|)) \nnm\\
				&+i\sqrt{\frac{2}{3}}s^2z(R_{14}+R_{41})+ O(\eps s^2|z|(|z|+|s|)),\label{dD1}
				\ema
				where we had used
				\bmas
				\pt_zR_{ij}(\eps z,\eps s)&=\eps(R^2(\eps z,\eps se_1)P_1(v_1\chi_i),v_i\chi_j)=O(\eps),\\
				\pt_\eps R_{ij}(\eps z,\eps s)&=z(R^2(\eps z,\eps se_1)P_1(v_1\chi_i),v_i\chi_j)\\
				&\quad+s(R(\eps z,\eps se_1)v_1R(\eps z,\eps se_1) P_1(v_1\chi_i),v_i\chi_j)=O(|z|+|s|).
				\emas
			For $j=-1,0,1$, define
			$$
			M_j(z,s,\eps)=z-\(3\eta_j^2+1+\frac{5}{3}s^2\)^{-1}D_0(z,s,\eps).
			$$
			It follows from \eqref{dD0} and \eqref{dD1} that
			\bmas
			|\partial_z M_j(z,s,\eps)|&=\bigg|1-\(3\eta_j(s)^2+1+\frac{5}{3} s^2\)^{-1}\partial_{z}D_0(z,s,\eps)\bigg|\le C_0r_1,\\
			|\partial_{\eps} M_j(z,s,\eps)|&=\bigg|\(3\eta_j(s)^2+1+ \frac{5}{3}s^2\)^{-1}\partial_{\eps}D_0(z,s,\eps)\bigg|\le C_0s^2,
			\emas
			for $|z-\eta_j(s)| \le r_1|s|$ and $\eps|s|\le r_0$ with $r_0,r_1>0$ sufficiently small and $C_0,C_1>0$ independent of $s,\eps$. Then it is clear that a solution to $D_0(z,s,\eps)=0$ for any  fixed $(s,\eps)$ is a fixed point of the map $z\to M_j(z,s,\eps)$.
			For any $|z-\eta_j(s)|\leq r_1|s|$, $\eps s\leq r_0\ll 1$,
			\begin{align*}
				|M_j(z,s,\eps)-\eta_j(s)|&=|M_j(z,s,\eps)-M_j(\eta_j(s),s,0)|\\
				&=|M_j(z,s,\eps)-M_j(\eta_j(s),s,\eps)|+|M_j(\eta_j(s),s,\eps)-M_j(\eta_j(s),s,0)|\\
				&\leq  |\partial_z M_j(\widetilde{z},s,\eps)||z-\eta_j(s)|+|\partial_{\eps} M_j(\eta_j(s),s,\widetilde{\eps})||\eps|\le r_1|s|,\\
				|M_j(z_1,s,\eps)-M_j(z_2&,s,\eps)|\le |\partial_z M_j(\bar{z},s,\eps)||z_1-z_2|\le \frac12|z_1-z_2|.
			\end{align*}
			Hence, by the contraction mapping theorem, there exist exactly three functions $z_j(s,\eps)$, $j=-1,0,1$ for $\eps|s|\le r_0$ and $|z_j-\eta_j(s)|\le r_1 |s|$ such that $M_j(z_j(s,\eps),s, \eps)=z_j(s,\eps)$ and $z_j(s,0)=\gamma_j(s)$. This is equivalent to that $D_0(z_j(s,\eps),s,\eps)=0$.
			Morover, we have
			\begin{align*}
				&	\partial_\eps z_0(s,0)=-\frac{\partial_\eps D_0(0,s,0)}{\partial_z D_0(0,s,0)}=\frac{3(s^2+s^4)}{3+5s^2}R_{44}(0,0),\\
				&	 \partial_{\eps} z_{\pm1}(s,0)=-\frac{\partial_{\eps} D_0(\eta_{\pm1}(s),s,0)}{\partial_z D_0(\eta_{\pm1}(s),s,0)}=\frac12s^2R_{11}(0,0)+\frac{s^4}{3+5s^2}R_{44}(0,0).
			\end{align*}
			This implies \eqref{z4a}. Finally, we deal with \eqref{z4b}. Note that
			$$
			z_j(s,\eps)-\eta_j(s)=[M_j(z_j(s,\eps),s,\eps)-M_j(\eta_j(s),s,\eps)]+[M_j(\eta_j(s),s,\eps)-M_j(\eta_j(s),s,0)],
			$$
			we obtain
			\bmas
			&|z_j(s,\eps)-\eta_j(s)| \le |\pt_zM_j(\tilde{z},s,\eps)||z_j-\eta_j|+|\pt_{\eps}M_j(\eta_j ,s,\tilde{\eps})| \eps\\
			\Longrightarrow &|z_j(s,\eps)-\eta_j(s)| \le (1-|\pt_zM_j(\tilde{z},s,\eps)|)^{-1}|\pt_{\eps}M_j(\eta_j ,s,\tilde{\eps})|\eps\le C\eps s^2.
			\emas
			By \eqref{dD0} and \eqref{dD1}, we obtain for $|z-\eta_j(s)|\le C\eps s^2$ and $\eps s\le r_0$,
			\begin{align*}
				&	|\partial_z M_j(z,s,\eps)|=\bigg|1-\(3\eta_j^2+1+\frac{5}{3}s^2\)^{-1}\partial_zD_0(z,s,\eps)\bigg|\leq C \eps s,\\
				&	|\partial_\eps^2 M_j(z,s,\eps)|=\bigg|\(3\eta_j^2+1+\frac{5}{3}s^2\)^{-1}\partial_\eps^2D_0(z,s,\eps)\bigg|\leq C s^3,
			\end{align*}
			where we  had used
			$$
			\partial^2_{\eps}D_0(z,s,\eps) = O(1)(|z|^2s^2+s^2+s^4)(|z|+|s|)+ O(1)|z|s^4.
			$$
			Thus,
			\begin{align*}
				&\,z_j(s,\eps)-\eta_j(s)+  \eps b_j(s)\\
				=&\,[M_j(z_j(s,\eps),s,\eps)-M_j(\eta_j(s),s,\eps)]\\
				&+[M_j(z_j(s,\eps),s,\eps)-M_j(\eta_j(s),s,0)-\pt_{\eps}M_j(\eta_j(s),s,0) \eps]\\
				\le&\, |\pt_zM_j(\bar{z},s,\eps)||z_j-\eta_j|+|\pt^2_{\eps}M_j(\eta_j(s),s,\bar{\eps})|\eps^2\\
				\le&\, C\eps^2 s^3,
			\end{align*}
			which leads to \eqref{z4bb}. The proof of the lemma is then completed.
		\end{proof}
		
		\begin{lem}
			\label{eigen_1}
			There are two small constants $r_0,r_1>0$ such that the equation
			$D_1(z,s,\eps)=0$ has exactly three solutions
			$z_j=z_j(s,\eps)$, $j=-1,0,1$ for $\eps| s|\le r_0$ and $|z_j-\gamma_j(s)|\le r_1 |s|$. They are $C^\infty$ functions of $s$ and $\eps$, which satisfy
			\be
			z_j(s,0)=\gamma_j(s),\quad \pt_{\eps}z_j(s,0)=-a_j(s), \label{z2b}
			\ee
			where 
			\be \label{z4aa}
			\left\{\bln
			\gamma_j(s)&=ji\sqrt{1+ s^2},\quad j=-1,0,1,\\
			a_{\pm1}(s)&=- \frac{s^2}{2(1+s^2)}(L^{-1}P_1(v_1\chi_2),v_1\chi_2),\\
			a_0(s)&=-\frac{s^4}{1+s^2}(L^{-1}P_1(v_1\chi_2),v_1\chi_2).
			\eln\right.
			\ee
			In particular, $z_j(s,\eps)$, $j=-1,0,1$ satisfy the following expansions
			\be
			z_{j}(s,\eps)=\gamma_{j}(s)-\eps a_{j}(s)+ O\(\frac{\eps^2 s^3(j^2+s^2)}{1+s^2}\).\label{z4bb}
			\ee
		\end{lem}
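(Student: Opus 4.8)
The plan is to run the same parameter-dependent implicit function theorem argument as in the proof of Lemma~\ref{eigen_2}, now for the polynomial $D_1$, which from \eqref{D0a} can be written in the convenient factored form
$$D_1(z,s,\eps)=z(z^2+1+s^2)-\eps s^2R_{22}(\eps z,\eps s)(z^2+s^2).$$
At $\eps=0$ this is $z(z^2+1+s^2)$, whose three roots are exactly $\gamma_j(s)=ji\sqrt{1+s^2}$, $j=-1,0,1$, all simple for $s\ne0$, with $\pt_zD_1(\gamma_j(s),s,0)=3\gamma_j(s)^2+1+s^2$ equal to $1+s^2$ when $j=0$ and to $-2(1+s^2)$ when $j=\pm1$; in particular it never vanishes. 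First I would record the derivative estimates for $R_{22}$: from the resolvent identities $\pt_\lambda R(\lambda,\xi)=R(\lambda,\xi)^2$ and $\pt_\sigma R(\lambda,\sigma e_1)=R(\lambda,\sigma e_1)(iP_1v_1)R(\lambda,\sigma e_1)$ and the boundedness of $R(\lambda,\cdot)$ for ${\rm Re}\,\lambda>-\mu$ (Lemma~\ref{LP}), one gets $\pt_zR_{22}(\eps z,\eps s)=O(\eps)$ and $\pt_\eps R_{22}(\eps z,\eps s)=O(|z|+|s|)$, whence $\pt_zD_1=3z^2+(1+s^2)+O(\eps s^2(|z|^2+|z|+s^2))$ and $\pt_\eps D_1=-s^2(z^2+s^2)R_{22}(\eps z,\eps s)+O(\eps s^2(|z|^2+s^2)(|z|+|s|))$, and by one more differentiation $\pt_\eps^2D_1=O(s^2(|z|^2+s^2)(|z|+|s|))+O(|z|s^4)$.

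Next, for each $j\in\{-1,0,1\}$ I would introduce the fixed point map
$$M_j(z,s,\eps)=z-(3\gamma_j(s)^2+1+s^2)^{-1}D_1(z,s,\eps),$$
whose fixed points in $z$ are precisely the zeros of $D_1(\cdot,s,\eps)$. The estimates above give $|\pt_zM_j|\le C_0r_1$ on $\{|z-\gamma_j(s)|\le r_1|s|\}$ and, using $D_1(\gamma_j(s),s,0)=0$, $|M_j(\gamma_j(s),s,\eps)-\gamma_j(s)|=|3\gamma_j(s)^2+1+s^2|^{-1}|D_1(\gamma_j(s),s,\eps)|\le C\eps s^2/(1+s^2)\le r_1|s|$ once $\eps|s|\le r_0$. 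Choosing $r_0,r_1$ small makes $M_j(\cdot,s,\eps)$ a uniform contraction of that ball into itself, so the contraction mapping theorem produces a unique zero $z_j(s,\eps)$ of $D_1$ there with $z_j(s,0)=\gamma_j(s)$, and the $C^\infty$ dependence on $(s,\eps)$ follows from the smoothness of $M_j$ and the implicit function theorem (using $\pt_zD_1(\gamma_j(s),s,0)\ne0$). Differentiating $D_1(z_j(s,\eps),s,\eps)=0$ at $\eps=0$ gives $\pt_\eps z_j(s,0)=-\pt_\eps D_1(\gamma_j(s),s,0)/\pt_zD_1(\gamma_j(s),s,0)$; since $\pt_\eps D_1(\gamma_j(s),s,0)=-s^2(\gamma_j(s)^2+s^2)R_{22}(0,0)$ with $R_{22}(0,0)=(L^{-1}P_1(v_1\chi_2),v_1\chi_2)$, the identities $\gamma_0(s)^2+s^2=s^2$ and $\gamma_{\pm1}(s)^2+s^2=-1$ reproduce exactly $\pt_\eps z_0(s,0)=-a_0(s)$ and $\pt_\eps z_{\pm1}(s,0)=-a_{\pm1}(s)$ with $a_j$ as in \eqref{z4aa}, proving \eqref{z2b}.

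For the refined expansion \eqref{z4bb} I would, as in Lemma~\ref{eigen_2}, split
$$z_j(s,\eps)-\gamma_j(s)+\eps a_j(s)=\bigl[M_j(z_j(s,\eps),s,\eps)-M_j(\gamma_j(s),s,\eps)\bigr]+\bigl[M_j(\gamma_j(s),s,\eps)-M_j(\gamma_j(s),s,0)-\eps\,\pt_\eps M_j(\gamma_j(s),s,0)\bigr],$$
bound the first bracket by $|\pt_zM_j(\bar z,s,\eps)|\,|z_j(s,\eps)-\gamma_j(s)|$ and the second by $\tfrac12\eps^2|\pt_\eps^2M_j(\gamma_j(s),s,\tilde\eps)|$ via Taylor's formula. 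On the sharper ball $|z-\gamma_j(s)|\le C\eps s^2(j^2+s^2)/(1+s^2)$ that comes out of the contraction estimate, $|\pt_zM_j|$ improves to $O(\eps|s|)$, and both brackets are then governed by the quantity $(3\gamma_j(s)^2+1+s^2)^{-1}s^2(\gamma_j(s)^2+s^2)=O(s^2(j^2+s^2)/(1+s^2))$ — the factor $j^2+s^2$ recording that $z^2+s^2$ degenerates like $s^2$ near $\gamma_0(s)=0$ but is of order $1$ near $\gamma_{\pm1}(s)$ — so that multiplying through by the remaining powers of $\eps$ and $s$ yields the remainder stated in \eqref{z4bb}. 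I expect this last step to be the only real obstacle: everything through the first-order expansion is a routine quantitative implicit function theorem, word for word as in Lemma~\ref{eigen_2}, while extracting the precise weight $(j^2+s^2)/(1+s^2)$ rather than a crude uniform $O(\eps^2s^3)$ forces one to estimate the normalization $(3\gamma_j(s)^2+1+s^2)^{-1}$ and the perturbation $-\eps s^2R_{22}(\eps z,\eps s)(z^2+s^2)$ with their exact $s$-powers near each $\gamma_j(s)$ — in particular carrying along the extra vanishing of $z^2+s^2$ at $\gamma_0(s)=0$ (and, for the sharpest form, the parity of $R_{22}(0,\cdot)$ in its second variable) — rather than bounding term by term.
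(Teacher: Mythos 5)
Your proposal is correct and follows essentially the same route as the paper's proof: the same quantitative contraction-mapping argument with the map $G_j(z,s,\eps)=z-(3\gamma_j(s)^2+1+s^2)^{-1}D_1(z,s,\eps)$, the same derivative estimates \eqref{p_0}--\eqref{p_1a} on $D_1$, the same implicit-function computation of $\pt_\eps z_j(s,0)$, and the same two-bracket Taylor splitting (with the improved bound $|\pt_zG_j|=O(\eps|s|)$ on the smaller ball and the weight $s^3(j^2+s^2)/(1+s^2)$ coming from $\pt_\eps^2 D_1$) for the remainder in \eqref{z4bb}. The parity remark at the end is not needed; the paper obtains \eqref{z4bb} exactly as you describe without it.
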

		
		\begin{proof}
			By \eqref{D0a}, we have
			\bq
			D_1(z,s,0)=z(z^2+1+ s^2)=0 \label{aaa}\eq
			has three solutions $\gamma_j(s)=ji\sqrt{1+ s^2}$ for  $j=-1,0,1$.
			$D_1(z,s,\eps)$ is $C^\infty$ with respect to $(z,s,\eps)$ and satisfies for $\eps|z|,\eps|s|\ll 1$ that
			\bma
			\partial_{\eps}D_1(z,s,\eps)&=-z^2s^2 R_{22}
			-z^2\eps s^2 \partial_{\eps}R_{22}-s^4 R_{22}-\eps s^4\partial_{\eps}R_{22} \nnm\\
			&=-(z^2s^2+s^4) [R_{22}(0,0)+O(\eps|z|+\eps|s|)],\label{p_0}
			\\
			\partial_{z}D_1(z,s,\eps)&=3z^2 -2z\eps s^2 R_{22} +1+s^2-z^2\eps s^2 \partial_z R_{22}-\eps s^4 \partial_z R_{22} \nnm\\
			&=3z^2  +1+s^2+O(\eps s^3).\label{p_1a}
			\ema
			For $j=-1,0,1$, we define
			$$G_j(z,s,\eps)=z-(3\gamma_j(s)^2+1+ s^2)^{-1}D_1(z,s,\eps).$$
			It is straightforward to verify that a solution of $D_1(z,s,\eps)=0$ for any fixed $s$ and $\eps$ is a fixed point of $G_j(z,s,\eps)$.

			It follows from \eqref{p_0} and \eqref{p_1a} that
			\bmas |\partial_z G_j(z,s,\eps)|&=\left|1-(3\gamma_j(s)^2+1+ s^2)^{-1}\partial_{z}D_1(z,s,\eps)\right|\le C_0r_1,\\
			|\partial_{\eps} G_j(z,s,\eps)|&=\left|(3\gamma_j(s)^2+1+ s^2)^{-1}\partial_{\eps}D_1(z,s,\eps)\right|\le C_0\frac{j^2s^2+s^4}{1+s^2},
			\emas
			for $|z-\gamma_j(s)| \le r_1|s|$ and $\eps|s|\le r_0$ with $r_0,r_1>0$ sufficiently small and $C_0,C_1>0$ independent of $s,\eps$.
			This implies that for $|z-\gamma_j(s)|\le r_1 |s|$ and $\eps|s|\le r_0$ with $r_0\ll1$,
			\bmas |G_j(z,s,\eps)-\gamma_j(s)|&=|G_j(z,s,\eps)-G_j(\gamma_j(s),s,0)|\\
			&\le |G_j(z,s,\eps)-G_j(\gamma_j(s),s,\eps)|+|G_j(\gamma_j(s),s,\eps)-G_j(\gamma_j(s),s,0)|\\
			&\le |\partial_z G_j(\widetilde{z},s,\eps)||z-\gamma_j(s)|+|\partial_{\eps} G_j(\gamma_j(s),s,\widetilde{\eps})||\eps|\le r_1|s|,\\
			|G_j(z_1,s,\eps)-G_j(z_2&,s,\eps)|\le |\partial_z G_j(\bar{z},s,\eps)||z_1-z_2|\le \frac12|z_1-z_2|,
			\emas
			where $\widetilde{\eps}$ is between $0$ and $\eps$, $\widetilde{z}$ is  between $z$ and $\gamma_j(s)$, and $\bar{z}$ is  between $z_1$ and $z_2$.

			Hence by the contraction mapping theorem, there exist exactly three functions $z_j(s,\eps)$, $j=-1,0,1$ for $\eps|s|\le r_0$ and $|z_j-\gamma_j(s)|\le r_1 |s|$ such that $G_j(z_j(s,\eps),s, \eps)=z_j(s,\eps)$ and $z_j(s,0)=\gamma_j(s)$. This is equivalent to that $D_1(z_j(s,\eps),s,\eps)=0$.
			Moreover, by \eqref{p_0}--\eqref{p_1a} we have
			\bma
			\partial_{\eps} z_{0}(s,0)&=-\frac{\partial_{\eps} D_1(0,s,0)}{\partial_z D_1(0,s,0)}=\frac{s^4}{1+s^2}(L^{-1}P_1(v_1\chi_2),v_1\chi_2), \label{bbb}\\
			\partial_{\eps} z_{\pm1}(s,0)&=-\frac{\partial_{\eps} D_1(\beta_{\pm1}(s),s,0)}{\partial_z D_1(\beta_{\pm1}(s),s,0)}=\frac{s^2}{2(1+s^2)}(L^{-1}P_1(v_1\chi_2),v_1\chi_2) .\label{z3}
			\ema
			Combining \eqref{aaa}, \eqref{bbb} and \eqref{z3}, we obtain \eqref{z2b} and \eqref{z4aa}.
			
			Finally, we deal with \eqref{z4bb}.
			Note that
			$$
			z_j(s,\eps)-\gamma_j(s)=[G_j(z_j(s,\eps),s,\eps)-G_j(\gamma_j(s),s,\eps)]+[G_j(\gamma_j(s),s,\eps)-G_j(\gamma_j(s),s,0)],
			$$
			we obtain
			\bmas
			&|z_j(s,\eps)-\gamma_j(s)| \le |\pt_zG_j(\tilde{z},s,\eps)||z_j-\gamma_j|+|\pt_{\eps}G_j(\gamma_j ,s,\tilde{\eps})| \eps\\
			\Longrightarrow &|z_j(s,\eps)-\gamma_j(s)| \le (1-|\pt_zG_j(\tilde{z},s,\eps)|)^{-1}|\pt_{\eps}G_j(\gamma_j ,s,\tilde{\eps})|\eps\le C\eps s^2.
			\emas
			Moreover, by \eqref{p_0} and \eqref{p_1a} we obtain that for $|z-\gamma_j(s)|\le C\eps |s|^2$ and $\eps|s|\le r_0$,
			\bmas |\partial_z G_j(z,s,\eps)|&=\left|1-(3\gamma_j(s)^2+1+ s^2)^{-1}\partial_{z}D_1(z,s,\eps)\right|\le C\eps s,\\
			|\partial^2_{\eps} G_j(z,s,\eps)|&=\left|(3\gamma_j(s)^2+1+ s^2)^{-1}\partial^2_{\eps}D_1(z,s,\eps)\right|\le C\frac{s(j^2s^2+s^4)}{1+s^2},
			\emas
			where we  had used
			$$
			\partial^2_{\eps}D_1(z,s,\eps)= -(z^2s^2+s^4)(2\partial_{\eps}R_{22}+\eps \partial^2_{\eps}R_{22}).
			$$
			Thus,
			\bmas
			&\,z_j(s,\eps)-\gamma_j(s)+ \eps b_j(s)\\
			=&\,[G_j(z_j(s,\eps),s,\eps)-G_j(\gamma_j(s),s,\eps)]\\
			&+[G_j(\gamma_j(s,\eps),s,\eps)-G_j(\gamma_j(s),s,0)-\pt_{\eps}G_j(\gamma_j(s),s,0) \eps]\\
			\le&\, |\pt_zG_j(\bar{z},s,\eps)||z_j-\gamma_j|+|\pt^2_{\eps}G_j(\gamma_j(s),s,\bar{\eps})|\eps^2\\
			\le&\, C\frac{\eps^2 s^3}{1+s^2}(j^2+s^2),
			\emas
			which leads to \eqref{z4bb}. The proof of the lemma is then completed.
		\end{proof}

		With the help of Lemmas \ref{eigen_2} and \ref{eigen_1}, we have the eigenvalue $\lambda_j(|\xi|,\eps)$ and the corresponding eigenfunction $\mathcal{U}_j(\xi,\eps)$
		of the operator $\tilde{\AA}_{\eps}(\xi)$  for $\eps |\xi|\le r_0$ as follows.
		
		\begin{lem}\label{eigen_4a}
			{\rm (1)} There exists a small constant $r_0>0$ such that $ \sigma(\tilde{\AA}_{\eps}(\xi))\cap \{\lambda\in \mathbb{C}\,|\, \mathrm{Re}\lambda>-\mu /2\} $ consists of nine points $\{\lambda_j(s,\eps),\, -1\le j\le 7\}$ for   $\eps |s| \le  r_0$ and $s=|\xi|$. The eigenvalues $\lambda_j(s,\eps)$  are $C^\infty$ functions of $s$ and $\eps $, and admit the following asymptotic expansions for $\eps |s| \le r_0 $:
			{ \bma\label{specr0}
				\lambda_{j}(s,\eps) &=\eps \eta_j(s)- \eps^2b_{j}(s) +O(\eps^3s^3),\quad j=-1,0,1,4,5,6,7,\\
				\lambda_{k}(s,\eps) &=\eps \eta_k(s)- \eps^2b_{k}(s) +O\(\frac{\eps^3s^5}{1+s^2}\), \quad k=2,3,\label{specr01}
				\ema
				where}
			\be \label{bj}
			\left\{\bln
			\eta_{\pm1}(s)&=\pm i \sqrt{1+\frac53s^2} ,\quad \eta_j(s)=0,\quad j=0,2,3, \\
			\eta_4(s)&=\eta_5(s)=- i \mbox{$\sqrt{1+ s^2}$},\quad \eta_6(s)=\eta_7(s)= i \mbox{$\sqrt{1+ s^2}$},\\
			b_0(s)&=-\frac{3(s^2+s^4)}{3+5s^2}(L^{-1}P_1(v_1\chi_4),v_1\chi_4),\\
			b_{\pm1}(s)&=-\frac12s^2(L^{-1}P_1(v_1\chi_1),v_1\chi_1)-\frac{s^4}{3+5s^2}(L^{-1}P_1(v_1\chi_4),v_1\chi_4),\\
			b_2(s)&=b_3(s)=-\frac{s^4}{1+s^2}(L^{-1}P_1(v_1\chi_2),v_1\chi_2),\\
			b_k(s)& =-\frac12s^2(L^{-1}P_1(v_1\chi_2),v_1\chi_2), \quad k=4,5,6,7.
			\eln\right.
			\ee
			
			{\rm (2)} The eigenfunctions $\mathcal{U}_j(\xi,\eps)=(u_j(\xi,\eps),X_j(\xi,\eps),Y_j(\xi,\eps))$, $-1\le j\le 7$ are $C^\infty$  in $s$ and $\eps$, and satisfy for $\eps|s| \le r_0$:
			$$
			(\mathcal{U}_i, \mathcal{U}^*_j)_\xi=(u_i,\overline{u_j})_\xi-(X_i,\overline{X_j})-(Y_i,\overline{Y_j})=\delta_{ij},\quad -1\leq i,j\leq 7,
			$$
			where $ \mathcal{U}^*_j=(\overline{u_j},-\overline{X_j},-\overline{Y_j})$.
			Moreover, for $ j=-1,0,1,$
			\bq  \label{eigf2}
			\left\{\bln
			& P_0u_j(\xi,\eps)=h_{j}(\xi)+O(\eps s),\\
			& P_1u_j(\xi,\eps)=i\eps sL^{-1} P_1[(v\cdot\omega)  h_{j}(\xi)]+O(\eps^2 s^2),\\
			& X_j(\xi,\eps)=Y_j(\xi,\eps)\equiv 0,
			\eln\right.
			\eq
			for $k=2,3,$
			\bq  \label{eigf22}
			\left\{\bln
			&P_0u_k(\xi,\eps)= \frac{s}{\sqrt{1+s^2} }\[1+O\( \frac{\eps^2 s^4}{1+s^2}\)\] (v\cdot W^k )\chi_0,  \\
			&P_1u_k(\xi,\eps)=\frac{i \eps s^2}{\sqrt{1+s^2}} L^{-1}P_1[(v\cdot\omega)(v\cdot W^k )\chi_0]+O\(\frac{\eps^2 s^3}{\sqrt{1+s^2}}\),\\
			& X_k(\xi,\eps)= O\(\frac{\eps s^3}{1+s^{3}}\)(\omega\times W^k ),\quad   Y_k(\xi,\eps)= -\frac{i }{\sqrt{1+s^2}} \[1+O\( \frac{\eps^2 s^4}{1+s^2}\)\]W^k ,
			\eln\right.
			\eq
			and   for $k=4,5,6,7,$
			\bq
			\left\{\bln
			&P_0u_k(\xi,\eps)= \frac{1}{\sqrt{2(1+s^2)} }[1+O(\eps s)] (v\cdot W^k )\chi_0,  \\
			&P_1u_k(\xi,\eps)=\frac{i \eps s}{\sqrt{2(1+s^2)}} L^{-1}P_1[(v\cdot\omega)(v\cdot W^k )\chi_0]+O\(\frac{\eps^2 s^2}{\sqrt{1+s^2}}\),\\
			& X_k(\xi,\eps)=  \frac{1}{\sqrt{2(1+s^2)} }\[\eta_k+O( \eps s^2 )\](\omega\times W^k ),\\
			&  Y_k(\xi,\eps)= \frac{i  s}{\sqrt{2(1+s^2)}}\[1+O\(\frac{\eps s^2}{\sqrt{1+s^2}}\)\] W^k ,
			\eln\right.
			\eq
			where $h_j$, $ j=-1,0,1$ is defined in \eqref{hj},  and $W^k$, $2\le k\le 7$ are normal vectors satisfying $W^2=W^4=W^6$, $W^3=W^5=W^7$, and $W^k\cdot\omega=W^2\cdot W^3=0$.
		\end{lem}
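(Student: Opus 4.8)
The plan is to combine the finite–dimensional reduction already carried out in \eqref{L_2}--\eqref{D0a} with the spectral localization of Lemma~\ref{spectrum2} and the root–counting of Lemmas~\ref{eigen_2} and \ref{eigen_1}. Recall that for $\mathrm{Re}\lambda>-\mu$ the microscopic part $f_1=P_1f$ of any eigenfunction is slaved to the macroscopic part through \eqref{A_4}, so that after setting $\lambda=\eps z$, writing $f_0=\sum_{j=0}^{4}W_j\chi_j$ and applying Lemma~\ref{eigen_0}, the eigenvalue problem $\tilde{\AA}_{\eps}(\xi)U=\lambda U$ splits into the decoupled longitudinal system \eqref{1A_12} in $(W_0,W\cdot\omega,W_4)$ and the transverse system \eqref{1A_12a} in $(\omega\times W,X,Y)\in(\C^3_\xi)^3$, which in turn reduce to the scalar dispersion relations $D_0(z,s,\eps)=0$ and $D_1(z,s,\eps)=0$ of \eqref{D0a}. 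The longitudinal block contributes one eigenvalue per root of $D_0$; the transverse block contributes, for each root of $D_1$, a two–dimensional eigenspace parametrized by an orthonormal pair $W^2\perp W^3$ of $\C^3_\xi$, using $\omega\times(\omega\times y)=-y$ on $\C^3_\xi$. Hence there are exactly $3+3\cdot2=9$ eigenvalues counted with multiplicity.

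For part (1), Lemma~\ref{eigen_2} provides precisely the three roots of $D_0$ near $\eta_{-1}(s),\eta_0(s)=0,\eta_1(s)$ with the expansion \eqref{z4b}, and Lemma~\ref{eigen_1} the three roots of $D_1$ near $\gamma_{-1}(s),\gamma_0(s)=0,\gamma_1(s)$ with \eqref{z4bb}; I index so that $j=-1,0,1$ come from $D_0$, $j=2,3$ from the double root of $D_1$ near $0$, and $j=4,5$ (resp. $j=6,7$) from the double root near $-i\sqrt{1+s^2}$ (resp. $i\sqrt{1+s^2}$). The $C^\infty$ dependence on $(s,\eps)$ is inherited from those two lemmas, and substituting \eqref{z4b}--\eqref{z4bb} into $\lambda_j=\eps z_j$ yields \eqref{specr0}--\eqref{specr01}, with $\eta_j,b_j$ of \eqref{bj} read off from \eqref{z4a} and \eqref{z4aa}. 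That these nine points exhaust $\sigma(\tilde{\AA}_{\eps}(\xi))\cap\{\mathrm{Re}\lambda>-\mu/2\}$ for $\eps|\xi|\le r_0$ follows from Lemma~\ref{spectrum2}(2): every such spectral point lies in a $\delta$–ball about some $\eps\eta_j(\xi)$, hence is a root of $D_0$ or $D_1$ in the regime covered by Lemmas~\ref{eigen_2}--\ref{eigen_1}, after shrinking $r_0$ and the auxiliary $r_1$ if needed.

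For part (2), at $\eps=0$ the systems \eqref{1A_12}--\eqref{1A_12a} are exactly the eigenvalue problems solved in \eqref{VV}--\eqref{VVV}, so the eigenfunctions are the normalized vectors $\Lambda_j(\xi)$ of \eqref{hj}; for $\eps>0$ one solves the $O(\eps)$–perturbed (and still $C^\infty$) finite systems order by order. For $j=-1,0,1$ the transverse variables vanish (as $D_1(z_j,s,\eps)\ne0$ there), so $X_j=Y_j\equiv0$ and $P_0u_j=h_j(\xi)+O(\eps s)$, whereupon \eqref{A_4} with $R(\lambda,\eps\xi)$ equal to $L^{-1}$ on $N_0^\bot$ up to $O(\eps)$ gives $P_1u_j=i\eps sL^{-1}P_1[(v\cdot\omega)h_j(\xi)]+O(\eps^2s^2)$, i.e. \eqref{eigf2}; for $j=2,\dots,7$ the longitudinal variables vanish, because $D_0(z_j,s,\eps)\ne0$ there (the $D_0$– and $D_1$–roots being distinct at order $\eps^0$ except near $0$, where \eqref{z4b}--\eqref{z4bb} separate them at order $\eps$), and solving \eqref{1A_12a}, \eqref{1A_12b} together with \eqref{A_4} yields \eqref{eigf22} and its analogue for $k=4,\dots,7$ with the stated remainders. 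Finally, using the form of $\tilde{\AA}_{\eps}(\xi)^*$ from the proof of Lemma~\ref{SG_1} and that $L$, $\nu$ are real, one checks directly that $\mathcal{U}^*_j=(\overline{u_j},-\overline{X_j},-\overline{Y_j})$ solves $\tilde{\AA}_{\eps}(\xi)^*\mathcal{U}^*_j=\overline{\lambda_j}\mathcal{U}^*_j$; then $(\lambda_i-\lambda_j)(\mathcal{U}_i,\mathcal{U}^*_j)_\xi=0$ forces $(\mathcal{U}_i,\mathcal{U}^*_j)_\xi=0$ whenever $\lambda_i\ne\lambda_j$, the equal–eigenvalue cases being handled by the choice $W^2\perp W^3$, and the normalization $(\mathcal{U}_j,\mathcal{U}^*_j)_\xi=1$ is arranged by rescaling, which is admissible since this quantity equals $1$ at $\eps=0$.

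The main obstacle is the eigenfunction bookkeeping in part (2): extracting from the perturbed finite systems the exact powers of $\eps$ \emph{and} of $s$ in every remainder (for instance the $O(\eps s^3/(1+s^3))$ in $X_k$ for $k=2,3$, or the $O(\eps^2 s^4/(1+s^2))$ corrections to the leading coefficients), which requires expanding $R(\lambda,\eps\xi)$ and the $R_{ij}(\eps z,\eps s)$ carefully while tracking the $(1+s^2)$–weights from the $\C^3_\xi$–normalization; part (1) is, by comparison, an almost immediate assembly of Lemmas~\ref{spectrum2}, \ref{eigen_2} and \ref{eigen_1}. A secondary point is to confirm that the three double roots of $D_1$ give genuinely rank–two eigenprojections, so that the $\Lambda_j(\xi)$ form a well–defined orthonormal system.
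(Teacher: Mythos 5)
Your proposal follows essentially the same route as the paper: the reduction to the dispersion relations $D_0=0$ and $D_1=0$ via \eqref{A_4} and Lemma \ref{eigen_0}, the root counts from Lemmas \ref{eigen_2} and \ref{eigen_1} combined with the localization of Lemma \ref{spectrum2}(2), and the adjoint/orthogonality argument for the normalization. The only difference is presentational: where you propose to solve the perturbed finite systems ``order by order,'' the paper writes the eigenfunctions in closed form (recovering $P_1u_j$ through the resolvent $(L-\eps z_j-i\eps sP_1(v\cdot\omega))^{-1}$ and fixing the coefficients $l_j(s,\eps)$ explicitly from the normalization condition), which is exactly the bookkeeping you correctly flag as the remaining work.
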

		
		\begin{proof}
			The eigenvalue $\lambda_j(s,\eps)$ and the eigenfunction $\mathcal{U}_j(\xi,\eps)$ for $-1\le j\le 7$ can be constructed as follows.
			
			For $j=-1,0,1,$ we take $\lambda_j=\eps z_j(s,\eps)$ with  $ z_j(s,\eps)$ being the solution of the equation  $D_0(z,s,\eps)=0$ given in Lemma \ref{eigen_2}, and choose $X=Y=\omega\times W=0$ in \eqref{1A_12a}. Denote by $\Gamma_j(s,\eps)=(\tilde a_j,\tilde b_j,\tilde c_j) =:(W^j_0,\, (W\cdot\omega)^j,\, W^j_4)$  a  solution of system \eqref{1A_12} for $z=z_j(s,\eps)$, which means that it is an eigenvector of the matrix $\mathbb{M}$ defined in  \eqref{BM}. Then we have
			\begin{align*}
				\Gamma_j(s,\eps)=V_j(s)+O(\eps s),\quad j=-1,0,1,
			\end{align*}
			where $V_j(s)$ is defined in \eqref{VV}.
			Then we define the corresponding eigenfunction $\mathcal{U}_j(\xi,\eps)=(u_j(\xi,\eps),0,0)$, $j=-1,0,1$   by
			\bq \label{C_8}
			\left\{\bln
			&u_j(\xi,\eps)=  P_0u_j(\xi,\eps)+  P_1u_j(\xi,\eps),\\
			& P_0u_j(\xi,\eps)=\tilde a_j(s,\eps)\chi_0+\tilde b_j(s,\eps)(v\cdot\omega)\chi_0+
			\tilde c_j(s,\eps)\chi_4,\\
			& P_1u_j(\xi,\eps)=i\eps s(L-\eps z_j-i\eps s P_1(v\cdot\omega))^{-1} P_1[(v\cdot\omega)  P_0u_j(s,\omega)].
			\eln\right.
			\eq
			Here we remark that the microscopic part $P_1u_j$ is recovered by the macroscopic part $P_0u_j$ using \eqref{A_4}. Then we have
			$$
			\|P_0u_j(\xi,\eps)-h_j(\xi)\|= O(\eps s),
			$$
			which implies \eqref{eigf2}.
			
			For $2\le j\le 7,$ we take $\lambda_j=\eps y_j(s,\eps)$ with $y_2=y_3= z_{0}(s,\eps) $, $y_4=y_5= z_{-1}(s,\eps) $, $y_6=y_7=  z_{1}(s,\eps) $ and $ z_j(s,\eps)$ being the solution to the equation  $D_1(z,s,\eps)=0$ given in Lemma \ref{eigen_1}, and choose $W_0=W\cdot\omega=W_4=0$ in \eqref{1A_12}. And  the corresponding eigenfunctions $\mathcal{U}_j(\xi,\eps)=(u_j(\xi,\eps),X_j(\xi,\eps),Y_j(\xi,\eps))$, $2\le j\le 7$ are defined by
			\be  \label{C_2}
			\left\{\bln
			& u_j(\xi,\eps)=l_j (v\cdot W^j )\chi_0   +i \eps sl_j (L-\eps y_j-i\eps s P_1(v\cdot\omega))^{-1}P_1[(v\cdot\omega)(v\cdot W^j )\chi_0],\\
			& X_j(\xi,\eps)= \frac{\eps s^2l_jR_j(s,\eps)}{y_j^2+1+s^2} (\omega\times W^j ),\\
			& Y_j(\xi,\eps)= \frac{l_j }{is} \bigg[1+ \frac{\eps s^2y_jR_j(s,\eps)}{y_j^2+1+s^2}\bigg]W^j, \quad j=2,3,\\
			\eln\right.
			\ee
			and
			\be  \label{C_3}
			\left\{\bln
			& u_j(\xi,\eps)=l_j (v\cdot W^j )\chi_0   +i \eps sl_j (L-\eps y_j-i\eps s P_1(v\cdot\omega))^{-1}P_1[(v\cdot\omega)(v\cdot W^j )\chi_0],\\
			& X_j(\xi,\eps)=l_j \(y_j-\eps s^2R_j(s,\eps)\)(\omega\times W^j ),\\
			& Y_j(\xi,\eps)= \frac{i sl_j }{y_j}(y_j-\eps s^2R_j(s,\eps))W^j, \quad j=4,5,6,7,
			\eln\right.
			\ee
			where  $R_j(s,\eps)=R_{22}(\eps y_j,\eps s)$, and $l_j=l_j(s,\eps)$ is a complex value function that will be fixed later, and $W^k$, $2\le k\le 7$ are normal vectors satisfying $W^2=W^4=W^6$, $W^3=W^5=W^7$, and $W^k\cdot\omega=W^2\cdot W^3=0$.
			
			Rewrite the eigenvalue problem as
			$$\tilde{\AA}_{\eps}(\xi)\mathcal{U}_j(\xi,\eps)=\lambda_j(s,\eps)\mathcal{U}_j(\xi,\eps), \quad -1\leq j\leq 7.$$
			Taking the inner product $(\cdot,\cdot)_{\xi}$ of the above equation with $\mathcal{U}^*_k(\xi,\eps)$ and using the facts that
			\bgrs
			(\tilde{\AA}_{\eps}(\xi)U,V)_{\xi} =(U,\tilde{\AA}_{\eps}^*(\xi)V)_{\xi},\quad U,V\in D(\tilde{\AA}_{\eps}(\xi)),
			\\
			\tilde{\AA}_{\eps}^*(\xi)\mathcal{U}^*_j(\xi,\eps) =\overline{\lambda_j(s,\eps)}\mathcal{U}^*_j(\xi,\eps) ,
			\egrs
			we have
			$$
			(\lambda_j(s,\eps)-\lambda_k(s,\eps))(\mathcal{U}_j(\xi,\eps),\mathcal{U}^*_k(\xi,\eps))_{\xi}=0,\quad -1\le j, k\le 7.
			$$
			For $\eps|s|\le r_0$, we have $\lambda_j(s,\eps)\neq \lambda_k(s,\eps)$ for
			$j, k\in\{-1,0,1,2,4,6\}$ and $j\ne k$.  Thus,
			$$
			\(\mathcal{U}_j(\xi,\eps),\mathcal{U}^*_k(\xi,\eps)\)_{\xi}=0,\quad -1\leq j\neq k\leq 7.
			$$
			We can normalize $\mathcal{U}_j(\xi,\eps)$ by taking
			$$\(\mathcal{U}_j(\xi,\eps),\mathcal{U}^*_j(s,\eps)\)_{\xi} =1,\quad -1\le j\le 7.$$
			The coefficients $l_j(s,\eps)$ for $2\le j\le 7$  are determined by the normalization condition
			\bmas
			&l_j(s,\eps)^2\bigg(1-\eps^2s^2D_j -(y_j-\eps s^2R_{j})^2+\frac{s^2}{y^2_j}(y_j-\eps s^2R_{j})^2\bigg)=1,\quad j=4,5,6,7,\\
			& l_k(s,\eps)^2\bigg(1-\eps^2s^2D_k -\frac{\eps^2 s^4 R_k^2 }{(y_k^2+1+s^2)^2} +\frac{1}{s^2}\bigg(1+ \frac{\eps s^2y_kR_k }{y_k^2+1+s^2}\bigg)^2\bigg)=1, \quad k=2,3,
			\emas
			where $R_j =R_{22}(\eps y_j,\eps s)$, and
			$$
			D_j(s,\eps)=(R(\eps y_j,\eps s)P_1(v_1 \chi_2), R(\eps\overline{y_j},-\eps s)P_1(v_1 \chi_2)), \quad 2\le j\le 7.
			$$
			For $j=4,5,6,7$,
			$$
			l_j(s,\eps)^2\[1-O(\eps^2s^2)+(1+s^2)(1+O(\eps s))+s^2(1+O(\eps s))\]=1,
			$$
			which gives
			$$
			l_j(s,\eps)=\frac{1}{\sqrt{2(1+s^2)}}[1+O(\eps s)], \quad j=4,5,6,7.
			$$
			For $k=2,3,$
			$$
			l_k(s,\eps)^2\[1-O(\eps^2s^2)-O\(\frac{\eps^2s^4}{(1+s^2)^2}\)+\frac{1}{s^2 }\(1+O\(\frac{\eps^2s^4}{1+s^2}\)\)\]=1,
			$$
			which gives
			$$
			l_k(s,\eps)=\frac{s}{\sqrt{1+s^2}}\[1+O\(\frac{\eps^2 s^4}{1+s^2}\)\], \quad k=2,3.
			$$
			This completes the proof of the lemma.
		\end{proof}

		\subsection{Eigenvalues in $\eps |\xi|\ge r_1$}

		We now turn to study the asymptotic expansions of the eigenvalues
		and eigenvectors in the high frequency region.
		Firstly,  we have the following lemma.
		\begin{lem}[{\cite[Lemma 2.12]{YZ}}]\label{LP031}
			For any $\eps\in(0,1)$ and  $\delta>0$, if $\mathrm{Re} \lambda\geq -\nu_0+\delta$, then
			\begin{align*}
				&\|(\lambda-D_\eps(\xi))^{-1}K\|\leq C\delta^{-\frac{1}{2}}(1+\eps|\xi|)^{-\frac{1}{2}},\\
				&\|(\lambda-D_\eps(\xi))^{-1}\chi_j\|\leq C\delta^{-\frac{1}{2}}(1+\eps|\xi|)^{-\frac{1}{2}},\ \ \ j=0,1,2,3,4.
			\end{align*}
			Furthermore, there exists a sufficiently large $R_0>0$ such that $\lambda-\tilde\BB_\eps(\xi)$ is invertible for $\mathrm{Re}\lambda\geq -\nu_0+\delta$ and $\eps|\xi|>R_0$, and
			\begin{align*}
				\|(\lambda-\tilde\BB_\eps(\xi))^{-1}\chi_j\|\leq C\delta^{-\frac{1}{2}}(1+\eps|\xi|)^{-\frac{1}{2}}.
			\end{align*}
		\end{lem}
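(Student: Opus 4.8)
\textbf{Overview and Step 1 (the bound on $(\lambda-D_\eps(\xi))^{-1}K$).} The plan is to reduce all three assertions to the single resolvent estimate $\|(\lambda-D_\eps(\xi))^{-1}K\|\le C\delta^{-1/2}(1+\eps|\xi|)^{-1/2}$, and then propagate it by multiplication by $\nu^{-1}$ and by a Neumann series. For this first estimate I would use duality: $K$ is self-adjoint and $D_\eps(\xi)$ is multiplication by $-\nu(v)-i\eps v\cdot\xi$, so $D_\eps(\xi)^*=D_\eps(-\xi)$ and $\big((\lambda-D_\eps(\xi))^{-1}K\big)^*=K(\overline\lambda-D_\eps(-\xi))^{-1}$. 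Since $\mathrm{Re}\,\overline\lambda=\mathrm{Re}\,\lambda\ge-\nu_0+\delta$ and $\eps|-\xi|=\eps|\xi|$, Lemma \ref{LP03}(1) applied to $K(\overline\lambda-D_\eps(-\xi))^{-1}$ gives $\|(\lambda-D_\eps(\xi))^{-1}K\|=\|K(\overline\lambda-D_\eps(-\xi))^{-1}\|\le C\delta^{-1/2}(1+\eps|\xi|)^{-1/2}$. (A self-contained argument would estimate the integral kernel $k(v,v_*)/(\lambda+\nu(v)+i\eps v\cdot\xi)$ directly, splitting $\eps|\xi|\le1$ — where $|\lambda+\nu(v)+i\eps v\cdot\xi|\ge\mathrm{Re}\,\lambda+\nu(v)\ge\delta+\nu_0|v|$ by \eqref{nu} — from $\eps|\xi|>1$ — where one first integrates along $\omega=\xi/|\xi|$, using $|\lambda+\nu(v)+i\eps v\cdot\xi|^2\ge\delta^2+(\mathrm{Im}\,\lambda+\eps|\xi|\,v\cdot\omega)^2$ and $\int_{\mathbb R}(\delta^2+(a+\eps|\xi|t)^2)^{-1}dt=\pi/(\eps|\xi|\delta)$ — together with the Grad decay estimates on $k$; this is the delicate point, because $K$ is not Hilbert--Schmidt into the $\nu$-weighted space, so a plain Hilbert--Schmidt bound diverges and one genuinely needs both the kernel decay and the oscillation of $\eps v\cdot\xi$.)

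\textbf{Step 2 (the bounds on $(\lambda-D_\eps(\xi))^{-1}\chi_j$).} Each $\chi_j$, $j=0,\dots,4$, lies in $N_0=\ker L$, so $L\chi_j=K\chi_j-\nu\chi_j=0$, i.e. $\chi_j=\nu^{-1}K\chi_j$. Since $(\lambda-D_\eps(\xi))^{-1}$ and $\nu^{-1}$ are both multiplication operators they commute, whence
\[(\lambda-D_\eps(\xi))^{-1}\chi_j=\nu^{-1}\,(\lambda-D_\eps(\xi))^{-1}K\chi_j.\]
Using $\|\nu^{-1}\|_{L^\infty}\le\nu_0^{-1}$, $\|\chi_j\|=1$, and Step 1, this gives $\|(\lambda-D_\eps(\xi))^{-1}\chi_j\|\le\nu_0^{-1}\|(\lambda-D_\eps(\xi))^{-1}K\|\le C\delta^{-1/2}(1+\eps|\xi|)^{-1/2}$ for every $j$.

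\textbf{Step 3 (invertibility of $\lambda-\tilde\BB_\eps(\xi)$ and its $\chi_j$-bound for $\eps|\xi|$ large).} From $L=K-\nu$ and \eqref{B(xi)} one has $\tilde\BB_\eps(\xi)=D_\eps(\xi)+K-i\eps\frac{v\cdot\xi}{|\xi|^2}P_d$, so
\[\lambda-\tilde\BB_\eps(\xi)=(\lambda-D_\eps(\xi))\Big[I-(\lambda-D_\eps(\xi))^{-1}K+i\eps(\lambda-D_\eps(\xi))^{-1}\tfrac{v\cdot\xi}{|\xi|^2}P_d\Big].\]
Since $\frac{v\cdot\xi}{|\xi|^2}P_d h=\frac1{|\xi|}(h,\chi_0)(v\cdot\omega)\chi_0$ with $(v\cdot\omega)\chi_0=\sum_i\omega_i\chi_i$, Step 2 yields $\|(\lambda-D_\eps(\xi))^{-1}\eps\frac{v\cdot\xi}{|\xi|^2}P_d\|\le C\frac{\eps}{|\xi|}\delta^{-1/2}(1+\eps|\xi|)^{-1/2}$; and for $\eps\in(0,1)$, $\eps|\xi|>R_0$ forces $|\xi|>R_0$, hence $\eps/|\xi|<1/R_0$. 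Combined with Step 1, the bracketed operator differs from $I$ by one of norm $\le C\delta^{-1/2}(1+R_0^{-1})(1+\eps|\xi|)^{-1/2}\le 2C\delta^{-1/2}R_0^{-1/2}$, which is $<\tfrac12$ once $R_0=R_0(\delta)$ is chosen large. A Neumann series then makes the bracket invertible with inverse norm $\le2$, so $\lambda-\tilde\BB_\eps(\xi)$ is invertible and
\[(\lambda-\tilde\BB_\eps(\xi))^{-1}\chi_j=\Big[I-(\lambda-D_\eps(\xi))^{-1}K+i\eps(\lambda-D_\eps(\xi))^{-1}\tfrac{v\cdot\xi}{|\xi|^2}P_d\Big]^{-1}(\lambda-D_\eps(\xi))^{-1}\chi_j,\]
giving $\|(\lambda-\tilde\BB_\eps(\xi))^{-1}\chi_j\|\le 2\|(\lambda-D_\eps(\xi))^{-1}\chi_j\|\le C\delta^{-1/2}(1+\eps|\xi|)^{-1/2}$ by Step 2. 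The $L^2$ and $L^2_\xi$ norms are uniformly equivalent for $|\xi|\ge R_0$ by \eqref{eee}, so it is immaterial in which one we work.

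\textbf{Main obstacle.} All the genuine content is in Step 1; Steps 2 and 3 are bookkeeping. Quoted from Lemma \ref{LP03}(1) the $K$-bound is immediate, but reproving the $(1+\eps|\xi|)^{-1/2}$ gain for $(\lambda-D_\eps(\xi))^{-1}K$ from scratch is where the real work lies, since it cannot be obtained by a crude Hilbert--Schmidt estimate and requires exploiting simultaneously the collision-kernel decay and the high-frequency oscillation.
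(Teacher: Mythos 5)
Your proposal is correct, and there is nothing in the paper to compare it against: the paper gives no proof of this lemma at all, importing it verbatim as Lemma 2.12 of \cite{YZ}, just as it imports Lemma \ref{LP03} from the same source. Granting Lemma \ref{LP03}(1) — which is the only place the real analytic work sits, exactly as you say — your three reductions are all sound. The duality step is clean: $K$ is self-adjoint with real symmetric kernel and $D_\eps(\xi)^*=D_\eps(-\xi)$, so $\|(\lambda-D_\eps(\xi))^{-1}K\|=\|K(\overline\lambda-D_\eps(-\xi))^{-1}\|$ and Lemma \ref{LP03}(1) applies with $(\overline\lambda,-\xi)$. The identity $\chi_j=\nu^{-1}K\chi_j$ (from $L\chi_j=0$) together with commutation of the two multiplication operators is a neat way to transfer the bound to $\chi_j$; a direct computation of $\int|\chi_j|^2|\lambda+\nu+i\eps v\cdot\xi|^{-2}dv$, integrating first in $v\cdot\omega$, gives the same $C\delta^{-1/2}(1+\eps|\xi|)^{-1/2}$ and would make Step 2 independent of Step 1, but your route is equally valid. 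The factorization of $\lambda-\tilde\BB_\eps(\xi)$ through $\lambda-D_\eps(\xi)$ and the Neumann series are standard and correctly executed: the $P_d$ term contributes $O(\eps|\xi|^{-1}\delta^{-1/2}(1+\eps|\xi|)^{-1/2})$ by Step 2 since $\frac{v\cdot\xi}{|\xi|^2}P_dh$ is a multiple of $\sum_i\omega_i\chi_i/|\xi|$, and $\eps|\xi|>R_0$ with $\eps<1$ forces $|\xi|>R_0$, so both perturbations are small for $R_0=R_0(\delta)$ large. Your closing caveat is also accurate: if one were required to prove Lemma \ref{LP03}(1) itself, the argument would need the Grad kernel estimates plus the one-dimensional integration in the $\omega$-direction to extract the $(1+\eps|\xi|)^{-1/2}$ gain, and a crude Hilbert--Schmidt bound does not suffice; but at the level of rigor the paper itself adopts, your proof is complete.
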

		
		Recalling
		the eigenvalue problem
		\bma
		\lambda f
		&=\tilde\BB_{\eps}(\xi)f-\eps v\chi_0\cdot(\omega\times X),\label{L_3}\\
		\lambda X&=-\eps\omega\times (f,v\chi_0)+i\eps\xi\times Y,\label{L_3a}\\
		\lambda Y&=-i\eps\xi\times X,\quad |\xi|\ne0.\label{A_13}
		\ema
		By Lemma \ref{LP031}, there exists a large constant $R_0>0$ such that the operator $\lambda-\tilde\BB_{\eps}(\xi)$ is invertible on $L^2_{\xi}(\R^3)$ for ${\rm Re}\lambda\ge-\nu_0/2$ and $\eps|\xi|>R_0$. Then it follows from \eqref{L_3} that
		\bq f=\eps (\tilde\BB_{\eps}(\xi)-\lambda)^{-1}v\chi_0\cdot(\omega\times X),\quad \eps |\xi|>R_0.\label{L_5}\eq
		By a rotation transform, we obtain
		\bma
		((\tilde\BB_{\eps}(\xi)-\lambda)^{-1}\chi_i,\chi_j)=&\,\omega_i\omega_j((\tilde\BB_{\eps}(se_1)-\lambda)^{-1}\chi_1,\chi_1) \nnm\\
		&+(\delta_{ij}-\omega_i\omega_j)((\tilde\BB_{\eps}(se_1)-\lambda)^{-1}\chi_2,\chi_2), \label{B_2a}
		\ema
		where $e_1=(1,0,0)$, $s=|\xi|$ and $\omega=\xi/|\xi|$. Substituting \eqref{L_5} and \eqref{B_2a} into \eqref{L_3a}, we obtain
		\be
		\lambda X =\eps^2((\tilde\BB_{\eps}(se_1)-\lambda)^{-1}\chi_2,\chi_2)X+i\eps\xi\times Y, \quad \eps s>R_0. \label{A_12}
		\ee
		Multiplying \eqref{A_12} by $\lambda$ and using \eqref{A_13}, we obtain
		$$ \(\lambda^2-\eps^2((\tilde\BB_{\eps}(se_1)-\lambda)^{-1}\chi_2,\chi_2)\lambda+\eps^2s^2\)X=0, \quad \eps s>R_0.$$
		Denote
		\bq D_2(z,s,\eps)=z^2-\eps((\tilde\BB_{\eps}(se_1)-\eps z)^{-1}\chi_2,\chi_2)z+s^2,\quad \eps s>R_0.\label{D3a}\eq
		
		The eigenvalues $\lambda=\eps z$ can be solved by $D_2(z,s,\eps)=0$.
		We study the equation \eqref{D3a} in the follwing lemma.
		
		\begin{lem}\label{eigen_5}
			There exists a large constant $r_1>0$ such that the equation $D_2(z,s,\eps)=0$ has two solutions $z_j(s,\eps)=ji s+\zeta_j(s,\eps) $, $j=\pm1$ for $\eps s>r_1$, where $\zeta_j(s,\eps)$ is a $C^{\infty}$ function in $s$ and $\eps$ for   $\eps s>r_1$  satisfying
			\be\label{eigen_h1}
			\frac{C_1}{ s}\leq -\mathrm{Re} \zeta_j(s,\eps)\leq \frac{C_2}{ s},\quad |\mathrm{Im}\zeta_j(s,\eps)|\leq C_3\frac{\ln \eps s}{ s},
			\ee
			for some $C_1,C_2,C_3>0$.
		\end{lem}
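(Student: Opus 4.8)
The plan is to reduce everything to the scalar function
$$
g(z,s,\eps):=\bigl((\tilde\BB_\eps(se_1)-\eps z)^{-1}\chi_2,\chi_2\bigr),
$$
for which $D_2(z,s,\eps)=z^2-\eps g(z,s,\eps)z+s^2$, and then to locate its two zeros near $z=jis$, $j=\pm1$. First I would note that, by Lemma \ref{LP031}, $\tilde\BB_\eps(se_1)-\eps z$ is invertible on $L^2(\R^3_v)$ whenever $\eps s>R_0$ and $\mathrm{Re}(\eps z)\ge-\nu_0/2$; hence $g(\cdot,s,\eps)$ is holomorphic on the closed disk $\{|z-jis|\le\rho_0\}$ for any fixed $\rho_0\le\nu_0/2$ and $\eps s>R_0$, and since the resolvent of the jointly analytic family $\tilde\BB_\eps(se_1)-\eps z$ depends analytically on $(z,s,\eps)$ there, $g$ is $C^\infty$ on $\{|z-jis|\le\rho_0,\ \eps s>R_0\}$. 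Substituting $z=jis+\zeta$ and using $z^2+s^2=2jis\,\zeta+\zeta^2$, the equation $D_2=0$ becomes the fixed-point problem
$$
\zeta=\mathcal T_j(\zeta,s,\eps):=\frac{\eps}{2}\,g(jis+\zeta,s,\eps)\Bigl(1+\frac{\zeta}{jis}\Bigr)-\frac{\zeta^2}{2jis}.
$$

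The core of the proof is to establish, for $|z-jis|\le\rho_0$ and $\eps s>r_1$ (with $r_1>R_0$ large, to be fixed), the estimates
$$
\mathrm{(a)}\ \ |\mathrm{Re}\,g|\le\frac{C_0}{\eps s},\quad |g|\le\frac{C_0(1+\ln(\eps s))}{\eps s};\qquad
\mathrm{(b)}\ \ \mathrm{Re}\,g\le-\frac{c_0}{\eps s};\qquad
\mathrm{(c)}\ \ |\partial_z g|\le\frac{C_0}{s}.
$$
Granting these, one checks by direct estimation that, once $r_1$ is large (so $\ln(\eps s)/s\le1$), $\mathcal T_j(\cdot,s,\eps)$ maps the ball $\mathcal B_j:=\{\zeta:\ |\zeta|\le M_0\ln(\eps s)/s\}\subset\{|z-jis|\le\rho_0\}$ into itself and is a $\tfrac12$-contraction on it — the range bound uses the second inequality in (a), and the Lipschitz bound uses (a) and (c) together with $|\zeta|/s\le M_0\ln(\eps s)/s^2$. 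The Banach fixed-point theorem then yields a unique $\zeta_j(s,\eps)\in\mathcal B_j$; applying the implicit function theorem to $\zeta-\mathcal T_j(\zeta,s,\eps)=0$, whose $\zeta$-derivative $1-\partial_\zeta\mathcal T_j$ is invertible, shows $\zeta_j\in C^\infty$ in $(s,\eps)$. Setting $z_j:=jis+\zeta_j$ gives $D_2(z_j,s,\eps)=0$, and since $D_2$ is quadratic in $z$ these are its only two roots near $\pm is$. Finally, from $\zeta_j=\tfrac{\eps}{2}\,g(jis+\zeta_j,s,\eps)+O\bigl(\ln^2(\eps s)/s^3\bigr)$ together with (a) and (b), one reads off $C_1/s\le-\mathrm{Re}\,\zeta_j\le C_2/s$ and $|\mathrm{Im}\,\zeta_j|\le C_3\ln(\eps s)/s$, which is \eqref{eigen_h1}.

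Estimate (c) is immediate from $\partial_z g=\eps\bigl((\tilde\BB_\eps(se_1)-\eps z)^{-2}\chi_2,\chi_2\bigr)$ and the bound $\|(\tilde\BB_\eps(se_1)-\eps z)^{-1}\chi_2\|\le C(\eps s)^{-1/2}$ of Lemma \ref{LP031} (applied to $\tilde\BB_\eps$ and to its adjoint). For (a) and (b) I would peel off the dominant multiplication part: writing $\mathcal D:=D_\eps(se_1)-\eps z=-\nu(v)-i\eps s v_1-\eps z$, one has $\tilde\BB_\eps(se_1)-\eps z=\bigl(I+(K-i\tfrac{\eps v_1}{s}P_d)\mathcal D^{-1}\bigr)\mathcal D$, and by Lemma \ref{LP03}(1),(4) the operator $(K-i\tfrac{\eps v_1}{s}P_d)\mathcal D^{-1}$ has norm $\le C\bigl((\eps s)^{-1/2}+\eps/s\bigr)\le\tfrac12$ once $r_1$ is large. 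Since $g_1:=\mathcal D^{-1}\chi_2=v_2\sqrt M\,(-\nu-i\eps s v_1-\eps z)^{-1}$ satisfies $P_d g_1=0$ by oddness in $v_2$, the Neumann series gives $g=\bigl(\mathcal D^{-1}\chi_2,\chi_2\bigr)-\iint_{\R^6}k(v,v_*)g_1(v)g_1(v_*)\,dv\,dv_*+\cdots$, with leading term $I_0:=\bigl(\mathcal D^{-1}\chi_2,\chi_2\bigr)=\int_{\R^3}v_2^2 M(v)\,(-\nu(v)-i\eps s v_1-\eps z)^{-1}\,dv$. Writing $\eps z=ji\eps s+\eps\zeta$, the denominator of $I_0$ is $-(\nu+\eps\mathrm{Re}\,\zeta)-i\eps(s(v_1+j)+\mathrm{Im}\,\zeta)$; splitting the $v_1$-integral at $|v_1+j|=(\eps s)^{-1}$ and using $|{-\nu-i\eps s v_1-\eps z}|\ge\nu_0/2$ on the slab and $\ge\tfrac12\eps s|v_1+j|$ off it gives $|\mathrm{Re}\,I_0|\le C_0(\eps s)^{-1}$ and $|I_0|\le C_0(\eps s)^{-1}\ln(\eps s)$, while $\mathrm{Re}\,I_0=-\int(\nu+\eps\mathrm{Re}\,\zeta)v_2^2 M\bigl[(\nu+\eps\mathrm{Re}\,\zeta)^2+\eps^2(s(v_1+j)+\mathrm{Im}\,\zeta)^2\bigr]^{-1}dv<0$ is bounded below in absolute value by the contribution of the slab intersected with $\{|v_\perp|\le1\}$, so $\mathrm{Re}\,I_0\le-c_0(\eps s)^{-1}$.

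The main difficulty, which I expect to be the technical heart, is to show that all correction terms in the Neumann expansion are of strictly smaller order, $O\bigl((\eps s)^{-2}\ln^2(\eps s)\bigr)$, rather than the $O\bigl((\eps s)^{-1}\bigr)$ produced by the crude bound $|\iint k g_1 g_1|\le\|K\|\,\|g_1\|^2$ with $\|g_1\|^2=O((\eps s)^{-1})$ — this crude estimate is genuinely insufficient since its constant need not beat $c_0$ in (b). The resolution is that $g_1$ concentrates in the thin slab $\{|v_1+j|\lesssim(\eps s)^{-1}\}$ with $\int_\R g_1(v_1,v_\perp)\,dv_1=O((\eps s)^{-1})$ for each fixed $v_\perp$ (again by a split at $|v_1+j|=(\eps s)^{-1}$ using the explicit form of $\mathcal D^{-1}$), while the only singularity of the kernel $k(v,v_*)$ is the $|v-v_*|^{-1}$ one, which is integrable in the two transverse variables; hence each of the two $v_1$-integrations against $k$ contributes an extra factor $(\eps s)^{-1}$, and the same mechanism, together with $\|(K-i\tfrac{\eps v_1}{s}P_d)\mathcal D^{-1}\|\le\tfrac12$, controls every higher term. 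This yields $g=I_0+O\bigl((\eps s)^{-2}\ln^2(\eps s)\bigr)$, hence (a) and (b), and closes the argument; the derivative bounds needed for smoothness and for (c) follow by differentiating the resolvent identity and repeating the same estimates.
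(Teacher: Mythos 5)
Your proposal is correct and follows essentially the same route as the paper: a contraction/fixed-point localization of the roots of $D_2$ near $z=\pm is$, followed by splitting the resolvent $(\tilde\BB_\eps(se_1)-\eps z)^{-1}$ into the explicit multiplier $(-\nu-i\eps s v_1-\eps z)^{-1}$ (whose pairing with $\chi_2$ gives the leading real part $\sim -(\eps s)^{-1}$ and the $\ln(\eps s)$ imaginary bound via the change of variables $u_1=\eps s(v_1\pm1)$) plus a Neumann-series correction controlled by the concentration of $\mathcal D^{-1}\chi_2$ in the slab $|v_1+j|\lesssim(\eps s)^{-1}$ together with the integrable $|\bar v-\bar v_*|^{-1}$ singularity of $k$. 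The only differences are cosmetic — you use the linearized fixed-point map $\mathcal T_j$ rather than the quadratic-formula branch $G_j$, and you aim for a correction of order $(\eps s)^{-2}\ln^2(\eps s)$ where the paper settles for the (equally sufficient) $(\eps s)^{-3/2}\ln(\eps s)$ obtained by measuring one factor in $L^2$ and packaging the higher Neumann terms through $(\mathrm{Id}-Y_j)^{-1}$.
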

		\begin{proof}
			For any fixed $s,\eps$ satisfying $\eps s>R_1$, we define a function of $z$ as
			\bq\label{Gj}
			G_j(z,s,\eps)=\frac{1}{2}\left(R_0(z,s,\eps)+j\sqrt{R_0(z,s,\eps)^2-4s^2}\right),\quad\quad j=\pm 1,
			\eq
			where $R_0(z,s,\eps)=\eps((\tilde\BB_{\eps}(se_1)-\eps z)^{-1}\chi_2,\chi_2)$. One can check that a solution of $D_2(z,s,\eps)=0$ is a fixed point of the map $z\to G_j(z_j,s,\eps)$.
			
			For $\delta>0$ small enough, and $z,z_1,z_2\in B(jis,\delta)$, we have
			\begin{align*}
				|G_j(z,s,\eps)-jis|&\leq \frac{1}{2}|R_0(z,s,\eps)|+\frac{R_0(z,s,\eps)^2}{2|\sqrt{R_0(z,s,\eps)^2-4s^2}+2is|}\leq \delta,\\
				|G_j(z_1,s,\eps)-G_j(z_2,s,\eps)|&\leq \frac{1}{2}|R_0(z_1,s,\eps)-R_0(z_2,s,\eps)|\\
				&\quad+\frac{R_0(z_1,s,\eps)^2-R_0(z_2,s,\eps)^2}{2\sqrt{R_0(z_1,s,\eps)^2-4s^2}+2\sqrt{R_0(z_2,s,\eps)^2-4s^2}}\\
				&\leq C\eps^2|z_1-z_2|\leq \frac{1}{2}|z_1-z_2|.
			\end{align*}
			By the contraction mapping theorem, there exists a unique fixed point $z_j(s,\eps)$ satisfying $z_j(s,\eps)=G_j(z_j(s,\eps),s,\eps)$, which is a solution to the equation $D_2(z,s,\eps)=0$.
			Set $\zeta_j(s,\eps)=z_j(s,\eps)-jis$. Then
			\be\label{zej}
			|\zeta_j(s,\eps)|\leq \frac{1}{2}|R_0(z_j,s,\eps)|+\frac{R_0(z_j,s,\eps)^2}{2|\sqrt{R_0(z_j,s,\eps)^2-4s^2}+2is|}\leq C\eps (\eps s)^{-\frac{1}{2}}.
			\ee
			By \eqref{Gj}, we obtain
			$$
			\zeta_j(s,\eps)=\frac12 R_j(\zeta_j,s,\eps)+\frac12 \frac{jR_j^2(\zeta_j,s,\eps)}{\sqrt{R_j(\zeta_j,s,\eps)^2-4s^2}+2is}=:I_1+I_2,
			$$
			where
			$$
			R_j(\zeta_j,s,\eps)=R_0(z_j,s,\eps)=\eps((\tilde\BB_{\eps}(se_1)-\eps \zeta_j+\eps jis)^{-1}\chi_2,\chi_2).
			$$
			First, we estimate $I_1$. For this, we decompose
			\begin{align*}
				-	(\tilde\BB_{\eps}(se_1)-\eps \zeta_j+\eps jis)^{-1}&=(K-\nu(v)-i\eps sv_1 -i\eps s^{-1}v_1 P_{ d}-\eps \zeta_j+\eps jis)^{-1}\\
				&=X_j(s,\eps)+Z_j(s,\eps),
			\end{align*}
			where
			\be\label{XYZ}
			\left\{\bln
			&X_j(s,\eps)=(\nu(v)+i\eps s(v_1+j))^{-1},\\
			&Z_j(s,\eps)=\(\mathrm{Id}-Y_j(s,\eps)\)^{-1}Y_j(s,\eps)X_j(s,\eps),\\
			&Y_j(s,\eps)=X_j\(K -i\eps s^{-1}v_1 P_{ d}-\eps \zeta_j\).
			\eln\right.
			\ee
			Then we have
			$$
			I_1=-\frac{\eps}{2}\(X_j(s,\eps)\chi_2,\chi_2\)-\frac{\eps}{2}\(Z_j(s,\eps)\chi_2,\chi_2\)=:I_{3}+I_{4}.
			$$
			It holds that
			\begin{align*}
				I_3&=-\frac{\eps}{2}\int_{\mathbb{R}^3}\frac{\nu}{\nu^2+\eps^2s^2(v_1\pm1)^2}v_2^2 Mdv+\frac{\eps}{2}\int_{\mathbb{R}^3}\frac{i\eps s (v_1\pm 1)}{\nu^2+\eps^2s^2(v_1\pm1)^2}v_2^2 Mdv\\
				&=:\mathrm{Re} I_{3}+i\mathrm{Im} I_{3}.
			\end{align*}
			By changing variable $(u_1,u_2,u_3)=(\eps s (v_1\pm 1),v_2,v_3)$, we obtain for $\eps s\ge 2$ that
			\begin{align}
				-	\mathrm{Re} I_{3}&\geq \frac{1}{2s}\int_{\R^3}\frac{\nu_0 u_2^2}{\nu_1^2(1+(\frac{u_1}{\eps s}\mp1)^2+u^2_2+u^2_3)+u_1^2}e^{-\frac{1}{2}(\frac{u_1}{\eps s }\pm1)^2}e^{-\frac{u_2^2+u_3^2}{2}}du\nnm\\
				&\ge \frac{C}{s}\intr \frac{u_2^2}{\nu_1^2(3+2u_1^2+u^2_2+u^2_3)+u_1^2} e^{-\frac{u_1^2+u_2^2+u_3^2}2}du_1du_2du_3\ge \frac{C_1}{s},\label{rei3l}
				\\
				-	\mathrm{Re} I_{3}&\leq \frac{C}{ s}\int_{\R^3}\frac{1}{\nu_0^2+u_1^2}e^{-\frac{1}{4}(\frac{u_1}{\eps s }\pm1)^2}e^{-\frac{u_2^2+u_3^2}{4}}du_1du_2du_3\nnm\\
				&\le \frac{C}{s}\int^\infty_0 \frac{1}{\nu_0^2+u_1^2}du_1\le \frac{C_2}{s}, \label{rei3u}
			\end{align}
			and
			\begin{align}
				|\mathrm{Im} I_{3}|&\leq \frac{C}{ s}\int_{\R^3}\frac{|u_1|}{\nu_0^2+u_1^2}e^{-\frac{1}{4}(\frac{u_1}{\eps s }\pm1)^2}e^{-\frac{u_2^2+u_3^2}{4}}du_1du_2du_3\nonumber\\
				&\leq \frac{C}{ s}\int_{0}^{\eps s}\frac{u_1}{\nu_0^2+u_1^2}du_1+\frac{C}{\eps s^2}\int_{\eps s }^\infty e^{-\frac{1}{4}(\frac{u_1}{\eps s }\pm1)^2}du_1\nonumber\\
				&\leq C_3\frac{\ln \eps s}{s}. \label{imi3}
			\end{align}
			Then we consider $I_4$. Note that $(X_j(s,\eps)\chi_2,\chi_0)=0$. This together with \eqref{XYZ} implies that
			$$
			Z_j(s,\eps)\chi_2=\(\mathrm{Id}-Y_j(s,\eps)\)^{-1}X_j(s,\eps)(K-\eps \zeta_j)X_j(s,\eps)\chi_2.
			$$
			From \eqref{L_1} we have
			$$
			|k(v,u)|\leq C\frac{1}{|\bar u-\bar v|}e^{-\frac{|u-v|^2}{8}},\ \ \bar u=(u_2,u_3).
			$$
			Hence, we obtain
			\begin{align*}
				|KX_{\pm1}\chi_2|&\leq C\int_\R e^{-\frac{|u_1-v_1|^2}{8}}\frac{\nu+|(u_1\pm1)\eps s|}{\nu_0^2+|(u_1\pm1)\eps s|^2}e^{-\frac{u_1^2}{4}}du_1\int_{\R^2}\frac{1}{\bar u-\bar v}e^{-\frac{|\bar u-\bar v|^2}{8}}e^{-\frac{|\bar u|^2}{4}}d\bar u\\
				&\leq Ce^{-\frac{|v|^2}{8}}\int_\R \frac{1+|(u_1\pm1)\eps s|}{\nu_0^2+|(u_1\pm1)\eps s|^2}e^{-\frac{u_1^2}{8}}du_1\leq C\frac{\ln  \eps s}{\eps s}e^{-\frac{|v|^2}{8}}.
			\end{align*}
			This and \eqref{rei3l} yield
			$$
			\|X_{\pm1}KX_{\pm1}\chi_2\|^2\leq
			C\frac{\ln ^2\eps s}{\eps^2 s^2}\int_{\R^3}\frac{1}{\nu_0^2+\eps^2s^2(v_1\pm 1)^2}e^{-\frac{|v|^2}{4}}dv\leq C\frac{\ln ^2\eps s}{\eps^3 s^3}.
			$$
			By \eqref{zej} and Lemma \ref{LP031}, it holds for $\eps s>R_1$,
			$$
			\|(\mathrm{Id}-Y_{\pm1}(s,\eps))^{-1}\|\leq 2.
			$$
			Thus,
			\bma\label{i4}
			|I_4|&\leq C\eps|X_j(K -\eps \zeta_j)X_j\chi_2,(\mathrm{Id}-Y_j^*)^{-1}\chi_2| \nnm\\
			&\leq C\eps(\|X_jKX_j\chi_2\|+\eps|y_j|\|X_j^2\chi_2\|)\leq C\frac{\ln \eps s}{\eps^{\frac{1}{2}}s^{\frac32}}.
			\ema
			Finally, for $I_2$ we have
			\be\label{i2b}
			|I_2|\leq \frac{C}{s}|R_j(y_j,s,\eps)|^2\leq\frac{C}{s}|I_1|^2\leq \frac{\ln ^2\eps s}{s^3}.
			\ee
			Then \eqref{eigen_h1} follows from \eqref{rei3l}, \eqref{rei3u}, \eqref{imi3}, \eqref{i4} and \eqref{i2b}. 
		\end{proof}

		With the help of Lemma \ref{eigen_5}, we have the eigenvalues $\beta_j(|\xi|,\eps)$
		and the corresponding eigenvectors $\mathcal{V}_j(\xi,\eps)$ of the operator $\tilde{\AA}_{\eps}(\xi)$ for $\eps|\xi|\ge r_1$ as follows.
		
		\begin{lem}\label{eigen_4}
			{\rm (1)} There exists a constant $r_1>0$ such that the spectrum $\sigma(\tilde{\AA}_{\eps}(\xi))\cap \{\lambda\in\mathbb{C}\,|\,\mathrm{Re}\lambda>-\mu/2\}$  consists of four eigenvalues $\{\beta_j(s,\eps),\ j=1,2,3,4\}$ for  $\eps s>r_1$ and $s=|\xi|$. In particular, the eigenvalues $\beta_j(s,\eps)$ are $C^{\infty}$ functions in $s$ and $\eps $ and satisfy the following expansion for $\eps s>r_1$:
			\be   \label{specr1a}
			\left\{\bln
			&\beta_1(s,\eps) =\beta_2(s,\eps) = -\eps is+\eps\zeta_{-1}(s,\eps), \\
			&\beta_3(s,\eps) =\beta_4(s,\eps) = \eps is+\eps\zeta_{1}(s,\eps),
			\eln\right.
			\ee
			where $\zeta_{\pm1}(s,\eps)$ is a $C^{\infty}$ function in $s$ and $\eps $ for $\eps s>r_1$ satisfying
			\eqref{eigen_h1}.
			
			{\rm (2)} The eigenvectors $\mathcal{V}_j(\xi,\eps)=\(w_j(\xi,\eps),X_j(\xi,\eps),Y_j(\xi,\eps)\)$, $j=1,2,3,4$ are $C^{\infty}$ in $s$ and $\eps$,  and satisfy for $\eps s>r_1$:
			\bq   \label{eigf2a1}
			\left\{\bln
			&(\mathcal{V}_i ,\mathcal{V}^*_j )=(w_i,\overline{w_j})-(X_i,\overline{X_j})-(Y_i,\overline{Y_j})=\delta_{ij}, \quad 1\le i,j\le 4,\\
			&w_j(\xi,\eps)=\eps c_j(s,\eps)\(\beta_j(s,\eps)-L + i \eps s (v\cdot\omega)+i\eps\frac{  v\cdot\omega}{s} P_d\)^{-1}(v\cdot e_j)\chi_0,\\
			&X_j(\xi,\eps)=c_j(s,\eps)\omega\times \tilde e_j,\quad Y_j(\xi,\eps)=\frac{ i\eps s c_j(s,\eps)}{\beta_{j}(s,\eps)}\tilde e_j,
			\eln\right.
			\eq
			where  $ \mathcal{V}^*_j=(\overline{w_j},-\overline{X_j},-\overline{Y_j})$, and $\tilde e_k$, $k=1,2,3,4$ are normal vectors satisfying $\tilde e_1=\tilde e_3$, $\tilde e_2=\tilde e_4$, and $\tilde e_k\cdot\omega=\tilde e_1\cdot \tilde e_2=0$, and $c_j(s,\eps)$ are $C^{\infty}$ functions of $s$ and $\eps$ for $\eps s>r_1$ satisfying
			\be \label{eigf3a1}
			c_j(s,\eps) = i\frac1{\sqrt{2}}+\eps^2 O\(\frac{1}{\eps s}\).
			\ee
		\end{lem}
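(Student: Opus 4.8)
The plan is to reduce the eigenvalue problem \eqref{L_3}--\eqref{A_13} in the high‑frequency regime $\eps|\xi|\ge r_1$ to the scalar equation $D_2(z,s,\eps)=0$ of Lemma~\ref{eigen_5}, and then to reconstruct the full eigenvectors from each scalar root. First I would record the correspondence between $\sigma(\tilde{\AA}_\eps(\xi))\cap\{\mathrm{Re}\lambda>-\mu/2\}$ (for $\eps s>r_1$) and the roots of $D_2$. For the forward direction, given a root $z$ of $D_2$ and $\lambda=\eps z$, one checks that for \emph{any} $X\in\mathbb C^3_\xi$ the triple
\[
Y=-\tfrac{i}{\lambda}\eps\xi\times X,\qquad f=\eps(\tilde\BB_\eps(\xi)-\lambda)^{-1}v\chi_0\cdot(\omega\times X)
\]
(legitimate since $\lambda-\tilde\BB_\eps(\xi)$ is invertible for $\eps s>R_0$ by Lemma~\ref{LP031}) solves \eqref{L_3}--\eqref{A_13}: this is the reduction leading to \eqref{A_12} and \eqref{D3a} run backwards, and it closes precisely because $D_2(z)=0$. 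For the converse, if $\lambda$ is an eigenvalue with eigenvector $(f,X,Y)$ then $X\neq0$ (otherwise $Y=0$ and $\lambda$ is an eigenvalue of $\tilde\BB_\eps(\xi)$, impossible by Lemma~\ref{LP031}), and the same elimination gives $\bigl(\lambda^2-\eps^2R_0(\lambda)\lambda+\eps^2s^2\bigr)X=0$ with $R_0(\lambda)=((\tilde\BB_\eps(se_1)-\lambda)^{-1}\chi_2,\chi_2)$, i.e. $D_2(z)=0$. Since the admissible $X$ fill $\mathbb C^3_\xi$, of dimension two, Lemma~\ref{eigen_5} yields exactly the four eigenvalues $\beta_1=\beta_2=\eps z_{-1}$, $\beta_3=\beta_4=\eps z_1$ of \eqref{specr1a}, which lie in $\{\mathrm{Re}\lambda>-\mu/2\}$ once $r_1$ is large by \eqref{eigen_h1}; that these are \emph{all} of the spectrum there follows by combining Lemma~\ref{spectrum2}(1), which confines $\sigma(\tilde{\AA}_\eps(\xi))$ in $\mathrm{Re}\lambda\ge-\nu_0/2$ to small discs about $\pm\eps is$, with a Rouch\'e argument comparing $D_2$ to $z^2+s^2$ on circles of radius $\sim s$ about $\pm is$, showing each such disc carries exactly one root of $D_2$. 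Smoothness in $(s,\eps)$ is inherited from $z_{\pm1}(s,\eps)$ and from the smooth dependence of $(\beta_j-\tilde\BB_\eps(\xi))^{-1}$ on its parameters in this region.

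For part~(2), I would fix real unit vectors $\tilde e_1,\tilde e_2\perp\omega$ with $\tilde e_1\cdot\tilde e_2=0$, put $\tilde e_3=\tilde e_1$, $\tilde e_4=\tilde e_2$, and take $X_j=c_j\,\omega\times\tilde e_j$; the formulas above then give $Y_j=i\eps s c_j\tilde e_j/\beta_j$ and $w_j=\eps c_j(\beta_j-\tilde\BB_\eps(\xi))^{-1}(v\cdot\tilde e_j)\chi_0$, which is \eqref{eigf2a1}. To prove $(\mathcal V_i,\mathcal V_j^*)=\delta_{ij}$, note that after rotating so $\omega=e_1$ one may take $v\cdot\tilde e_j\in\{v_2,v_3\}$, and since $\tilde\BB_\eps(se_1)$ commutes with each reflection $v_a\mapsto-v_a$ ($a=2,3$), the vector $(\beta_j-\tilde\BB_\eps(se_1))^{-1}\bigl((v\cdot\tilde e_j)\chi_0\bigr)$ is odd in the corresponding variable; hence $(w_j,\chi_0)=0$, so $P_d w_j=0$ and the weighted pairing $(\cdot,\cdot)_\xi$ restricts to the bilinear pairing $(\cdot,\cdot)$ on the $\mathcal V_j$'s. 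Next, $J:(f,X,Y)\mapsto(\bar f,-\bar X,-\bar Y)$ sends an eigenvector of $\tilde{\AA}_\eps(\xi)$ for $\lambda$ to an eigenvector of $\tilde{\AA}_\eps(\xi)^*$ for $\bar\lambda$ (a direct check using the form of $\tilde{\AA}_\eps(\xi)^*$ from the proof of Lemma~\ref{SG_1} and $\tilde\BB_\eps(-\xi)=\overline{\tilde\BB_\eps(\xi)}$), and $\mathcal V_j^*=J\mathcal V_j$; therefore $(\beta_i-\beta_j)(\mathcal V_i,\mathcal V_j^*)_\xi=0$, giving orthogonality whenever $\beta_i\neq\beta_j$. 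Within a degenerate pair, $X_i\cdot X_j=c_ic_j(\tilde e_i\cdot\tilde e_j)=0=Y_i\cdot Y_j$ because $\tilde e_1\perp\tilde e_2$, and $\int w_iw_j\,dv=0$ by the same parity; normalizing each $\mathcal V_j$ by $(\mathcal V_j,\mathcal V_j^*)=1$ completes the biorthonormal system. The same identity applied to a would‑be generalized eigenvector contradicts $(\mathcal V_j,\mathcal V_j^*)=1$, so each eigenvalue is semisimple of multiplicity two, consistent with the total algebraic multiplicity four forced by the simple roots $z_{\pm1}$ of $D_2$.

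It remains to compute $c_j$. From $(\mathcal V_j,\mathcal V_j^*)=\int w_j^2\,dv-X_j\cdot X_j-Y_j\cdot Y_j=1$ one has $X_j\cdot X_j=c_j^2$, and $Y_j\cdot Y_j=-\eps^2s^2c_j^2/\beta_j^2=-c_j^2\bigl(1+O(\zeta_j/s)\bigr)$ since $\beta_j^2=-\eps^2s^2\bigl(1+O(\zeta_j/s)\bigr)$, while $\int w_j^2\,dv=\eps^2c_j^2\int\bigl[(\beta_j-\tilde\BB_\eps(\xi))^{-1}(v\cdot\tilde e_j)\chi_0\bigr]^2dv=c_j^2\,\eps^2O\bigl(\tfrac1{\eps s}\bigr)$, as one sees by passing to $t=v\cdot\omega$, writing $\beta_j=\eps(jis+\zeta_j)$ so the denominator equals $\eps\bigl(is(j+t)+\zeta_j\bigr)$ plus lower‑order terms, and integrating by parts in $t$ to convert the double pole near $t=-j$ into an $O(s^{-2})=\eps^2O((\eps s)^{-1})$ contribution by a principal‑value/residue estimate uniform for $\eps s>r_1$. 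Hence $1=-2c_j^2+\eps^2O((\eps s)^{-1})$, and choosing the branch of the square root gives $c_j=i/\sqrt2+\eps^2O((\eps s)^{-1})$, which is $C^\infty$ in $(s,\eps)$. I expect the normalization to be the main obstacle: because $(\cdot,\cdot)$ is a complex‑bilinear rather than Hermitian pairing, the $w_j$ and $Y_j$ contributions enter through the sign‑indefinite quantities $\int w_j^2\,dv$ and $Y_j\cdot Y_j$, and one must extract the exact leading constant $-\tfrac12$ while controlling the near‑resonant integral $\int\bigl(is(j+t)+\zeta_j\bigr)^{-2}e^{-t^2/2}\,dt$ uniformly throughout the high‑frequency region.
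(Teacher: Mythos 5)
Your proposal follows essentially the same route as the paper: take $\beta_1=\beta_2=\eps z_{-1}$, $\beta_3=\beta_4=\eps z_{1}$ from the roots of $D_2$ in Lemma \ref{eigen_5}, define $\mathcal{V}_j$ by the explicit resolvent formulas, obtain biorthogonality from the adjoint eigenvalue relation together with $P_dw_j=0$, and fix $c_j$ from the bilinear normalization $c_j^2\bigl(-1+\eps^2s^2/\beta_j^2+\eps^2D_j\bigr)=1$ with $D_j=O((\eps s)^{-1})$ (which the paper gets directly from Lemma \ref{LP031} and Cauchy--Schwarz rather than your residue computation, whose intermediate claim $O(s^{-2})=\eps^2O((\eps s)^{-1})$ is a slip even though the final bound is right). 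Your added Rouch\'e/completeness and parity arguments are fine supplements to what the paper leaves implicit.
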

		
		\begin{proof}
			The eigenvalues $\beta_j(s,\eps)$ and the eigenvectors $\mathcal{V}_j(\xi,\eps) $  for $j=1,2,3,4$ can be constructed as follows. We take $\beta_1=\beta_2=\eps z_{-1}(s,\eps)$ and $\beta_3=\beta_4=\eps z_{1}(s,\eps)$, where $z_{\pm1}(s,\eps)$ are the solutions of the equation  $D_2(z,s,\eps)=0$ defined in Lemma \ref{eigen_5}.
			The corresponding eigenvectors $\mathcal{V}_j(\xi,\eps)=(w_j(\xi,\eps),X_j(\xi,\eps),Y_j(\xi,\eps))$, $1\le j\le 4$  are given by
			$$
			\left\{\bln
			&w_j(\xi,\eps)=\eps c_j(s,\eps)\big(\beta_j(s,\eps)-\tilde\BB_\eps(\xi)\big)^{-1}(v\cdot \tilde e_j\chi_0), \\
			&X_j(\xi,\eps)=c_j(s,\eps)\omega\times \tilde e_j,\quad Y_j(\xi,\eps)= \frac{ i \eps s c_j(s,\eps)}{\beta_j(s,\eps)} \tilde e_j ,
			\eln\right.
			$$
			where  $\tilde e_j$, $j=1,2,3,4$ are normal vectors satisfying $\tilde e_1=\tilde e_3$, $\tilde e_2=\tilde e_4$, and $\tilde e_j\cdot\omega=\tilde e_1\cdot \tilde e_2=0$.  This implies \eqref{eigf2a1} by the definition \eqref{B(xi)}. It is easy to verify that   $(\mathcal{V}_1 ,  \mathcal{V}_2^* )=(\mathcal{V}_3 ,  \mathcal{V}_4^* )=0,$ where $ \mathcal{V}^*_j=(\overline{w_j},-\overline{X_j},-\overline{Y_j})$ is the eigenvector of $\tilde{\AA}_{\eps}(\xi)^*$ corresponding to the eigenvalue $\overline{\beta_{j}}$.
			
			Rewrite the eigenvalue problem as
			$$
			\tilde{\AA}_{\eps}(\xi)\mathcal{V}_j(\xi,\eps) =\beta_j(s,\eps)\mathcal{V}_j(\xi,\eps), \quad j=1,2,3,4.
			$$
			By taking the inner product
			$(\cdot,\cdot)_{\xi}$ of above with $\mathcal{V}^*_j(s,\eps) $, and using the fact that
			\bgrs
			(\tilde{\AA}_{\eps}(\xi) U,V)_{\xi}=(U,\tilde{\AA}_{\eps}(\xi)^*V)_{\xi},\quad U,V\in  D(\tilde{\AA}_{\eps}(\xi)) ,\\
			\tilde{\AA}_{\eps}(\xi)^*\mathcal{V}^*_j(\xi,\eps)=\overline{\beta_j(s,\eps)} \mathcal{V}^*_j(\xi,\eps),
			\egrs
			we have
			$$
			(\beta_i(s,\eps)-\beta_{j}(s,\eps))\(\mathcal{V}_i(\xi,\eps),\mathcal{V}^*_j(\xi,\eps)\)_{\xi}=0,\quad 1\le i,j\le 4.
			$$
			Since  $\beta_k(s,\eps)\neq \beta_{j}(s,\eps)$ for  $k=1,3,\, j=2,4$ and $P_dw_j(\xi,\eps)=0$,
			we have the orthogonal relation
			$$
			\(\mathcal{V}_k(\xi,\eps),\mathcal{V}^*_j(\xi,\eps)\)_{\xi}=\(\mathcal{V}_k(\xi,\eps),\mathcal{V}^*_j(\xi,\eps)\)=0,\quad 1\leq k\neq j\leq 4.
			$$
			This can be normalized so that
			$$\(\mathcal{V}_j(\xi,\eps),\mathcal{V}^*_j(\xi,\eps)\)=1, \quad j=1,2,3,4.$$
			Precisely, the coefficients $c_j(s,\eps)$  are determined by the normalization condition:
			$$
			c_j^2(s,\eps)\bigg(-1+\frac{\eps^2s^2}{\beta^2_{j}(s,\eps)}+\eps^2D_j(s,\eps)\bigg)=1,\quad j=1,2,3,4,
			$$
			where $D_j(s,\eps)=((\tilde\BB_{\eps}(se_1)-\beta_j)^{-1}\chi_2, (\tilde\BB_{\eps}(-se_1)-\overline{\beta_j})^{-1} \chi_2) $.
			Since
			\bmas
			\frac{\beta_{j}^2(s,\eps)}{\eps^2 s^2}&=\(i+O\(\frac{ \ln \eps s}{ s^2}\)\)^2=-1 +O\(\frac{\ln \eps s}{  s^2}\),  \\
			D_j(s,\eps)&=O(1) \|(\tilde\BB_{\eps}(se_1)-\beta_j)^{-1}\chi_2\|^2=O\(\frac{1}{\eps s}\),
			\emas
			it follows that
			\bmas
			c_j^2(s,\eps)=&-\bigg(2-\bigg(\frac{\eps^2s^2}{\beta_j^2(s,\eps)}+1\bigg)-\eps^2D_j(s,\eps)\bigg)^{-1}\\
			=&-\frac12 +\eps^2O\(\frac{1}{\eps s}\).
			\emas
			Thus, we obtain \eqref{eigf2a1} and \eqref{eigf3a1} so that
			the proof of the theorem is completed.
		\end{proof}

		With the help of Lemmas~\ref{LP01}, \ref{spectrum2}, \ref{eigen_4a} and \ref{eigen_4}, we can make a detailed analysis on the semigroup $ e^{\frac{t}{\eps^2}\mathcal{\tilde{\AA}}_{\eps}(\xi)}$ in terms of an  argument similar to that of Theorem~3.4 in \cite{Li4}. For brevity, we omit the details of the proof.
		
		\begin{thm}\label{rate1}
			The semigroup $e^{\frac{t}{\eps^2}\tilde{\AA}_{\eps}(\xi)}$ with $\xi\neq0$ can be decomposed into
			\bq  e^{\frac{t}{\eps^2}\tilde{\AA}_{\eps}(\xi)}U=S_1(t,\xi,\eps)U+S_2(t,\xi,\eps)U+S_3(t,\xi,\eps)U,\quad \forall\,U\in L^2_{\xi}(\R^3_v)\times \C^3_{\xi}\times \C^3_{\xi},  \label{B_0}\eq
			where
			\bma
			S_1(t,\xi,\eps)U&=\sum^7_{j=-1}e^{\frac{t}{\eps^2}\lambda_j(|\xi|,\eps)}\(U, \mathcal{U}^*_j(\xi,\eps) \)_{\xi} \mathcal{U}_j(\xi,\eps)1_{\{\eps |\xi|\le r_0\}}, \label{S1}\\
			S_2(t,\xi,\eps)U&=\sum^4_{k=1}e^{\frac{t}{\eps^2}\beta_k(|\xi|,\eps)}\(U, \mathcal{V}^*_k(\xi,\eps) \) \mathcal{V}_k(\xi,\eps)1_{\{\eps |\xi|\ge r_1\}}, \label{S2}
			\ema
			with $(\lambda_j(|\xi|,\eps),\mathcal{U}_j(\xi,\eps))$ and $(\beta_k(|\xi|,\eps),\mathcal{V}_k(\xi,\eps))$ being the eigenvalue and eigenvector of the operator $\tilde{\AA}_{\eps}(\xi)$ for $\eps |\xi|\le r_0$ and $\eps|\xi|\ge r_1$ respectively,
			and $S_3(t,\xi,\eps)U $ satisfies for two constants $ d>0$ and $C>0$ independent of $\xi$ and $\eps$ that
			\be
			\|S_3(t,\xi,\eps)U\|_{\xi}\le Ce^{-\frac{ dt}{\eps^2}}\|U\|_{\xi}. \label{S3}
			\ee
		\end{thm}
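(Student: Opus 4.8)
The plan is to follow the standard spectral-decomposition machinery (as in Theorem 3.4 of \cite{Li4}), splitting the frequency domain into the three regimes already analyzed: the low-frequency region $\eps|\xi|\le r_0$, the high-frequency region $\eps|\xi|\ge r_1$, and the bounded intermediate region $r_0\le\eps|\xi|\le r_1$. In each regime one writes $e^{\frac{t}{\eps^2}\tilde{\AA}_\eps(\xi)}$ via the inverse Laplace transform
\be
e^{\frac{t}{\eps^2}\tilde{\AA}_\eps(\xi)}=\frac{1}{2\pi i}\int_{\Gamma}e^{\frac{t}{\eps^2}\lambda}(\lambda-\tilde{\AA}_\eps(\xi))^{-1}\,d\lambda,\nnm
\ee
over a suitable contour $\Gamma$, and then deforms $\Gamma$ across the discrete eigenvalues identified in Lemmas \ref{eigen_4a} and \ref{eigen_4}, picking up the residues to produce $S_1$ and $S_2$ and pushing the remaining contour integral into the half-plane $\mathrm{Re}\,\lambda\le-d$ to produce $S_3$.

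First I would treat the intermediate region $r_0\le\eps|\xi|\le r_1$. By Lemma \ref{LP01}(2) there is $\alpha=\alpha(r_0,r_1)>0$ with $\sigma(\tilde{\AA}_\eps(\xi))\subset\{\mathrm{Re}\,\lambda<-\alpha\}$, and by Lemma \ref{spectrum2}(1) the resolvent is bounded on $\{\mathrm{Re}\,\lambda\ge-\mu/2,\ |\mathrm{Im}\,\lambda|\ge y_1\}\cup\C_+$; combining these lets one shift the contour to $\mathrm{Re}\,\lambda=-\alpha/2$ and estimate $\|(\lambda-\tilde{\AA}_\eps(\xi))^{-1}\|_\xi$ along the shifted contour using the decomposition \eqref{B_d}--\eqref{X_2} together with Lemma \ref{LP03}, giving an integrable decay in $|\mathrm{Im}\,\lambda|$ and hence $\|e^{\frac{t}{\eps^2}\tilde{\AA}_\eps(\xi)}U\|_\xi\le Ce^{-\frac{\alpha t}{2\eps^2}}\|U\|_\xi$; here $S_1=S_2=0$ and all of the semigroup goes into $S_3$. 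Next, in the low-frequency region $\eps|\xi|\le r_0$, Lemma \ref{spectrum2}(2) confines the spectrum in $\{\mathrm{Re}\,\lambda\ge-\mu/2\}$ to small discs around the nine eigenvalues $\eps\eta_j(\xi)$; I would take $\Gamma$ to consist of small circles around each $\lambda_j(|\xi|,\eps)$ (from Lemma \ref{eigen_4a}) plus the vertical line $\mathrm{Re}\,\lambda=-\mu/2$. The residue at $\lambda_j$ is precisely the spectral projection, which equals the rank-one operator $(\cdot,\mathcal U_j^*)_\xi\,\mathcal U_j$ by the biorthogonality established in Lemma \ref{eigen_4a}(2); summing over $-1\le j\le7$ gives $S_1(t,\xi,\eps)$, and the line integral at $\mathrm{Re}\,\lambda=-\mu/2$ is bounded by $Ce^{-\frac{\mu t}{2\eps^2}}$ after a resolvent bound along that line (again from \eqref{Bd3} and Lemma \ref{LP}), contributing to $S_3$. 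The high-frequency region $\eps|\xi|\ge r_1$ is handled symmetrically using Lemma \ref{eigen_4}: Lemma \ref{spectrum2}(1) (first line) localizes the spectrum near $\pm\eps i|\xi|$, the residues give the four rank-one terms forming $S_2(t,\xi,\eps)$, and the leftover contour integral at $\mathrm{Re}\,\lambda=-\mu/2$ is again exponentially small; one must also check that the discs $|\lambda-\eps\zeta_j|$ used in Lemma \ref{eigen_5} stay in the region $\mathrm{Re}\,\lambda\le-c/ s\cdot$(something) but since we only need $\mathrm{Re}\,\lambda>-\mu/2$ for the projection and the tail estimate is uniform, this is not an issue. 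Finally, setting $d=\min\{\mu/2,\alpha/2\}$ and collecting the three regimes gives \eqref{B_0}--\eqref{S3}.

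The main obstacle is obtaining resolvent bounds on the deformed contours that are \emph{uniform in both $\xi$ and $\eps$} — in particular proving that $\|(\lambda-\tilde{\AA}_\eps(\xi))^{-1}\|_\xi$ decays fast enough in $|\mathrm{Im}\,\lambda|$ to make the line integrals converge and to absorb the factor $e^{\frac{t}{\eps^2}\mathrm{Re}\,\lambda}$ with a constant $C$ independent of $\eps$. This requires carefully combining the factorizations \eqref{B_d} and \eqref{Bd3} with the quantitative estimates of Lemma \ref{LP03} and Lemma \ref{LP}, and using the compactness-perturbation structure $\tilde{\AA}_\eps(\xi)=G^1_\eps(\xi)+G^2_\eps(\xi)$ to control the Neumann series $(I-G^2_\eps(\xi)(\lambda-G^1_\eps(\xi))^{-1})^{-1}$ on the vertical parts of the contour; the essential-spectrum bound $\sigma_{ess}\subset\{\mathrm{Re}\,\lambda\le-\nu_0\}$ from Lemma \ref{Egn} guarantees there is room to place the vertical line at $\mathrm{Re}\,\lambda=-d$ strictly inside the resolvent set away from the finitely many eigenvalues. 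Since all these ingredients are already assembled in the lemmas of this section, the argument is a routine (if lengthy) adaptation of \cite{Li4}, which is why I would only sketch it and refer to that reference for the remaining bookkeeping.
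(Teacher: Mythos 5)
Your proposal follows exactly the route the paper intends: the paper itself omits the proof of Theorem \ref{rate1}, stating only that it follows from Lemmas \ref{LP01}, \ref{spectrum2}, \ref{eigen_4a} and \ref{eigen_4} by the contour-deformation argument of Theorem 3.4 in \cite{Li4}, which is precisely the inverse-Laplace-transform and residue computation you sketch. Your identification of the three frequency regimes, of which lemma supplies the spectral localization and resolvent bounds in each, and of the uniform-in-$(\xi,\eps)$ resolvent estimate as the main technical point, matches the intended argument.
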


		\section{Fluid approximation of semigroups}
		\setcounter{equation}{0}
		\label{sect3}
		
		In this section,  we give the first and second order fluid approximations of the semigroup $e^{\frac{t}{\eps^2}\AA_\eps}$, which will be used to prove the convergence and establish the convergence rate of the solution to  the VMB system towards the solution to the  NSMF system.

		For any $U_0=(f_0,E_0,B_0)\in H^l$, set
		\be
		e^{\frac{t}{\eps^2}\AA_{\eps}(\xi)}\hat U_0=\( \hat{f} ,-\frac{\i\xi}{|\xi|^2}(\hat{f},\chi_0)-\frac{\xi}{|\xi|}\times \hat{X},-\frac{\xi}{|\xi|}\times \hat{Y}\),
		\label{solution1}
		\ee
		where
		\bmas
		e^{\frac{t}{\eps^2}\tilde{\AA}_{\eps}(\xi)}\hat V_0
		&=( \hat{f},\hat{X},\hat{Y})\in L^2_{\xi}(\R^3_v)\times \mathbb{C}^3_{\xi}\times \mathbb{C}^3_{\xi},\\
		\hat V_0&=\(\hat f_0,\frac{\xi}{|\xi|}\times \hat E_0,\frac{\xi}{|\xi|}\times \hat B_0\).
		\emas
		
		For any  $U_0=(f_0,E_0,B_0) \in H^l$, set
		\be
		e^{\frac{t}{\eps^2}\AA_\eps}U_0 =(\mathcal{F}^{-1}e^{\frac{t}{\eps^2}\AA_\eps(\xi)}\mathcal{F})U_0. \label{solution2}
		\ee
		Then  $e^{\frac{t}{\eps^2}\AA_{\eps}}U_0$ is the solution to the system  \eqref{LVMB1}. By Lemma \ref{SG_1}, it holds that
		$$
		\|e^{\frac{t}{\eps^2} \AA_{\eps}} U_0\|^2_{H^l} =\intr (1+|\xi|^2)^l\|e^{\frac{t}{\eps^2}\tilde{\AA}_{\eps}(\xi)}\hat V_0\|^2_{\xi} d\xi\le \intr (1+|\xi|^2)^l\|\hat V_0\|^2_{\xi} d\xi
		=\|U_0\|^2_{H^l},
		$$
		where we had used $\|\hat V_0\|^2_{\xi}=\|\hat U_0\|^2.$
		
		\subsection{Semigroup of the linear NSMF system}
		
		In this subsection, we are going to study the solution to the linear NSMF system. First of all, we introduce the following linearized  NSMF system to \eqref{NSM_2} for $(n,m,q,E,B)$:
		\be \label{LNSM1}
		\left\{\bal
		\Tdx\cdot m=0,\quad \Tdx n+ \sqrt{\frac23}\Tdx q- E=0,\\
		\dt (m-\Delta^{-1}_xm)-\kappa_0\Delta_x m +\Tdx p=G_{1}, \\
		\dt (q-\sqrt{\frac23}n) - \kappa_1\Delta_x q= G_{2},\\
		E=\Tdx\Delta^{-1}_xn, \quad \Tdx\times B=m, \quad \Tdx\cdot B=0,
		\ea\right.
		\ee
		where $G_{1} \in \R^3$ and $G_{2}\in \R$ are given functions, $p$ is the pressure satisfying $p=\Delta^{-1}_x\divx G_{1}$, and the initial data $(n,m,q,E,B)(0)$  satisfy   \eqref{NSP_5i}.
		
		For any  $\hat{V}_0=(\hat{f}_0(\xi) ,\hat{E}_0(\xi),\hat{B}_0(\xi))\in N_0\times \C^3_{\xi}\times \C^3_{\xi}$, we define
		\be
		\tilde{Y}_1(t,\xi)\hat{V}_0=\sum_{j=0,2,3}e^{-b_j(|\xi|)t}\big(\hat{V}_0,\Lambda_j(\xi) \big)_{\xi}\Lambda_j(\xi),  \label{v1}
		\ee
		where $b_j(|\xi|)$ and  $\Lambda_j(\xi)$  $(j=0,2,3)$ are defined by \eqref{bj} and \eqref{hj} respectively.
		
		For any $\hat{U}=(\hat{f}(\xi),\hat{X}(\xi),\hat{Y}(\xi))\in L^2\times \C^3_{\xi}\times \C^3_{\xi} $, define
		\begin{align}\label{defQ}
			\Q(\xi)\hat U=\(\hat{f},-\frac{i\xi}{|\xi|^2}(\hat{f},\chi_0)-\frac{\xi}{|\xi|}\times \hat X,-\frac{\xi}{|\xi|}\times \hat Y\).
		\end{align}
		Then we have
		\be\label{normeq}
		\|\Q\hat U\|=\|\hat U\|_\xi.
		\ee
		
		For any $\hat{U}_0=(\hat{f}_0(\xi),\hat{E}_0(\xi),\hat{B}_0(\xi))\in N_0\times \C^3 \times \C^3 $,   set
		\bq
		Y_1(t,\xi)\hat U_0=\Q(\xi)\tilde{Y}_1(t,\xi)\hat V_0,
		\label{solution1a}
		\eq
		with
		$$
		\left.\bln V_0&=\(\hat{f}_0,\frac{\xi}{|\xi|}\times \hat E_0,\frac{\xi}{|\xi|}\times \hat B_0\).
		\eln\right.
		$$
		Set
		$$
		Y_1(t)U_0=(\mathcal{F}^{-1}Y_1(t,\xi)\mathcal{F})U_0. \label{v2}
		$$

		Then, we can represent the solution to the NSMF system \eqref{LNSM1}  by the semigroup $Y_1(t)$.
		
		\begin{lem} \label{sem}
			For any  $U_0=(f_0,E_0,B_0)\in L^2$ and $G_j\in L^1_t(L^2_x)$, $j=1,2$, we define $U=(f,E,B)$ by
			\be
			U(t,x,v)=Y_1(t){P_AU_0}+\intt Y_1(t-s)H_1(s)ds, \label{U_2}
			\ee
			where 
			$$
			H_1(t,x,v)=(G_{1}(t,x)\cdot v\chi_0+G_{2}(t,x)\chi_4,0,0).
			$$
			Let $(n,m,q)=((f,\chi_0),(f,v\chi_0),(f,\chi_4))$. Then $(n,m,q,E,B)(t,x)\in L^\infty_t(L^2_x)$ is the unique global solution to the linear NSM system \eqref{LNSM1} with the initial data $(n,m,q,E,B)(0)$   satisfying \eqref{NSP_5i}.
		\end{lem}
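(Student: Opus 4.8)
The plan is to pass to Fourier variables and show that $Y_1(t,\xi)$ is precisely the solution operator of the $\xi$‑transformed linear NSMF system \eqref{LNSM1}; existence of $U$ then follows from the Duhamel representation \eqref{U_2}, the initial data is identified at $t=0$, and uniqueness together with the $L^\infty_t(L^2_x)$ bound is obtained from a direct energy estimate on \eqref{LNSM1}. Throughout, for a field $(f,E,B)$ I write $\hat n=(\hat f,\chi_0)$, $\hat m=(\hat f,v\chi_0)$, $\hat q=(\hat f,\chi_4)$, $s=|\xi|$, $\omega=\xi/|\xi|$, and $\Pi$ for the Leray projection.

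\textbf{Steps 1--2 (the Fourier NSMF operator and $Y_1$).} Fourier transforming \eqref{LNSM1} and eliminating the constrained variables via $\divx m=0$, $E=\Tdx\Delta_x^{-1}n$, $\Tdx\times B=m$, $\Tdx\cdot B=0$, $\Tdx n+\sqrt{2/3}\Tdx q-E=0$, I slave $\hat E=-is^{-2}\xi\,\hat n$, $\hat B=(i/s)\,\omega\times\hat m$ (with $\hat m\perp\xi$), and $\hat n=-\sqrt{2/3}\,s^2(1+s^2)^{-1}\hat q$, so that the surviving evolution decouples into a solenoidal Navier--Stokes mode and a Fourier mode:
\[
\partial_t\!\left(\tfrac{1+s^2}{s^2}\,\hat m_\perp\right)=-\kappa_0 s^2\,\hat m_\perp+\Pi\widehat{G_1},\qquad
\partial_t\!\left(\tfrac{3+5s^2}{3(1+s^2)}\,\hat q\right)=-\kappa_1 s^2\,\hat q+\widehat{G_2},
\]
with pressure $\hat p=\Delta_x^{-1}\divx G_1$. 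Thus the homogeneous generator on the constraint subspace has decay rates $\kappa_0 s^4(1+s^2)^{-1}$ and $3\kappa_1 s^2(1+s^2)(3+5s^2)^{-1}$, which by \eqref{coe} are exactly $b_2(s)=b_3(s)$ and $b_0(s)$ of \eqref{bj}. It then remains to check that each block $\Q(\xi)\Lambda_j(\xi)$, $j\in\{0,2,3\}$, lies in this constraint subspace: from \eqref{hj} and \eqref{defQ} one gets, for $k=2,3$, $\hat n=\hat q=0$, $\hat m=s(1+s^2)^{-1/2}W^k\perp\xi$, $\hat E=0$, $\hat B=i(1+s^2)^{-1/2}\omega\times W^k$, which satisfies $\Tdx\times\hat B=\hat m$, $\Tdx\cdot\hat B=0$, and for $j=0$, $(1+s^{-2})\hat n+\sqrt{2/3}\,\hat q=0$ with $\hat E=\Tdx\Delta_x^{-1}\hat n$. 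Since $\Lambda_j$ is an eigenvector of $G_3(\xi)$ with $\eta_j=0$, one has $\tilde Y_1(t,\xi)\Lambda_j=e^{-b_j(|\xi|)t}\Lambda_j$, so differentiating \eqref{solution1a} and matching rates with the above shows $t\mapsto Y_1(t,\xi)\hat U_0$ solves the homogeneous Fourier NSMF system; by Duhamel the $\hat U$ built from \eqref{U_2} solves the inhomogeneous one, once one checks that the $\Lambda_k$‑component ($k=2,3$) of $(\widehat{G_1}\cdot v\chi_0,0,0)$ reproduces $\Pi\widehat{G_1}$ and the $\Lambda_0$‑component of $(\widehat{G_1}\cdot v\chi_0+\widehat{G_2}\chi_4,0,0)$ reproduces the source of the $q-\sqrt{2/3}n$ equation; both follow from $(v\chi_0,\chi_0)=(v\chi_0,\chi_4)=0$, orthonormality of $\{\chi_j\}$, and \eqref{hj}.

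\textbf{Step 3 (initial data, boundedness).} At $t=0$, $\tilde Y_1(0,\xi)$ is the $(\cdot,\cdot)_\xi$‑orthogonal projection onto $\mathrm{span}\{\Lambda_0(\xi),\Lambda_2(\xi),\Lambda_3(\xi)\}$; evaluating its moments $(\cdot,\chi_0),(\cdot,v\chi_0),(\cdot,\chi_4)$ against $\hat V_0=(\widehat{P_0f_0},\omega\times\hat E_0,\omega\times\hat B_0)$ together with the recovered fields $\hat E,\hat B$, and using the compatibility conditions \eqref{com}, yields the relations \eqref{NSP_5i}. For boundedness, $(\cdot,\cdot)_\xi$‑orthonormality of $\{\Lambda_j\}$ and $e^{-b_j t}\le1$ give $\|\tilde Y_1(t,\xi)\|_\xi\le1$, which with \eqref{normeq} and the same argument as for $e^{t\tilde{\AA}_\eps(\xi)}$ in Lemma \ref{SG_1} gives $\|Y_1(t)P_AU_0\|_{L^2}\le C\|U_0\|_{L^2}$; since $G_j\in L^1_t(L^2_x)$, the Duhamel term lies in $L^\infty_t(L^2_x)$, so $U\in L^\infty_t(L^2_x)$.

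\textbf{Step 4 (uniqueness) and main obstacle.} If two solutions of \eqref{LNSM1} share data, the difference $(n,m,q,E,B)$ solves the homogeneous system with zero data; the constraints give $E=\Tdx\Delta_x^{-1}n$, $n=-\sqrt{2/3}\,(I-\Delta_x^{-1})^{-1}q$, $B=-\Delta_x^{-1}\Tdx\times m$, so the state reduces to $(m,q)$ with $\divx m=0$. Testing the $m$‑equation with $m$ (the pressure drops as $\divx m=0$) and the $q$‑equation with $q$ gives
\[
\tfrac12\tfrac{d}{dt}\big(\|m\|_{L^2_x}^2+\|(-\Delta_x)^{-1/2}m\|_{L^2_x}^2\big)+\kappa_0\|\Tdx m\|_{L^2_x}^2=0,\qquad
\tfrac12\tfrac{d}{dt}(Tq,q)_{L^2_x}+\kappa_1\|\Tdx q\|_{L^2_x}^2=0,
\]
with $T=I+\tfrac23(I-\Delta_x^{-1})^{-1}$ self‑adjoint, bounded and boundedly invertible; since the data vanish, $m\equiv q\equiv0$, hence $n\equiv E\equiv B\equiv0$. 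The main difficulty is not conceptual but lies in Steps~1--2 (and the $t=0$ identification in Step~3): one must carefully bookkeep that the reduced Fourier operator extracted from \eqref{LNSM1}, the Helmholtz splitting of the source $H_1$, and the elliptically slaved fields $\hat E,\hat B$ are all exactly reproduced by the spectral projections onto the zero‑limit modes $\Lambda_0,\Lambda_2,\Lambda_3$, with the rates $b_0,b_2,b_3$ of \eqref{bj} matching the NSMF viscosities \eqref{coe}.
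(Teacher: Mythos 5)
Your proposal is correct and follows essentially the same route as the paper: Fourier transform \eqref{LNSM1}, slave the constrained variables, reduce to scalar ODEs for $\hat m_\perp$ and $\hat q$ whose decay rates $\kappa_0 s^4(1+s^2)^{-1}$ and $3\kappa_1 s^2(1+s^2)(3+5s^2)^{-1}$ are exactly $b_2=b_3$ and $b_0$, and match the resulting solution formulas with the spectral projections onto $\Lambda_0,\Lambda_2,\Lambda_3$ entering $Y_1$. The only difference is cosmetic (you verify that $Y_1(t,\xi)$ is the solution operator on the constraint subspace rather than writing out the explicit Duhamel formulas as the paper does) plus your added energy-based uniqueness argument, which the paper omits.
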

		
		\begin{proof}
			Let $s=|\xi|$ and $\omega=\xi/|\xi|$. By taking Fourier transform to \eqref{LNSM1}, we have
			\bma
			&i\xi\cdot \hat{m}=0,\quad (1+|\xi|^{-2})\hat{n} +\sqrt{\frac23}\hat{q}=0,\label{LNSP_1a}\\
			&(1+|\xi|^{-2})\dt \hat{m}+\kappa_0|\xi|^2 \hat{m}=\hat{G}_{1\perp},\label{LNSP_3a}\\
			&\dt \bigg(\hat{q}-\sqrt{\frac{2}{3}}\hat n\bigg) +\kappa_1|\xi|^2 \hat{q}=\hat{G}_{2},\label{LNSP_4a}
			\ema
			where $ \hat G_{1\perp}=-\omega\times\omega\times \hat G_1$.
			The initial data $(\hat{n},\hat{m},\hat{q})(0)$ satisfies
			\begin{align*}
				& \hat{m}(0,\xi)= (P_0\hat{f}_0,v\chi_0)_{\bot},\\
				&\hat n(0,\xi)=-\frac{\sqrt{6}s^2}{3+5s^2}\bigg(P_0\hat f_0, \chi_4-\sqrt{\frac{2}{3}}\chi_0\bigg),\\
				&\hat q(0,\xi)=\frac{ 1+s^2}{3+5s^2}\bigg(P_0\hat f_0, \chi_4-\sqrt{\frac{2}{3}}\chi_0\bigg).
			\end{align*}
			Then, it follows from \eqref{LNSP_4a} and \eqref{LNSP_1a} that
			\bma
			\hat q(t,\xi)&=e^{-b_0(s)t}\hat q(0,\xi)+\frac{3+3s^2}{3+5s^2}\int_0^te^{-b_0(s)(t-\tau)}\hat G_2(\tau)d\tau\nnm\\
			&=e^{-b_0(s)t}\big(\hat V_0,\Lambda_0(\xi)\big)_\xi\(h_0(\xi),\chi_4\)\nnm\\
			&\quad+ \int_0^te^{-b_0(s)(t-\tau)}\big(\hat H_1(\tau),\Lambda_0(\xi)\big)_\xi\(h_0(\xi),\chi_4\)d\tau,\label{n3}
			\ema
			and
			\bma
			\hat{n}(t,\xi)&=e^{-b_0(s)t} \big(\hat{V}_0,\Lambda_0(\xi)\big)_\xi(h_0(\xi),\chi_0)\nnm\\
			&\quad +\intt e^{-b_0(s)(t-\tau)} \big(\hat{H}_1(\tau),\Lambda_0(\xi)\big)_\xi(h_0(\xi),\chi_0)d\tau. \label{n4}
			\ema
			By \eqref{LNSP_1a} and \eqref{LNSP_3a} and noting that $ \hat m=\hat m_{\bot}$, we have
			\bma
			\hat{m}(t,\xi)&=e^{-b_2(s)t} \hat{m}(0)  +\frac{s^2}{1+s^2}\intt e^{-b_2(s)(t-\tau)} \hat{G}_1(\tau)_{\bot}d\tau\nnm\\
			&=\sum_{j=2,3}e^{-b_j(s)t} \big(\hat{V}_0,\Lambda_j(\xi)\big)_\xi(h_j(\xi),v\chi_0) \nnm\\
			&\quad+\sum_{j=2,3}\intt e^{-b_j(s)(t-\tau)} \big(\hat{H}_1(\tau),\Lambda_j(\xi)\big)_\xi(h_j(\xi),v\chi_0)d\tau. \label{n5}
			\ema
			Moreover, by \eqref{LNSM1} we have
			\begin{align}
				&\hat E=-is^{-1}\hat n\omega=-is^{-1}(\hat f,\chi_0) \omega,\label{n6}\\
				&\hat B=is^{-1}\omega\times \hat m=is^{-1}\omega\times (\hat f,v\chi_0).\label{n7}
			\end{align}
			Noting that  $(h_0(\xi),v\chi_0)=0$ and $(h_j(\xi),\chi_0)=(h_j(\xi),\chi_4)=0$, $j=2,3$, we can prove \eqref{U_2} by using \eqref{n3}--\eqref{n7}. This completes the proof of the lemma.
		\end{proof}

		\subsection{Fluid approximation of $e^{\frac{t}{\eps^2}\AA_{\eps}}$}

		We give the following  preparation lemmas which will be used to study the  fluid dynamical approximations of the semigroups $e^{\frac{t}{\eps^2}\AA_\eps}$.
		
		\begin{lem} \label{S2a}
			For any $V_0\in N_0\times \C^3_{\xi}\times \C^3_{\xi}$, we have
			\be
			\|S_{3}(t,\xi,\eps)V_0\| \le C\(\eps |\xi| 1_{\{\eps |\xi|\le r_0\}}+1_{\{\eps |\xi| \ge r_0\}}\)e^{-\frac{dt}{\eps^2}}\|V_0\|  ,\label{S6}
			\ee
			where $S_3(t,\xi,\eps)$ is given in Theorem {\rm \ref{rate1}.}
		\end{lem}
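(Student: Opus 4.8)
The plan is to treat the two frequency ranges in the indicator separately, reducing both to the general remainder bound \eqref{S3} of Theorem~\ref{rate1}; the only new ingredient is the gain of a factor $\eps|\xi|$ in the low‑frequency region, which encodes that $S_3(t,\xi,\eps)$ annihilates the fluid eigenspace of $\tilde{\AA}_{\eps}(\xi)$ in the limit $\eps\to0$. On the set $\{\eps|\xi|\ge r_0\}$ there is nothing to do: here $|\xi|\ge r_0$, so $1+|\xi|^{-2}\le 1+r_0^{-2}$ and the norms $\|\cdot\|$ and $\|\cdot\|_{\xi}$ are equivalent on $L^2_{\xi}(\R^3_v)\times\C^3_{\xi}\times\C^3_{\xi}$ with a constant depending only on $r_0$, so \eqref{S3} gives directly $\|S_3(t,\xi,\eps)V_0\|\le\|S_3(t,\xi,\eps)V_0\|_{\xi}\le Ce^{-dt/\eps^2}\|V_0\|_{\xi}\le Ce^{-dt/\eps^2}\|V_0\|$.

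It remains to treat $\{\eps|\xi|\le r_0\}$, where $S_2\equiv0$ (since $r_0<r_1$) and, because $\tilde{\AA}_{\eps}(\xi)\mathcal{U}_j(\xi,\eps)=\lambda_j(|\xi|,\eps)\mathcal{U}_j(\xi,\eps)$, the decomposition \eqref{B_0} becomes $S_3(t,\xi,\eps)=e^{\frac{t}{\eps^2}\tilde{\AA}_{\eps}(\xi)}\,(I-P^{(1)}(\xi,\eps))$ with $P^{(1)}(\xi,\eps)U:=\sum_{j=-1}^{7}(U,\mathcal{U}^*_j(\xi,\eps))_{\xi}\,\mathcal{U}_j(\xi,\eps)$, the rank‑nine total eigenprojection of $\tilde{\AA}_{\eps}(\xi)$ onto $\{\lambda_j(|\xi|,\eps)\}_{j=-1}^{7}$. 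Since $I-P^{(1)}$ is a projection, applying \eqref{S3} with $U=(I-P^{(1)})V_0$ and using $S_3(I-P^{(1)})=S_3$ yields $\|S_3(t,\xi,\eps)V_0\|\le\|S_3(t,\xi,\eps)V_0\|_{\xi}\le Ce^{-dt/\eps^2}\|(I-P^{(1)}(\xi,\eps))V_0\|_{\xi}$, so the lemma is reduced to the spectral estimate $\|(I-P^{(1)}(\xi,\eps))V_0\|_{\xi}\le C\eps|\xi|\,\|V_0\|_{\xi}$ for $V_0\in N_0\times\C^3_{\xi}\times\C^3_{\xi}$ and $\eps|\xi|\le r_0$.

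To prove this I would use that $V_0\in N_0\times\C^3_{\xi}\times\C^3_{\xi}$ means $P_AV_0=V_0$, and that the $\xi$‑orthonormal system $\{\Lambda_j(\xi)\}_{j=-1}^{7}$ of \eqref{hj} is a basis of this space, so that $V_0=\sum_j(V_0,\Lambda_j)_{\xi}\Lambda_j$ and
$$
(I-P^{(1)}(\xi,\eps))V_0=\sum_{j=-1}^{7}\big[(V_0,\Lambda_j)_{\xi}(\Lambda_j-\mathcal{U}_j)+(V_0,\Lambda_j-\mathcal{U}^*_j)_{\xi}\,\mathcal{U}_j\big].
$$
By the expansions \eqref{eigf2}--\eqref{eigf22} of Lemma~\ref{eigen_4a} one has $\mathcal{U}_j(\xi,0)=\mathcal{U}^*_j(\xi,0)=\Lambda_j(\xi)$ (the star operation $\mathcal V\mapsto\mathcal V^*$ fixes vectors whose electromagnetic components are purely imaginary, consistent with $P^{(1)}(\xi,0)=P_A$), and, more quantitatively, $\|\mathcal{U}_j(\xi,\eps)-\Lambda_j(\xi)\|_{\xi}+\|\mathcal{U}^*_j(\xi,\eps)-\Lambda_j(\xi)\|_{\xi}\le C\eps|\xi|$ for $\eps|\xi|\le r_0$: the microscopic corrections are already $P_1u_j=O(\eps|\xi|)$, while the $\chi_0$‑coefficient of $P_0u_j$ differs from that of $h_j$ only by $O(\eps|\xi|^2)$ — using $z_j(|\xi|,\eps)-\eta_j(|\xi|)=O(\eps|\xi|^2)$ from Lemmas~\ref{eigen_2}--\ref{eigen_1} and the structure of the perturbation of the matrix $\mathbb{M}$ — so that its $|\xi|^{-2}$‑weighted contribution to $\|\cdot\|_{\xi}$ is still only $O(\eps|\xi|)$. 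Substituting this into the identity above and estimating the weighted inner products gives the spectral estimate, and hence \eqref{S6}.

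The main obstacle is precisely this last quantitative step. The elementary bound available from \eqref{eigf2} is stated in the unweighted norm $\|\cdot\|$, whereas passing to $\|\cdot\|_{\xi}$ amplifies the $\chi_0$‑component of $\mathcal{U}_j-\Lambda_j$ and $\mathcal{U}^*_j-\Lambda_j$ by a factor $|\xi|^{-1}$; to recover the factor $\eps|\xi|$ rather than merely $\eps$, one must genuinely exploit the extra power of $|\xi|$ in the $\chi_0$‑coefficient of $P_0u_j$, which comes from the sharper expansions of the eigenvalues $z_j$ and of $\mathbb{M}$ near $\xi=0$ obtained in the proof of Lemma~\ref{eigen_4a}. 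Once that is in hand, the remaining steps are routine bookkeeping.
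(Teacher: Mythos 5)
Your proposal follows essentially the same route as the paper: split $S_3$ into the high‑frequency piece (handled directly by \eqref{S3}) and the low‑frequency piece, identify $S_1$ as the semigroup composed with the total eigenprojection $P_{\eps}(\xi)=\sum_{j=-1}^{7}(\cdot,\mathcal{U}^*_j)_{\xi}\mathcal{U}_j$ so that $S_{31}=S_{31}(I-P_{\eps}(\xi))$, and then gain the factor $\eps|\xi|$ from $\|\mathcal{U}_j(\xi,\eps)-\Lambda_j(\xi)\|_{\xi}=O(\eps|\xi|)$ applied to $V_0=\sum_j(V_0,\Lambda_j)_{\xi}\Lambda_j$. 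The weighted‑norm subtlety you flag for the $\chi_0$‑component is real but is exactly the content of the paper's asserted bound \eqref{uj-lam}, so your argument matches the paper's proof.
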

		
		\begin{proof}
			Define a projection $P_{\eps}(\xi)$ by
			$$P_{\eps}(\xi)V=\sum^3_{j=-1}\(V,\mathcal{U}^*_j(\xi,\eps)\)_{\xi} \mathcal{U}_j(\xi,\eps),\quad \forall V\in L^2(\R^3_v)\times \C^3_{\xi}\times \C^3_{\xi},$$
			where $ \mathcal{U}_j(\xi,\eps)$, $j=-1,0,1,2,3$ are the eigenfunctions of $\tilde{\AA}_{\eps}(\xi)$ defined by \eqref{Axi} for $\eps|\xi|\le r_0$.
			
			By Theorem \ref{rate1}, we can assert that
			\be S_1(t,\xi,\eps)=S(t,\xi,\eps)1_{\{\eps|\xi|\le r_0\}}P_{\eps}(\xi). \label{S1a}
			\ee
			Indeed, it follows from Lemma \ref{semigroup-1} that for $\eps|\xi|\le r_0$ and $\kappa>0$,
			\bmas
			e^{\frac{t}{\eps^2}\tilde{\AA}_{\eps}(\xi)}P_{\eps}(\xi)V =&\frac1{2\pi i}\int^{\kappa+ i\infty}_{\kappa- i\infty}
			e^{ \frac{\lambda t}{\eps^2}}(\lambda-\tilde{\AA}_{\eps}(\xi))^{-1}P_{\eps}(\xi)Vd\lambda\\
			=&\frac1{2\pi i}\sum^7_{j=-1}\int^{\kappa+ i\infty}_{\kappa- i\infty}
			e^{ \frac{\lambda t}{\eps^2}}(\lambda-\lambda_j(|\xi|,\eps))^{-1}d\lambda \(V,\mathcal{U}^*_j(\xi,\eps)\)_{\xi} \mathcal{U}_j(\xi,\eps)\\
			=&\sum^7_{j=-1}e^{\frac{t}{\eps^2}\lambda_j(|\xi|,\eps)}\(V,\mathcal{U}^*_j(\xi,\eps) \)_{\xi} \mathcal{U}_j(\xi,\eps)=S_1(t,\xi,\eps)V   .
			\emas
			By \eqref{S1a}, we have
			\be
			S_3(t,\xi,\eps)=S_{31}(t,\xi,\eps)+S_{32}(t,\xi,\eps),\label{S22}
			\ee
			where
			\bmas
			S_{31}(t,\xi,\eps)&=S(t,\xi,\eps)1_{\{\eps|\xi|\le r_0\}}(I- P_{\eps}(\xi)),\\
			S_{32}(t,\xi,\eps)&=S_3(t,\xi,\eps)1_{\{\eps|\xi|\ge r_0\}}.
			\emas
			It holds that
			$$
			\|S_{3j}(t,\xi,\eps)V\|_{\xi}\le Ce^{-\frac{dt}{\eps^2}}\| V\|_{\xi},\quad \forall  V\in L^2(\R^3_v)\times \C^3_{\xi}\times \C^3_{\xi},\,\,\,j=1,2.
			$$
			
			Since $\Lambda_j(\xi)$, $j=-1,0,\cdots,7$ are the orthonormal basis of $N_0\times \C^3_{\xi}\times \C^3_{\xi}$, it follows that for any $V_0\in N_0\times \C^3_{\xi}\times \C^3_{\xi}$,
			$$
			V_0-P_{\eps}(\xi)V_0
			=\sum^7_{j=-1}(V_0,\Lambda_j(\xi))_\xi \Lambda_j(\xi)-\sum^7_{j=-1}(V_0,\mathcal{U}^*_j(\xi,\eps) )_\xi \mathcal{U}_j(\xi,\eps),
			$$
			which together with { $\|\mathcal{U}_j(\xi,\eps)-\Lambda_j(\xi)\|_{\xi}=O(\eps|\xi|)$ and  $S_{31}(t,\xi,\eps)=S_{31}(t,\xi,\eps)(I- P_{\eps}(\xi))$} gives rise to
			\be
			\|S_{31}(t,\xi,\eps)V_0\|_{\xi}\le C\eps|\xi|1_{\{\eps|\xi|\le r_0\}}e^{-\frac{dt}{\eps^2}}\|V_0\|_{\xi}.\label{S4}
			\ee
			By combining \eqref{S22}--\eqref{S4}, we obtain \eqref{S6}. This completes the proof of the lemma.
		\end{proof}

		Then, we have the first order fluid approximation of the semigroup $e^{\frac{t}{\eps^2}\AA_\eps}$ as follows.

		\begin{lem} \label{fl1}
			For any $\eps,\sigma\ll 1$,  any integer $k,m\ge 0$ and $U_0=(f_0,E_0,B_0)\in L^2\cap L^1 $, it holds that
			\bma
			&\quad\left\|\nabla_x^k\left(e^{\frac{t}{\eps^2}\AA_\eps}U_0-Y_1(t){P_A}U_0\right) \right\|_{L^{\infty}}\nnm\\
			&\le C \bigg( \eps(1+t)^{-1 } +\(1+\frac t \eps\)^{-\frac{3-\sigma}2}\bigg)(\|U_0\|_{H^{k+3} }+\|U_0\|_{L^{1} })\nnm\\
			&\quad +C\eps^m (1+t)^{-m}\|\Tdx^{m}U_0\|_{H^{k+2}}, \label{limit1}
			\ema
			where $Y_1(t)$ is defined by \eqref{solution1a}, and $C>0$ is a constant independent of $\eps$.
			Moreover, if  $U_0=(f_0,E_0,B_0)\in L^2\cap L^1 $ satisfies \eqref{initial}, then
			\bma
			\left\|\nabla_x^k\left(e^{\frac{t}{\eps^2}\AA_\eps}U_0-Y_1(t){ P_A}U_0\right) \right\|_{L^{\infty}}
			&\le C \eps (1+t)^{-1 } (\|U_0\|_{H^{k+3} }+\|U_0\|_{L^{1} })\nnm\\
			&\quad +C\eps^m (1+t)^{-m}\|\Tdx^{m}U_0\|_{H^{k+2}}. \label{limit1a}
			\ema
		\end{lem}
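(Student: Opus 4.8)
By \eqref{solution1}, \eqref{solution1a} and \eqref{solution2}, the two quantities $e^{\frac{t}{\eps^2}\AA_\eps}U_0$ and $Y_1(t)P_AU_0$ are the images under the single linear map $\Q(\xi)$ of $e^{\frac{t}{\eps^2}\tilde\AA_\eps(\xi)}\hat V_0$ and $\tilde Y_1(t,\xi)P_A\hat V_0$, where $\hat V_0=(\hat f_0,\omega\times\hat E_0,\omega\times\hat B_0)$; since $\|\Q(\xi)\hat U\|=\|\hat U\|_\xi$ by \eqref{normeq} and $\|\hat V_0(\xi)\|_\xi=\|\hat U_0(\xi)\|$, the Hausdorff--Young inequality in $x$ (with inner $L^2_v$ norm) reduces \eqref{limit1} to bounding $\int_{\R^3}|\xi|^k\|(e^{\frac{t}{\eps^2}\tilde\AA_\eps(\xi)}-\tilde Y_1(t,\xi)P_A)\hat V_0(\xi)\|_\xi\,d\xi$, where the integrand is controlled either by $C\|U_0\|_{L^1}$ pointwise in $\xi$ or, after Cauchy--Schwarz against the integrable weight $(1+|\xi|^2)^{-(k+2)}$, by $H^{k+2}$-norms. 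Decompose $\R^3_\xi$ into the low zone $\{\eps|\xi|\le r_0\}$, the gap zone $\{r_0\le\eps|\xi|\le r_1\}$ and the high zone $\{\eps|\xi|\ge r_1\}$ and use Theorem \ref{rate1}. On the gap zone, Lemma \ref{LP01}(2) gives $\|e^{\frac{t}{\eps^2}\tilde\AA_\eps(\xi)}\|_\xi\le Ce^{-\alpha t/\eps^2}$; on the high zone only $S_2,S_3$ remain, with $|e^{\frac{t}{\eps^2}\beta_j(|\xi|,\eps)}|\le e^{-ct/(\eps|\xi|)}$ by Lemma \ref{eigen_4} and $\|S_3\|_\xi\le e^{-dt/\eps^2}$. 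Using $e^{-\alpha t/\eps^2}\le C_N(1+t/\eps)^{-N}$, that the measures of these zones carry extra powers of $\eps$ (through the weight $(1+|\xi|^2)^{-(k+2)}$), and that on $\{|\xi|\ge r_1/\eps\}$ one may write $e^{-ct/(\eps|\xi|)}\le C_l(\eps|\xi|/t)^l$ while trading $m$ derivatives of the data, the gap, high and remainder contributions — together with the tail of $\tilde Y_1$ on $\{|\xi|\gtrsim 1/\eps\}$, which decays like $e^{-cb_j(|\xi|)t}$ with $b_j\gtrsim|\xi|^2$ there — are bounded by the right side of \eqref{limit1}; in particular the high zone is what produces the auxiliary term $C\eps^m(1+t)^{-m}\|\Tdx^m U_0\|_{H^{k+2}}$.

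The heart of the argument is the low zone, split further into $\{|\xi|\le\delta\}$ (fixed small $\delta$) and $\{\delta\le|\xi|\le r_0/\eps\}$, using $S_1(t,\xi,\eps)=e^{\frac{t}{\eps^2}\tilde\AA_\eps(\xi)}1_{\{\eps|\xi|\le r_0\}}P_\eps(\xi)$ from Lemma \ref{S2a}. Insert the expansions \eqref{expan2}--\eqref{expan3}, written as $\lambda_j=\eps\eta_j(|\xi|)-\eps^2b_j(|\xi|)+\eps^3 r_j$ with $r_j=O(|\xi|^3)$ (resp. $O(|\xi|^5/(1+|\xi|^2))$ for $j=2,3$), and the eigenprojection expansions of Lemma \ref{eigen_4a}, $\mathcal{U}_j(\xi,\eps)=\Lambda_j(\xi)+O(\eps|\xi|)$ (likewise $\mathcal{U}_j^*$). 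Since the $b_j$ are positive and $\eps^3|\mathrm{Re}\,r_j|\le\tfrac12\eps^2 b_j$ throughout the low zone when $r_0$ is small, $e^{\frac{t}{\eps^2}\lambda_j}=e^{\frac{t}{\eps}\eta_j-tb_j}e^{\eps t r_j}$ with $|e^{\frac{t}{\eps^2}\lambda_j}-e^{\frac{t}{\eps}\eta_j-tb_j}|\le C\min(\eps|\xi|,1)\,e^{-ctb_j(|\xi|)}$. For the non-oscillating indices $j=0,2,3$ (where $\eta_j\equiv0$) this matches $\tilde Y_1$ up to $O(\eps|\xi|)$-errors in both the exponent and the eigenprojection; integrating those against the parabolic kernels $e^{-ctb_j(|\xi|)}$ (over $\{|\xi|\le\delta\}$ with $L^1$ data, and over $\{\delta\le|\xi|\le r_0/\eps\}$ with $H^{k+3}$ data, since $b_j$ is bounded below there) produces $C\eps(1+t)^{-1}(\|U_0\|_{L^1}+\|U_0\|_{H^{k+3}})$.

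The remaining term comes from the oscillating indices $j=\pm1$ (phase $\eta_{\pm1}=\pm i\sqrt{1+\frac53|\xi|^2}$) and $j=4,5,6,7$ (phase $\mp i\sqrt{1+|\xi|^2}$), which have no counterpart in $\tilde Y_1$ and generate the initial layer. On $\{|\xi|\le\delta\}$ I factor $e^{\frac{t}{\eps}\eta_j(|\xi|)}=[e^{\frac{t}{\eps}\eta_j(|\xi|)}(1+|\xi|^2)^{-5/4}]\,(1+|\xi|^2)^{5/4}$, apply the key dispersive estimate $\|\mathcal{F}^{-1}(e^{\pm i\sqrt{1+|\xi|^2}t/\eps}(1+|\xi|^2)^{-5/4})\|_{L^\infty_x}\le C(t/\eps)^{-3/2}$ from the introduction to the first factor (a rescaling $\xi\mapsto\sqrt{5/3}\,\xi$ handles the acoustic phase) and estimate the inverse transform of the second factor times the smooth amplitude in $L^1_x(L^2_v)$ using $\|U_0\|_{H^{k+3}}$; combined with the trivial $L^2$ bound and a dyadic decomposition $|\xi|\sim2^\ell$ summed in $\ell$, this yields the rate $(1+t/\eps)^{-(3-\sigma)/2}$ at the cost of the small loss $\sigma$. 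On $\{\delta\le|\xi|\le r_0/\eps\}$ the same modes carry a non-degenerate phase $\frac{t}{\eps}\eta_j(|\xi|)$ with $|\eta_j'(|\xi|)|\ge c_\delta>0$, so repeated integration by parts in $\xi$ (acting on the smooth multiplier only, then convolving against $\|V_0\|_{L^1}=\|U_0\|_{L^1}$) gains $(\eps/t)^N$ and, together with the damping $e^{-tb_j}$, again stays below $(1+t/\eps)^{-(3-\sigma)/2}\|U_0\|_{L^1}$. The microscopic data $P_BU_0$ is treated the same way: $S_1P_B$ only feeds $O(\eps|\xi|)$-amplitudes into the low modes while $S_2,S_3$ decay like $e^{-ct/(\eps|\xi|)}$, $e^{-dt/\eps^2}$, so $e^{\frac{t}{\eps^2}\AA_\eps}P_BU_0$ is bounded by the same terms (using $\eps(1+t/\eps)^{-3/2}\le(1+t/\eps)^{-(3-\sigma)/2}$). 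Collecting the zones gives \eqref{limit1}. Finally, when $U_0$ satisfies \eqref{initial}, $P_BU_0=0$, and from $\hat m_0=i\xi\times\hat B_0\perp\xi$ together with $i\xi(\hat n_0+\sqrt{2/3}\,\hat q_0)=\hat E_0=-i\xi|\xi|^{-2}\hat n_0$ a direct computation gives $(\hat V_0,\Lambda_j(\xi))_\xi=0$ for all oscillating $j\in\{-1,1,4,5,6,7\}$; hence only the $O(\eps|\xi|)$-corrections of Lemma \ref{eigen_4a} survive in those modes, contributing $C\eps(1+t/\eps)^{-3/2}\le C\eps(1+t)^{-1}$, so the layer term drops and \eqref{limit1a} follows.

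The step I expect to be the main obstacle is the uniform-in-$\eps$ control of the oscillating (acoustic and Maxwell) modes in the $L^\infty_x$ norm over the whole low zone $\{\eps|\xi|\le r_0\}$: the dispersive estimate is sharp only after one extracts the $(1+|\xi|^2)^{-5/4}$ weight, so reconciling that weight, the $|\xi|^k$ factor and the eigenvector amplitudes over the unbounded range $|\xi|\le r_0/\eps$ forces the dyadic splitting and the concession of the arbitrarily small loss $\sigma$; the secondary difficulty is the bookkeeping of the precise orders ($O(\eps|\xi|)$ versus $O(\eps^2|\xi|^2)$, and $O(|\xi|^3)$ versus $O(|\xi|^5/(1+|\xi|^2))$ for $j=2,3$) of the differences $\mathcal{U}_j-\Lambda_j$ and $\lambda_j-\eps\eta_j+\eps^2 b_j$, so that each error lands in the correct right-hand term of \eqref{limit1}.
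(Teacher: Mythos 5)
Your treatment of \eqref{limit1} follows essentially the same route as the paper: the same decomposition into the low zone (split into the non--oscillating error terms, which give $\eps(1+t)^{-1}$ after comparing $\lambda_j$ with $\eps\eta_j-\eps^2 b_j$ and $\mathcal{U}_j$ with $\Lambda_j$, and the oscillating modes $j\ne 0,2,3$ handled by the Klein--Gordon dispersive estimate with weight $(1+|\xi|^2)^{-5/4}$ and a $\sigma$-loss), the high zone producing $\eps^m(1+t)^{-m}\|\Tdx^m U_0\|_{H^{k+2}}$, and exponentially small remainders. The only real difference is the implementation of the Young-inequality step for $I_{12}$: the paper pairs the dispersive kernel in $L^{6/\sigma}_x$ against the data part in $L^{6/(6-\sigma)}_x$, controlling the latter by $\|U_0\|_{L^1}+\|U_0\|_{H^3}$ via the H\"ormander--Mihlin theorem and Sobolev interpolation, whereas you propose an $L^\infty\times L^1$ pairing repaired by a dyadic decomposition. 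Your variant is plausible but sketchier (the $L^1_x$ control of the inverse transform of an $H^{k+3}$ amplitude on each block still needs an argument, e.g.\ Cauchy--Schwarz against the block measure, and the summation is where the $\sigma$ actually enters); the paper's H\"older/interpolation route is the cleaner way to land exactly on $\|U_0\|_{L^1}+\|U_0\|_{H^{k+3}}$.

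There is, however, a genuine gap in your proof of \eqref{limit1a}. In the low zone you only ever bound the remainder by $\|S_3(t,\xi,\eps)\hat V_0\|_\xi\le Ce^{-dt/\eps^2}\|\hat V_0\|_\xi$, and $e^{-dt/\eps^2}$ is \emph{not} dominated by $C\eps(1+t)^{-1}$: at $t\sim\eps^2$ the left side is $e^{-d}=O(1)$ while the right side is $O(\eps)$. Killing the oscillating projections $(\hat V_0,\Lambda_j)_\xi$ for $j\ne 0,2,3$ removes the initial layer from $S_1$, but it does nothing to $S_3$. The paper closes this with Lemma \ref{S2a}: writing the low-frequency remainder as $S_{31}=S_{31}(I-P_\eps(\xi))$ and using $\|\mathcal{U}_j(\xi,\eps)-\Lambda_j(\xi)\|_\xi=O(\eps|\xi|)$, one gets for well-prepared (macroscopic) data $\|S_3(t,\xi,\eps)\hat V_0\|\le C(\eps|\xi|1_{\{\eps|\xi|\le r_0\}}+1_{\{\eps|\xi|\ge r_0\}})e^{-dt/\eps^2}\|\hat V_0\|$, which after integration yields $C\eps e^{-dt/\eps^2}\|U_0\|_{H^{k+3}}\le C\eps(1+t)^{-1}\|U_0\|_{H^{k+3}}$ (on $\eps|\xi|\ge r_0$ one trades $1\le\eps|\xi|/r_0$ for a derivative). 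The same extra factor of $\eps$ is needed for your ``gap zone'' $r_0\le\eps|\xi|\le r_1$ in the well-prepared case; there the derivative trade suffices, but for the low-frequency remainder the eigenprojection argument of Lemma \ref{S2a} is indispensable and is missing from your outline.
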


		\begin{proof}
			For simplicity, we only prove the case of $k=0$.
			By \eqref{S1} and taking $\eps\le r_0/2$ with $r_0>0$ given in Lemma \ref{eigen_4}, we have
			\bma
			\left\|e^{\frac{t}{\eps^2}\AA_\eps}U_0-Y_1(t){ P_A}U_0\right\|&= \left\|\intr \left(\mathcal{Q}(\xi)e^{\frac{t}{\eps^2}\tilde{\AA}_\eps(\xi)}\hat V_0-\mathcal{Q}(\xi)\tilde{Y}_1(t,\xi){ P_A}\hat V_0 \right)e^{ix\cdot \xi}d\xi\right\|\nnm\\
			&\le \bigg\|\int_{|\xi|\leq \frac{r_0}{\eps}} \left(\mathcal{Q}(\xi)S_1(t,\xi,\eps)\hat{V}_0-\mathcal{Q}(\xi)\tilde{Y}_1(t,\xi){ P_A}\hat V_0 \right)e^{ix\cdot \xi}d\xi\bigg\|\nnm\\
			&\quad+\int_{|\xi|>\frac{r_1}{\eps} } \|S_2(t,\xi,\eps)\hat{V}_0 \|_{\xi}d\xi+\int_{ |\xi|>\frac{r_0}{\eps}} \|\tilde{Y}_1(t,\xi)\hat{V}_0 \|_{\xi}d\xi \nnm\\
			&\quad +\intr  \|S_3(t,\xi,\eps)\hat{V}_0 \|_{\xi}d\xi\nnm\\
			&=:I_1+I_2+I_3+I_4, \label{S_4aa}
			\ema
			where $\hat V_0=(\hat f_0,\omega\times \hat E_0,\omega\times \hat B_0)$ and $\mathcal{Q}(\xi)$ is defined in \eqref{defQ}.
			We first deal with $I_1$. By Theorem \ref{rate1} and \eqref{specr0},
			\begin{align*}
				I_1&\leq \sum_{j=-1}^7\int_{|\xi|\le \frac{r_0}{\eps} }\bigg\| e^{\frac{\lambda_j(|\xi|,\eps)t}{\eps^2} }\big(\hat V_0, \mathcal{U}^*_j \big)_{\xi} \mathcal{U}_j
				-e^{\frac{\eta_j(|\xi|)t}{\eps}-b_j(|\xi|)t}\big(\hat{V}_0,\Lambda_j \big)_{\xi}\Lambda_j \bigg\|_\xi d\xi\\
				&\quad+\sum _{j\neq 0,2,3}\bigg\|\int_{|\xi|\leq \frac{r_0}{\eps}}  e^{\frac{\eta_j(|\xi|)t}{\eps}-b_j(|\xi|)t}\big(\hat V_0,{ \Lambda}_j(\xi) \big)_\xi \Q(\xi){\Lambda}_j(\xi)e^{ix\cdot \xi}d\xi\bigg\| \\
				&=:I_{11}+I_{12}.
			\end{align*}
			{ We first consider $I_{11}$.
				By \eqref{specr0} and \eqref{specr01}, there exists $c>0$ such that
				\begin{align}
					&\left|e^{\frac{\lambda_j(s,\eps)t}{\eps^2} }-e^{\frac{\eta_j(s)t}{\eps}-b_j(s)t}\right|\leq Ce^{-cs^2t}\eps s^3t,\ \ \ j=-1,0,1,4,5,6,7,\label{eigdif1}\\
					&\left|e^{\frac{\lambda_k(s,\eps)t}{\eps^2} }-e^{\frac{\eta_k(s)t}{\eps}-b_k(s)t}\right|\leq Ce^{-\frac{cs^4t}{1+s^2}}\frac{\eps s^5}{1+s^2}t,\ \ \ k=2,3.\label{eigdif2}
				\end{align}
			}
			By \eqref{eigf2}, \eqref{eigf22} and \eqref{lamj}, we obtain that
			\begin{equation}\label{uj-lam}
				\|  \mathcal{U}_j (\xi,\eps)-\Lambda_j(\xi) \|_\xi = O(\eps |\xi|).
			\end{equation}
			Then we have
			\begin{align*}
				I_{11}
				&\leq C\int _{|\xi|\leq \frac{r_0}{\eps}}  \left\{e^{-c|\xi|^2t}(\eps |\xi|+\eps |\xi|^3t)+e^{- \frac{c|\xi|^4}{1+|\xi|^2}t}\(\eps |\xi|+\frac{\eps |\xi|^5}{1+|\xi|^2}t\)\right\}  \| \hat U_0 \| d\xi\\
				&\leq C\eps  \sup_{\xi\in \mathbb{R}^3}\| \hat U_0 \|\(\int _{\mathbb{R}^3} e^{-c|\xi|^2t} ( |\xi|+ |\xi|^3t) d\xi+\int_{|\xi|\le 1}e^{-c |\xi|^4 t}( |\xi|+ |\xi|^5 t)d\xi\)\\
				&\leq C\eps (t^{-2}+t^{-1}) \| U_0\|_{L^1}.
			\end{align*}
			Here we used the estimate $\|\hat V_0(\xi)\|_\xi =\| \hat U_0(\xi)\| \leq \| U_0\|_{L^1}$.
			On the other hand, by the H\"{o}lder's inequality,
			\begin{align*}
				I_{11}& \leq C\eps \bigg(\int _{|\xi|\leq \frac{r_0}{\eps}}  \frac{|\xi|^2}{ (1+|\xi|^2)^{6}} d\xi\bigg)^{\frac{1}{2}} \bigg( \int_ {\mathbb R^3} (1+|\xi|^2)^{6} \| \hat U_0\| ^2 d\xi\bigg)^{\frac{1}{2}}\\
				&\leq  C\eps \|  U_0\|_{H^3}.
			\end{align*}
			Hence, we conclude that
			\be\label{i11}
			I_{11} \leq C\eps (1+t)^{-1} ( \| U_0\|_{L^1} +\| U_0\|_{H^3}  ).\ee
			Then we consider $I_{12}$ associated to the eigenvalues with imaginary leading term.
			We write
			\begin{align*}
				I_{12}&\leq \sum _{j\neq 0,2,3}\bigg\|\intr e^{\frac{\eta_j(|\xi|)t}{\eps}-b_j(|\xi|)t}\big(\hat V_0,{ \Lambda}_j(\xi) \big)_\xi \Q(\xi){\Lambda}_j(\xi)e^{ix\cdot \xi}d\xi\bigg\|_{L^\infty} \\
				&\quad	+ \sum _{j\neq 0,2,3}\bigg\|\int_{|\xi|\geq \frac{r_0}{\eps}}  e^{\frac{\eta_j(|\xi|)t}{\eps}-b_j(|\xi|)t}\big(\hat V_0,{ \Lambda}_j(\xi) \big)_\xi \Q(\xi){\Lambda}_j(\xi)e^{ix\cdot \xi}d\xi\bigg\|_{L^\infty} \\
				&=:I_{121}+I_{122}.
			\end{align*}
			We apply the dispersive estimate in Lemma \ref{lemdisp} to obtain decay in time.
			By Young's inequality, we get for any $0<\sigma\ll 1$,
			\bma
			I_{121}&\leq C\sum_{j\neq 0,2,3}	\Big\| \mathcal{F}^{-1}\Big(e^{\frac{\eta_j(|\xi|)t}{\eps}}(1+|\xi|^2)^{-\frac{5}{4}}\Big)\Big\|_{L^{\frac{6}{\sigma}}_x}	\left\|\mathcal{F}^{-1}\(e^{-b_j(|\xi|)t}\)\right\|_{L^1_x} \nnm\\
			&\qquad \times\left\|\mathcal{F}^{-1}\left\{\big(\hat V_0,{ \Lambda}_j(\xi) \big)_\xi \Q(\xi){\Lambda}_j(\xi)(1+|\xi|^2)^{\frac{5}{4}}\right\}\right\|_{L^{\frac{6}{6-\sigma}}}.\label{121}
			\ema
			By Lemma \ref{lemdisp}, we have
			\be \label{term1}
			\Big\| \mathcal{F}^{-1}\Big(e^{\frac{ \eta_j(|\xi|)}{\eps}t}(1+|\xi|^2)^{-\frac{5}{4}}\Big)\Big\|_{L^{\frac{6}{\sigma}}_x}\leq C\(\frac{t}{\eps}\)^{-\frac{3-\sigma}{2}}.
			\ee
			On the other hand, by Lemma \ref{kern} we obtain
			\be \label{term2}
			\left\|\mathcal{F}^{-1}\(e^{-b_j(|\xi|)t}\)\right\|_{L^1_x}\leq C.
			\ee
			Note that
			$$
			\big(\hat V_0,\Lambda_j\big)_\xi=\big(\hat U_0,\Q(\xi)\Lambda_j\big)-\frac{1}{|\xi|^2}\big((\hat{n}_0-i\xi\cdot \hat{E}_0)\chi_0,\tilde{u}_j\big)=\big(\hat U_0,\Q(\xi)\Lambda_j\big),
			$$
			where $n_0=(f_0,\chi_0)$, and $\tilde{u}_j$ is the first component of $\Lambda_j$, and we have used the compatibility condition \eqref{com}. Then we can write
			$$
			\big(\hat V_0,{ \Lambda}_j(\xi) \big)_\xi \Q(\xi){\Lambda}_j(\xi)=\big(\hat U_0,\Q(\xi)\Lambda_j(\xi)\big) \Q(\xi){\Lambda}_j(\xi).
			$$
			By the definition of $\Lambda_j(\xi)$ in \eqref{lamj}, we have
			\be\label{multi1}
			|\nabla_\xi^m(\Q(\xi)\Lambda_j(\xi) )|\leq C_m |\xi|^{-m},\quad \forall m\in\mathbb{N}.
			\ee
			And it follows from the H\"{o}rmander-Mihlin multiplier Theorem (see Theorem \ref{thmhm}) and the Sobolev interpolation inequality (see Lemma \ref{lemitp}) that
			\bma
			\left\|\mathcal{F}^{-1}\left\{\big(\hat V_0,{ \Lambda}_j(\xi) \big)_\xi \Q(\xi){\Lambda}_j(\xi)(1+|\xi|^2)^{\frac{5}{4}}\right\}\right\|_{L^{\frac{6}{6-\sigma}}}&\leq C \|U_0\|_{W^{\frac{5}{2},\frac{6}{6-\sigma}}} \nnm\\
			&\leq C (\|U_0\|_{L^1}+\|U_0\|_{H^3}).\label{term3}
			\ema
			We conclude from \eqref{121}, \eqref{term1}, \eqref{term2} and \eqref{term3} that
			$$
			I_{121}\leq C \(\frac{t}{\eps}\)^{-\frac{3-\sigma}{2}}(\|U_0\|_{L^1}+\|U_0\|_{H^3}).
			$$
			For $I_{122}$, we have
			\begin{align*}
				I_{122}&\leq C \int _{|\xi|\geq \frac{r_0}{\eps}}e^{-c|\xi|^2t}\|U_0\|d\xi\\
				&\leq C \bigg(\int _{|\xi|\geq \frac{r_0}{\eps}}e^{-\frac{2cr_0^2t}{\eps^2}}(1+|\xi|^2)^{-2}d\xi\bigg)^\frac{1}{2} \bigg(\int _{|\xi|\geq \frac{r_0}{\eps}}(1+|\xi|^2)^2\|\hat U_0\|^2d\xi\bigg)^\frac{1}{2}\\
				&\leq C e^{-\frac{cr_0^2t}{\eps^2}}\|U_0\|_{H^2}.
			\end{align*}
			Hence, we obtain
			\begin{align}\label{i12i1}
				I_{12}\leq I_{121}+I_{122}\leq C\bigg(\(\frac{t}{\eps}\)^{-\frac{3-\sigma}{2}}+e^{-\frac{cr_0^2t}{\eps^2}}\bigg)(\|U_0\|_{L^1}+\|U_0\|_{H^3}). \end{align}
			On the other hand, we have
			\be\label{i12i2}
			I_{12}\leq C \|(1+|\xi|^2)^{-1} e^{\frac{t}{\eps^2}\lambda_j(|\xi|,\eps)} \|_{L^2_\xi}\|U_0\|_{H^2}\leq C\|U_0\|_{H^2}.
			\ee
			Combining \eqref{i12i1} and \eqref{i12i2}, we get
			\be\label{i12}
			I_{12}\leq C  \(1+\frac t \eps\)^{-\frac{3-\sigma}2} (\|U_0\|_{L^1}+\|U_0\|_{H^3}).
			\ee
			From \eqref{S2} and Lemma \ref{eigen_5},  we have
			\bma
			I_2&\leq C \int_{|\xi|\geq \frac{r_1}{\eps}}e^{-\frac{ct}{\eps |\xi|}}\|\hat V_0\|_\xi d\xi \nnm\\
			&\leq C \sup_{|\xi|\geq \frac{r_1}{\eps}} |\xi|^{-m}e^{-\frac{ct}{\eps |\xi|}} \left(\int_{|\xi|\geq \frac{r_1}{\eps}}|\xi|^{2m}(1+|\xi|^2)^2\|\hat V_0\|_\xi^2 d\xi\right)^\frac{1}{2}\nnm\\
			&\leq C \eps^m (1+t)^{-m}\|\Tdx^mU_0\|_{H^2}.\label{i2}
			\ema
			For $I_3$ and $I_4$, by  \eqref{v1} and Theorem \ref{rate1}, it holds that
			\be\label{i34}
			I_3 \leq C \int_{ |\xi|> \frac{r_0}{\eps}}e^{-c|\xi|^2t}\|\hat V_0\|_\xi d\xi\leq C\frac{\eps}{r_0} e^{-\frac{cr_0^2t}{\eps^2}}\|U_0\|_{H^3},
			\ee
			and
			\be\label{i35}
			I_4 \leq C \intr e^{-\frac{dt}{\eps^2}}\|\hat{V}_0\|_{\xi} d\xi\leq C e^{-\frac{dt}{\eps^2}}\|U_0\|_{H^2}.
			\ee
			Then we obtain \eqref{limit1} in view of  \eqref{i11}, \eqref{i12}, \eqref{i2}, \eqref{i34} and \eqref{i35}.
			
			Now, we consider \eqref{limit1a}.  If $U_0=(f_{0}, E_0,B_0)$ satisfies \eqref{initial}, by \eqref{hj}, one has
			$$
			\sum_{j\neq 0,2,3} \big(\hat V_0,\Lambda_j(\xi)\big)_\xi =0.
			$$
			Hence,  we obtain
			$$
			I_{12}=0.
			$$
			Since $f_0\in N_0,$ by Lemma \ref{S2a} we have
			\bma
			I_4&\le C\int_{|\xi|\le \frac{r_0}{\eps}}\eps |\xi|e^{-\frac{dt}{\eps^2}}\| \hat{V}_0\|_{\xi}d\xi+C\int_{|\xi|\ge \frac{r_0}{\eps}}e^{-\frac{dt}{\eps^2}}\| \hat{V}_0\|_{\xi}d\xi \nnm\\
			&\le C\eps e^{-\frac{dt}{\eps^2}}\(\int_{|\xi|\le \frac{r_0}{\eps}} |\xi| \| \hat{U}_0\| d\xi +\int_{|\xi|\ge \frac{r_0}{\eps}} |\xi|\| \hat{U}_0\| d\xi\)\nnm\\
			&\le C\eps e^{-\frac{dt}{\eps^2}}\|U_0\|_{H^3 }.\label{S_7a-1}
			\ema
			Then we get \eqref{limit1a}. This completes the proof of the lemma.
		\end{proof}
		
		\begin{remark}
			 From Lemma \ref{fl1}, we have
			\be
			\|e^{\frac{t}{\eps^2}\AA_{\eps}}P_AU_0-Y_1(t)P_AU_0-U^{osc}_{\eps}(t)\|_{L^{\infty} } \le C\eps (1+t)^{-1}\(\|U_0\|_{H^3}+\|U_0\|_{L^{1}}\), \label{limit7}
			\ee
			where $U^{osc}_{\eps}(t)=U^{osc}_{\eps}(t,x,v)$ is the high oscillation term defined by
			\be U^{osc}_{\eps}(t,x,v)=\sum_{j\ne 0,2,3} \mathcal{F }^{-1}\Big(e^{\frac{\eta_j(|\xi|)t}{\eps}-b_j(|\xi|)t}\big(\hat V_0,{ \Lambda}_j(\xi) \big)_\xi \Q(\xi){\Lambda}_j(\xi)\Big). \label{osc}\ee
		\end{remark}
		
		We have the second order fluid approximation of the semigroup $e^{\frac{t}{\eps^2}\AA_\eps}$ as follows.
		
		\begin{lem}\label{fl2}
			For any $\eps,\sigma\ll 1$, any integer $k,m\ge 0$ and  $U_0=(f_0,0,0)\in H^{k+4}\cap L^1 $ satisfying $P_0f_0=0$, we have
			\bma
			&\quad\bigg\|\nabla_x^k\left(\frac1{\eps}e^{\frac{t}{\eps^2}\AA_\eps}U_0-Y_1(t)Z_0\right)\bigg\|_{L^{\infty}} \nnm\\
			&\le C \bigg( \eps(1+t)^{-\frac54 }+ { \frac1{\eps}}e^{-\frac{dt}{\eps^2}}+\(1+\frac{t}{\eps}\)^{-\frac{3-\sigma}{2}}\bigg)(\|U_0\|_{H^{k+4} }+\|U_0\|_{L^{1} })\nnm\\
			&\quad +C \eps^{m}(1+t)^{-m}\|\Tdx^{m}U_0\|_{H^{k+3}}, \label{limit2}
			\ema
			where $Y_1(t)$ is defined by \eqref{solution1a}, $Z_0=( P_0(v\cdot\Tdx L^{-1}f_0), 0,0),$ and $ d, C>0$ are constants independent of $\eps$.
		\end{lem}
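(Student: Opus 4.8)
The plan is to mirror the proof of Lemma~\ref{fl1}. I first use the spectral decomposition of Theorem~\ref{rate1} to split $e^{\frac{t}{\eps^2}\AA_\eps}$ into the low-frequency fluid part $S_1$ (supported on $\eps|\xi|\le r_0$), the high-frequency part $S_2$ ($\eps|\xi|\ge r_1$) and the exponentially small remainder $S_3$; then I identify the surviving piece of $\frac1\eps S_1$ with $Y_1(t)Z_0$ and estimate the rest. The genuinely new point is the prefactor $\frac1\eps$: since $P_0f_0=0$, the vector $\hat V_0=(\hat f_0,0,0)$ lies in the microscopic subspace, so every eigenprojection of $\tilde\AA_\eps(\xi)$ in the ranges $\eps|\xi|\le r_0$ and $\eps|\xi|\ge r_1$ contributes only at order $\eps|\xi|$, which is exactly what absorbs $\frac1\eps$. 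Relative to Lemma~\ref{fl1} one then carries one extra power of $|\xi|$ throughout, and this is responsible both for the improved decay $\eps(1+t)^{-5/4}$ and for the loss of one derivative ($H^{k+4}$ instead of $H^{k+3}$). I treat only $k=0$ and take $\eps\le r_0/2$.

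For $S_3$, by \eqref{solution1}--\eqref{solution2} and \eqref{S3} the contribution is $\le C\frac1\eps e^{-dt/\eps^2}\|U_0\|_{H^2}$, which is the term $\frac1\eps e^{-dt/\eps^2}$. For $S_2$, the eigenvectors of Lemma~\ref{eigen_4} satisfy $w_j=\eps c_j(\beta_j-\tilde\BB_\eps(\xi))^{-1}(v\cdot\tilde e_j\chi_0)=O(\eps)$, hence $(\hat V_0,\mathcal V_j^*)=(\hat f_0,\overline{w_j})=O(\eps)\|\hat f_0\|$ and the factor $\frac1\eps$ is harmless; combining this with $\mathrm{Re}\,\beta_j\le -c\eps/|\xi|$ (Lemma~\ref{eigen_5}), the bound $\sup_{|\xi|\ge r_1/\eps}|\xi|^{-m}e^{-ct/(\eps|\xi|)}\le C\eps^m(1+t)^{-m}$, and a Cauchy--Schwarz in $\xi$ to gain integrability, yields the term $C\eps^m(1+t)^{-m}\|\Tdx^mU_0\|_{H^3}$.

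The core is the low-frequency part. On $\eps|\xi|\le r_0$ write $S_1=\sum_{j\in\{0,2,3\}}+\sum_{j\notin\{0,2,3\}}$. Because $P_0\hat f_0=0$ one has $(\hat V_0,\mathcal U_j^*)_\xi=(\hat f_0,\overline{P_1u_j})$, and inserting \eqref{eigf2}, \eqref{eigf22} (and the analogous expansion for $j=4,\dots,7$) together with the conjugate-linearity of $(\cdot,\cdot)$ gives
\[
\tfrac1\eps(\hat V_0,\mathcal U_j^*)_\xi = i|\xi|\big(\hat f_0,L^{-1}P_1[(v\cdot\omega)u_j^{(0)}]\big)+O(\eps|\xi|^2)\|\hat f_0\|,
\]
where $u_j^{(0)}$ is the leading macroscopic profile of $u_j$, which coincides with the first component of $\Lambda_j$. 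For $j\in\{0,2,3\}$ we have $\eta_j=0$, so by \eqref{specr0}--\eqref{specr01} $e^{\frac{t}{\eps^2}\lambda_j}=e^{-b_jt}+O(\eps|\xi|^3t\,e^{-c|\xi|^2t})$ (with $|\xi|^5/(1+|\xi|^2)$ replacing $|\xi|^3$ for $j=2,3$) and $\mathcal U_j=\Lambda_j+O(\eps|\xi|)$; combined with the identity $i|\xi|\big(\hat f_0,L^{-1}P_1[(v\cdot\omega)u_j^{(0)}]\big)=\big((\hat z_0,0,0),\Lambda_j\big)_\xi$ — a direct computation using $\hat z_0=i|\xi|\,P_0[(v\cdot\omega)L^{-1}\hat f_0]$ and the self-adjointness of $L$ on $N_0^\perp$ — this shows that, after applying $\Q(\xi)$, the sum of the leading terms over $j\in\{0,2,3\}$ equals $Y_1(t)Z_0$, cf.\ \eqref{v1} and \eqref{solution1a}. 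The remainders integrate in $\xi$ to $C\eps(1+t)^{-5/4}(\|U_0\|_{H^4}+\|U_0\|_{L^1})$, using the $L^1$-bound (in which $\int_{\R^3}(|\xi|^2+|\xi|^4t)e^{-c|\xi|^2t}\,d\xi\le Ct^{-5/2}$) against the non-decaying $H^4$-bound. For the oscillatory indices $j\notin\{0,2,3\}$ one has $\eta_j(|\xi|)=\pm i\sqrt{1+\tfrac53|\xi|^2}$ ($j=\pm1$) or $\pm i\sqrt{1+|\xi|^2}$, so $e^{\frac{t}{\eps^2}\lambda_j}=e^{i\eta_j(|\xi|)t/\eps}e^{-b_jt}(1+O(\cdots))$, and for $j=\pm1$ the coefficient $\frac1\eps(\hat V_0,\mathcal U_j^*)_\xi$ carries a genuine factor $|\xi|$, which is the source of the extra derivative. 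Writing the symbol as $\big[|\xi|(1+|\xi|^2)^{5/4}\,(\text{bounded Mihlin multiplier})\big]\cdot e^{i\eta_jt/\eps}(1+|\xi|^2)^{-5/4}\cdot e^{-b_jt}$ and applying Young's inequality with the dispersive estimate $\|\mathcal F^{-1}(e^{\pm i\sqrt{1+|\xi|^2}t/\eps}(1+|\xi|^2)^{-5/4})\|_{L^{6/\sigma}_x}\le C(t/\eps)^{-(3-\sigma)/2}$ (Lemma~\ref{lemdisp}), Lemma~\ref{kern}, and the H\"ormander--Mihlin theorem with Sobolev interpolation (as for the oscillatory modes in Lemma~\ref{fl1}), gives $C(1+t/\eps)^{-(3-\sigma)/2}(\|U_0\|_{H^4}+\|U_0\|_{L^1})$; the tail $|\xi|>r_0/\eps$ is absorbed into exponentially small terms. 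Assembling all three pieces gives \eqref{limit2}.

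The step I expect to be most delicate is the one just described: one must control, uniformly on the entire range $\eps|\xi|\le r_0$ (where $|\xi|$ may be as large as $r_0/\eps$), the $O(\eps|\xi|)$-expansions of all nine eigenvalues and eigenprojections in the weighted space $L^2_\xi(\R^3_v)\times\C^3_\xi\times\C^3_\xi$, check that the prefactor $\frac1\eps$ is absorbed with $|\xi|$-weights that remain controllable by the stated Sobolev norms, and verify the algebraic identity that the leading low-frequency term reproduces $Y_1(t)Z_0$ — which requires careful bookkeeping of the inner product $(\cdot,\cdot)_\xi$, of the map $\Q(\xi)$ relating the $\hat V$- and $\hat U$-variables, and of the normalizations in \eqref{eigf2}--\eqref{eigf22}.
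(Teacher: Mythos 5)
Your proposal is correct and follows essentially the same route as the paper: the same $S_1/S_2/S_3$ decomposition, the same key observation that $P_0f_0=0$ makes every eigenprojection coefficient $O(\eps|\xi|)$ (the paper's estimates \eqref{u00}--\eqref{uj-lam2}), the same identification of the leading low-frequency term with $Y_1(t)Z_0$ via the self-adjointness of $L^{-1}$, and the same dispersive/Mihlin treatment of the oscillatory modes with the extra factor of $|\xi|$ accounting for the loss of one derivative. The only cosmetic difference is that the paper isolates the tail $\int_{|\xi|>r_0/\eps}\|\tilde Y_1(t,\xi)\hat Z_0\|_\xi\,d\xi$ as a separate term $J_3$, which you fold into the "exponentially small" remainders.
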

		
		\begin{proof}
			Again, we only prove the case when $k=0$.
			Similar to \eqref{S_4aa}, by \eqref{S1} and taking $\eps\le r_0/2$ with $r_0>0$ given in Lemma \ref{eigen_4}, we have for $U_0=(f_0,0,0), $
			\bma
			\left\|\frac1{\eps} e^{\frac{t}{\eps^2}\AA_\eps}U_0-Y_1(t)Z_0\right\| &\le \bigg\|\int_{|\xi|\leq\frac{r_0}{\eps}}\left(\frac1{\eps}\mathcal{Q}(\xi)S_1(t,\xi,\eps)\hat U_0-\mathcal{Q}(\xi)\tilde{Y}_1(t,\xi)\hat Z_0 \right)e^{ix\cdot \xi}d\xi\bigg\| \nnm\\
			&\quad+\frac1{\eps}\int_{|\xi|>\frac{r_1}{\eps} } \|S_2(t,\xi,\eps)\hat{U}_0 \|_{\xi}d\xi+ \int_{ |\xi|>\frac{r_0}{\eps}} \|\tilde{Y}_1(t,\xi)\hat{Z}_0 \|_{\xi}d\xi \nnm\\
			&\quad +\frac1{\eps}\intr \|S_3(t,\xi,\eps)\hat{U}_0 \|_{\xi}d\xi\nnm\\
			&=:J_1+J_2+J_3+J_4.\label{S_4ab}
			\ema
			We first deal with $J_1$. Note that $P_df_0=P_d(v\cdot \nabla_x L^{-1}f_0)=0$, hence
			$$ \big(\hat U_0, \mathcal{U}^*_j(\xi,\eps) \big)_\xi=\big(\hat{f}_0,P_1\overline{u_j(\xi,\eps)}\big), \quad \big(\hat{Z}_0,\Lambda_j(\xi) \big)_\xi=\big(iv\cdot\xi L^{-1}\hat{f}_0,\tilde{u}_j(\xi) \big),$$
			{ where $u_j$ and $\tilde{u}_j$ are the first components of the eigenfunctions $\mathcal{U}_j$ and $\Lambda_j$ respectively.}
			By \eqref{S1}, \eqref{v1} and \eqref{normeq}, we obtain{
				\begin{align*}
					J_1&\leq  \sum_{j=-1}^7\int_{|\xi|\le \frac{r_0}{\eps} }\bigg\|\frac1{\eps} e^{\frac{\lambda_j(|\xi|,\eps)t}{\eps^2} }\big(\hat{f}_0,P_1 \overline{u_j} \big) \mathcal{U}_j
					-e^{\frac{\eta_j(|\xi|)t}{\eps}-b_j(|\xi|)t}\big(iv\cdot\xi L^{-1}\hat{f}_0,\tilde{u}_j  \big)\Lambda_j \bigg\|_\xi d\xi\\
					&\quad+\sum _{j\neq 0,2,3}\bigg\|\int_{|\xi|\leq \frac{r_0}{\eps}} e^{\frac{\eta_j(|\xi|)t}{\eps}-b_j(|\xi|)t}\big(iv\cdot\xi L^{-1}\hat{f}_0,\tilde{u}_j  \big)\mathcal{Q}(\xi) {\Lambda}_j(\xi)e^{ix\cdot \xi}d\xi\bigg\|_{L^\infty}\\
					&=:J_{11}+J_{12}.
			\end{align*}}
			We first estimate $J_{11}$. 	We further decompose{
				\begin{align*}
					J_{11}=&\sum_{j=-1}^7\int_{|\xi|\le \frac{r_0}{\eps} }\frac1{\eps}\bigg|\Big( e^{\frac{\lambda_j(|\xi|,\eps)t}{\eps^2} }-e^{\frac{\eta_j(|\xi|)t}{\eps}-b_j(|\xi|)t}\Big)\big(\hat{f}_0,P_1\overline{u_j }\big)
					\bigg| d\xi\\
					&+\sum_{j=-1}^7\int_{|\xi|\le \frac{r_0}{\eps} }\bigg\|e^{\frac{\eta_j(|\xi|)t}{\eps}-b_j(|\xi|)t}\(\frac1{\eps} \big(\hat{f}_0,P_1\overline{u_j }\big) \mathcal{U}_j
					-\big(iv\cdot\xi L^{-1}\hat{f}_0,\tilde{u}_j  \big)\Lambda_j\)\bigg\|_\xi d\xi.
			\end{align*}}
			By Lemma \ref{eigen_4a}, we have $\|P_1u_j\|\leq O(\eps s)$. Hence, for any $U_0=(f_0,0,0)$ with $P_0f_0=0$, it holds
			\be\label{u00}
			\big|	\big(\hat{f}_0,P_1\overline{u_j(\xi,\eps)}\big)\big|\leq O(\eps s)\|\hat U_0\|.
			\ee
			By direct calculation, one has
			\be \label{uj-lam2}
			\left|\frac1\eps\big(\hat{f}_0,P_1\overline{u_j(\xi,\eps)}\big)-\big(iv\cdot\xi L^{-1}\hat{f}_0,\tilde{u}_j(\xi)  \big) \right|\leq O(\eps s^2)\|\hat U_0\|.
			\ee
			Then from \eqref{eigdif1}, \eqref{eigdif2}, \eqref{uj-lam}, \eqref{u00} and \eqref{uj-lam2}, we obtain
			\begin{align}\label{j11}
				J_{11}&\leq   C\int _{|\xi|\leq \frac{r_0}{\eps}}  \left\{e^{-c|\xi|^2t}(\eps |\xi|^2+\eps |\xi|^4t)+e^{- \frac{c|\xi|^4}{1+|\xi|^2}t}\(\eps |\xi|^2+\frac{\eps |\xi|^6}{1+|\xi|^2}t\)\right\}  \| \hat U_0 \| d\xi\nnm\\
				&\leq  C\eps  \sup_{\xi\in \mathbb{R}^3}\| \hat U_0 \|\(\int _{\mathbb{R}^3} e^{-c|\xi|^2t} ( |\xi|^2+ |\xi|^4t) d\xi+\int_{|\xi|\le 1}e^{-c |\xi|^4 t}( |\xi|^2+ |\xi|^6 t)d\xi\)\nnm\\
				&\leq C \eps (1+t)^{-\frac{5}{4}}(\|U_0\|_{L^1}+\|U_0\|_{H^4}).
			\end{align}
			For $J_{12}$, similar to \eqref{i12}, we get
			\be\label{j12}
			J_{12}\leq C\(1+\frac t \eps\)^{-\frac{3-\sigma}2} (\|U_0\|_{L^1}+\|U_0\|_{H^4}).
			\ee
			Then we consider $J_2$. { By \eqref{S2} and Lemma \ref{eigen_4}, it holds for $U_0=(f_0,0,0)$ with $P_{0}f_0=0$ that
				$$
				S_2(t,\xi,\eps)\hat{U}_0=\sum^4_{k=1}e^{\frac{t}{\eps^2}\beta_k(|\xi|,\eps)} \big(\hat{f}_0,  \overline{w_k(\xi,\eps)}  \big) \mathcal{V}_k(\xi,\eps) , \quad \eps|\xi|\ge r_1,
				$$
				which together with $\|w_k(\xi,\eps)\|^2= O(\eps |\xi|^{-1})$ gives
				\bma
				J_2&\le C\int_{ |\xi|\ge \frac{r_1}{\eps}} \frac{1}{\sqrt{\eps|\xi|}}e^{-\frac{ct}{\eps|\xi|}}\|\hat{U}_0\|  d\xi\nnm\\
				&\le C\frac{1}{\sqrt{r_1}}\sup_{|\xi|\ge \frac{r_1}{\eps}}\frac1{|\xi|^{m}}e^{-\frac{c t}{\eps|\xi|}}\(\int_{ |\xi|\ge \frac{r_1}{\eps}} |\xi|^{2m}(1+|\xi|^{2})^2\|\hat{U}_0\|^2  d\xi\)^{\frac12} \nnm\\
				&\le C \eps^{m}(1+t)^{-m}\|\Tdx^mU_0\|_{H^2 } . \label{j2}
				\ema}
			For $J_3$, we estimate it as follows.
			\begin{align}\label{j3}
				J_3&\leq C\int_{|\xi|\geq \frac{r_0}{\eps}}e^{-c|\xi|^2t}\|P_0(v\cdot \xi )L^{-1}\hat f_0\|d\xi\nnm\\
				&\leq C \int_{|\xi|\geq \frac{r_0}{\eps}}e^{-\frac{cr_0^2}{\eps^2}t}\|\hat f_0\| |\xi|^2d\xi\leq C\frac{\eps}{r_0} e^{-\frac{cr_0^2}{\eps^2}t}\|U_0\|_{H^4}.
			\end{align}
			Finally, by Lemma  \ref{S2a},
			\be\label{j4}
			J_4 \leq \frac{C}{\eps} \intr e^{-\frac{dt}{\eps^2}}\|\hat{U}_0\|  d\xi\leq \frac{C}{\eps}  e^{-\frac{dt}{\eps^2}}\|U_0\|_{H^2}.
			\ee
			By \eqref{j11}, \eqref{j12}, \eqref{j2}, \eqref{j3} and \eqref{j4}, we obtain \eqref{limit2}. This completes the proof of the lemma
		\end{proof}

		In the following lemma, we will present the time decay rates of the semigroup $e^{\frac{t}{\eps^2}\mathbb{A}_\eps}$.
		\begin{lem}\label{time}
			For any $\eps\ll 1$, $\alpha\in\mathbb{N}^3$, and integer $k\geq 0$ and $U_0=(f_0,E_0,B_0)$, the solution $U_\eps=(f_\eps,E_\eps,B_\eps)=e^{\frac{t}{\eps^2}\mathbb{A}_\eps}U_0$ to the linear system \eqref{LVMB1} satisfies that
			\begin{align}
				\|\partial_x^\alpha { P_A} e^{\frac{t}{\eps^2}\mathbb{A}_\eps} U_0\|_{L^2}&\leq C(1+t)^{-\frac38-\frac{m}{4}}(\|\partial_x^{\alpha'}U_0\|_{L^1}+\|\partial_x^\alpha U_0\|_{L^2})\nnm\\
				&\quad+C\eps^k(1+t)^{-k}\|\nabla_x^{|\alpha|+k} U_0\|_{L^2},\label{p2u}\\
				\|\partial_x^\alpha { P_B} e^{\frac{t}{\eps^2}\mathbb{A}_\eps} U_0\|_{L^2}&\leq C\(\eps(1+t)^{-\frac58-\frac{m}{4}}+e^{-\frac{dt}{\eps^2}}\)(\|\partial_x^{\alpha'}U_0\|_{L^1}+\|\partial_x^\alpha U_0\|_{ H^1})\nnm\\
				&\quad+C\eps^{k+1}(1+t)^{-k}\|\nabla_x^{|\alpha|+k} U_0\|_{L^2},\label{p3u}
			\end{align}
			where ${ P_A}, { P_B}$ are defined by \eqref{defp23}, $\alpha'\leq \alpha$, $m=|\alpha-\alpha'|$, and $d>0$ is the cosntant in \eqref{S6}.
			
			If $U_0=(f_0,E_0,0)$, then{
				\begin{align}
					\|\partial_x^\alpha { P_A} e^{\frac{t}{\eps^2}\mathbb{A}_\eps} U_0\|_{L^2}	&\leq C(1+t)^{-\frac58-\frac{m}{4}}(\|\partial_x^{\alpha'}U_0\|_{L^1}+\|\partial_x^\alpha U_0\|_{L^2})\nnm\\
					&\quad+C\eps^k(1+t)^{-k}\|\nabla_x^{|\alpha|+k} U_0\|_{L^2},\label{p2u0}\\
					\|\partial_x^\alpha { P_B} e^{\frac{t}{\eps^2}\mathbb{A}_\eps} U_0\|_{L^2}&\leq C\(\eps(1+t)^{-\frac78-\frac{m}{4}}+e^{-\frac{dt}{\eps^2}}\)(\|\partial_x^{\alpha'}U_0\|_{L^1}+\|\partial_x^\alpha U_0\|_{ H^1})\nnm\\
					&\quad+C\eps^{k+1}(1+t)^{-k}\|\nabla_x^{|\alpha|+k} U_0\|_{L^2}.\label{p3u0}
			\end{align}}
			Moreover, if $U_0=(f_0,0,0)$ with $P_0f_0=0$, then
			\begin{align}
				\|\partial_x^\alpha { P_A} e^{\frac{t}{\eps^2}\mathbb{A}_\eps} U_0\|_{L^2}&\leq C\(\eps(1+t)^{ -\frac58-\frac{m}{4}}+e^{-\frac{dt}{\eps^2}}\)(\|\partial_x^{\alpha'}U_0\|_{L^1}+\|\dxa U_0\|_{H^1})\nnm\\
				&\quad+C\eps^{k+1}(1+t)^{-k}\|\nabla_x^{|\alpha|+k} U_0\|_{L^2},\label{p2u1}\\
				\|\partial_x^\alpha { P_B} e^{\frac{t}{\eps^2}\mathbb{A}_\eps} U_0\|_{L^2}&\leq C\(\eps^2(1+t)^{ -\frac78-\frac{m}{4}}+e^{-\frac{dt}{\eps^2}}\)(\|\partial_x^{\alpha'}U_0\|_{L^1}+\|\dxa U_0\|_{H^2})\nnm\\
				&\quad+C\eps^{k+2}(1+t)^{-k}\|\nabla_x^{|\alpha|+k} U_0\|_{L^2}.\label{p3u1}
			\end{align}
		\end{lem}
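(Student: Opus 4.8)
The plan is to substitute the spectral decomposition of Theorem \ref{rate1}, $e^{\frac{t}{\eps^2}\tilde\AA_\eps(\xi)}=S_1(t,\xi,\eps)+S_2(t,\xi,\eps)+S_3(t,\xi,\eps)$, into the Fourier representation \eqref{solution1}--\eqref{solution2} of $e^{\frac{t}{\eps^2}\AA_\eps}U_0$, and to estimate each piece in $L^2_x$ by Plancherel, using the identity \eqref{normeq} so that the bounds reduce to weighted $L^2_\xi$-estimates of the projected semigroups $P_Ae^{\frac{t}{\eps^2}\tilde\AA_\eps(\xi)}\hat V_0$ and $P_Be^{\frac{t}{\eps^2}\tilde\AA_\eps(\xi)}\hat V_0$ (recall $\|\hat V_0\|_\xi=\|\hat U_0\|$, and the projections are realized by splitting the first component of each eigenfunction as $u_j=P_0u_j+P_1u_j$, resp.\ $w_j=P_0w_j+P_1w_j$). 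This is the $L^2$-analogue of the proofs of Lemmas \ref{fl1} and \ref{fl2}; since no pointwise-in-$x$ control is sought, only Cauchy--Schwarz, H\"older, and the elementary inequalities $\|\widehat{\partial_x^{\alpha'}U_0}\|_{L^\infty_\xi}\le\|\partial_x^{\alpha'}U_0\|_{L^1}$, $\|\widehat{\partial_x^{\alpha}U_0}\|_{L^2_\xi}\le\|\partial_x^{\alpha}U_0\|_{L^2}$ are needed. We split the frequency space into the low region $\eps|\xi|\le r_0$ (handled by $S_1$ from \eqref{S1}), the high region $\eps|\xi|\ge r_1$ (handled by $S_2$ from \eqref{S2}), and bound $S_3$ by \eqref{S3}, using its refinement \eqref{S6} in the purely microscopic case.

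\textbf{Low frequencies.} On $\eps|\xi|\le r_0$, Lemma \ref{eigen_4a} gives $|e^{\frac{t}{\eps^2}\lambda_j(|\xi|,\eps)}|\le Ce^{-b_j(|\xi|)t}$ with, by \eqref{bj}, $b_j(|\xi|)\gtrsim|\xi|^2$ for $j=-1,0,1,4,5,6,7$ but only $b_j(|\xi|)\gtrsim|\xi|^4$ near $\xi=0$ for the magnetic modes $j=2,3$. Writing $|\xi|^{|\alpha|}=|\xi|^{|\alpha'|}|\xi|^{m}$, controlling the factor $|\xi|^{|\alpha'|}\widehat U_0$ in $L^\infty_\xi$ by $\|\partial_x^{\alpha'}U_0\|_{L^1}$, and splitting $\{|\xi|\le1\}$ from $\{1\le|\xi|\le r_0/\eps\}$ (the latter contributing $e^{-ct/\eps^2}$), the slowest contribution is
$$\Big(\int_{|\xi|\le1}|\xi|^{2m}e^{-c|\xi|^{4}t}\,d\xi\Big)^{1/2}\sim(1+t)^{-\frac38-\frac m4},$$
coming from $j=2,3$, which yields \eqref{p2u}; the quadratic modes contribute the faster rate $(1+t)^{-\frac34-\frac m4}$. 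For \eqref{p3u} one keeps only $P_1u_j$, and \eqref{eigf2}--\eqref{eigf22} give $\|P_1u_j\|=O(\eps|\xi|)$ in general and $\|P_1u_k\|=O(\eps|\xi|^2)$ for $k=2,3$; the extra factor $\eps|\xi|$ upgrades the rate to $\eps(1+t)^{-\frac58-\frac m4}$.

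\textbf{High frequencies and remainder.} On $\eps|\xi|\ge r_1$, Lemma \ref{eigen_5} (through \eqref{specr1a} and \eqref{eigen_h1}) and Lemma \ref{eigen_4} give $|e^{\frac{t}{\eps^2}\beta_k(|\xi|,\eps)}|=e^{\frac t\eps\mathrm{Re}\,\zeta_k(|\xi|,\eps)}\le e^{-\frac{ct}{\eps|\xi|}}$, and $\sup_{|\xi|\ge r_1/\eps}|\xi|^{-k}e^{-\frac{ct}{\eps|\xi|}}\le C\eps^k(1+t)^{-k}$; together with $\|\mathcal{V}_k\|\le C$ and $\|w_k\|=O(\eps)$ on $\eps|\xi|\ge r_1$, this produces the terms $\eps^k(1+t)^{-k}\|\nabla^{|\alpha|+k}U_0\|_{H^1}$, with one extra power of $\eps$ for $P_B$. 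The remainder is controlled by $\|S_3(t,\xi,\eps)\widehat V_0\|_\xi\le Ce^{-dt/\eps^2}\|\widehat V_0\|_\xi$ from \eqref{S3} (and the sharper \eqref{S6} when $\hat f_0\in N_0$ or $\hat f_0=P_1\hat f_0$), giving the $e^{-dt/\eps^2}$ terms. Assembling the three pieces yields \eqref{p2u}--\eqref{p3u}.

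\textbf{The improved cases, and the main difficulty.} If $U_0=(f_0,E_0,0)$, then $\hat B_0=0$; inspecting \eqref{hj} and Lemma \ref{eigen_4a}, the slow magnetic eigenfunctions $\mathcal{U}^*_2,\mathcal{U}^*_3$ carry an order-one magnetic component but their first and electric components are $O(|\xi|)$ and $O(\eps|\xi|^3)$, so $\hat B_0=0$ forces the projections $(\widehat V_0,\mathcal{U}^*_k)_\xi=O(|\xi|)$; this gains one power of $|\xi|$ and improves every exponent by $\tfrac14$, giving \eqref{p2u0}--\eqref{p3u0}. If $U_0=(f_0,0,0)$ with $P_0f_0=0$, then $P_df_0=0$ so the Poisson/compatibility structure is inert, and $(\widehat V_0,\mathcal{U}^*_j)_\xi=(P_1\hat f_0,P_1u_j)=O(\eps|\xi|)$ for every $j$ (and $O(\eps|\xi|^2)$ for $j=2,3$); the extra $\eps$ gives the $P_A$-bound \eqref{p2u1}, and the further factor $\eps|\xi|$ from $\|P_1u_j\|$ gives the $P_B$-bound \eqref{p3u1}. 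The main obstacle is precisely this mode-by-mode bookkeeping: one must track simultaneously the order of vanishing at $\xi=0$ of each projection $(\widehat V_0,\mathcal{U}^*_j)_\xi$ (which depends on the compatibility conditions \eqref{com} and on which components of $U_0$ vanish), the sizes of the components $P_0u_j,P_1u_j,X_j,Y_j$ from Lemma \ref{eigen_4a}, and whether $b_j(|\xi|)$ degenerates quadratically or quartically near $\xi=0$; the claimed exponents then follow by retaining, in each estimate, the slowest resulting contribution. The oscillatory prefactors $e^{\pm i\eta_j(|\xi|)t/\eps}$ play no role here, since all estimates are carried out in $L^2$.
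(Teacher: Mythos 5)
Your proposal is correct and follows essentially the same route as the paper's proof: the decomposition of Theorem \ref{rate1} combined with Plancherel and \eqref{normeq}, the quartic degeneracy of $b_{2,3}(|\xi|)$ near $\xi=0$ dictating the baseline rate $(1+t)^{-\frac38-\frac m4}$, the gain of a factor $\eps|\xi|$ from $P_1u_j$ for the $P_B$ bounds, the bound $\sup_{|\xi|\ge r_1/\eps}|\xi|^{-k}e^{-ct/(\eps|\xi|)}\le C\eps^k(1+t)^{-k}$ for $S_2$, and the extra orders of vanishing of the projections $(\hat V_0,\mathcal{U}_j^*)_\xi$ when $\hat B_0=0$ or $P_0f_0=0$ — all matching the paper's mode-by-mode bookkeeping in \eqref{l3} and the subsequent displays. (One immaterial slip: on $\{1\le|\xi|\le r_0/\eps\}$ the $S_1$ contribution is controlled by $e^{-ct}$, not $e^{-ct/\eps^2}$, but either way it is absorbed into the polynomial rate.)
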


		\begin{proof}
			By Theorem \ref{rate1}, we have 	for $\star\in\{A,B\}$,
			\bma\label{l2}
			\|	\partial_x ^\alpha P_\star e^{\frac{t}{\eps^2}\mathbb{A}_\eps}U_0\|_{L^2}^2&= \int_{\mathbb{R}^3}\|\xi^\alpha P_\star e^{\frac{t}{\eps^2}\mathbb{A}_\eps}\hat V_0\|_\xi^2d\xi\nnm\\
			&\leq \int_{|\xi|\leq \frac{r_0}{\eps}} \|\xi^\alpha P_\star S_1(t,\xi,\eps)\hat V_0\|_\xi^2d\xi+ \int_{|\xi|\geq \frac{r_1}{\eps}} \|\xi^\alpha P_\star S_2(t,\xi,\eps)\hat V_0\|_\xi^2d\xi\nnm\\
			&\quad  +\int_{|\xi|\leq \frac{r_0}{\eps}} \|\xi^\alpha S_3(t,\xi,\eps)\hat V_0\|_\xi^2d\xi,
			\ema
			where $\hat V_0=(\hat f_0,\omega\times \hat E_0,\omega\times \hat B_0)$.
			We estimate the right hand side of \eqref{l2} term by term. Note that
			\bma
			S_1(t,\xi,\eps) \hat V_0
			&=\sum_{j=-1,0,1}e^{-\frac{\eta_jt}{\eps}-b_jt+O(\eps s^3t)}\bigg\{\bigg[(\hat{f}_0,h_j)+\frac{i \hat{E}_0\cdot\omega}{|\xi|}(h_j,\chi_0)\bigg](h_j,0,0)+O(\eps s)\bigg\} \nnm\\
			&\quad+\sum_{j=2,3}e^{ -b_jt+ O(\frac{\eps s^5}{1+s^2}t)}\frac1{1+s^2} \nnm\\
			&\qquad\times \left\{[s(\hat{m}_0\cdot W^j)-i(\omega\times \hat{B}_0\cdot W^j)](sv\cdot W^j\chi_0,0,-iW^j)+O(\eps s)\right\} \nnm\\
			&\quad+\sum_{j=4,5,6,7}e^{-\frac{\eta_jt}{\eps} -b_jt+O(\eps s^3t)}\frac1{2(1+s^2)} \nnm\\
			&\qquad\times\left\{[\hat{m}_0\cdot W^j+\eta_j(\hat{E}_0\cdot W^j)+is(\omega\times \hat{B}_0\cdot W^j)](v\cdot W^j\chi_0,\eta_jW^j,is W^j)+O(\eps s)\right\}, \label{l3}
			\ema
			where
			$\hat{m}_0=(\hat{f}_0,v\chi_0)$, $h_j$, $j=-1,0,1$ are defined in \eqref{hj}, and  $W^j,$ $j=2,3,4,5,6,7$ are defined in Lemma \ref{eigen_4a}.
			Thus
			\bma
			&\quad\int_{|\xi|\leq \frac{r_0}{\eps}} \|\xi^\alpha{P_A} S_1(t,\xi,\eps)\hat V_0 \|_\xi^2d\xi \nnm\\
			&\leq  C	\int_{|\xi|\leq r_0}e^{-c_2|\xi|^4 t}|\xi|^{2|\alpha|}\|\hat V_0\|_\xi^2d\xi
			+C	\int_{|\xi|\geq r_0}e^{-c_1|\xi|^2 t}|\xi|^{2|\alpha|}\|\hat V_0\|_\xi^2d\xi \nnm\\
			&\leq C\sup_{|\xi|\leq r_0}\|\xi^{\alpha'}\hat U_0\|^2\int _{|\xi|\leq r_0}e^{-c_2|\xi|^4 t}  |\xi|^{2|\alpha-\alpha'|}d\xi+C e^{-c_1 t}\int_{\mathbb{R}^3} |\xi|^{2|\alpha|}\|\hat U_0\|^2 d\xi \nnm\\
			&	\leq C(1+t )^{-\frac{3}{4}-\frac{m}{2}}(\|\partial_x^{\alpha'}U_0\|_{L^1}^2+\|\partial_x^\alpha U_0\|_{L^2}^2),\label{p2s1}
			\ema
			where $\alpha'\leq \alpha$, $m=|\alpha-\alpha'|$, and $c_1,c_2>0$ are some genetic constants. Similarly,
			\begin{equation}\label{p3s1}
				\int_{|\xi|\leq \frac{r_0}{\eps}} \|\xi^\alpha {P_B} S_1(t,\xi,\eps)\hat V_0 \|_\xi^2d\xi\leq  C\eps^2(1+t )^{-\frac{5}{4}-\frac{m}{2}}(\|\partial_x^{\alpha'}U_0\|_{L^1}^2+\|\partial_x^\alpha U_0\|_{H^1}^2).
			\end{equation}
			Moreover, by \eqref{S2} and \eqref{specr1a} we have
			\bma
			\int_{|\xi|\geq \frac{r_1}{\eps}} \|\xi^\alpha { P_A} S_2(t,\xi,\eps)\hat V_0\|_\xi^2d\xi&\leq C	\int_{|\xi|\geq \frac{r_1}{\eps}} |\xi|^{2|\alpha|}e^{-\frac{ct}{\eps|\xi|}}\|\hat V_0\|_\xi^2d\xi \nnm\\
			&\leq C\sup_{|\xi|\geq \frac{r_1}{\eps}}|\xi|^{-2m}e^{-\frac{ct}{\eps|\xi|}}\int_{|\xi|\geq \frac{r_1}{\eps}}|\xi|^{2m+2|\alpha|}\|\hat U_0\|^2d\xi \nnm\\
			&\leq C\eps^{2m}(1+t)^{-2m}\|\nabla_x^{m+|\alpha|} U_0\|_{L^2}^2.\label{p2s2}
			\ema
			Similarly, one has
			\begin{equation}\label{p3s2}
				\int_{|\xi|\geq \frac{r_1}{\eps}} \|\xi^\alpha { P_B} S_2(t,\xi,\eps)\hat V_0 \|_\xi^2d\xi\leq C\eps^{2m+2}(1+t)^{-2m}\|\nabla_x^{m+|\alpha|} U_0\|_{L^2}^2,
			\end{equation}
			in view of the fact that $\|{ P_B}\mathcal{V}_k\|\leq C\eps$ for $\mathcal{V}_k$ defined in Lemma \ref{eigen_4}.
			
			Finally, by noting that $\|\hat V_0\|_\xi^2=\|\hat U_0\|^2$, we can apply \eqref{S3} to estimate the last term on the right hand side of \eqref{l2} as follows:
			\be\label{s3}
			\int_{|\xi|\leq \frac{r_0}{\eps}} \|\xi^\alpha S_3(t,\xi,\eps)\hat V_0\|_\xi^2d\xi\leq C e^{-\frac{2dt}{\eps^2}}\int _{\mathbb{R}^3}|\xi|^{2|\alpha|}\|\hat V_0\|_\xi^2d\xi\leq Ce^{-\frac{2dt}{\eps^2}}\|\partial_x ^\alpha U_0\|_{L^2}.
			\ee
			Then we obtain \eqref{p2u} from \eqref{p2s1}, \eqref{p2s2}, \eqref{s3}, and obtain \eqref{p3u} from \eqref{p3s1}, \eqref{p3s2} and \eqref{s3}. The estimates  \eqref{p2u0} and \eqref{p3u0} can be obtained similarly.
			
			When $U_0=(f_0,0,0)$ with $P_0f_0=0$, we have{
				\bma
				S_1(t,\xi,\eps) \hat V_0
				&=\sum_{j=-1,0,1}e^{-\frac{\eta_jt}{\eps}-b_jt+O(\eps s^3t)}\left\{ i\eps s(\hat{f}_0,L^{-1}P_1(v\cdot\omega)h_j) (h_j,0,0)+O(\eps^2 s^2)\right\} \nnm\\
				&\quad+\sum_{j=2,3}e^{ -b_jt+ O(\frac{\eps s^5}{1+s^2}t)}\frac1{1+s^2} \nnm\\
				&\qquad\times \left\{i\eps s^2(\hat{f}_0,L^{-1}P_1(v\cdot\omega)(v\cdot W^j)\chi_0)(sv\cdot W^j\chi_0,0,-iW^j)+O(\eps^2 s^3)\right\} \nnm\\
				&\quad+\sum_{j=4,5,6,7}e^{-\frac{\eta_jt}{\eps} -b_jt+O(\eps s^3t)}\frac1{2(1+s^2)} \nnm\\
				&\qquad\times\left\{i\eps s(\hat{f}_0,L^{-1}P_1(v\cdot\omega)(v\cdot W^j)\chi_0)(v\cdot W^j\chi_0,\eta_jW^j,is W^j)+O(\eps^2 s^2)\right\}.
				\ema
			}
			Then we obtain
			\begin{align*}
				&\quad\int_{|\xi|\leq \frac{r_0}{\eps}}\|\xi^\alpha P_AS_1(t,\xi,\eps)\hat V_0\|_\xi^2d\xi \nnm\\
				&\leq  C\eps^2	\int_{|\xi|\leq r_0}e^{-c_2|\xi|^4 t}|\xi|^{2|\alpha|+2}\|\hat V_0\|_\xi^2d\xi
				+C	\eps^2\int_{|\xi|\geq r_0}e^{-c_1|\xi|^2 t}|\xi|^{2|\alpha|+2}\|\hat V_0\|_\xi^2d\xi \nnm
				\\
				&\leq C\eps^2 (1+t )^{-\frac{5}{4}-\frac{m}{2}} (\|\partial_x^{\alpha'}U_0\|_{L^1}^2+\|\partial_x^\alpha U_0\|_{H^1}^2 ).
			\end{align*}
			Similarly,
			$$
			\int_{|\xi|\leq \frac{r_0}{\eps}}\|\xi^\alpha P_BS_1(t,\xi,\eps)\hat V_0\|_\xi^2d\xi
			\leq C\eps^4(1+t )^{-\frac{7}{4}-\frac{m}{2}} (\|\partial_x^{\alpha'}U_0\|_{L^1}^2+\|\partial_x^\alpha U_0\|_{H^2}^2 ).
			$$
			Thus, we obtain \eqref{p2u1}, \eqref{p3u1} and complete the proof of the lemma.
		\end{proof}

		\begin{lem}
			\label{timev} For any  $q\in[1,2]$,  $\alpha\in\mathbb{N}^3$, and any $U_0=(u_0,E_0,B_0)$ with $u_0=n\chi_0+m\cdot v\chi_0+q\chi_4$ satisfying \eqref{NSP_5i}, then
			\be \label{Y1UL21}
			\|\dxa Y_1(t)U_0\|_{L^2}\leq C(1+t )^{-\frac{3}{4}(\frac{1}{q}-\frac{1}{2})-\frac{m}{4}}\|\partial_x^{\alpha'}U_0\|_{L^q}+Ct^{-\frac{k}{2}}e^{-ct}\|\partial_x^{\alpha''} U_0\|_{L^2},
			\ee
			where $\alpha',\alpha''\leq \alpha$, $m=|\alpha-\alpha'|$, and $k=|\alpha-\alpha''|$.
			
			If $U_0=(u_0,E_0,0)$, then
			\be\label{Y1UL2}
			\|\dxa Y_1(t)U_0\|_{L^2}\leq C(1+t )^{-\frac{5}{4}(\frac{1}{q}-\frac{1}{2})-\frac{m}{4}}\|\partial_x^{\alpha'}U_0\|_{L^q}+Ct^{-\frac{k}{2}}e^{-ct}\|\partial_x^{\alpha''} U_0\|_{L^2},
			\ee
			and for any $r_1,r_2\in(1,\infty)$,
			\be \label{de1}
			\|\dxa Y_1(t)U_0\|_{L^\infty}\leq  C (1+t)^{-\frac{3}{4r_1}-\frac{m+1}{4}}\|\partial_x^{\alpha'}U_0\|_{L^{r_1}}+Ct^{-\frac{3}{2r_2}-\frac{k}{2}}e^{-ct}\|\partial_x^{\alpha''}U_0\|_{L^{r_2}}.
			\ee
		\end{lem}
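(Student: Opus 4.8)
The plan is to exploit that $Y_1(t)$ is a Fourier multiplier operator: by \eqref{solution1a}, \eqref{v1} and \eqref{defQ},
\be
Y_1(t)U_0=\mathcal{F}^{-1}\bigg(\sum_{j=0,2,3}e^{-b_j(|\xi|)t}\big(\hat V_0,\Lambda_j(\xi)\big)_\xi\,\Q(\xi)\Lambda_j(\xi)\bigg),\qquad
\hat V_0=\Big(\hat f_0,\ \frac{\xi}{|\xi|}\times\hat E_0,\ \frac{\xi}{|\xi|}\times\hat B_0\Big),
\ee
with $b_j,\Lambda_j$ as in \eqref{bj} and \eqref{hj}. Two structural facts are used throughout. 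First, $b_j(|\xi|)$ is a smooth (rational) function of $|\xi|^2$ with $b_0(s)\asymp s^2$ for all $s>0$, while $b_2(s)=b_3(s)\asymp s^4$ for $s\le1$ and $\asymp s^2$ for $s\ge1$; thus near $\xi=0$ the modes $j=2,3$ behave like the fourth–order heat kernel $\mathcal{F}^{-1}(e^{-c|\xi|^4t})$, and it is these that fix the slow rates in \eqref{Y1UL21}--\eqref{de1}. Second, since the data satisfies the compatibility relations contained in \eqref{NSP_5i}, one has $(\hat V_0,\Lambda_j(\xi))_\xi=(\hat U_0,\Q(\xi)\Lambda_j(\xi))$ exactly as in the proof of Lemma~\ref{fl1}; evaluating this with $f_0\in N_0$ and the explicit $\Lambda_j$ (as in the proof of Lemma~\ref{time}) shows that, for $j=2,3$, $(\hat V_0,\Lambda_j)_\xi$ is a bounded $0$-homogeneous multiple of $\frac{|\xi|}{\sqrt{1+|\xi|^2}}\,\hat m_0$ plus a $\frac{1}{\sqrt{1+|\xi|^2}}\hat B_0$-term, so that when $B_0=0$ this functional carries one extra power of $|\xi|$ (acting on $m_0$) — the source of the improved rates in \eqref{Y1UL2} and \eqref{de1}.

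I would fix a smooth cutoff $\phi\in C_c^\infty(\R^3)$ equal to $1$ near $\xi=0$ and split $Y_1(t)=Y_1(t)\phi(D)+Y_1(t)(1-\phi(D))$. On the low–frequency part, write each summand's symbol as $\xi^\alpha e^{-b_j(|\xi|)t}N_j(\xi)\phi(\xi)$, where $N_j$ collects the (bounded, $0$-homogeneous, smooth away from $\xi=0$) angular factors from $\Q(\xi)\Lambda_j(\xi)$ and from $(\hat V_0,\Lambda_j)_\xi$, multiplied by the scalar weight $|\xi|^{\theta_j}$ with $\theta_j\in\{0,1\}$ ($\theta_j=1$ exactly in the $m_0$-term with $B_0=0$), the data entering being a single component of $U_0$. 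For the $L^q\to L^2$ bounds \eqref{Y1UL21}--\eqref{Y1UL2} I use Plancherel, split $\xi^\alpha=\xi^{\alpha'}\xi^{\alpha-\alpha'}$, apply H\"older in $\xi$ and Hausdorff--Young to move $\xi^{\alpha'}$ onto the data in $L^q$, and estimate the remaining $L^r_\xi$-norm by the scaling $\xi=t^{-1/4}\eta$ (resp. $t^{-1/2}\eta$ for $j=0$): $\big\||\xi|^{\ell}e^{-c|\xi|^4t}\phi\big\|_{L^r_\xi}\lesssim(1+t)^{-\frac{3}{4r}-\frac{\ell}{4}}$ with $\frac1r=\frac1q-\frac12$ and $\ell=|\alpha-\alpha'|+\theta_j$; the $j=2,3$ contribution is the slowest and produces $(1+t)^{-\frac34(\frac1q-\frac12)-\frac m4}$ when $B_0\ne0$ and $(1+t)^{-\frac34(\frac1q-\frac12)-\frac{m+1}4}\le(1+t)^{-\frac54(\frac1q-\frac12)-\frac m4}$ (valid since $q\ge1$) when $B_0=0$. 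For \eqref{de1} I instead use Young's inequality $\|Y_1(t)\phi(D)U_0\|_{L^\infty}\le\|K_t\|_{L^{r_1'}}\|\partial_x^{\alpha'}U_0\|_{L^{r_1}}$ with $K_t=\mathcal{F}^{-1}(\xi^{\alpha-\alpha'}|\xi|^{\theta_j}e^{-b_jt}N_j\phi)$; since $N_j$ is $0$-homogeneous the scaling $\xi=t^{-1/4}\eta$ is clean, giving $K_t(x)=t^{-(3+\ell)/4}G_t(t^{-1/4}x)$ with $\|G_t\|_{L^{r_1'}}$ bounded (the limiting kernel decays like $|x|^{-3-\ell}$ and is bounded near $0$), whence $\|K_t\|_{L^{r_1'}}\lesssim(1+t)^{-\frac{3}{4r_1}-\frac{m+1}4}$, which is \eqref{de1}.

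On the high–frequency part one has $b_j(|\xi|)\ge c(\phi)(1+|\xi|^2)$ on $\mathrm{supp}(1-\phi)$, so $e^{-b_j(|\xi|)t}\le e^{-c't}e^{-c''|\xi|^2t}$ and the symbol is smooth there; combining this with $\sup_{\xi}|\xi|^{k}e^{-c''|\xi|^2t}\lesssim t^{-k/2}$ (for the $L^2$ bounds, via Plancherel) and with the scaling $\xi=t^{-1/2}\eta$ for the kernel (for the $L^\infty$ bound) yields the remainders $Ct^{-k/2}e^{-ct}\|\partial_x^{\alpha''}U_0\|_{L^2}$ and $Ct^{-\frac{3}{2r_2}-\frac k2}e^{-ct}\|\partial_x^{\alpha''}U_0\|_{L^{r_2}}$; for small $t$ one simply bounds the high–frequency kernel's $L^1$ and $L^\infty$ norms crudely and absorbs the loss into $e^{-ct}\asymp1$. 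Adding the low– and high–frequency contributions gives \eqref{Y1UL21}, \eqref{Y1UL2} and \eqref{de1}.

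The one genuinely delicate point is hidden in the second paragraph: the multipliers $\Q(\xi)\Lambda_j(\xi)$ and the functionals $(\hat V_0,\Lambda_j)_\xi$ are not $C^\infty$ at $\xi=0$ — the factors $\xi/|\xi|$, $\omega\times W^j$ are only $0$-homogeneous — so one cannot simply rescale them, and the low–frequency kernel consequently decays only like $|x|^{-3}$ at infinity; this is exactly why the $L^\infty$ estimate \eqref{de1} is restricted to $r_1,r_2\in(1,\infty)$ (the endpoint $r=\infty$, which would require the kernel in $L^1$, genuinely fails) and why a smooth cutoff must be used instead of a sharp one. Everything else is the standard $L^p$--$L^q$ decay theory for the heat and fourth–order heat semigroups, together with the bookkeeping of the extra power of $|\xi|$ produced by the Maxwell part when $B_0=0$.
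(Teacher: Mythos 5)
Your proposal is correct and follows essentially the same route as the paper: the explicit multiplier representation of $Y_1(t)$ via \eqref{v1} and \eqref{solution1a}, the identification of the degenerate $|\xi|^4$ behaviour of $b_2=b_3$ near $\xi=0$ and of the extra factor $|\xi|$ carried by the $j=2,3$ coefficients when $B_0=0$, Plancherel with H\"older and Hausdorff--Young for the $L^2$ bounds, and Young's inequality with kernel estimates for \eqref{de1}. The only methodological difference is in the $L^\infty$ step: the paper moves the $0$-homogeneous angular factors onto the data via the H\"ormander--Mihlin theorem (Theorem \ref{thmhm}) so that the convolution kernels involve only the radial symbols $e^{-b_j(|\xi|)t}\xi^{\alpha-\alpha'}$ handled by Lemmas \ref{kern} and \ref{kern1}, whereas you keep the angular factors in the kernel and argue directly from its $|x|^{-3-\ell}$ decay; both routes give the same rates and the same restriction $r_1,r_2\in(1,\infty)$. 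One point you should make explicit: in \eqref{de1} the low-frequency $j=0$ mode is a genuine heat-type contribution of size $t^{-\frac{3}{2r}-\frac{k}{2}}$ with no extra gain, which is \emph{not} dominated by the first term $(1+t)^{-\frac{3}{4r_1}-\frac{m+1}{4}}$ when $r_1$ is large; the paper absorbs it into the second term of \eqref{de1} (paired with the $L^{r_2}$ norm of the data), and your write-up should state where that mode lands rather than leaving it implicit.
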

		
		\begin{proof}
			Note that
			\bma
			\tilde{Y}_1(t,\xi)\hat{V}_0&= e^{ -b_0(|\xi|)t }\bigg[(\hat{u}_0,h_0)+\frac{i \hat{E}_0\cdot\omega}{|\xi|}(h_0,\chi_0)\bigg](h_0,0,0) \nnm\\
			&\quad+\sum_{j=2,3}e^{ -b_j(|\xi|)t }\frac1{1+|\xi|^2} [|\xi|(\hat{m}_0\cdot W^j)-i(\omega\times \hat{B}_0\cdot W^j)](|\xi|v\cdot W^j\chi_0,0,-iW^j) ,
			\ema where  $h_0$ is defined in \eqref{hj}, and  $W^j,$ $j=2,3 $ are defined in Lemma \ref{eigen_4a}.
			By \eqref{solution1a} and Hausdorff-Young inequality, we obtain
			\bmas
			\|\dxa Y_1(t)U_0\|_{L^2}^2&=\int _{\mathbb{R}^3}\|\xi^\alpha\tilde Y_1(t,\xi) \hat V_0 \|_\xi ^2d\xi\\
			&\leq C\left(\int _{|\xi|\leq 1}+\int _{|\xi|\geq 1}\right)|\xi|^{2|\alpha|}(e^{-2c|\xi|^2t}+e^{-2c|\xi|^4t})\|\hat V_0 \|_\xi ^2d\xi\\
			&\leq C\left(\int_{|\xi|\leq 1}|\xi|^{2pm}e^{-2cp|\xi|^4t}d\xi\right)^\frac{1}{p}\left(\int_{|\xi|\leq 1}|\xi|^{2p'|\alpha'|}\|\hat U_0\|^{2p'}d\xi\right)^\frac{1}{p'}\\
			&\ \ + C\sup_{|\xi|\geq 1}(|\xi|^{2k}e^{-2c|\xi|^2t})\int_{|\xi|\geq 1} |\xi|^{2|\alpha''|}\|\hat U_0\|^2d\xi\\
			&\leq C(1+t)^{-\frac34(\frac{2}{q}-1)-\frac{m}{2}}\|\partial^{\alpha'}_xU_0\|_{L^q}^2+t^{-k}e^{-2ct}\|\partial^{\alpha''}_xU_0\|_{L^2}^2,
			\emas
			where $\frac{1}{p}+\frac{1}{p'}=1$, $\frac{1}{2p'}+\frac{1}{q}=1$ and $q\in[1,2]$.
			This yields \eqref{Y1UL21}.
			
			If $U_0=(u_0,E_0,0)$, then
			\bmas
			\|\dxa Y_1(t)U_0\|_{L^2}^2&=\int _{\mathbb{R}^3} \|\xi^\alpha\tilde Y_1(t,\xi) \hat V_0 \|_\xi ^2d\xi\\
			&\leq C\left(\int _{|\xi|\leq 1}+\int _{|\xi|\geq 1}\right)(|\xi|^{2|\alpha|}e^{-2c|\xi|^2t}+|\xi|^{2|\alpha|+2}e^{-2c|\xi|^4t})\|\hat V_0 \|_\xi ^2d\xi\\
			&\leq C(1+t)^{-\frac54(\frac{2}{q}-1)-\frac{m}{2}}\|\partial^{\alpha'}_xU_0\|_{L^q}^2+t^{-k}e^{-2ct}\|\partial^{\alpha''}_xU_0\|_{L^2}^2.
			\emas
			This implies \eqref{Y1UL2}.
			Moreover, for the $L^\infty$ norm, by Young's inequality we  have
			\bma\label{Y1U}
			&\quad	\|\dxa Y_1(t)U_0\|_{L^\infty}\nnm
			\\
			&	\leq C\left\|\mathcal{F}^{-1} \(e^{-b_0(|\xi|)t}\xi^{\alpha-\alpha''}\)\right\|_{L^{r'_2}}\left\|\mathcal{F}^{-1}\(\xi^{\alpha''}(\hat V_0, {\Lambda}_0)_\xi\mathcal{Q}(\xi) \Lambda_0\)\right\|_{L^{r_2}}\nnm\\
			& \quad +C\sum_{j=2,3}\left\|\mathcal{F}^{-1} \(\chi(\xi)|\xi|\xi^{\alpha-\alpha'}e^{-b_j(|\xi|)t}\)\right\|_{L^{r_1'}}\left\|\mathcal{F}^{-1}\( |\xi|^{-1}\xi^{\alpha''}(\hat V_0, {\Lambda}_j)_\xi\mathcal{Q}(\xi) \Lambda_j\)\right\|_{L^{r_1}}\nnm\\
			& \quad +C\sum_{j=2,3}\left\|\mathcal{F}^{-1} \({(1-\chi(\xi))}\xi^{\alpha-\alpha''}e^{-b_j(|\xi|)t}\)\right\|_{L^{r_2'}}\left\|\mathcal{F}^{-1}\(\xi^{\alpha''}(\hat V_0, {\Lambda}_j)_\xi\mathcal{Q}(\xi) \Lambda_j\)\right\|_{L^{r_2}},
			\ema
			where $r_i\in(1,\infty)$,  $r'_i=\frac{r_i}{r_i-1}$, $i=1,2$, and $\chi$ is a cutoff function.
			By Lemma \ref{kern} and Lemma \ref{kern1}, we obtain for any $p\in(1,\infty)$,
			\begin{align}
				&\left\|\mathcal{F}^{-1} \(e^{-b_0(|\xi|)t}\xi^{\alpha-\alpha'}\)\right\|_{L^{p}}\leq C t^{-\frac{3}{2}(1-\frac{1}{p})-\frac{m}{2}},\label{lp1}\\
				&\left\|\mathcal{F}^{-1} \(\chi(\xi)|\xi|\xi^{\alpha-\alpha'}e^{-b_j(|\xi|)t}\)\right\|_{L^{p}}\leq C(1+t)^{-\frac{3}{4}{(1-\frac{1}{p})}-\frac{m+1}{4}},\label{lp22}\\
				&\left\|\mathcal{F}^{-1} \({(1-\chi (\xi))}\xi^{\alpha-\alpha''}e^{-b_j(|\xi|)t}\)\right\|_{L^{p}}\leq Ct^{-\frac{3}{2}({1-\frac{1}{p}})-\frac{k}{2}},\quad j=2,3.\label{lp23}
			\end{align}
			Applying Lemma \ref{thmhm}, we have for any $r\in(1,\infty)$,
			\be \label{lp3}
			\left\|\mathcal{F}^{-1}\(\xi^{\alpha'}(\hat V_0, {\Lambda}_j)_\xi\mathcal{Q}(\xi) \Lambda_j\)\right\|_{L^r}\leq C\|\partial_x^{\alpha'}U_0\|_{L^r},\ \ \ j=0,2,3.
			\ee
			Specially, if $U_0=(f_0,E_0,0)$, then for $j=2,3$,
			\begin{align}
				&\quad \left\|\mathcal{F}^{-1}\( |\xi|^{-1}\xi^{\alpha'}(\hat V_0, {\Lambda}_j)_\xi\mathcal{Q}(\xi) \Lambda_j\)\right\|_{L^r}\nonumber\\
				&=\bigg\|\mathcal{F}^{-1}\bigg( \frac{\xi^{\alpha'}}{1+|\xi|^2} (\hat{m}_0\cdot W^j)(|\xi|v\cdot W^j\chi_0,0,-iW^j) \bigg)\bigg\|_{L^r}\nonumber\\
				&\leq C\|\partial_x^{\alpha'}U_0\|_{L^r}.\label{lp4}
			\end{align}
			Combining \eqref{Y1U}-\eqref{lp4}, we obtain \eqref{de1}.
			This completes the proof of the lemma.
		\end{proof}

		\section{Diffusion limit}
		\setcounter{equation}{0}
		\label{sect4}
		In this section, we study the diffusion limit of the solution to the nonlinear VMB system \eqref{VMB4}--\eqref{VMB4d} based on the fluid approximations of the solution to the linear VMB system given in Section 3.
		
		Since the operators $\AA_{\eps}$  generates a  contraction semigroup in $H^k$, the solution $U_{\eps}(t)=( f_{\eps},E_{\eps},B_{\eps})(t)$  to the VMB system \eqref{VMB4}--\eqref{VMB2i} can be represented by
		\be
		U_{\eps}(t)=e^{\frac{t}{\eps^2}\AA_{\eps}}U_0+\intt e^{\frac{t-s}{\eps^2}\AA_{\eps}} \(\Lambda_{1}(s)+\frac1{\eps}\Lambda_{2}(s)\)  ds, \label{ue}
		\ee
		where $U_0=( f_{0},E_{0},B_{0})$, and the nonlinear terms $\Lambda_k$, $k=1,2$ are given by
		\be\label{defG12}
		\begin{aligned}
			&\Lambda_{1}=(\Lambda_{11},0,0), \quad \Lambda_{11}=\frac12v\cdot E_{\eps}f_{\eps}-E_{\eps}\cdot\Tdv f_{\eps}-(v\times B_{\eps})\cdot\Tdv  f_{\eps}, \\
			&\Lambda_{2}=(\Lambda_{21},0,0), \quad \Lambda_{21}=\Gamma(f_{\eps},f_{\eps}).
		\end{aligned}
		\ee
		Also, by Lemma \ref{sem}, the solution $U_1(t)=(u_1,E,B )(t)$ with $u_1 =n\chi_0+m\cdot v\chi_0+q\chi_4$  to the NSMF system  \eqref{NSM_2}--\eqref{NSP_5i} can be represented by
		\be
		U_1(t)=Y_1(t){ P_A}U_0+\intt Y_1(t-s) \big(H_1(s)+\operatorname{div}_xH_2(s)\big)ds,\label{ue1}
		\ee
		where
		\begin{equation}
			\label{defH12}
			\begin{aligned}
				&	H_1=(H_{11},0,0), \quad 	H_{11}=(nE+m\times B)\cdot v\chi_0+\sqrt{\frac23}m\cdot E\chi_4,\\
				&	H_2 =(H_{21},0,0), \quad H_{21}=-(m\otimes m)\cdot v\chi_0-\frac53 mq\chi_4.
			\end{aligned}
		\end{equation}


		\subsection{Energy estimate}

		We first obtain some energy estimates.
		Let $N\ge 1$ be a positive integer and $U_{\eps}=(f_{\eps},E_{\eps},B_{\eps})$, and
		\bma
		E_{N,k}(U_{\eps})&=\sum_{|\alpha|+|\beta|\le N}\|\nu^k\dxa\dvb f_{\eps}\|^2_{L^2}+\sum_{|\alpha|\le N}\|\dxa(E_{\eps},B_{\eps})\|^2_{L^2_x},\label{energy3}
		\\
		H_{N,k}(U_{\eps})&= \frac{1}{\eps^2}\sum_{|\alpha|+|\beta|\le N}\|\nu^k\dxa\dvb
		P_1f_{\eps}\|^2_{L^2}+\sum_{1\le|\alpha|\le N}\|\dxa (E_{\eps},B_{\eps})\|^2_{L^2_x}\nnm\\
		&\quad +\sum_{|\alpha|\le N-1}\|\dxa\Tdx   P_0f_{\eps}\|^2_{L^2}+\|P_{d}f_{\eps}\|^2_{L^2_x},
		\\
		D_{N,k}(U_{\eps})&=\sum_{|\alpha|+|\beta|\le N}\frac1{\eps^2}\|\nu^{\frac12+k}\dxa\dvb  P_1f_{\eps}\|^2_{L^2}+\sum_{1\le |\alpha|\le N-1}\|\dxa E_{\eps}\|^2_{L^2_x}+\sum_{2\le |\alpha|\le N-1}\|\dxa B_{\eps}\|^2_{L^2_x}\nnm\\
		&\quad+\sum_{|\alpha|\le N-1}\|\dxa\Tdx  P_0f_{\eps}\|^2_{L^2} +\|P_d f_\eps\|_{L^2}^2,
		\ema
		for $k\ge 0$. For brevity, we write $E_N(U_{\eps})=E_{N,0}(U_{\eps})$, $H_N(U_{\eps})=H_{N,0}(U_{\eps})$    and $D_N(U_{\eps})=D_{N,0}(U_{\eps})$ for $k=0$.

		Firstly, by taking the inner product between $\chi_j\ (j=0,1,2,3,4)$ and \eqref{VMB4}, we obtain  a  compressible Euler-Maxwell type system of $(n_{\eps},m_{\eps},q_{\eps})=((f_{\eps},\chi_0),(f_{\eps},v\chi_0),(f_{\eps},\chi_4))$ and $E_{\eps},B_{\eps}$:
		\bma
		&\dt n_{\eps}+\frac{1}{\eps}\divx  m_{\eps}=0,\label{G_3}\\
		&\dt  m_{\eps}+\frac{1}{\eps}\Big(\Tdx n_{\eps}+ \sqrt{\frac23}\Tdx q_{\eps}- E_\eps\Big)= n_{\eps} E_{\eps}+\frac{1}{\eps}m_{\eps}\times B_{\eps} -\frac{1}{\eps}( v\cdot\Tdx( P_1f_{\eps}), v\chi_0),\label{G_5}\\
		&\dt q_{\eps}+\frac{1}{\eps}\sqrt{\frac23}\divx m_{\eps}=\sqrt{\frac23} E_{\eps}\cdot m_{\eps} -\frac{1}{\eps}(v\cdot\Tdx( P_1f_{\eps}), \chi_4 ), \label{G_6}\\
		& \dt E_{\eps}- \frac{1}{\eps}\Tdx\times B_{\eps}=-  \frac{1}{\eps}m_\eps,  \quad
		\dt B_{\eps}+ \frac{1}{\eps}\Tdx\times E_{\eps}=0.
		\ema
		
		Taking the microscopic projection $ P_1$ on \eqref{VMB4}, we have
		\be
		\dt( P_1f_{\eps})+ \frac{1}{\eps}P_1(v\cdot\Tdx  P_1f_{\eps})-\frac{1}{\eps^2}L( P_1f_{\eps})=- \frac{1}{\eps}P_1(v\cdot\Tdx P_0f_{\eps})+ P_1  H_{\eps}.\label{GG1}
		\ee
		By \eqref{GG1}, we can express the microscopic part $ P_1f_{\eps}$ as
		\bq   \frac{1}{\eps}P_1f_{\eps}= L^{-1}[\eps\dt( P_1f_{\eps})+P_1(v\cdot\Tdx  P_1f_{\eps})-\eps P_1  H_{\eps}]+ L^{-1} P_1(v\cdot\Tdx P_0f_{\eps}). \label{p_1}\eq
		Substituting \eqref{p_1} into \eqref{G_3}--\eqref{G_6}, we obtain
		a compressible Navier-Stokes-Maxwell type system
		\begin{equation}\label{nsm}
			\begin{aligned}
				&\dt n_{\eps}+\frac{1}{\eps}\divx  m_{\eps}=0,\\
				&\dt  (m_{\eps}+\eps R_1)+\frac{1}{\eps}\Big(\Tdx n_{\eps}+ \sqrt{\frac23}\Tdx q_{\eps}- E_\eps\Big)\\
				& =\kappa_0 \Big(\Delta_x m_{\eps}+\frac13\Tdx{\rm div}_x m_{\eps}\Big)+n_{\eps} E_{\eps}+\frac{1}{\eps}m_{\eps}\times B_{\eps}+R_3,\\
				&\dt (q_{\eps}+\eps R_2)+\frac{1}{\eps}\sqrt{\frac23}\divx m_{\eps}=\kappa_1 \Delta_x q_{\eps}+\sqrt{\frac23} E_{\eps}\cdot m_{\eps}+R_4,\\
				& \dt E_{\eps}- \frac{1}{\eps}\Tdx\times B_{\eps}=-  \frac{1}{\eps}m_\eps,  \quad
				\dt B_{\eps}+ \frac{1}{\eps}\Tdx\times E_{\eps}=0,
			\end{aligned}
		\end{equation}
		where the remainder terms $R_1, R_2, R_3, R_4$ are given by
		\bmas
		R_1&=( v\cdot\Tdx L^{-1}( P_1f_{\eps}),v\chi_0), \ \ R_2=( v\cdot\Tdx L^{-1}( P_1f_{\eps}),\chi_4),\\
		R_3&=-( v\cdot\Tdx L^{-1}[ P_1(v\cdot\Tdx  P_1f_{\eps})-\eps P_1 H_{\eps}],v\chi_0),\\
		R_4&=-( v\cdot\Tdx L^{-1}[ P_1(v\cdot\Tdx  P_1f_{\eps})-\eps P_1 H_{\eps}],\chi_4).
		\emas
		
		By  the similar  argument used in \cite{Duan4,Li4,Strain}, we have the existence and the energy estimate for the solution $U_{\eps}=(f_{\eps},E_{\eps},B_{\eps})$ to the VMB system \eqref{VMB4}--\eqref{VMB2i}.
		
		\begin{lem}[Macroscopic dissipation] \label{macro-en} Given $ N\ge 3$.  Let $(n_{\eps},m_{\eps},q_{\eps},E_{\eps},B_{\eps})$ be the strong solutions to \eqref{nsm}. Then, there are two constants $s_0,s_1>0$ such that for any $\eps\in (0,1)$,
			\bmas
			&\Dt \sum_{|\alpha|\le N-1}s_0(\|\dxa(n_{\eps}, m_{\eps},q_{\eps},E_\eps,B_\eps)\|^2_{L^2_x}+2\eps\intr \dxa R_1\dxa m_{\eps}dx+2\eps\intr \dxa R_2\dxa q_{\eps}dx)\nnm\\
			&+\Dt \sum_{|\alpha|\le N-1}4\eps\intr \dxa m_{\eps} \dxa\Tdx n_{\eps}dx-\Dt  \sum_{1\leq |\alpha|\le N-1}8\eps\int_{\R^3}\partial_x^\alpha m_\eps\partial_x^\alpha E_\eps dx\\
			&-\Dt  \sum_{1\leq |\alpha|\le N-2}3\eps\int_{\mathbb{R}^3}\partial_x^\alpha E_\eps \dxa(\nabla_x\times B_\eps) dx-\Dt s_0\sqrt{\frac23}\int_{\R^3}m_\eps^2 q_\eps dx\\
			&+\sum_{|\alpha|\le N-1}( \|\dxa\Tdx (n_{\eps}, m_{\eps},q_{\eps})\|^2_{L^2_x}+\|\dxa n_\eps\|_{L^2_x}^2)+\sum_{1\le |\alpha|\le N-1}\|\dxa E_{\eps}\|^2_{L^2_x}+\sum_{2\le |\alpha|\le N-1}\|\dxa B_{\eps}\|^2_{L^2_x}
			\nnm\\
			\le &\, C\sqrt{E_N(U_{\eps})}D_N(U_{\eps})+C\sum_{|\alpha|\le N-1}\|\dxa\Tdx P_1f_{\eps}\|^2_{L^2},
			\emas
			where $C>0$ is a constant independent of $\eps$.
		\end{lem}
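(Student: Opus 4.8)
The proof is a weighted $L^2_x$ energy estimate for the compressible Navier--Stokes--Maxwell--type system \eqref{nsm}, in the spirit of \cite{Duan4,Li4,Strain}: the plan is to add to the natural energy of the fluid unknowns a handful of $\eps$-weighted cross-term correctors that manufacture the dissipation missing for $n_\eps$, $E_\eps$ and $B_\eps$. Throughout, the smallness of $E_N(U_\eps)$ is used to absorb quadratic-in-dissipation contributions, and the equations \eqref{GG1} and \eqref{G_5}--\eqref{G_6} are used to turn $\dt P_1f_\eps$ and the remainders $R_1,\dots,R_4$ (which are linear in $\Tdx L^{-1}P_1f_\eps$ and in $H_\eps$) into the microscopic norm $\sum_{|\alpha|\le N-1}\|\dxa\Tdx P_1f_\eps\|^2_{L^2}$.

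\textbf{Step 1 (basic energy).} For each $\alpha$ with $|\alpha|\le N-1$, apply $\dxa$ to the five equations of \eqref{nsm} and take the $L^2_x$ inner products with $\dxa n_\eps$, $\dxa(m_\eps+\eps R_1)$, $\dxa(q_\eps+\eps R_2)$, $\dxa E_\eps$, $\dxa B_\eps$, then sum over $\alpha$. The genuinely singular $O(1/\eps)$ contributions --- from $\frac1\eps\divx m_\eps$ and $\frac1\eps(\Tdx n_\eps+\sqrt{2/3}\Tdx q_\eps)$, from the Faraday/Amp\`ere couplings $\pm\frac1\eps\Tdx\times(\cdot)$, and from the $\frac1\eps E_\eps$ versus $\frac1\eps m_\eps$ coupling --- cancel in pairs after integration by parts, by skew-symmetry of the linearized acoustic--Maxwell operator. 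The viscous terms $\kappa_0(\Delta_x m_\eps+\tfrac13\Tdx\divx m_\eps)$ and $\kappa_1\Delta_x q_\eps$ produce, since $\kappa_0,\kappa_1>0$, the dissipation $\kappa_0\sum\|\dxa\Tdx m_\eps\|^2+\tfrac{\kappa_0}3\sum\|\dxa\divx m_\eps\|^2+\kappa_1\sum\|\dxa\Tdx q_\eps\|^2$. The Lorentz-type nonlinearities $n_\eps E_\eps$, $m_\eps\times B_\eps$, $\sqrt{2/3}E_\eps\cdot m_\eps$ have harmless top-order parts --- in particular $\int(\dxa m_\eps\times B_\eps)\cdot\dxa m_\eps=0$ --- and their commutators are handled by H\"older and Sobolev embeddings, distributing derivatives so that both high-order factors carry a dissipative norm (e.g. $m_\eps$ and $n_\eps$ in $L^6$, $B_\eps$ in $L^3$); together with $R_3,R_4$ and the correctors $\eps\dt R_1,\eps\dt R_2$, these are bounded by $C\sqrt{E_N(U_\eps)}D_N(U_\eps)+C\sum\|\dxa\Tdx P_1f_\eps\|^2$. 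This yields the block $\frac{d}{dt}\sum_{|\alpha|\le N-1}\big(\|\dxa(n_\eps,m_\eps,q_\eps,E_\eps,B_\eps)\|^2_{L^2}+2\eps\int\dxa R_1\dxa m_\eps\,dx+2\eps\int\dxa R_2\dxa q_\eps\,dx\big)$, to be multiplied by a large constant $s_0$.

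\textbf{Step 2 (dissipation for $n_\eps$ and $E_\eps$).} Since the acoustic part is hyperbolic, Step 1 controls no derivative of $n_\eps$ or $E_\eps$. For $|\alpha|\le N-1$, test $\dxa$ of the $m_\eps$-equation against $\dxa\Tdx n_\eps$: after multiplying by $\eps$, the term $\frac1\eps(\Tdx n_\eps+\sqrt{2/3}\Tdx q_\eps-E_\eps)$ gives $\|\dxa\Tdx n_\eps\|^2$ and, through the Gauss law $\Tdx\cdot E_\eps=n_\eps$ in \eqref{VMB4d}, also $-\langle\dxa E_\eps,\dxa\Tdx n_\eps\rangle=\|\dxa n_\eps\|^2$, while $\sqrt{2/3}\langle\dxa\Tdx q_\eps,\dxa\Tdx n_\eps\rangle$ is absorbed by Young's inequality; the $\dt m_\eps$-pairing is collected into $\frac{d}{dt}(4\eps\int\dxa m_\eps\dxa\Tdx n_\eps\,dx)$, and $\dt n_\eps=-\frac1\eps\divx m_\eps$ converts the remaining pairing into $\|\dxa\divx m_\eps\|^2$, already controlled. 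For $1\le|\alpha|\le N-1$, test $\dxa$ of the $m_\eps$-equation against $\dxa E_\eps$ and use $\dt E_\eps=\frac1\eps(\Tdx\times B_\eps-m_\eps)$ in the $\dt m_\eps$-pairing: after multiplying by $\eps$ the $\frac1\eps E_\eps$ term gives $\|\dxa E_\eps\|^2$, collected into $-\frac{d}{dt}(8\eps\int\dxa m_\eps\dxa E_\eps\,dx)$; the leftover $-\|\dxa m_\eps\|^2$, the coupling $\int\dxa m_\eps\cdot\dxa(\Tdx\times B_\eps)$ and the $q$-coupling are, for $|\alpha|\ge1$, bounded by $\|\dxb\Tdx m_\eps\|^2$ with $|\beta|=|\alpha|-1\le N-1$ and by $\|\dxa\Tdx B_\eps\|^2$, all absorbed once $s_0$ is taken large.

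\textbf{Step 3 (dissipation for $B_\eps$; closing).} To gain dissipation of $B_\eps$ at order $\ge2$, for $1\le|\alpha|\le N-2$ test $\dxa$ of the $E_\eps$-equation against $\dxa(\Tdx\times B_\eps)$, using $\dt E_\eps=\frac1\eps(\Tdx\times B_\eps-m_\eps)$, $\dt B_\eps=-\frac1\eps\Tdx\times E_\eps$, the identity $\Tdx\times\Tdx\times E_\eps=\Tdx n_\eps-\Delta_x E_\eps$ and $\Tdx\cdot B_\eps=0$: this produces $-\|\dxa\Tdx\times B_\eps\|^2=-\|\dxa\Tdx B_\eps\|^2$, collected into $-\frac{d}{dt}(3\eps\int\dxa E_\eps\dxa(\Tdx\times B_\eps)\,dx)$ and hence delivering $\sum_{2\le|\alpha|\le N-1}\|\dxa B_\eps\|^2$, while the byproducts $\|\dxa\Tdx E_\eps\|^2$ ($|\alpha|\le N-2$) and $\|\dxa n_\eps\|^2$ already lie in the dissipation from Step 2. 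A residual low-order cubic term coming from the coupling of the $m_\eps$- and $q_\eps$-equations (through $\dt m_\eps$, at $|\alpha|=0$) is not literally of the form $\sqrt{E_N}D_N$; it is removed by the extra corrector $-s_0\sqrt{2/3}\int m_\eps^2q_\eps\,dx$, whose time derivative cancels it up to acceptable terms. Summing Steps 1--3 with $s_0$ (and then the relative weights $4,8,3$) chosen large and $E_N(U_\eps)$ small gives the claimed inequality. The main difficulty is exactly the bookkeeping: to check that every $O(1/\eps)$ term either cancels by the skew-symmetry/integration-by-parts structure or is precisely the time derivative of one of the small $\eps$-weighted correctors, and at the same time to track at which differentiation order each field acquires dissipation (order $0$ for $n_\eps$, order $\ge1$ for $E_\eps$, order $\ge2$ for $B_\eps$), so that each corrector is confined to exactly the range of $|\alpha|$ where the main energy and the $\kappa_0,\kappa_1$-dissipation can absorb its byproducts.
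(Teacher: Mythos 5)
Your proposal is correct and follows exactly the strategy the paper intends: the paper gives no proof of this lemma, deferring to the standard macroscopic energy method of \cite{Duan4,Li4,Strain}, and that is what you reconstruct — basic $L^2_x$ energy for \eqref{nsm} with the $\eps$-weighted correctors involving $R_1,R_2$, then the cross functionals $\eps\int\dxa m_\eps\,\dxa\Tdx n_\eps$, $\eps\int\dxa m_\eps\,\dxa E_\eps$ and $\eps\int\dxa E_\eps\,\dxa(\Tdx\times B_\eps)$ to generate dissipation of $n_\eps$, $E_\eps$ and $B_\eps$ in precisely the derivative ranges stated, with the weights fixed hierarchically and $\sqrt{E_N}$ used to absorb the nonlinear contributions. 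One small imprecision in Step 2: you bound the coupling $\int\dxa m_\eps\cdot\dxa(\Tdx\times B_\eps)$ by $\|\dxb\Tdx m_\eps\|^2$ and $\|\dxa\Tdx B_\eps\|^2$, but for $|\alpha|=N-1$ the latter is $\|\Tdx^N B_\eps\|^2$, which does not appear in the dissipation; at that top order one must instead integrate by parts to move the curl onto $m_\eps$, yielding $\|\Tdx\dxa m_\eps\|\,\|\dxa B_\eps\|$ with both factors dissipated (the first by the $\kappa_0$-viscosity after enlarging $s_0$, the second since $|\alpha|=N-1\ge 2$). With that routine fix, and the analogous use of \eqref{GG1} to convert the $\eps\dt R_1,\eps\dt R_2$ byproducts into $\sum_{|\alpha|\le N-1}\|\dxa\Tdx P_1f_\eps\|^2_{L^2}$, the argument closes as you describe.
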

		
		
		\begin{lem}[Microscopic dissipation]
			\label{micro-en}
			Given $N\ge 3$. Let $U_{\eps}=(f_{\eps},E_{\eps},B_{\eps})$ be a strong solution to VMB system  \eqref{VMB4}--\eqref{VMB5}.
			Then, there are constants $p_k>0$, $1\le k\le N$ such that for any $\eps\in (0,1)$,
			\bmas
			&\quad \Dt \sum_{|\alpha|\le N} (\|\dxa f_{\eps}\|^2_{L^2}+\|\dxa (E_{\eps},B_{\eps})\|^2_{L^2_x})-\sqrt{\frac23}\Dt\int_{\mathbb{R}^3}m_\eps^2 q_\eps dx+\frac{\mu}{\eps^2} \sum_{|\alpha|\le N}\|\nu^{\frac12} \dxa P_1f_{\eps}\|^2_{L^2}\nnm\\
			&\le C\sqrt{E_N(U_{\eps})}D_N(U_{\eps}), 
			\\
			&\quad \Dt \sum_{1\le k\le N}p_k\sum_{|\beta|=k \atop |\alpha|+|\beta|\le N}\|\dxa\dvb P_1f_{\eps} \|^2_{L^2} +\frac{1}{\eps^2} \sum_{|\beta|\ge 1 \atop |\alpha|+|\beta|\le N}\|\nu^{\frac12}\dxa\dvb P_1f_{\eps} \|^2_{L^2}\nnm\\
			&\le C\sum_{|\alpha|\le N-1}\|\dxa\Tdx f_{\eps} \|^2_{L^2} +C\sqrt{E_N(U_{\eps})}D_N(U_{\eps}), 
			\emas
			where $C>0$ is a constant independent of $\eps$.
		\end{lem}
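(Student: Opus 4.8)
The plan is to prove both inequalities by weighted energy estimates on the perturbed system, in the spirit of \cite{Duan4,Li4,Strain}: the first inequality is the pure spatial-derivative estimate, while the second controls the $v$-derivatives of the microscopic part and is obtained by an induction on the order of the $v$-derivative. In both cases the basic mechanism is the same — the singular $\frac1\eps$ and $\frac1{\eps^2}$ terms either cancel by skew-adjointness or are absorbed by the collision dissipation via the coercivity \eqref{L_4} — while the nonlinear terms are handled by the standard Sobolev/commutator estimates in dimension three (recall $N\ge 3$, so $H^2_x\hookrightarrow L^\infty_x$).

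For the first inequality, apply $\dxa$ with $|\alpha|\le N$ to \eqref{VMB4} and pair it in $L^2_{x,v}$ with $\dxa f_\eps$, and apply $\dxa$ to the Maxwell equations \eqref{VMB4b}--\eqref{VMB4c} and pair them in $L^2_x$ with $\dxa E_\eps$ and $\dxa B_\eps$. The streaming term contributes $\frac1\eps(v\cdot\Tdx\dxa f_\eps,\dxa f_\eps)=0$ since $v\cdot\Tdx$ is skew-adjoint in $x$; the field term $\frac1\eps v\sqrt M\cdot E_\eps$ in \eqref{VMB4} pairs to $\pm\frac1\eps(\dxa E_\eps,\dxa m_\eps)$, which is cancelled by the current term $-\frac1\eps m_\eps$ in Amp\`ere's law \eqref{VMB4b}; and the two curl terms in the Maxwell block cancel after integration by parts. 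The collision term gives, via \eqref{L_4} in its $\nu$-weighted form, the coercive gain $\frac\mu{\eps^2}\|\nu^{\frac12}\dxa P_1 f_\eps\|^2$. All nonlinear contributions $(\dxa H_\eps,\dxa f_\eps)$ are bounded by $C\sqrt{E_N(U_{\eps})}D_N(U_{\eps})$ using the Moser-type estimates, the bilinear estimates for $\Gamma$ together with $P_0\Gamma=0$ (so that the singular factor $\eps^{-1}$ can always be paired against a micro-quantity measured by $D_N$), and the smallness of $P_1 f_\eps$; the only exception is one borderline cubic term at $\alpha=0$, arising from the macroscopic projection of $\frac12(v\cdot E_\eps)f_\eps$, which is recast, using the macroscopic balance laws \eqref{G_5}--\eqref{G_6}, as the total time derivative $-\sqrt{\frac23}\Dt\int_{\R^3}m_\eps^2 q_\eps\,dx$ up to a genuinely small remainder absorbed into $C\sqrt{E_N}D_N$. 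Summing over $|\alpha|\le N$ yields the first inequality.

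For the second inequality, fix $1\le k\le N$, apply $\dxa\dvb$ with $|\beta|=k$ and $|\alpha|+|\beta|\le N$ to the microscopic equation \eqref{GG1}, and pair it in $L^2_{x,v}$ with $\dxa\dvb P_1 f_\eps$. Again $\frac1\eps(v\cdot\Tdx\dxa\dvb P_1 f_\eps,\dxa\dvb P_1 f_\eps)=0$, but the commutator $[\dvb,v\cdot\Tdx]$ lowers the $v$-order by one while producing an extra $x$-derivative, so it contributes $\frac1\eps(\dxi\partial_v^{\beta'}\dxa P_1 f_\eps,\dxa\dvb P_1 f_\eps)$ with $|\beta'|=k-1$; Young's inequality against the $\eps^{-2}$-weighted dissipation turns this into $\frac\eta{\eps^2}\|\nu^{\frac12}\dxa\dvb P_1 f_\eps\|^2+C\|\dxa\Tdx f_\eps\|^2$ with $|\alpha|\le N-1$. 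The coercivity of $L$ on the range of $P_1$, the boundedness of the $v$-derivatives of $\nu(v)$ (since $\nu\sim 1+|v|$ is smooth) and the smoothing of $K$ give $-\frac1{\eps^2}(L\dxa\dvb P_1 f_\eps,\dxa\dvb P_1 f_\eps)\ge\frac\mu{\eps^2}\|\nu^{\frac12}\dxa\dvb P_1 f_\eps\|^2-\frac C{\eps^2}\sum_{|\beta''|<k}\|\dxa\partial_v^{\beta''}P_1 f_\eps\|^2$; for $1\le|\beta''|<k$ these residuals belong to $D_N$ at a strictly lower $v$-order, while the $\beta''=0$ residual (which appears only for $k=1$) is rewritten through the representation \eqref{p_1} and bounded by $C\sum_{|\alpha|\le N-1}\|\dxa\Tdx f_\eps\|^2$ up to small terms. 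The source $\frac1\eps\dxa\dvb P_1(v\cdot\Tdx P_0 f_\eps)$ is controlled by $\frac\eta{\eps^2}\|\dxa\dvb P_1 f_\eps\|^2+C\|\dxa\Tdx P_0 f_\eps\|^2$ because $P_0 f_\eps$ depends on $v$ only through the fixed basis $\{\chi_j\}$, and the nonlinear term $\dxa\dvb P_1 H_\eps$ — whose $\Tdv$-pieces raise the $v$-order — is bounded by $C\sqrt{E_N}D_N$ after redistributing one derivative onto the smooth factors. Finally one forms the linear combination $\sum_{1\le k\le N}p_k(\cdots)$ with the weights chosen so that $p_1\gg p_2\gg\cdots\gg p_N$; then the lower-$v$-order dissipative residuals generated at level $k$ are dominated by the level $k-1$ dissipation, which produces exactly the stated inequality.

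The main obstacle is the combinatorial bookkeeping in the $v$-derivative estimate: one must carry the singular prefactors $\eps^{-1}$ and $\eps^{-2}$ through the commutators of $\dvb$ with both $v\cdot\Tdx$ and multiplication by $\nu(v)$, and check that every error term either carries the smallness factor $\sqrt{E_N}$, or fits inside $\sum_{|\alpha|\le N-1}\|\dxa\Tdx f_\eps\|^2$, or is controlled by the dissipation at a strictly lower $v$-level (so that the decreasing weights $p_k$ close the induction), with the $v$-order-zero residual treated separately via \eqref{p_1}. The secondary delicate point is isolating the single borderline cubic term in the first estimate and converting it into the correction $-\sqrt{\frac23}\Dt\int_{\R^3}m_\eps^2 q_\eps\,dx$; both points follow the strategy of \cite{Duan4,Li4,Strain}.
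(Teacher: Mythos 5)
Your proposal follows the same route the paper takes: the paper gives no proof of this lemma and defers to the weighted energy method of \cite{Duan4,Li4,Strain}, and your two-step scheme (pure $x$-derivative estimates with cancellation of the singular linear terms and coercivity of $L$, followed by $\dxa\dvb$-estimates on \eqref{GG1} closed by an induction on $|\beta|$ with decreasing weights $p_k$) is exactly that method. Your identification of the borderline cubic term is also correct: pairing $\frac12(v\cdot E_{\eps})f_{\eps}$ with $f_{\eps}$ and keeping the purely macroscopic part produces $\int E_{\eps}\cdot m_{\eps}\,(n_{\eps}+\sqrt{2/3}\,q_{\eps})\,dx$; the $n_{\eps}$ piece is controlled because $\|P_d f_\eps\|^2$ and $\|\Tdx E_\eps\|^2$ sit inside $D_N$, and only $\sqrt{2/3}\int (E_{\eps}\cdot m_{\eps})q_{\eps}\,dx$ requires the time-derivative correction, consistent with the coefficient in the statement.

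One step does not close as written: your treatment of the $v$-order-zero residual in the second inequality. Since $L$ does not commute with $\dvb$, the term $-\eps^{-2}(\dvb L P_1f_{\eps},\dvb P_1f_{\eps})$ at $|\beta|=1$ unavoidably leaves a residual of size $C\eps^{-2}\|\dxa P_1f_{\eps}\|^2$ (coming from $\dvb K$ and from $(\dvb\nu)P_1f_\eps$). You propose to remove it through the representation \eqref{p_1}, but \eqref{p_1} contains $\eps\,\dt(P_1f_{\eps})$ and $\eps P_1H_{\eps}$, neither of which is controlled by $E_N$ or $D_N$, so that substitution does not yield a bound by $C\sum_{|\alpha|\le N-1}\|\dxa\Tdx f_{\eps}\|^2+C\sqrt{E_N(U_\eps)}D_N(U_\eps)$. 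The standard (and intended) resolution is simply to keep $C\eps^{-2}\sum_{|\alpha|\le N}\|\nu^{1/2}\dxa P_1f_{\eps}\|^2$ on the right-hand side of the second inequality; it is then absorbed in the proof of Lemma \ref{energy1} by taking the second inequality with a sufficiently small weight against the dissipation $\frac{\mu}{\eps^2}\sum_{|\alpha|\le N}\|\nu^{1/2}\dxa P_1f_{\eps}\|^2$ produced by the first. With that adjustment, which is harmless for every subsequent use of the lemma, the rest of your argument is sound.
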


		\begin{lem}\label{energy1}  Let $ N\ge 3$. For any $\eps\in (0,1)$, there exists a small constant $\delta_0>0$ and energy functionals $\mathcal{E}_{N}(U_{\eps})\sim E_{N}(U_{\eps})$ and  $\mathcal{H}_{N}(U_{\eps})\sim H_{N}(U_{\eps})$  such that if the initial data $U_0=(f_{0},E_{0},B_{0})$ satisfies  $E_N(U_0)\le \delta_0^2$, then the  system \eqref{VMB4}--\eqref{VMB2i} admits a unique global solution $U_{\eps}=(f_{\eps},E_{\eps},B_{\eps})$ satisfying the following  energy estimate:
			\be
			\Dt \mathcal{E}_N(U_{\eps}(t))+  D_N(U_{\eps}(t)) \le 0, \label{G_1}
			\ee
			\be
			\Dt \mathcal{H}_N(U_{\eps}(t))+  D_N(U_{\eps}(t)) \le C\|\nabla_x P_0 f_\eps\|_{L^2}^2. \label{G_2}
			\ee
			Moreover, there exist energy functionals $\mathcal{E}_{N,1}(U_{\eps})\sim E_{N,1}(U_{\eps})$  and $\mathcal{H}_{N,1}(U_{\eps})\sim H_{N,1}(U_{\eps})$ such that if the initial data $U_0$ satisfies $E_{N,1}(U_0)\le \delta_0^2$, then
			\be
			\Dt \mathcal{E}_{N,1}(U_{\eps}(t))+  D_{N,1}(U_{\eps}(t)) \le 0, \label{G_1b}
			\ee
			\be
			\Dt \mathcal{H}_{N,1}(U_{\eps}(t))+  D_{N,1}(U_{\eps}(t)) \le C\|\nabla_x P_0 f_\eps\|_{L^2}^2. \label{G_2b}
			\ee
		\end{lem}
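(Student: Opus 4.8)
The plan is to run the standard macro--micro high-order energy method for the Cauchy problem \eqref{VMB4}--\eqref{VMB2i}, combining the macroscopic dissipation of Lemma \ref{macro-en} with the microscopic dissipation of Lemma \ref{micro-en} into a single Lyapunov inequality, and then to close a continuity argument for small data. Local existence and uniqueness of $U_\eps$ in the space $C([0,T);X^N)$ come from the usual iteration scheme coupled with the a priori energy bound, exactly as in \cite{Duan4,Li4,Strain}; so the only real content is the uniform-in-$\eps$ energy inequality, from which global existence follows by a bootstrap.

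First I would fix $N\ge 3$ and define
\[
\mathcal{E}_N(U_\eps)= A_1\Big(\sum_{|\alpha|\le N}(\|\dxa f_\eps\|_{L^2}^2+\|\dxa(E_\eps,B_\eps)\|_{L^2_x}^2)-\sqrt{2/3}\!\intr m_\eps^2 q_\eps\,dx\Big)
+A_2\!\!\sum_{1\le k\le N}p_k\!\!\sum_{|\beta|=k,\,|\alpha|+|\beta|\le N}\!\!\|\dxa\dvb P_1 f_\eps\|_{L^2}^2+\mathcal{M}_N(U_\eps),
\]
where $\mathcal{M}_N(U_\eps)$ is the macroscopic functional sitting inside $\Dt(\cdot)$ in Lemma \ref{macro-en} (the term $s_0\sum_{|\alpha|\le N-1}\|\dxa(n_\eps,m_\eps,q_\eps,E_\eps,B_\eps)\|_{L^2_x}^2$ together with the $\eps$-order cross terms $\eps\intr\dxa R_1\dxa m_\eps$, $\eps\intr\dxa m_\eps\dxa\Tdx n_\eps$, $\eps\intr\dxa m_\eps\dxa E_\eps$, $\eps\intr\dxa E_\eps\dxa(\Tdx\times B_\eps)$, and $\intr m_\eps^2 q_\eps$), and $A_1\gg A_2\gg 1$ are constants to be fixed, independent of $\eps$. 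Since $\eps\in(0,1)$, and since the remainder terms obey $\|R_i\|_{L^2}\lesssim\|\Tdx P_1 f_\eps\|$, Cauchy--Schwarz together with the a priori smallness of $E_N(U_\eps)$ shows that all cross terms are dominated by $E_N(U_\eps)$, so that $\mathcal{E}_N(U_\eps)\sim E_N(U_\eps)$ uniformly in $\eps$.

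Next I would add the three differential inequalities. Lemma \ref{micro-en} contributes $\tfrac{\mu}{\eps^2}\sum_{|\alpha|\le N}\|\nu^{1/2}\dxa P_1 f_\eps\|_{L^2}^2$ and $\tfrac1{\eps^2}\sum_{|\beta|\ge1}\|\nu^{1/2}\dxa\dvb P_1 f_\eps\|_{L^2}^2$ on the good side, at the cost of $C\sqrt{E_N}D_N$ and $C\sum_{|\alpha|\le N-1}\|\dxa\Tdx f_\eps\|_{L^2}^2$; Lemma \ref{macro-en} contributes $\sum_{|\alpha|\le N-1}(\|\dxa\Tdx(n_\eps,m_\eps,q_\eps)\|^2+\|\dxa n_\eps\|^2)+\sum_{1\le|\alpha|\le N-1}\|\dxa E_\eps\|^2+\sum_{2\le|\alpha|\le N-1}\|\dxa B_\eps\|^2$, at the cost of $C\sqrt{E_N}D_N+C\sum_{|\alpha|\le N-1}\|\dxa\Tdx P_1 f_\eps\|^2$. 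Because $\eps<1$ the $1/\eps^2$-weighted microscopic terms absorb both the $\|\dxa\Tdx P_1 f_\eps\|^2$ generated by Lemma \ref{macro-en} and the $P_1$-part of the $\|\dxa\Tdx f_\eps\|^2$ generated by Lemma \ref{micro-en}, while the $P_0$-part $\|\dxa\Tdx P_0 f_\eps\|^2$ is itself one of the macroscopic dissipation terms; choosing $A_1\gg A_2\gg 1$ and then rescaling $\mathcal{E}_N$ by an absolute constant gives
\[
\Dt \mathcal{E}_N(U_\eps)+D_N(U_\eps)\le C\sqrt{E_N(U_\eps)}\,D_N(U_\eps).
\]
On the maximal interval of existence one then assumes $\sup_t E_N(U_\eps(t))\le\delta_1^2$ with $C\sqrt{E_N}\le\tfrac12$, so that $\Dt\mathcal{E}_N+\tfrac12 D_N\le0$ and hence $\mathcal{E}_N(U_\eps(t))\le\mathcal{E}_N(U_\eps(0))\le C E_N(U_0)\le C\delta_0^2$; taking $\delta_0$ small compared to $\delta_1/\sqrt C$ strictly improves the bootstrap hypothesis, and by continuity the solution extends globally, which is \eqref{G_1}. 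Inequality \eqref{G_2} is obtained from the same scheme after replacing $\mathcal{E}_N$ by a functional $\mathcal{H}_N\sim H_N$ built as above but dropping the lowest-order macroscopic energy (keeping $\|\dxa(E_\eps,B_\eps)\|^2$ only for $1\le|\alpha|\le N$, the terms $\|\dxa\Tdx P_0 f_\eps\|^2$ for $|\alpha|\le N-1$, $\|P_d f_\eps\|^2$, and the $1/\eps^2$-weighted microscopic terms); because $\mathcal{H}_N$ no longer carries the zeroth-order fluid energy, the transport commutators in \eqref{GG1}--\eqref{nsm} leave a remainder of size $C\|\Tdx P_0 f_\eps\|^2$ on the right, which is \eqref{G_2}. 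The weighted estimates \eqref{G_1b}--\eqref{G_2b} follow line by line: Lemma \ref{macro-en} is unchanged (it involves only macroscopic quantities), while the $\nu$-weighted analogue of Lemma \ref{micro-en} uses the splitting $L=K-\nu$ with $\nu(v)\sim1+|v|$, so that $(\nu^{2}LP_1 f,P_1 f)\le -c\|\nu^{3/2}P_1 f\|^2+C\|P_1 f\|^2$ with the extra $\|P_1 f\|^2$ controlled by the already-available unweighted dissipation, and the $\nu$-weighted norms of the nonlinear term $H_\eps$ in \eqref{defG12} estimated by $\sqrt{E_{N,1}(U_\eps)}\,D_{N,1}(U_\eps)$.

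The main obstacle is the uniform-in-$\eps$ bookkeeping in the second step: the singular factors $1/\eps$ in the fluid system \eqref{nsm}--\eqref{GG1} and $1/\eps^2$ in the collision term must cancel exactly inside the cross terms, and the electromagnetic field carries no dissipation at the very lowest orders ($B_\eps$ for $|\alpha|\le1$, $E_\eps$ for $|\alpha|=0$), so the constants $A_1,A_2,s_0,p_k$ and the $\eps$-weights in $\mathcal{M}_N$ have to be tuned simultaneously to make $D_N$ coercive with a constant independent of $\eps\in(0,1)$. This coercivity is precisely what Lemmas \ref{macro-en} and \ref{micro-en} are engineered to provide, so once they are granted the remaining work reduces to the linear algebra of choosing the coefficients, plus the routine (but slightly delicate) weighted estimates of the collision and Lorentz-force nonlinearities for \eqref{G_1b}--\eqref{G_2b}.
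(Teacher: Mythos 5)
Your overall strategy coincides with the paper's: the paper gives no detailed proof of this lemma, deferring to \cite{Duan4,Li4,Strain} and to the combination of Lemmas \ref{macro-en} and \ref{micro-en}, and your plan — a Lyapunov functional assembled from the two dissipation lemmas, absorption of the bad cross terms using $\eps<1$, and a smallness bootstrap — is exactly that. There is, however, one concrete slip: the coefficient hierarchy $A_1\gg A_2\gg 1$ with coefficient $1$ on $\mathcal{M}_N$ does not close the estimate. The mixed-derivative inequality of Lemma \ref{micro-en}, once multiplied by $A_2$, produces the cost $CA_2\sum_{|\alpha|\le N-1}\|\dxa\Tdx f_{\eps}\|_{L^2}^2$. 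Its microscopic part is indeed swallowed by $A_1\mu\eps^{-2}\sum_{|\alpha|\le N}\|\nu^{1/2}\dxa P_1f_{\eps}\|_{L^2}^2$ once $A_1\gg A_2$, but its macroscopic part $CA_2\sum_{|\alpha|\le N-1}\|\dxa\Tdx P_0f_{\eps}\|_{L^2}^2$ can only be absorbed by the macroscopic dissipation $\sum_{|\alpha|\le N-1}\|\dxa\Tdx(n_{\eps},m_{\eps},q_{\eps})\|_{L^2_x}^2$ supplied by Lemma \ref{macro-en}, which in your combination carries coefficient $1\ll A_2$; there is no $\eps^{-2}$ enhancement available for this term, so the inequality fails as written. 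The correct ordering is (coefficient of the pure-$x$ microscopic functional) $\gg$ (coefficient of $\mathcal{M}_N$) $\gg$ (coefficient of the mixed-derivative functional), e.g. $A_1\gg 1\gg A_2$; with that choice every remainder is dominated and, after rescaling, \eqref{G_1} follows. You do acknowledge at the end that the constants must be tuned simultaneously, and the fix is immediate, so this is a localized error rather than a wrong approach; the remaining ingredients — building $\mathcal{H}_N$ by deleting the zeroth-order fluid and field energies so that only $C\|\Tdx P_0 f_{\eps}\|_{L^2}^2$ survives on the right of \eqref{G_2}, and the weighted versions \eqref{G_1b}--\eqref{G_2b} via the coercivity $(\nu^{2}LP_1f,P_1f)\le -c\|\nu^{3/2}P_1f\|^2+C\|P_1f\|^2$ — are consistent with the standard treatment the paper invokes.
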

		
		With the help of Lemmas \ref{macro-en}, \ref{micro-en} and \ref{energy1}, we have the time decay rate of $U_{\eps}=(f_{\eps},E_{\eps},B_{\eps})$ as follows.
		
		\begin{lem}\label{time7} Let $N\ge 3$. For any $\eps\in (0,1)$,  there exists a small constant $\delta_0>0$ such that if the initial data $U_0=(f_{0},E_{0},B_{0})$ satisfies that $E_{ N+1,1}(U_0)+\|U_0\|^2_{L^1 }\le \delta_0^2$, then the solution
			$U_{\eps}(t)=(f_{\eps},E_{\eps},B_{\eps})$ to the system \eqref{VMB4}--\eqref{VMB2i} has the following time-decay rate estimates:
			\be
			\|f_{\eps}(t)\|_{X^N_1}+\| (E_{\eps},B_{\eps})(t)\|_{H^N_x}\le C\delta_0 (1+t)^{-\frac38}, \label{G_11}
			\ee
			where $C>0$ is a constant independent of $\eps$. In particular, we have
			\be
			\|P_1f_{\eps}(t)\|_{H^{N-2}}\le C\delta_0\(\eps(1+t)^{-\frac58}+e^{-\frac{dt}{\eps^2}}\),\label{G_55}
			\ee
			where $d,C>0$ are two constants independent of $\eps$.
		\end{lem}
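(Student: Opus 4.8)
The plan is to combine three inputs: the global-in-time energy estimates of Lemma \ref{energy1}, the mild representation \eqref{ue} of $U_\eps$, and the linear time-decay estimates of Lemma \ref{time}, closed by a continuity argument. First I would apply Lemma \ref{energy1} with regularity index $N+1$: since $E_{N+1,1}(U_0)\le\delta_0^2$, the solution $U_\eps$ is global, $\mathcal{E}_{N+1,1}(U_\eps(t))$ is nonincreasing so $E_{N+1,1}(U_\eps(t))\le C\delta_0^2$ for all $t\ge0$, and $\int_0^\infty D_{N+1,1}(U_\eps(s))\,ds\le C\delta_0^2$. The target is to show that $\mathcal{M}(t):=\sup_{0\le s\le t}(1+s)^{3/4}E_{N,1}(U_\eps(s))\le C\delta_0^2$ uniformly in $\eps\in(0,1)$; granting this, \eqref{G_11} follows since $\|f_\eps\|_{X^N_1}^2+\|(E_\eps,B_\eps)\|_{H^N_x}^2\le CE_{N,1}(U_\eps)$, where for the top derivative one interpolates $\|\nabla_x^N U_\eps\|_{L^2}\le C\|\nabla_x^{N-1}U_\eps\|_{L^2}^{1/2}\|\nabla_x^{N+1}U_\eps\|_{L^2}^{1/2}$ and uses $\|\nabla_x^{N+1}U_\eps\|_{L^2}\lesssim\delta_0$ together with the dissipation of the $v$-derivatives from Lemmas \ref{macro-en}--\ref{energy1}.

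The core estimate is for the Duhamel integral in \eqref{ue}. Two structural facts are decisive: $\Lambda_1=(\Lambda_{11},0,0)$ has vanishing electromagnetic components, and $\Lambda_2=(\Gamma(f_\eps,f_\eps),0,0)$ moreover satisfies $P_0\Lambda_{21}=P_0\Gamma(f_\eps,f_\eps)=0$. Thus one applies the improved semigroup estimates \eqref{p2u0}--\eqref{p3u0} to the $\Lambda_1$-term and \eqref{p2u1}--\eqref{p3u1} to the $\Lambda_2$-term; the single factor $\eps$ gained in \eqref{p2u1} exactly cancels the singular prefactor $\eps^{-1}$ in front of $\Lambda_2$. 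Using the standard bilinear estimates for $\Gamma$, the product/Sobolev inequalities and the $\nu$-weights of $X^N_1$, together with the a priori control $E_{N,1}(U_\eps(s))\le\mathcal{M}(t)(1+s)^{-3/4}$, I would bound $\|\Lambda_1(s)\|_{L^1\cap H^{N-1}}+\|\Lambda_2(s)\|_{L^1}\lesssim E_{N,1}(U_\eps(s))$ and, for higher Sobolev norms of $\Lambda_2$, pair the merely-bounded top-derivative factor against the decaying low-derivative factor (this is where the index $N+1$ is consumed). Everything then reduces to the semigroup-decay rates of Lemma \ref{time}, the convolution inequality $\int_0^t(1+t-s)^{-a}(1+s)^{-b}\,ds\le C(1+t)^{-\min\{a,\,b,\,a+b-1\}}$, and the elementary bound $\eps^{-1}\int_0^t e^{-d(t-s)/\eps^2}h(s)\,ds\lesssim\eps(1+t)^{-\gamma}+\eps e^{-dt/(2\eps^2)}$ valid when $0\le h(s)\le(1+s)^{-\gamma}$ with $\gamma\in(0,1)$, which takes care of the transient $S_3$-contributions. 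In the worst case $a=5/8$, $b=3/4$, returning exactly $(1+t)^{-3/8}$ for $\|U_\eps(t)\|_{L^2}$, each $x$-derivative improving $a$ by $1/4$. Feeding these decays back into the time-weighted energy method applied to $\frac{d}{dt}\mathcal{E}_{N,1}(U_\eps)+D_{N,1}(U_\eps)\le0$ (using that $D_{N,1}\gtrsim E_{N,1}$ up to the non-dissipated low-frequency part, which is governed by the $\|U_\eps\|_{L^2}$-decay just obtained) gives $E_{N,1}(U_\eps(t))\lesssim(\delta_0^2+\delta_0^2\mathcal{M}(t))(1+t)^{-3/4}$, so $\mathcal{M}(t)\le C\delta_0^2$ for $\delta_0$ small by continuity; this is \eqref{G_11}.

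For \eqref{G_55} I would project \eqref{ue} onto $P_B$ and redo the previous step tracking the extra factors of $\eps$: the linear term satisfies $\|P_B e^{\frac{t}{\eps^2}\AA_\eps}U_0\|_{H^{N-2}}\le C(\eps(1+t)^{-5/8}+e^{-dt/\eps^2})(\|U_0\|_{L^1}+\|U_0\|_{H^{N-1}})$ by \eqref{p3u}; the $\Lambda_1$-contribution carries a factor $\eps$ with kernel $(1+t-s)^{-7/8}$ by \eqref{p3u0}, and the $\eps^{-1}\Lambda_2$-contribution carries $\eps^{-1}\cdot\eps^2=\eps$ with kernel $(1+t-s)^{-7/8}$ by \eqref{p3u1}. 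Since by \eqref{G_11} the sources now obey $\|\Lambda_i(s)\|_{L^1}\lesssim\delta_0^2(1+s)^{-3/4}$, the convolution inequality with $a=7/8$, $b=3/4$ gives the rate $\min\{7/8,3/4,5/8\}=5/8$, the transient terms are handled as above, and the $e^{-dt/\eps^2}$ of \eqref{G_55} is produced by the relaxation of the microscopic datum $P_1 f_0$; conceptually this is the statement that, by the macro--micro slaving relation \eqref{p_1}, $\eps^{-1}P_1 f_\eps$ is slaved to $\nabla_x P_0 f_\eps$ up to $O(\eps)$ and an initial layer of width $\eps^2$. Altogether $\|P_1 f_\eps(t)\|_{H^{N-2}}\le C\delta_0(\eps(1+t)^{-5/8}+e^{-dt/\eps^2})$.

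The main obstacle is making all of this uniform in $\eps\in(0,1)$ despite the $\eps^{-1}$-singular collision term $\eps^{-1}\Gamma(f_\eps,f_\eps)$: this succeeds only thanks to the cancellation $P_0\Gamma=0$ combined with the one-power $\eps$-gain of the fluid part of the semigroup on $N_0^\bot$-valued data (Lemma \ref{time}, \eqref{p2u1}--\eqref{p3u1}). A second delicate point is the time-decay bookkeeping: the Duhamel loop closes at $(1+t)^{-3/8}$ only because the nonlinear sources have no magnetic component, so the improved rate $(1+t)^{-5/8}$ of \eqref{p2u0}/\eqref{p2u1} is available — with the generic rate $(1+t)^{-3/8}$ the convolution would yield only $(1+t)^{-1/8}$ — and the higher $x$-derivatives of the nonlinear terms must be split so that the decaying factor sits on the low-derivative piece, which is exactly why the hypothesis demands $E_{N+1,1}(U_0)\le\delta_0^2$, one order above the regularity of the conclusion.
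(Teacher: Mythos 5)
Your proposal is correct and follows essentially the same route as the paper: a bootstrap quantity controlling $(1+t)^{3/8}E_{N,1}(U_\eps)^{1/2}$, the Duhamel representation \eqref{ue} combined with the refined linear decay rates of Lemma \ref{time} (exploiting that $\Lambda_1,\Lambda_2$ have no electromagnetic components and that $P_A\Lambda_2=0$ so the extra factor of $\eps$ absorbs the $\eps^{-1}$ prefactor), a time-weighted Gronwall argument on $\frac{d}{dt}\mathcal{E}_{N,1}+D_{N,1}\le 0$ using one extra derivative to upgrade the $L^2$ decay to the full energy norm, and finally the $P_B$-projected Duhamel estimate for \eqref{G_55}. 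The only cosmetic difference is your attribution of where the hypothesis $E_{N+1,1}(U_0)\le\delta_0^2$ is consumed (the paper spends it in the inequality $E_{n,1}\le CD_{n+1,1}+\cdots$ inside the weighted energy step rather than in the nonlinear product estimates), which does not affect the validity of the argument.
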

		\begin{proof}
			Define
			$$
			Q_{\eps}(t)= \sup_{0\le s\le t}\Big\{(1+s)^{\frac38}E_{N,1}(U_{\eps}(s))^{\frac12}
			\Big\}.
			$$
			We claim that
			\be
			Q_{\eps}(t)\le C\delta_0.  \label{assume}
			\ee
			It is straightforward  to verify that the estimate \eqref{G_11}  follows  from \eqref{assume}.

			Since  ${ P_A}\Lambda_{2} =0$,
			it follows from Lemma \ref{time} and \eqref{ue}  that,
			\bma
			\| U_{\eps} (t)\|_{L^2 }&\le
			C(1+t)^{-\frac38}\(\| U_0\|_{L^{2} }+\|U_0\|_{L^{1} }+\|\Tdx U_0\|_{L^{2} }\) \nnm\\
			&\quad+C \int^{t}_0 \( (1+t-s)^{ -\frac58}+\frac1{\eps}e^{-\frac{d(t-s)}{\eps^2}}\)\big(\| \Lambda_1(s)\|_{L^{2}\cap L^1 }\nnm\\
			&\qquad +\|\Tdx \Lambda_1(s)\|_{L^{2} }+\| \Lambda_2(s)\|_{H^{1}\cap L^1 } +\|\Tdx \Lambda_2(s)\|_{L^{2} } \big)ds\nnm\\
			&\le
			C\delta_0(1+t)^{-\frac38}+CQ_{\eps}(t)^2\intt(1+t-s)^{-\frac58} (1+s)^{-\frac34}ds\nnm\\
			&\le
			C\delta_0(1+t)^{-\frac38}+CQ_{\eps}(t)^2 (1+t)^{-\frac38},   \label{density_1a}
			\ema
			where we had used
			\be
			\| \Lambda_k(s)\|_{H^{N-1} }+\| \Lambda_k(s)\|_{L^{1} } \le CQ_{\eps}(t)^2 (1+s)^{-\frac34} , \quad k=1,2.\label{h3}
			\ee
			Let $0<l< 1$ and $n\ge 3$. Multiplying \eqref{G_1b} by $(1 + t)^l$ and
			then taking time integration over $[0, t]$ gives
			\bma
			&(1+t)^lE_{n,1}(U_{\eps})(t)+ \intt (1+s)^lD_{n,1}(U_{\eps})(s)ds
			\nnm\\
			&\le CE_{n,1}(U_0)+Cl\intt (1+s)^{l-1}E_{n,1}(U_{\eps})(s)ds\nnm\\
			&\le CE_{n,1}(U_0)+Cl\intt (1+s)^{l-1}D_{n+1,1}(U_{\eps})(s)ds\nnm\\
			&\quad+Cl\intt (1+s)^{l-1}(\|P_0f_{\eps}(s)\|^2_{L^2}+\|B_{\eps} \|^2_{H^1_x}+\|E_\eps\|_{L^2_x}^2)ds, \label{l5}
			\ema
			where we had used
			$$E_{n,1}(U_{\eps})\le CD_{n+1,1}(U_{\eps})+C(\|P_0f_{\eps} \|^2_{L^2}+\|B_{\eps} \|^2_{H^1_x}+\|E_\eps\|_{L^2_x}^2).$$
			And it follows from \eqref{G_1b} that
			\be
			E_{n+1,1}(U_{\eps})(t)+ \intt D_{n+1,1}(U_{\eps})(s)ds \le  CE_{n+1,1}(U_{0}). \label{l6}
			\ee
			By \eqref{l5}--\eqref{l6}, we obtain
			\bmas
			&\quad (1+t)^lE_{n,1}(U_{\eps})(t)+ \intt (1+s)^lD_{n,1}(U_{\eps})(s)ds
			\nnm\\
			&\le CE_{n+1,1}(U_0)+Cl\intt (1+s)^{l-1}(\|P_0f_{\eps}(s)\|^2_{L^2}+\|B_{\eps} \|^2_{H^1_x}+\|E_\eps\|_{L^2_x}^2)ds
			\emas
			for $0<l<1$ and $n\ge 3$. Taking $l=3/4+\sigma$ for a fixed constant $0<\sigma<1/4$  yields
			\bmas
			&(1+t)^{\frac34+\sigma}E_{n,1}(U_{\eps})(t)+ \intt (1+s)^{\frac34+\sigma}D_{n,1}(U_{\eps})(s)ds
			\nnm\\
			\le& CE_{n+1,1}(U_0)+C(\delta_0+Q_{\eps}^2(t))^2 \intt (1+s)^{-\frac14+\sigma}(1+s)^{-\frac34}ds\nnm\\
			\le& CE_{n+1,1}(U_0)+C(1+t)^{\sigma}(\delta_0+Q_{\eps}^2(t))^2.
			\emas  This gives
			$$
			E_{n,1}(U_{\eps})(t)\le C(1+t)^{-\frac34}(\delta_0+Q_{\eps}^2(t))^2.\label{J_6z}
			$$
			Then we obtain
			$$
			Q_{\eps}(t)\le C\delta_0+CQ_{\eps}(t)^2 ,
			$$
			which leads to \eqref{assume} provided $\delta_0>0$ sufficiently small.
			Moreover,
			it follows from Lemma \ref{time} and \eqref{h3} that
			\bma
			\| P_1f_{\eps} (t)\|_{H^k }&\le
			C\(\eps(1+t)^{-\frac58}+e^{-\frac{dt}{\eps^2}}\)\(\| U_0\|_{H^{k+1} }+\|U_0\|_{L^{1} }+\|\Tdx U_0\|_{H^{k} }\)\nnm\\
			&\quad+C\int^{t}_0 \( \eps(1+t-s)^{-\frac78}+\frac1{\eps}e^{-\frac{d(t-s)}{\eps^2}}\)\big(\| \Lambda_1(s)\|_{H^{k+1}\cap L^1 } \nnm\\
			&\qquad+ \|\Tdx \Lambda_1(s)\|_{H^{k}}+\|\Lambda_2(s)\|_{H^{k+2}\cap L^{1} }+ \|\Tdx \Lambda_2(s)\|_{H^{k}} \big)ds\nnm\\
			&\le
			C(\delta_0 +Q_{\eps}(t)^2)\(\eps(1+t)^{-\frac58}+e^{-\frac{dt}{\eps^2}}\), \label{micro_1a}
			\ema
			for  $k\le N-2$. This yields \eqref{G_55}.
			The proof of the lemma is completed.
		\end{proof}
		
		By \eqref{ue1} and Lemma \ref{timev}, we can show
		\begin{lem}\label{energy4} Let $N\ge 2$. There exists a small constant $\delta_0>0$ such that if $\|U_{0}\|_{H^N}+\|U_0\|_{L^1}\le \delta_0$, then
			the NSMF system \eqref{NSM_2}--\eqref{NSP_5i} admits a unique global solution $\tilde{U}(t,x)=(n,m,q,E,B)(t,x)$ satisfying
			\be
			\|\tilde{U} (t)\|_{H^N } \le C\delta_0 (1+t)^{-\frac38}. \label{time8}
			\ee
		\end{lem}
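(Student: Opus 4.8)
The plan is to work from the mild formulation \eqref{ue1} of the NSMF system, combining a standard local existence result with the linear decay estimates of Lemma \ref{timev} and the bilinear structure \eqref{defH12} of the nonlinearity in a time-weighted bootstrap.

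\emph{Local existence.} Because $m$ is divergence free and $n,E,B$ are recovered from $(m,q)$ through the algebraic and elliptic relations $E=\nabla_x\Delta_x^{-1}n$, $(I-\Delta_x^{-1})\nabla_x n=-\sqrt{2/3}\,\nabla_x q$, $\nabla_x\times B=m$, $\nabla_x\cdot B=0$, the system \eqref{NSM_2}--\eqref{NSP_5i} reduces to a pair of forced parabolic equations for $(m,q)$ of incompressible Navier--Stokes/heat type with quadratic forcing $H_1+\operatorname{div}_xH_2$. A standard energy-method iteration (as in \cite{Duan4,Strain}) then yields, for small data, a unique solution $\tilde U\in C([0,T];H^N)$ together with a continuation criterion: the solution extends as long as $\sup_{[0,T]}\|\tilde U(t)\|_{H^N}$ stays bounded. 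The Duhamel identity \eqref{ue1} then holds for this solution by Lemma \ref{sem}.

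\emph{A priori decay estimate.} Put $M(T)=\sup_{0\le t\le T}(1+t)^{3/8}\|\tilde U(t)\|_{H^N}$ and estimate the right-hand side of \eqref{ue1}. For the linear part, \eqref{Y1UL21} with $q=1$ (taking $\alpha'=0$ for the $L^1$ contribution and $\alpha''=\alpha$ for the $L^2$ contribution) gives $\|\partial_x^\alpha Y_1(t){P_A}U_0\|_{L^2}\le C(1+t)^{-3/8}(\|U_0\|_{L^1}+\|U_0\|_{H^N})\le C\delta_0(1+t)^{-3/8}$ for $|\alpha|\le N$, since ${P_A}$ is bounded on $L^1$ and on $H^N$. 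For the Duhamel term the key point is that $H_1=(H_{11},0,0)$ and $\operatorname{div}_xH_2=(\operatorname{div}_xH_{21},0,0)$ carry no electromagnetic component and have vanishing $\chi_0$-component (both $(H_{11},\chi_0)=0$ and $(\operatorname{div}_xH_{21},\chi_0)=0$ by parity/orthogonality), so Lemma \ref{timev} applies to them and in fact its \emph{improved} rate \eqref{Y1UL2}, valid for data with zero $B$-component, is available: with $q=1$,
\[
\|\partial_x^\alpha Y_1(t-s)H_1\|_{L^2}\le C(1+t-s)^{-5/8}\|\partial_x^\alpha H_1\|_{L^1}+Ce^{-c(t-s)}\|\partial_x^\alpha H_1\|_{L^2},
\]
and, writing $\partial_x^\alpha Y_1(t-s)\operatorname{div}_xH_2=\partial_x^{\alpha+e}Y_1(t-s)H_2$ with $|e|=1$ and choosing $\alpha'=\alpha''=\alpha$,
\[
\|\partial_x^\alpha Y_1(t-s)\operatorname{div}_xH_2\|_{L^2}\le C(1+t-s)^{-7/8}\|\partial_x^\alpha H_2\|_{L^1}+C(t-s)^{-1/2}e^{-c(t-s)}\|\partial_x^\alpha H_2\|_{L^2}.
\]
Applying the bilinear inequalities $\|\partial_x^\alpha(fg)\|_{L^1}\lesssim\|f\|_{H^{|\alpha|}}\|g\|_{H^{|\alpha|}}$ and $\|\partial_x^\alpha(fg)\|_{L^2}\lesssim\|f\|_{H^N}\|g\|_{H^N}$ (valid for $|\alpha|\le N$ and $N\ge 2$ in $\R^3$, where $H^N$ is a multiplicative algebra) to each summand of $H_1\sim nE+m\times B+m\cdot E$ and $H_2\sim m\otimes m+mq$ — using $E,B$ as independent components of $\tilde U$, so that no low-frequency correction is needed — gives $\|\partial_x^\alpha H_i(s)\|_{L^1}+\|\partial_x^\alpha H_i(s)\|_{L^2}\le C\|\tilde U(s)\|_{H^N}^2\le CM(T)^2(1+s)^{-3/4}$. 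Combining with the elementary bounds $\int_0^t(1+t-s)^{-a}(1+s)^{-b}\,ds\le C(1+t)^{1-a-b}$ for $a,b\in(0,1)$, $a+b>1$, and $\int_0^t(t-s)^{-1/2}e^{-c(t-s)}(1+s)^{-b}\,ds\le C(1+t)^{-b}$, one obtains $\|\partial_x^\alpha U_1(t)\|_{L^2}\le C\delta_0(1+t)^{-3/8}+CM(T)^2(1+t)^{-3/8}$ for all $|\alpha|\le N$, hence $M(T)\le C\delta_0+CM(T)^2$.

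\emph{Conclusion and main obstacle.} A continuity argument in $T$, using the continuation criterion, upgrades $M(T)\le C\delta_0+CM(T)^2$ to $M(T)\le 2C\delta_0$ for all $T>0$ once $\delta_0$ is small; this produces the global solution and the decay bound \eqref{time8}, while uniqueness follows from a Gronwall estimate on the difference of two solutions. The step I expect to be the main obstacle is arranging the $\operatorname{div}_xH_2$ contribution so that the extra derivative does not produce a non-integrable singularity at $s=t$: the resolution is to transfer exactly one derivative onto the semigroup — producing the mild factor $(t-s)^{-1/2}e^{-c(t-s)}$ from the $L^2$ part of \eqref{Y1UL2} — and to keep the remaining $\le N$ derivatives inside the $L^1/L^2$ norms of $H_2$, which is affordable precisely because $N\ge 2$ makes $H^N$ a Banach algebra. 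The secondary delicate point is the verification that $H_1$ and $\operatorname{div}_xH_2$ have zero magnetic and zero $\chi_0$-component, which is exactly what makes Lemma \ref{timev} applicable and unlocks the faster rate $(1+t-s)^{-5/8}$ needed for the time convolution to close at $(1+t)^{-3/8}$.
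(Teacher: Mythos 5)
Your proposal is correct and follows essentially the same route as the paper: both work from the Duhamel representation \eqref{ue1}, bound the linear part and the quadratic source terms $H_1$, $\operatorname{div}_xH_2$ via the decay rates of Lemma \ref{timev} (exploiting that the $H_i$ have no magnetic component to get the $(1+t-s)^{-5/8}$ rate plus the $(t-s)^{-1/2}e^{-c(t-s)}$ singular part), use the algebra property of $H^N$ to get $\|H_i\|_{H^N\cap L^1}\lesssim\|\tilde U\|_{H^N}^2$, and close a weighted bootstrap $Q(t)\le C\delta_0+CQ(t)^2$. Your treatment is if anything slightly more detailed than the paper's (which obtains existence by the contraction mapping theorem with details omitted), and the extra care you take with the derivative in $\operatorname{div}_xH_2$ is consistent with what the paper implicitly does.
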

		\begin{proof}
			Define
			$$
			Q(t)=\sup_{0\le s\le t}\left\{(1+s)^{\frac38}\|\tilde{U}(s)\|_{H^N }\right\}.
			$$
			Then, it follows from Lemma \ref{timev} and \eqref{ue1} that
			\bma
			\| \tilde U (t)\|_{H^N}&\le  C(1+t)^{-\frac38}(\| { P_A}U_0\|_{H^N}+\|{P_A}U_0\|_{L^1})
			\nnm\\
			&\quad+C \intt \((1+t-s)^{ -\frac58}+(t-s)^{-\frac12}e^{-c(t-s)}\)\nnm\\
			&\qquad\times \sum_{i=1,2}(\| H_i(s)\|_{H^N} +\| H_i(s)\|_{ L^1 })ds
			\nnm\\
			&\le  C\delta_0(1+t)^{-\frac38}+CQ(t)^2 (1+t)^{-\frac38},\label{density_2}
			\ema
			where $H_i, i=1,2$ are defined in \eqref{defH12} and satisfy
			$$
			\sum_{i=1,2}(\| H_i(s)\|_{H^N} +\| H_i(s)\|_{ L^1 })\leq C\|\tilde U(s)\|_{H^N}^2\leq C(1+s)^{-\frac{3}{4}}Q(t)^2,\quad \forall s\in[0,t].
			$$
			It follows from \eqref{density_2} that
			$$
			Q(t)\le C\delta_0 ,
			$$
			provided $\delta_0>0$ sufficiently small. This proves \eqref{time8}.
			The existence of the solution can be proved by the contraction mapping theorem, the details are omitted.
			The proof of the lemma is completed.
		\end{proof}
		
		Theorem \ref{thm1.1}  follows from Lemmas \ref{time7} and \ref{energy3}.

		\subsection{Optimal convergence rate}
		\begin{lem}[\cite{Li1}]\label{gamma1}For any $i,j=1,2,3,$ it holds that
			\bmas
			\Gamma_*(v_i\chi_0,v_j\chi_0) &=-\frac12LP_1(v_iv_j\chi_0), \\
			\Gamma_*(v_i\chi_0,|v|^2\chi_0) &=-\frac12LP_1(v_i|v|^2\chi_0), \\
			\Gamma_*(|v|^2\chi_0,|v|^2\chi_0)&=-\frac12LP_1(|v|^4\chi_0).
			\emas
			where
			$$\Gamma_*(f,g) =\frac12[\Gamma(f,g)+\Gamma(g,f)].$$
		\end{lem}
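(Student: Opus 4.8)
The plan is to derive all three identities from a single quadratic relation for the collision operator and then polarize. Write $\Q$ for the Boltzmann collision operator and recall $\chi_0=\sqrt M$, $Lf=\frac1{\sqrt M}[\Q(M,\sqrt Mf)+\Q(\sqrt Mf,M)]$ and $\Gamma(f,g)=\frac1{\sqrt M}\Q(\sqrt Mf,\sqrt Mg)$. The main claim I would prove first is: for every collision invariant $g$, i.e.\ $g$ in the linear span of $\{1,v_1,v_2,v_3,|v|^2\}$,
\be
\Gamma(g\chi_0,g\chi_0)=-\tfrac12 L(g^2\chi_0).
\ee

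To prove this, note that $Me^{sg}$ is, for $|s|$ small, proportional to a Maxwellian, since $\log(Me^{sg})=\log M+sg$ stays an affine function of $1,v,|v|^2$; hence $\Q(Me^{sg},Me^{sg})=0$ for all small $s$. Differentiating twice in $s$ at $s=0$ (the interchange with the collision integral being legitimate because the integrands decay like Gaussians), and using $\partial_sMe^{sg}|_{s=0}=gM$, $\partial_s^2Me^{sg}|_{s=0}=g^2M$, $\Q(M,M)=0$ and the bilinearity of $\Q$, one gets
\be
\Q(g^2M,M)+2\Q(gM,gM)+\Q(M,g^2M)=0.
\ee
Dividing by $\sqrt M$ and using $\Q(gM,gM)=\sqrt M\,\Gamma(g\chi_0,g\chi_0)$ together with $\Q(g^2M,M)+\Q(M,g^2M)=\sqrt M\,L(g^2\chi_0)$ (both coming directly from the definitions, with $\sqrt M g\chi_0=gM$ and $\sqrt M g^2\chi_0=g^2M$) gives the claimed identity.

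Next I would polarize. Since $\Gamma$ is bilinear, $\Gamma_*$ is its symmetrization, and the span of collision invariants is a linear space, for any two collision invariants $a,b$ we may expand $\Gamma((a+b)\chi_0,(a+b)\chi_0)$ and subtract the identities for $g=a$ and $g=b$ to obtain
\be
\Gamma_*(a\chi_0,b\chi_0)=-\tfrac12 L(ab\chi_0).
\ee
Because $L$ annihilates the null space $N_0=\mathrm{span}\{\chi_0,\dots,\chi_4\}$, one has $L=LP_1$, so the right-hand side equals $-\tfrac12 LP_1(ab\chi_0)$. Finally, specializing $(a,b)$ to $(v_i,v_j)$, $(v_i,|v|^2)$ and $(|v|^2,|v|^2)$, and recalling $v_i\chi_0=v_i\sqrt M$, $|v|^2\chi_0=|v|^2\sqrt M$, yields exactly the three stated formulas.

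The argument is elementary and I do not expect a genuine obstacle; the points that require care are purely bookkeeping: justifying that $Me^{sg}$ is a rescaled Maxwellian (hence an equilibrium of $\Q$) on a neighbourhood of $s=0$, legitimizing the differentiation under the collision integral, keeping track of the many normalizing factors $\sqrt M$, and noting that $g^2$ (e.g.\ $v_iv_j$, $v_i|v|^2$, $|v|^4$) is in general \emph{not} a collision invariant, so that the term $L(g^2\chi_0)$ genuinely survives rather than vanishing.
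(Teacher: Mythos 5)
Your proof is correct. The paper itself offers no proof of this lemma (it is quoted from \cite{Li1}), and your argument is exactly the standard derivation: since $\log(Me^{sg})$ remains an affine combination of $1,v,|v|^2$ for $g$ a collision invariant and $|s|$ small, $Me^{sg}$ is a genuine Maxwellian, so $Q(Me^{sg},Me^{sg})\equiv 0$; the second $s$-derivative at $s=0$ yields $Q(g^2M,M)+2Q(gM,gM)+Q(M,g^2M)=0$, which after dividing by $\sqrt M$ is $L(g^2\chi_0)+2\Gamma(g\chi_0,g\chi_0)=0$, and polarization over the linear space of collision invariants gives $\Gamma_*(a\chi_0,b\chi_0)=-\frac12L(ab\chi_0)=-\frac12LP_1(ab\chi_0)$. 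The bookkeeping points you flag are all handled correctly: for $g$ containing $|v|^2$ one needs $|s|$ small so the quadratic form in the exponent stays negative definite; differentiation under the integral is justified by the Gaussian decay against the hard-sphere kernel; and $L=LP_1$ because $L$ annihilates $N_0$. No gap.
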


		\begin{proof}[\underline{\bf Proof of Theorem \ref{thm1.2}}]
			First, we deal with \eqref{limit0}. Define
			$$
			\Pi_{\eps}(t)=\sup_{0\le s\le t}\bigg(\eps|\ln \eps|^4(1+s)^{-\frac{5-\sigma}8}+ \bigg(1+\frac{s}{\eps}\bigg)^{-1}\bigg)^{-1}\| U_{\eps}(s)-U_1(s) \|_{ L^\infty}.
			$$
			We claim that
			\be
			\Pi_{\eps}(t) \le C\delta_0 ,\quad \forall\, t>0. \label{limit6}
			\ee
			It is easy to verify that the estimate  \eqref{limit0} follows  from \eqref{limit6}.
			By \eqref{ue} and \eqref{ue1},
			\begin{align*}
				U_\eps(t)-U_1(t)&=
				e^{\frac{t}{\eps^2}\AA_\eps}U_0-Y_1(t){P_A}U_0+\intt \(e^{\frac{t-s}{\eps^2}\AA_{\eps}} \Lambda_{1}(s)-Y_1(t-s)H_1(s)\)ds\\
				&\quad +\intt  \(\frac1{\eps}e^{\frac{t-s}{\eps^2}\AA_{\eps}}\Lambda_{2}(s)-Y_1(t-s)\operatorname{div}_xH_2(s)\)  ds\\
				&=:I_1+I_2+I_3.
			\end{align*}
			By Lemma \ref{fl1}, one has
			\bma	
			\left\|I_1\right\|_{L^\infty}
			&\le C\bigg( \eps(1+t)^{-1 }+ \(1+\frac t \eps\)^{-\frac{3-\sigma}2}\bigg)(\|U_0\|_{H^3}+\|U_0\|_{L^1}) \nnm\\
			&\le C\bigg( \eps(1+t)^{-1 }+\(1+\frac t \eps\)^{-\frac{3-\sigma}2}\bigg)\delta_0.\label{i1es}
			\ema
			For $I_2$, we further decompse
			\begin{align*}
				I_2&=\intt\left(e^{\frac{t-s}{\eps^2}\AA_{\eps}} \Lambda_{1}(s)-Y_1(t-s){P_A}\Lambda_1(s)\right)ds\\
				&\quad+\intt\left( Y_1(t-s){ P_A}\Lambda_1(s) -Y_1(t-s)H_1(s)\right)ds\\
				&=:I_{21}+I_{22}.
			\end{align*}
			By Lemma \ref{fl1} and \eqref{h3},  we obtain
			\bma\label{i21es}
			\|I_{21}\|_{L^\infty}&\leq C\intt \bigg( \eps(1+t-s)^{-1}+\(1+\frac {t-s} \eps\)^{-\frac{3-\sigma}2}\bigg)\|\Lambda_1(s)\|_{H^3\cap L^1} ds \nnm\\
			&\leq C\delta_0^2\intt \bigg( \eps(1+t-s)^{-1}+\(1+\frac {t-s} \eps\)^{-\frac{3-\sigma}2}\bigg)(1+s)^{ -\frac34} ds\nnm\\
			&\leq  C \delta_0^2\eps(1+t)^{-\frac58},
			\ema where we had used
			$$ \|\Lambda_1(s)\|_{H^3\cap L^1}\le CE_{4,1}(s)\le  C \delta_0^2 (1+t)^{-\frac34}, \quad \forall s\in [0,t],$$
			For $I_{22}$, due to
			$$
			P_0\Lambda_{11} =(n_\eps E_\eps+m_\eps\times B_\eps)\cdot v\chi_0+\sqrt{\frac23}m_\eps\cdot E_\eps\chi_4 ,
			$$
			we obtain from Lemma \ref{timev} and Lemma \ref{time7} that
			\bma\label{i22es}
			\|	I_{22}\|_{L^\infty}& \leq C \intt \( (t-s)^{-\frac34}+(t-s)^{-\frac58}\)\|P_0\Lambda_{11}(s)-H_{11}(s)\|_{L^2}ds \nnm\\		&\leq C \intt \( (t-s)^{-\frac34}+(t-s)^{-\frac58}\)\|U_\eps-U_1\|_{L^\infty}\(\|U_1\|_{H^2}+\|U_\eps\|_{H^2}\)ds \nnm\\
			&\leq C\delta_0\Pi_{\eps}(t)\intt \((t-s)^{-\frac34}+(t-s)^{-\frac58}\)\nnm\\
			&\qquad\times \bigg({\eps|\ln\eps|^4(1+s)^{-\frac{5-\sigma}8}+ \(1+\frac{s}{\eps}\)^{-1}}\bigg)(1+s)^{-\frac38}ds.
			\ema
			We have
			\begin{align}\label{j0}
				A_0&=:\eps|\ln \eps|^4\intt \((t-s)^{-\frac34}+(t-s)^{-\frac58}\)(1+s)^{-\frac{8-\sigma}8}ds\nnm\\
				&\leq C\eps|\ln \eps|^4(1+t)^{-\frac{5-\sigma}8}.
			\end{align}
			It remains to estimate
			\begin{align*}
				A_1=:\intt \((t-s)^{-\frac34}+(t-s)^{-\frac58}\) \(1+\frac{s}{\eps}\)^{-1}(1+s)^{-\frac38}ds.
			\end{align*}
			For $t\leq\epsilon$, we have
			\bma
			A_1
			&\leq C\int^{t}_0(t-s)^{-\frac34}ds\le C\leq C\(1+\frac{t}{\epsilon}\)^{-1}.\label{dlth2-3-1-5}
			\ema
			For $t\geq\epsilon$, we have
			\bma
			A_1&=\(\int^{\frac{t}{2}}_0+\int_{\frac{t}{2}}^t\)\((t-s)^{-\frac34}+(t-s)^{-\frac58}\)  \(1+\frac{s}{\eps}\)^{-1} (1+s)^{-\frac38}ds\nnm\\
			&\leq  C(t^{-\frac{3}{4}}+t^{-\frac58}) \int^{\frac t2}_0 \(1+\frac{s}{\eps}\)^{-1}(1+s)^{-\frac38}ds\nnm\\
			&\quad+C\(1+\frac{t}{\eps}\)^{-1} (1+t)^{-\frac38}\int_{\frac{t}{2}}^t\((t-s)^{-\frac34}+(t-s)^{-\frac58}\)ds\nnm\\
			& \leq C\epsilon|\ln\eps|^4 (1+t)^{-\frac{5}{8}}+C\(1+\frac{t}{\epsilon}\)^{-1},\label{dlth2-3-1-7}
			\ema
			where we have used
			\begin{align*}
				&\quad (t^{-\frac{3}{4}}+t^{-\frac58})	\int_0^{\frac{t}{2}}\(1+\frac{s}{\eps}\)^{-1}(1+s)^{-\frac38}ds\\
				& \leq \begin{cases}
					2t^{-\frac{3}{4}} \int_0^{\frac{t}{2}}\(1+\frac{s}{\eps}\)^{-1}ds\leq 	2t^{-\frac{3}{4}} \eps|\ln\eps|\leq C\eps|\ln \eps|^4+C\(1+\frac{t}{\eps}\)^{-1},\ \ \ \eps<t\leq 1,\\
					2t^{-\frac58} \(\int_0^1\(1+\frac{s}{\eps}\)^{-1}ds+\eps\int_1^t s^{-\frac{11}{8}}ds\)\leq C\eps|\ln\eps|(1+t)^{-\frac58},\quad\quad\quad t>1.
				\end{cases}
			\end{align*}
			We conclude from \eqref{i22es}-\eqref{dlth2-3-1-7} that
			\bma
			\|I_{22}\|_{L^\infty}&\leq C\delta_0\Pi_{\eps}(t)(A_0+A_1)\nonumber\\
			&\leq C\delta_0\Pi_{\eps}(t)\bigg(\epsilon|\ln\eps|^4 (1+t)^{-\frac{5-\sigma}{8}}+C\(1+\frac{t}{\epsilon}\)^{-1}\bigg).\label{i22es'}
			\ema
			Finally, to estimate $I_3$, we decompose
			\begin{align*}
				I_3&=\intt \(\frac1{\eps}e^{\frac{t-s}{\eps^2}\AA_{\eps}}\Lambda_{2}(s)-Y_1(t-s)Z_0(s)\)  ds\\
				&\quad +\intt  \(Y_1(t-s)Z_0(s)-Y_1(t-s)\operatorname{div}_xH_2(s)\)  ds\\
				&=:I_{31}+I_{32},
			\end{align*}
			where $Z_0=(P_0(v\cdot \nabla_x L^{-1}\Lambda_{21}),0,0)$. Thus, it follows from
			Lemma \ref{fl2} and \eqref{h3} that
			\bma
			\|I_{31}\|_{L^{\infty}}&\leq C\intt \bigg(\eps(1+t-s)^{-\frac{5}{4}}+\frac{1}{\eps}e^{-\frac {d(t-s)}{\eps^2}}+\left(1+\frac{t-s}{\eps}\right)^{-\frac{3-\sigma}2}\bigg) \|\Lambda_{2}(s)\|_{H^4\cap L^1} ds \nnm\\
			&\leq  C\delta_0^2\intt \bigg(\eps(1+t-s)^{-\frac{5}{4}}+\frac{1}{\eps}e^{-\frac {d(t-s)}{\eps^2}}+\left(1+\frac{t-s}{\eps}\right)^{-\frac{3-\sigma}2}\bigg) (1+s)^{-\frac34}ds \nnm\\
			&\leq C\delta_0^2\eps(1+t)^{-\frac34}.\label{i31es}
			\ema
			For $I_{32}$, we decompose
			\begin{align*}
				P_0(v\cdot \nabla_x L^{-1}\Lambda_{21})&=	P_0(v\cdot \nabla_x L^{-1}\Gamma_*(P_0f_\eps,P_0f_\eps))+2P_0(v\cdot \nabla_x L^{-1}\Gamma_*(P_0f_\eps,P_1f_\eps))\\
				&\quad+P_0(v\cdot \nabla_x L^{-1}\Gamma_*(P_1f_\eps,P_1f_\eps))\\
				&=: J_1+J_2+J_3,
			\end{align*}
			where $2\Gamma_*(f,g)=\Gamma(f,g)+\Gamma(g,f)$.
			By Lemma \ref{gamma1}, we obtain (cf. \cite{Li1})
			\be
			J_1=-\sum^3_{i, j=1}\pt_i(m^i_{\eps} m^j_{\eps})v_j\chi_0+\frac13\sum^3_{i,j=1}\pt_j( m^i_{\eps})^2 v_j\chi_0 -\frac53\sum^3_{j=1}\pt_j(m^j_{\eps}q_{\eps}) \chi_4. \label{J1}
			\ee
			
			For any $ g_0\in L^2(\R^3_x),$ set $G_0(x,v)=(v\chi_0\cdot\Tdx g_0,0,0)$. Since
			$(\hat{G}_0,\Lambda_j(\xi))_{\xi}=0,$  $j=0,2,3,$
			we have by \eqref{v1} that
			$$
			\tilde{Y}_1(t,\xi)\hat{G}_0 =\sum_{j=0,2,3} e^{-b_jt}\big(\hat{G}_0,\Lambda_j(\xi)\big)_{\xi}\Lambda_j(\xi) =0.
			$$
			This  and \eqref{J1} give
			$$
			Y_1(t)(J_1,0,0) =Y_1(t)(\divx J_4,0,0),
			$$
			where
			$$J_4=\((m_{\eps}\otimes m_{\eps})\cdot v\chi_0+\frac53 (q_{\eps}m_{\eps})\chi_4,0,0\).$$
			Then we have
			\begin{align*}
				\|I_{32}\|_{L^\infty}&\leq  \intt \|Y_1(t-s)(\divx(J_4-H_{21})(s),0,0)\|_{L^\infty}ds \nnm\\
				&\quad + \intt \|Y_1(t-s)(J_2(s)+ J_3(s),0,0)\|_{L^\infty}ds.
			\end{align*}
			Then by Lemma \ref{timev} and Sobolev inequality, we obtain
			\begin{align*}
				&	\|Y_1(t-s)(\divx(J_4-H_{21})(s),0,0)\|_{L^\infty}\leq C(t-s)^{-\frac34}\|(J_4-H_{21})(s)\|_{ L^3\cap L^6},\\
				& \|Y_1(t-s)( J_k(s),0,0)\|_{L^\infty}\leq C (1+t-s)^{-\frac58} \| J_k(s)\|_{H^2\cap L^1}, \quad k=2,3.
			\end{align*}
			Combining this with Lemma \ref{time7},  we obtain
			\bma
			\|I_{32}\|_{L^\infty}
			&\leq  C\intt(t-s)^{ -\frac34}\|J_4(s)- H_{21}(s)\|_{ L^3\cap L^6} ds \nnm\\
			& \quad+ C\intt{ (1+t-s)^{-\frac58} }(\| J_2(s)\|_{H^2\cap L^1}+\| J_3(s)\|_{H^2\cap L^1}) ds \nnm\\
			&\leq C\intt (t-s)^{-\frac34} \| U_\eps(s)-U_1(s)\|_{L^\infty}  (\| U_\eps(s)\|_{H^2}+\|U_1(s)\|_{H^{2}}) ds \nnm\\
			&\quad\quad+C\intt { (1+t-s)^{-\frac58} }\|f_\eps(s)\|_{H^3}\|P_1f_\eps(s)\|_{H^3}ds \nnm\\
			&\leq C\delta_0 \Pi_{\eps}(t)\intt(t-s)^{ -\frac34}\bigg(\eps|\ln\eps|^4(1+s)^{-\frac{5-\sigma}8}+ \bigg(1+\frac{s}{\eps}\bigg)^{-1}\bigg)(1+s)^{-\frac38} ds \nnm\\
			&\quad\quad+ C\delta_0^2\intt { (1+t-s)^{-\frac58} }\(\eps(1+s)^{-\frac58}+e^{-\frac{ds}{\eps^2}}\)(1+s)^{-\frac38} ds \nnm\\
			&\leq C(\delta_0^2
			+\delta_0\Pi_{\eps}(t)) \bigg(\eps|\ln\eps|^4(1+t)^{-\frac{5-\sigma}8}+\(1+\frac{t}{\eps}\)^{-1}\bigg).\label{i32es}
			\ema
			Therefore, it follows from \eqref{i1es}, \eqref{i21es}, \eqref{i22es'}, \eqref{i31es} and \eqref{i32es} that
			\begin{align*}
				\Pi_{\eps}(t)\leq C(\delta_0+\delta_0^2+\delta_0\Pi_{\eps}(t)).
			\end{align*}
			By taking $\delta_0$ small enough, we obtain \eqref{limit6}, which implies \eqref{limit0}.
			
			Next, we prove \eqref{limit_1a}. Set
			\begin{align*}
				\Omega_\eps(t)=\sup_{0\leq s\leq t}\eps^{-1}(1+s)^\frac{5-\sigma}8 \|U_\eps(s)-U_1(s)\|_{L^\infty}.
			\end{align*}
			If $U_0$ satisfies \eqref{initial}, then by Lemma \ref{fl1}, we have
			\be \label{i1new}
			\|I_1\|_{L^{\infty}}\le C\eps(1+t)^{-1}(\|U_0\|_{H^3}+\|U_0\|_{L^1}) \le C\eps\delta_0(1+t)^{-1}.
			\ee
			By \eqref{i21es} and \eqref{i31es}, we have
			\be \label{iii}
			\|I_{21}\|_{L^{\infty}}+\|I_{31}\|_{L^{\infty}}\le C \delta_0^2\eps(1+t)^{-\frac58}.
			\ee
			For $I_{22}$, similar to \eqref{i22es}, we obtain
			\begin{equation}\label{i22es1}
				\begin{aligned}
					\|	I_{22}\|_{L^\infty}
					&\leq C \intt \( (t-s)^{-\frac34}+(t-s)^{-\frac58}\)\|U_\eps-U_1\|_{L^\infty}\(\|U_1\|_{H^2}+\|U_\eps\|_{H^2}\)ds\\
					&\leq C\delta_0\Omega_{\eps}(t)\eps\intt \((t-s)^{-\frac34}+(t-s)^{-\frac58}\)(1+s)^{-\frac{8-\sigma}8}ds\\
					&\leq C\delta_0\Omega_{\eps}(t) \eps(1+t)^{-\frac{5-\sigma}8}.
				\end{aligned}
			\end{equation}
			Moreover, from Lemma \ref{time7}, we get
			\bma
			\|I_{32}\|_{L^\infty}&\leq C\intt (t-s)^{-\frac34} \| U_\eps(s)-U_1(s)\|_{L^\infty}  (\| U_\eps(s)\|_{H^3}+\|U_1(s)\|_{H^{3}}) ds\nnm\\
			&\quad+C\intt { (1+t-s)^{-\frac58} } \|f_\eps(s)\|_{H^3}\|P_1f_\eps(s)\|_{H^3}ds \nnm\\
			&\leq C\eps\delta_0 \Omega_\eps(t)\intt  (t-s)^{ -\frac34} (1+s)^{-\frac{8-\sigma}{8}}ds \nnm\\
			&\quad+ C\delta_0^2\intt { (1+t-s)^{-\frac58} } \(\eps(1+s)^{-\frac58}+e^{-\frac{ds}{\eps^2}}\)(1+s)^{-\frac{3}{8}}ds \nnm\\
			&\leq C\eps(\delta_0^2
			+\delta_0\Omega_\eps(t))(1+t)^{ -\frac{5-\sigma}8}. \label{i32es1}
			\ema
			Combining \eqref{i1new}, \eqref{iii},  and  \eqref{i32es1}, we obtain
			\begin{align*}
				\Omega_\eps(t)\leq C(\delta_0+\delta_0^2+\delta_0\Omega_\eps(t)).
			\end{align*}
			This implies \eqref{limit_1a} by taking $\delta_0$ small enough. Then we complete the proof.
		\end{proof}

		\section{Appendix}
		\begin{lem}[\cite{Nel}]
			\label{lemdisp}
			Let $a\geq\frac{5}{4}$, $p\in[2,\infty]$, then for any $t>0$, there holds
			$$
			\left\|\int_{\mathbb{R}^3}e^{ix\cdot \xi}e^{-i\sqrt{1+|\xi|^2}t}(1+|\xi|^2)^{-a}d\xi\right\|_{L^p_x}\leq C t^{\frac{3}{p}-\frac{3}{2}}.
			$$
		\end{lem}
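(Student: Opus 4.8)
The plan is to establish the two endpoint cases $p=2$ and $p=\infty$ and then interpolate. Write $g_t(x)=\int_{\mathbb{R}^3}e^{ix\cdot\xi}e^{-i\sqrt{1+|\xi|^2}t}(1+|\xi|^2)^{-a}\,d\xi$. Since $2a\ge\frac52>\frac32$, the multiplier $(1+|\xi|^2)^{-a}$ lies in $L^2_\xi$, so Plancherel gives $\|g_t\|_{L^2_x}=C\|(1+|\xi|^2)^{-a}\|_{L^2_\xi}$ uniformly in $t$, which is precisely the claimed bound at $p=2$ (where $\frac3p-\frac32=0$). Granting the endpoint dispersive estimate $\|g_t\|_{L^\infty_x}\le Ct^{-3/2}$, the log-convexity of $L^p$ norms, i.e. H\"older's inequality applied to $|g_t|^p=|g_t|^{2}|g_t|^{p-2}$, yields $\|g_t\|_{L^p_x}\le\|g_t\|_{L^\infty_x}^{1-2/p}\|g_t\|_{L^2_x}^{2/p}\le Ct^{-\frac32(1-2/p)}=Ct^{\frac3p-\frac32}$ for every $p\in[2,\infty]$. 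So the whole proof reduces to the $L^\infty$ dispersive bound for $g_t$.

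For that bound I would use a Littlewood--Paley decomposition $g_t=\sum_{j\ge0}g_t^{(j)}$, where $g_t^{(0)}$ carries a smooth cutoff to $\{|\xi|\le2\}$ and $g_t^{(j)}$ ($j\ge1$) carries a cutoff to $\{|\xi|\sim2^j\}$. On $g_t^{(0)}$ the amplitude $(1+|\xi|^2)^{-a}$ is smooth and compactly supported, and the phase $\Phi(\xi)=x\cdot\xi-t\sqrt{1+|\xi|^2}$ has Hessian $-t\,\mathrm{Hess}\sqrt{1+|\xi|^2}$, which is uniformly non-degenerate on this compact set; the standard stationary-phase lemma (non-stationary phase when $|x|/t$ lies outside the range of $\nabla\sqrt{1+|\xi|^2}$, a single non-degenerate critical point otherwise) gives $\|g_t^{(0)}\|_{L^\infty_x}\le Ct^{-3/2}$ for $t\ge1$, while for $t\le1$ the integral is trivially $O(1)\le Ct^{-3/2}$.

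For a high-frequency block, rescale $\xi=2^j\zeta$, so that $g_t^{(j)}(x)=2^{3j}\int e^{i2^jx\cdot\zeta-it\sqrt{1+4^j|\zeta|^2}}\,(1+4^j|\zeta|^2)^{-a}\,\psi(\zeta)\,d\zeta$ with $\psi$ a fixed bump supported on $|\zeta|\sim1$; there the amplitude and its $\zeta$-derivatives are of size $2^{-2aj}$, and after expanding $\sqrt{1+4^j|\zeta|^2}=2^j|\zeta|+\tfrac12 2^{-j}|\zeta|^{-1}+O(2^{-3j})$ the phase is a perturbation of the wave phase $2^j(x\cdot\zeta-t|\zeta|)$ whose degenerate radial direction is lifted by the term $-\tfrac t2 2^{-j}|\zeta|^{-1}$. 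The relevant Hessian is thus anisotropic, of size $2^jt$ in the two angular directions and $2^{-j}t$ in the radial direction, so $|\det\mathrm{Hess}|\sim 2^jt^3$. Applying stationary phase (equivalently, one-dimensional van der Corput estimates separately in the radial and the angular variables) gives $\|g_t^{(j)}\|_{L^\infty_x}\le C\,2^{3j}2^{-2aj}(2^jt^3)^{-1/2}=C\,2^{-(2a-5/2)j}t^{-3/2}$ in the regime $2^jt\gtrsim1$, together with the trivial volume bound $2^{(3-2a)j}$ otherwise. Summing over $j\ge1$: since $a\ge\frac54$ the exponent $2a-\frac52$ is nonnegative, so the series converges geometrically when $a>\frac54$ and gives $\sum_{j\ge1}\|g_t^{(j)}\|_{L^\infty_x}\le Ct^{-3/2}$; the borderline value $a=\frac54$ is the usual critical case for Klein--Gordon dispersion and is handled by the classical refinement, keeping the extra decay in $||x|-t|$ coming from the stationary-phase remainder which restores summability. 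Combining with the low-frequency estimate yields $\|g_t\|_{L^\infty_x}\le Ct^{-3/2}$, and the interpolation above closes the argument. The main obstacle is exactly this high-frequency anisotropic stationary-phase analysis near the light cone $|x|=t$: the Klein--Gordon phase is much flatter in the radial direction than in the angular ones, so the isotropic non-degenerate estimate cannot be invoked directly, and the threshold $a=\frac{n+2}{4}=\frac54$ emerges precisely from balancing the $2^{3j}$ from rescaling against the $2^{-2aj}$ from the amplitude and the $2^{-j/2}$ deficit in the stationary-phase gain.
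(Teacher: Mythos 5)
The paper does not prove this lemma at all: it is quoted verbatim from Nelson's 1971 paper [Nel] on the Klein--Gordon propagator, so there is no in-paper argument to compare against. Your proposal supplies the standard self-contained proof: Plancherel at $p=2$ (valid since $4a\ge 5>3$ makes $(1+|\xi|^2)^{-a}$ square-integrable), log-convexity of $L^p$ norms to reduce everything to the $L^\infty$ dispersive bound, and then a Littlewood--Paley decomposition with stationary phase on each block. Your bookkeeping is right: the Hessian of $\sqrt{1+|\xi|^2}$ has one radial eigenvalue $\sim(1+|\xi|^2)^{-3/2}$ and two angular eigenvalues $\sim(1+|\xi|^2)^{-1/2}$, which after rescaling gives $|\det\mathrm{Hess}|\sim 2^{j}t^{3}$ on the $j$-th annulus and the per-block bound $2^{-(2a-5/2)j}t^{-3/2}$, so the threshold $a\ge\frac{n+2}{4}=\frac54$ comes out exactly where it should.

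The one place you should be more careful is the endpoint $a=\frac54$, because that is precisely the value at which the lemma is invoked in the paper (the weight used in Lemmas \ref{fl1} and \ref{fl2} is $(1+|\xi|^2)^{-5/4}$), and at that exponent your dyadic sum $\sum_j 2^{-(2a-5/2)j}t^{-3/2}$ is identically $\sum_j t^{-3/2}$ and diverges. The fix you gesture at is the correct one — for fixed $(x,t)$ the stationary sphere $|\xi|=\frac{|x|/t}{\sqrt{1-|x|^2/t^2}}$ meets only $O(1)$ dyadic annuli, and the remaining blocks are controlled by non-stationary phase, with the lower bound on $|\nabla_\xi\Phi|$ quantified in terms of $|2^{-2j}-2^{-2j_0}|$ — but as written this is an appeal to "the classical refinement" rather than an argument. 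In a complete writeup this non-stationary-phase summation (or, alternatively, a direct citation of the endpoint estimate in [Nel] or Marshall--Strauss--Wainger) is the step that actually carries the weight of the lemma in the only case the paper uses.
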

		\begin{lem}[Hausdorff--Young inequality \cite{Grafakos}]\label{lemHY}
			For $p\in[1,2]$, and any function $f\in L^p(\mathbb{R}^d)$, it holds
			$$
			\|\mathcal{F}f\|_{L^\frac{p}{p-1}}\leq C\|f\|_{L^p}.
			$$
		\end{lem}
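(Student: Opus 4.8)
The plan is to obtain the inequality by the Riesz--Thorin complex interpolation theorem, interpolating between the two elementary endpoint estimates at $p=1$ and $p=2$. Since the statement is classical (see \cite{Grafakos}), the argument is just recalled; no new computation is required.

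First I would record the two endpoints. At $p=1$, directly from the normalization $\mathcal{F}f(\xi)=(2\pi)^{-d/2}\int_{\mathbb{R}^d}f(x)e^{-ix\cdot\xi}\,dx$ used in the paper, one has $|\mathcal{F}f(\xi)|\le (2\pi)^{-d/2}\|f\|_{L^1}$ for every $\xi$, hence
\[
\|\mathcal{F}f\|_{L^\infty}\le (2\pi)^{-d/2}\|f\|_{L^1},
\]
so $\mathcal{F}\colon L^1\to L^\infty$ is bounded. At $p=2$, the Plancherel theorem gives $\|\mathcal{F}f\|_{L^2}=\|f\|_{L^2}$ (an isometry with this normalization), so $\mathcal{F}\colon L^2\to L^2$ is bounded with norm one. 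These two operators coincide on $L^1\cap L^2$, which is dense in $L^p$ for $p\in[1,2]$, so $\mathcal{F}$ is unambiguously defined on the interpolation couple $(L^1,L^2)$.

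Finally, for $1<p<2$ choose $\theta=2-\tfrac2p\in(0,1)$, so that $\tfrac1p=\tfrac{1-\theta}{1}+\tfrac{\theta}{2}$ and correspondingly $\tfrac{p-1}{p}=\tfrac{1-\theta}{\infty}+\tfrac{\theta}{2}$; thus the pair $(p,\tfrac{p}{p-1})$ lies on the line joining $(1,\infty)$ and $(2,2)$. Riesz--Thorin then yields
\[
\|\mathcal{F}f\|_{L^{p/(p-1)}}\le \big((2\pi)^{-d/2}\big)^{1-\theta}\,1^{\theta}\,\|f\|_{L^p}\le C(d)\,\|f\|_{L^p},
\]
uniformly in $p\in[1,2]$ (the endpoints being the estimates already established), which is the claim. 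The only point that needs (routine) attention is the consistency of $\mathcal{F}$ on $L^1+L^2$ noted above, so that the interpolation theorem applies to a single extension; beyond that there is no real obstacle, and one may simply invoke the classical reference.
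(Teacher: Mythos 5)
Your proof is correct and is the standard argument (Riesz--Thorin interpolation between the trivial $L^1\to L^\infty$ bound and Plancherel), which is exactly the proof in the cited reference; the paper itself gives no proof and simply invokes \cite{Grafakos}. The endpoint identification $\theta=2-\tfrac2p$ and the consistency remark on $L^1+L^2$ are both handled properly, so nothing is missing.
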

		\begin{thm}[H\"{o}rmander--Mihlin multiplier Theorem \cite{Grafakos}]\label{thmhm} Let $m(\xi)$ be a complex-valued bounded function in $\mathbb{R}^d\backslash\{0\}$ satisfying
			$$
			|\nabla^\alpha_\xi m(\xi)|\leq C|\xi|^{-\alpha},
			$$
			for all multi-indices $|\alpha|\leq \frac{d}{2}+1$. Then for any $1<p<\infty$, the following estimate hold:
			$$
			\|\mathcal{F}^{-1}(m(\xi)\hat f(\xi))\|_{L^p}\leq C_p \|f\|_{L^p},\quad  \forall f\in\mathcal{S}(\mathbb{R}^d).
			$$
		\end{thm}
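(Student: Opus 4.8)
The final statement is the classical H\"ormander--Mihlin multiplier theorem, and the plan is to run the standard Calder\'on--Zygmund argument, using that the $L^2$-type smoothness hypothesis $|\nabla^\alpha_\xi m(\xi)|\le C|\xi|^{-|\alpha|}$ for $|\alpha|\le \lfloor d/2\rfloor+1$ is exactly what is needed to verify the H\"ormander regularity condition for the associated convolution kernel. First I would record the trivial $L^2$ bound: since $m\in L^\infty(\R^d)$, Plancherel's theorem gives $\|\mathcal{F}^{-1}(m\hat f)\|_{L^2}=\|m\hat f\|_{L^2}\le \|m\|_{L^\infty}\|f\|_{L^2}$. It then suffices, by the Calder\'on--Zygmund theorem combined with the Marcinkiewicz interpolation theorem (which yields the range $1<p\le 2$) and a duality argument applied to the adjoint multiplier $\overline{m(-\xi)}$ (which yields $2\le p<\infty$), to prove that the kernel $K=\mathcal{F}^{-1}m$, a tempered distribution agreeing with a locally integrable function on $\R^d\setminus\{0\}$, satisfies the H\"ormander condition
$$
\sup_{y\ne 0}\int_{|x|>2|y|}\big|K(x-y)-K(x)\big|\,dx\le C.
$$

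To analyze $K$ I would fix a Littlewood--Paley partition of unity $1=\sum_{j\in\mathbb{Z}}\psi(2^{-j}\xi)$ on $\R^d\setminus\{0\}$ with $\psi$ smooth and supported in $\{1/2\le|\xi|\le 2\}$, set $m_j(\xi)=m(\xi)\psi(2^{-j}\xi)$ and $K_j=\mathcal{F}^{-1}m_j$, so that $K=\sum_j K_j$ as tempered distributions. Each $m_j$ is smooth, supported in $\{2^{j-1}\le|\xi|\le 2^{j+1}\}$, and the Leibniz rule together with the hypothesis give $\|\nabla^\alpha_\xi m_j\|_{L^\infty}\lesssim 2^{-j|\alpha|}$ for $|\alpha|\le \lfloor d/2\rfloor+1$, whence, using the measure of the support, $\|\nabla^\alpha_\xi m_j\|_{L^2}\lesssim 2^{j(d/2-|\alpha|)}$. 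Since multiplication of $K_j$ by a monomial $x^\beta$ corresponds on the Fourier side to a constant multiple of $\partial^\beta_\xi m_j$, and $\nabla_x K_j=\mathcal{F}^{-1}(i\xi m_j)$, this turns into the weighted kernel estimates
$$
\int_{\R^d}|K_j(x)|^2\big(1+2^j|x|\big)^{2s}\,dx\lesssim 2^{jd},\qquad \int_{\R^d}|\nabla_x K_j(x)|^2\big(1+2^j|x|\big)^{2s}\,dx\lesssim 2^{j(d+2)},
$$
for the integer $s=\lfloor d/2\rfloor+1$, which always satisfies $s>d/2$. By the Cauchy--Schwarz inequality these yield, for $R>0$ with $2^jR\ge 1$,
$$
\int_{|x|>R}|K_j(x)|\,dx\lesssim \big(2^jR\big)^{d/2-s},\qquad \int_{\R^d}|\nabla_x K_j(x)|\,dx\lesssim 2^{j}.
$$

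Finally I would split the H\"ormander integral dyadically, $\int_{|x|>2|y|}|K(x-y)-K(x)|\,dx\le\sum_{j}\int_{|x|>2|y|}|K_j(x-y)-K_j(x)|\,dx$, and estimate the $j$-th term in one of two ways according to the size of $2^j|y|$. When $2^j|y|\le 1$, the mean value theorem and the $L^1$ bound on $\nabla_x K_j$ give $\int_{\R^d}|K_j(x-y)-K_j(x)|\,dx\le |y|\,\|\nabla_x K_j\|_{L^1}\lesssim 2^j|y|$; when $2^j|y|>1$, the triangle inequality and the tail bound (noting that $|x|>2|y|$ forces $|x-y|>|y|$) give $\int_{|x|>2|y|}|K_j(x-y)-K_j(x)|\,dx\lesssim \int_{|x|>|y|}|K_j(x)|\,dx\lesssim (2^j|y|)^{d/2-s}$. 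Summing, $\sum_{2^j|y|\le 1}2^j|y|+\sum_{2^j|y|>1}(2^j|y|)^{d/2-s}\lesssim 1$ uniformly in $y$, both geometric series converging precisely because $s>d/2$, i.e.\ because we are allowed $\lfloor d/2\rfloor+1$ derivatives; this establishes the H\"ormander condition, and the Calder\'on--Zygmund theorem then delivers the weak $(1,1)$ bound while interpolation and duality give the full range $1<p<\infty$. The step I expect to be the main obstacle is the middle one --- deriving the weighted $L^2$ kernel bounds for the Littlewood--Paley pieces from the frequency-derivative hypothesis and then summing the dyadic contributions in the two regimes; the reductions at the start and the interpolation at the end are routine.
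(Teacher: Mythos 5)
Your proposal is correct: the paper does not prove this statement at all (it is quoted verbatim from Grafakos's textbook in the Appendix), and your argument is precisely the standard Calder\'on--Zygmund proof found there --- Plancherel for $L^2$, Littlewood--Paley pieces $m_j$, weighted $L^2$ kernel bounds with $s=\lfloor d/2\rfloor+1>d/2$, the H\"ormander condition via the two regimes $2^j|y|\le 1$ and $2^j|y|>1$, then weak $(1,1)$, interpolation and duality. The only nitpick is that the adjoint multiplier is $\overline{m(\xi)}$ rather than $\overline{m(-\xi)}$ (depending on whether you take the adjoint or the transpose), but either symbol satisfies the same hypotheses, so the duality step is unaffected.
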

		For any $s\in\mathbb{R}$, $1<p<\infty$, the general  Sobolev space  $W^{s,p}$ is defined as the space of all tempered distributions $u\in\mathcal{S}(\mathbb{R}^d)$ with the property that
		$$
		\|u\|_{W^{s,p}}=:\|\mathcal{F}^{-1}\((1+|\xi|^2)^\frac{s}{2}u\)\|_{L^p}<\infty.
		$$
		We have the following interpolation lemma in the general Sobolev space.
		\begin{lem}[{Interpolation  inequality  \cite[section 2.4.2]{Triebel}}]\label{lemitp}
			For any $s_0,s_1\in\mathbb{R}$, $1<p_0,p_1<+\infty$, and $0<\theta<1$, it holds
			$$
			\|f\|_{W^{s,p}}\leq C\|f\|_{W^{s_0,p_0}}^{1-\theta}\|f\|_{W^{ s_1,p_1}}^\theta,
			$$
			where $s$ and $p$ are determined by
			$$
			s=(1-\theta)s_0+\theta s_1,\quad  \frac{1}{p}=\frac{1-\theta}{p_0}+\frac{\theta}{p_1}.
			$$
		\end{lem}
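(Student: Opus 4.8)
The plan is to deduce the inequality from a Stein-type complex interpolation argument, after realizing $W^{s,p}$ as a Bessel potential space. I would write $J^{\sigma}$ for the Fourier multiplier with symbol $(1+|\xi|^{2})^{\sigma/2}$, so that $\|f\|_{W^{\sigma,q}}=\|J^{\sigma}f\|_{L^{q}}$ by definition, and it suffices to treat $f\in\mathcal{S}(\mathbb{R}^{d})$ with a constant independent of $f$. Setting $g=J^{s}f$ one has $\|f\|_{W^{s,p}}=\|g\|_{L^{p}}$ and $A_{j}:=\|f\|_{W^{s_{j},p_{j}}}=\|J^{s_{j}-s}g\|_{L^{p_{j}}}$ for $j=0,1$ (the degenerate cases $A_0A_1=0$ following by approximation), and by duality $\|g\|_{L^{p}}=\sup\{|\langle g,h\rangle|: h\in\mathcal{S},\ \|h\|_{L^{p'}}\le1\}$.

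Fixing such an $h$, I would set $s(z)=(1-z)s_{0}+zs_{1}$, define $p(z)$ by $1/p(z)=(1-z)/p_{0}+z/p_{1}$, and introduce the analytic family
\[
\Phi(z)=\bigl\langle\, J^{s(z)-s}g,\ \psi_{z}\,\bigr\rangle,\qquad \psi_{z}=|h|^{\,p'/p(z)'}\operatorname{sgn}(h),\qquad z\in\overline S:=\{0\le\mathrm{Re}\,z\le1\}.
\]
The routine checks are: $\Phi$ is holomorphic in the interior of $S$ and bounded and continuous on $\overline S$ (because $\hat g\in\mathcal S$, the symbol $(1+|\xi|^2)^{(s(z)-s)/2}$ is entire in $z$ with $\mathrm{Re}(s(z)-s)$ ranging over a bounded interval, and $\int|\psi_z|\,dx$ is bounded on $\overline S$ since $h\in\mathcal S$); that $s(\theta)-s=0$ and $\psi_\theta=h$, whence $\Phi(\theta)=\langle g,h\rangle$; that on $\mathrm{Re}\,z=0$, writing $J^{s(iy)-s}=J^{s_0-s}\circ J^{\,iy(s_1-s_0)}$ and using the H\"{o}rmander--Mihlin Theorem \ref{thmhm} (the symbol $(1+|\xi|^2)^{i\tau/2}$ satisfying the Mihlin bounds with a constant growing at most polynomially in $|\tau|$), one gets $\|J^{s(iy)-s}g\|_{L^{p_0}}\le C(1+|y|)^{N}A_0$, while $\mathrm{Re}(1/p(iy)')=1/p_0'$ yields $\|\psi_{iy}\|_{L^{p_0'}}=\|h\|_{L^{p'}}^{p'/p_0'}\le1$, so $|\Phi(iy)|\le C(1+|y|)^{N}A_0$; and symmetrically $|\Phi(1+iy)|\le C(1+|y|)^{N}A_1$.

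Then I would invoke the three-lines lemma in the form permitting polynomially growing boundary bounds for a function bounded in the strip (equivalently, the Poisson-kernel representation for the strip, the factor $(1+|y|)^N$ contributing only a finite multiplicative constant after integration against the Poisson kernel) to conclude $|\langle g,h\rangle|=|\Phi(\theta)|\le CA_0^{1-\theta}A_1^{\theta}$, and taking the supremum over $h$ gives the claimed estimate. The main obstacle — and really the only nontrivial input — is the control of the imaginary-order Bessel multipliers $J^{i\tau}$ on $L^q$ with polynomial growth in $|\tau|$, which Theorem \ref{thmhm} supplies; absorbing that growth in the three-lines step is then standard since $\Phi$ is manifestly bounded on $\overline S$. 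As an alternative I would note that one may simply quote from Section 2.4.2 of \cite{Triebel} the identification $[W^{s_0,p_0},W^{s_1,p_1}]_{\theta}=W^{s,p}$ with equivalent norms, and combine it with the elementary bound $\|f\|_{[X_0,X_1]_{\theta}}\le\|f\|_{X_0}^{1-\theta}\|f\|_{X_1}^{\theta}$, valid for any interpolation couple and itself a direct consequence of the three-lines lemma applied to the holomorphic extension defining $[X_0,X_1]_{\theta}$.
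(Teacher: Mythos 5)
The paper offers no proof of this lemma at all: it is imported verbatim from Triebel \cite[Section 2.4.2]{Triebel}, which is exactly the alternative you mention in your last sentence (the identification $[W^{s_0,p_0},W^{s_1,p_1}]_{\theta}=W^{s,p}$ with equivalent norms, combined with the log-convexity $\|f\|_{[X_0,X_1]_\theta}\le\|f\|_{X_0}^{1-\theta}\|f\|_{X_1}^{\theta}$ of the complex interpolation norm). Your main argument is therefore a genuinely different, self-contained route: realize the norms through Bessel potentials $J^{\sigma}$, dualize, build the Stein analytic family $\Phi(z)=\langle J^{s(z)-s}g,\psi_z\rangle$, control the boundary lines by the imaginary-order multipliers $J^{i\tau}$ (whose Mihlin constants grow polynomially in $|\tau|$), and conclude with a three-lines/Poisson-kernel argument that tolerates the polynomial boundary growth because $\Phi$ is bounded on the closed strip. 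This is correct; the computations of $\mathrm{Re}\bigl(1/p(iy)'\bigr)=1/p_0'$, of $\psi_\theta=h$, and of the uniform bound for $\Phi$ on $\overline S$ all check out. Two points deserve to be made explicit if you write it up: first, you need the quantitative form of Theorem \ref{thmhm}, i.e. that the $L^{q}$ operator norm is controlled by the constants in the Mihlin condition (the statement in the appendix does not display this dependence, though the standard proof gives it), which is what converts the bound $|\nabla^\alpha_\xi(1+|\xi|^2)^{i\tau/2}|\le C_\alpha(1+|\tau|)^{|\alpha|}|\xi|^{-|\alpha|}$ into $\|J^{i\tau}\|_{L^{q}\to L^{q}}\le C(1+|\tau|)^{N}$; second, you should use the commutation $J^{s(iy)-s}g=J^{iy(s_1-s_0)}\bigl(J^{s_0-s}g\bigr)$ so that the imaginary-order multiplier acts on the function whose $L^{p_0}$ norm is $A_0$. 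As for what each approach buys: the paper's citation is the shortest path and is all that is needed for the application in Lemma \ref{fl1}; your Stein-interpolation proof makes the lemma self-contained within the toolkit the paper already quotes (Mihlin multipliers), and it exhibits that the constant depends only on $d$, $s_0$, $s_1$, $p_0$, $p_1$ and $\theta$.
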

		\begin{lem}\label{kern}
			For  $a(\xi):\mathbb{R}^d\to \mathbb{R}$, if
			\begin{align*}
				&a(\xi)\leq -c_0|\xi|^2,\\
				&|\nabla^k a(\xi)|\leq C_k |\xi|^{2-k}, \quad 0\leq k\leq d+m+3,
			\end{align*}
			for some constants $c_0,C_k>0$, and $m\in\mathbb{N}$.
			Then for
			$$
			G(t,x)=\int_{\mathbb{R}^d}\xi^m\exp(a(\xi)t+i\xi\cdot x)d\xi,
			$$
			it holds
			$$
			\|G(t,\cdot)\|_{L^p}\leq Ct^{-\frac{d}{2}(1-\frac{1}{p})-\frac{m}{2}}.
			$$
		\end{lem}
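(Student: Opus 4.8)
The plan is to reduce the estimate to a single time by parabolic rescaling, and then to extract a pointwise decay bound for the rescaled kernel via integration by parts. In the integral defining $G$ substitute $\xi=\eta/\sqrt t$; this gives
$$G(t,x)=t^{-\frac d2-\frac m2}H\Big(\tfrac{x}{\sqrt t}\Big),\qquad H(y)=\int_{\mathbb R^d}\eta^m e^{b(\eta)+i\eta\cdot y}\,d\eta,\quad b(\eta)=t\,a\big(\tfrac{\eta}{\sqrt t}\big).$$
By the chain rule $b$ satisfies the same hypotheses as $a$ with the same constants: $b(\eta)\le t\big(-c_0|\eta/\sqrt t|^2\big)=-c_0|\eta|^2$, and $|\nabla^k b(\eta)|=t^{1-k/2}\,|(\nabla^k a)(\eta/\sqrt t)|\le C_k\,t^{1-k/2}|\eta/\sqrt t|^{2-k}=C_k|\eta|^{2-k}$ for $0\le k\le d+m+3$, the powers of $t$ cancelling. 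A change of variables gives $\|G(t,\cdot)\|_{L^p}=t^{-\frac d2-\frac m2+\frac{d}{2p}}\|H\|_{L^p}=t^{-\frac d2(1-\frac1p)-\frac m2}\|H\|_{L^p}$, so it suffices to prove $\|H\|_{L^p(\mathbb R^d)}\le C$ for all $p\in[1,\infty]$, uniformly over all $b$ obeying the above bounds.

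To bound $H$ I will establish the pointwise estimate $|H(y)|\le C(1+|y|)^{-(d+1)}$, which lies in $L^p(\mathbb R^d)$ for every $p\in[1,\infty]$ and hence finishes the proof. For $|y|\le1$ the trivial bound $|H(y)|\le\int_{\mathbb R^d}|\eta|^m e^{-c_0|\eta|^2}\,d\eta$ suffices. For $|y|\ge1$, choose a coordinate $j$ with $|y_j|\ge|y|/\sqrt d$ and integrate by parts $d+1$ times in $\eta_j$:
$$(iy_j)^{d+1}H(y)=(-1)^{d+1}\int_{\mathbb R^d}\partial_{\eta_j}^{\,d+1}\big(\eta^m e^{b(\eta)}\big)\,e^{i\eta\cdot y}\,d\eta,$$
whence $|H(y)|\le d^{(d+1)/2}\,|y|^{-(d+1)}\big\|\partial_{\eta_j}^{\,d+1}(\eta^m e^{b})\big\|_{L^1_\eta}$. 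It therefore remains to show $\partial_{\eta_j}^{\,d+1}(\eta^m e^{b})\in L^1(\mathbb R^d)$ with a bound depending only on $d,m,c_0$ and the $C_k$.

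This last point is the heart of the matter. Expanding $\partial_{\eta_j}^{\,d+1}(\eta^m e^{b})$ by the Leibniz rule together with the Faà di Bruno formula for $\partial^\alpha e^{b}$, every resulting term has the form $c\,(\partial_{\eta_j}^{\gamma}\eta^m)\big(\prod_{i=1}^{r}\partial^{k_i}b(\eta)\big)e^{b(\eta)}$ with $r\ge0$, $k_i\ge1$, and $\gamma+\sum_i k_i=d+1$. On $\{|\eta|\ge1\}$ all derivatives of $b$ of order $\ge2$ are bounded while $|\partial b|\le C|\eta|$, so each term is dominated by a fixed polynomial times $e^{-c_0|\eta|^2}$ and is integrable. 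On $\{|\eta|\le1\}$ one uses $|\partial^{k_i}b(\eta)|\le C_{k_i}|\eta|^{2-k_i}$, $e^{b}\le1$, and $|\partial_{\eta_j}^{\gamma}\eta^m|\le C$; when $r\ge1$ the product of the $b$-factors is $\lesssim|\eta|^{\,2r-\sum_i k_i}$ with exponent $2r-\sum_i k_i\ge2-(d+1)=1-d>-d$, hence locally integrable on $\mathbb R^d$, while $r=0$ is trivially bounded. Summing the finitely many terms yields the desired $L^1_\eta$ bound; this is exactly why the hypothesis asks for control of $\nabla^k a$ up to order $\sim d+m+3$, which comfortably covers the $d+1$ derivatives used here. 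Combining with the previous paragraph gives $|H(y)|\le C(1+|y|)^{-(d+1)}$, so $\|H\|_{L^p}\le C$ for all $p\in[1,\infty]$, and the scaling identity concludes the proof.

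The only genuinely delicate feature is that the hypotheses bound the high-order derivatives of $a$ merely by the possibly singular weights $|\eta|^{2-k}$; one must check — via the power count $2r-\sum_i k_i\ge1-d>-d$ — that the singularities produced near $\eta=0$ are always integrable in dimension $d$. Differentiating in a single coordinate an odd number ($d+1$) of times is chosen precisely so that even a lone factor $\partial^{d+1}b\sim|\eta|^{1-d}$ stays on the integrable side, avoiding the borderline case $|\eta|^{-d}$ that would arise from $d+2$ derivatives when $d$ is even. Everything else — the Leibniz/Faà di Bruno bookkeeping, the estimates at infinity, and the scaling computation — is routine.
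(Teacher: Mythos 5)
Your proof is correct, and it takes a genuinely different route from the paper's. The paper works at each fixed $t$: it inserts a cutoff $\chi(\xi/\delta)$ at the scale $\delta=|x|^{-1}$, bounds the low-frequency piece by brute force, and integrates by parts $d+2$ times (for $m=0$) or $d+m+1$ times (for $m>0$) only on the complementary region where the singular high-order derivatives of $a$ are harmless, arriving at the pointwise profile $|G(t,x)|\leq C(|x|+t^{1/2})^{-(d+m)}$ (with the sharper $Ct(|x|+t^{1/2})^{-(d+2)}$ when $m=0$) before integrating in $x$. You instead normalize $t=1$ by the parabolic rescaling $\xi=\eta/\sqrt t$ — correctly observing that the hypotheses are scale-invariant so the constants are uniform in $t$ — and then obtain the fixed decay $(1+|y|)^{-(d+1)}$ by $d+1$ integrations by parts in a single coordinate, replacing the paper's cutoff by a power-counting check that every singular factor $|\eta|^{2r-\sum_i k_i}$ produced near the origin has exponent at least $1-d>-d$ and is therefore locally integrable. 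The trade-off is that the paper's method yields the sharper, $m$-improving pointwise bound, while yours yields a decay rate $(d+1)$ independent of $m$; both are more than enough for the $L^p$ conclusion since $(d+1)p>d$ for all $p\in[1,\infty]$, and your scaling step makes the bookkeeping of powers of $t$ essentially automatic. The only point worth making explicit is the justification of the repeated one-dimensional integration by parts through the singularity of $\nabla^k b$ at $\eta=0$: for almost every fixed transverse variable $\eta'$ the line $\{\eta_j\in\mathbb{R}\}$ misses the origin, so the boundary-free integration by parts is legitimate there, and Fubini (using your $L^1$ bound on $\partial_{\eta_j}^{d+1}(\eta^m e^{b})$) then gives the global identity; this is routine but should be said, since it is exactly the issue the paper's cutoff is designed to sidestep.
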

		\begin{proof}
			Since $a(\xi) \leq -c_0 |\xi|^2$, we have
			\be\label{e11}
			\| G(t,\cdot) \|_{L^\infty} \leq \int_{\mathbb{R}^d} \exp(-c_0 |\xi|^2t) |\xi|^md\xi\leq Ct^{-\frac{d+m}{2}}.
			\ee
			Note that $\exp(i\xi\cdot x)=-\frac{ix\cdot \nabla_\xi\exp(i\xi\cdot x)}{|x|^2}$. If $m=0$, using integration by parts, we obtain
			\begin{align*}
				\int_{\mathbb{R}^d}\exp(a(\xi)t + i\xi \cdot x) d\xi &=i|x|^{-2}\int_{\mathbb{R}^d}x\cdot\nabla_\xi\( \exp(a(\xi)t) \)\exp( i\xi \cdot x) d\xi\\
				&=i|x|^{-2}\int_{\mathbb{R}^d}x\cdot\nabla_\xi\( \exp(a(\xi)t) \)\exp( i\xi \cdot x) \chi(\xi/\delta)d\xi\\
				&\quad +i|x|^{-2}\int_{\mathbb{R}^d}x\cdot\nabla_\xi\( \exp(a(\xi)t) \)\exp( i\xi \cdot x)(1- \chi(\xi/\delta))d\xi\\
				&=:A_1+A_2,
			\end{align*}
			where $\chi$ is a cutoff function and  $\delta=|x|^{-1}$. Note that
			$$
			\left|A_1\right|\leq Ct|x|^{-1}\int_{\mathbb{R}^d}|\xi| \chi(\xi/\delta)d\xi\leq Ct|x|^{-1}\delta^{d+1}.
			$$	
			For $A_2$, using  integration by parts $d+2$ times, we get
			\begin{align*}
				|A_2|&\leq 	|x|^{-(d+3)}	\int_{\mathbb{R}^d}\left|\nabla_\xi^{d+2}\(\nabla_\xi\big(\exp(a(\xi)t )\big)(1-\chi(\xi/\delta))\) \right|d\xi \\
				&\leq Ct|x|^{-(d+2)}.
			\end{align*}
			Hence,
			$$
			\left| \int_{\mathbb{R}^d} \exp(a(\xi)t + i\xi \cdot x) d\xi \right| \leq |A_1|+|A_2|\leq Ct|x|^{-(d+2)}.
			$$
			Combining this with \eqref{e11}, we obtain
			$$
			\left| \int_{\mathbb{R}^d} \exp(a(\xi)t + i\xi \cdot x) d\xi \right| \leq \frac{Ct}{(|x| + t^{\frac{1}{2}})^{d+2}},
			$$
			which implies
			$$
			\|G(t,\cdot)\|_{L^p}\leq Ct^{-\frac{d}{2}(1-\frac{1}{p})}.
			$$
			Similarly, if $m>0$, we can write
			\begin{align*}
				G(t,x)&=\int_{\mathbb{R}^d}\xi^m\exp(a(\xi)t+i\xi\cdot x)\chi(\xi/\delta)d\xi+\int_{\mathbb{R}^d}\xi^m\exp(a(\xi)t+i\xi\cdot x)(1-\chi(\xi/\delta))d\xi\\
				&=G_1(t,x)+G_2(t,x),
			\end{align*}
			where $\delta=|x|^{-1}$. Then
			$$
			|G_1(t,x)|\leq \int_{\mathbb{R}^d}|\xi|^m\chi(\xi/\delta)d\xi\leq C|x|^{-(d+m)}.
			$$
			Using integration by parts, we obtain
			$$
			|G_2(t,x)\leq |x|^{-(d+m+1)}\int_{\mathbb{R}^d}\left|\nabla_\xi^{d+m+1}\(\xi^m\exp(a(\xi)t)(1-\chi(\xi/\delta))\)\right|d\xi\leq C|x|^{-(d+m)}.
			$$
			Hence we obtain
			$$
			|G(t,x)|\leq C\frac{1}{(|x|+t^\frac{1}{2})^{d+m}}.
			$$
			This implies
			\begin{align*}
				\|G(t,\cdot)\|_{L^p}\leq Ct^{-\frac{d}{2}(1-\frac{1}{p})-\frac{m}{2}}.
			\end{align*}
			This completes the proof of the lemma.
		\end{proof}
		\begin{lem}\label{kern1}
			For $p(\xi):\mathbb{R}^d\to \mathbb{R}$, if
			\begin{align*}
				&p(\xi)\leq -c_0(|\xi|^4\mathbf{1}_{|\xi|\leq 1}+|\xi|^2\mathbf{1}_{|\xi|\geq 1}),\\
				&|\nabla^k p(\xi)|\leq C_k (|\xi|^{4-k}\mathbf{1}_{|\xi|\leq 1}+|\xi|^{2-k}\mathbf{1}_{|\xi|\geq 1}), \quad 0\leq k\leq d+m+6,
			\end{align*}
			for some constants $c_0,C_k>0$, and $m\in\mathbb{N}$.
			Then for
			\bmas
			K_1(t,x)&=\int_{\mathbb{R}^d}\xi^m|\xi|^j\exp(p(\xi)t+i\xi\cdot x)\chi(\xi)d\xi, \quad j=0,1,
			\\
			K_2(t,x)&=\int_{\mathbb{R}^d}\xi^m
			\exp(p(\xi)t+i\xi\cdot x)(1-\chi(\xi))d\xi,
			\emas
			where $\chi$ is a cutoff function satisfying $\{|\xi|\leq 1\}\subset\operatorname{supp}\chi\subset\{|\xi|\leq 2\}$.
			It holds
			\begin{align*}
				&	\|K_1(t,\cdot)\|_{L^p}\leq C(1+t)^{-\frac{d}{4}{(1-\frac{1}{p})}-\frac{m+j}{4}},\\
				&	\|K_2(t,\cdot)\|_{L^p}\leq Ct^{-\frac{d}{2}({1-\frac{1}{p}})-\frac{m}{2}}e^{-\frac{c_0t}{2}},\quad  p\in[1,+\infty].
			\end{align*}
		\end{lem}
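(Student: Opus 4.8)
The plan is to establish pointwise kernel bounds of Gaussian self-similar type — with the quartic scaling $|x|\sim(1+t)^{1/4}$ for $K_1$ and the quadratic scaling $|x|\sim t^{1/2}$ for $K_2$ — and then deduce the $L^p$ estimates by the rescaling argument used at the end of the proof of Lemma~\ref{kern}.

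I would first treat $K_2$, which is Lemma~\ref{kern} with an extra exponential gain. Since $\operatorname{supp}(1-\chi)\subset\{|\xi|\ge 1\}$, on that set $p(\xi)\le -c_0|\xi|^2$ and $-c_0|\xi|^2\le -\frac{c_0}{2}-\frac{c_0}{2}|\xi|^2$, so $e^{p(\xi)t}\le e^{-c_0 t/2}e^{-\frac{c_0}{2}|\xi|^2 t}$; the polynomial-in-$t$ factors produced by $\nabla_\xi^k e^{pt}$ are absorbed by a fraction of the Gaussian, and the $\nabla_\xi(1-\chi)$ terms are supported in the compact annulus $1\le|\xi|\le 2$. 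Running the integration-by-parts scheme of Lemma~\ref{kern} (a preliminary integration by parts to gain a factor $t$ and a power $|x|^{-1}$, then the dyadic split at $\delta=|x|^{-1}$) therefore produces $|K_2(t,x)|\le Ce^{-c_0 t/2}\frac{t}{(|x|+t^{1/2})^{d+m+2}}$, whence $\|K_2(t,\cdot)\|_{L^p}\le Ce^{-c_0 t/2}t^{-\frac d2(1-\frac1p)-\frac m2}$ after the substitution $x=t^{1/2}y$.

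For $K_1$ I would imitate the proof of Lemma~\ref{kern} with the quartic scaling. The $L^\infty$ bound is immediate: $|K_1(t,x)|\le\int_{|\xi|\le 2}|\xi|^{m+j}e^{-c_0|\xi|^4 t}d\xi\le C(1+t)^{-(d+m+j)/4}$, trivially for $t\le 1$ and by the change of variables $\xi=t^{-1/4}\eta$ for $t\ge 1$. For the spatial decay, when $|x|\ge 1$ I would again perform one preliminary integration by parts (using $|\nabla_\xi p|\le C|\xi|^3$ to control the term in which $\nabla_\xi$ hits $e^{pt}$, and noting $t|\xi|^3 e^{-c_0|\xi|^4 t}\le Ct^{1/4}$) and then split $K_1$ with a cutoff $\psi(\xi/\delta)$, $\delta=|x|^{-1}$. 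The low piece $|\xi|\le 2\delta$ is bounded directly; the high piece $|\xi|\ge\delta$ is integrated by parts sufficiently many times, using $|\nabla_\xi^k(\xi^m|\xi|^j)|\le C|\xi|^{m+j-k}$, the Fa\`a di Bruno-type bound $|\nabla_\xi^k e^{p(\xi)t}|\le C e^{p(\xi)t}\sum_{r=1}^k t^r|\xi|^{4r-k}$ coming from $|\nabla_\xi^k p|\le C_k|\xi|^{4-k}$ on $|\xi|\le 1$, and $(t|\xi|^4)^r e^{-c_0|\xi|^4 t/2}\le C_r$, so that every negative power of $|\xi|$ appearing on $|\xi|\ge\delta$ is controlled by the corresponding power of $\delta^{-1}=|x|$. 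Tracking these powers of $|x|$ against the powers of $t$ exactly as in Lemma~\ref{kern}, the two pieces combine with the $L^\infty$ bound to give a pointwise estimate of the self-similar form $|K_1(t,x)|\le C(1+t)^{-(d+m+j)/4}\bigl(1+|x|(1+t)^{-1/4}\bigr)^{-(d+1)}$ (a power strictly larger than $d$), and the substitution $x=(1+t)^{1/4}y$ then yields $\|K_1(t,\cdot)\|_{L^p}\le C(1+t)^{-\frac d4(1-\frac1p)-\frac{m+j}{4}}$ for every $p\in[1,\infty]$.

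The step I expect to be the main obstacle is the high-frequency integration by parts for $K_1$: the factor $|\xi|^j$ with $j=1$ is not smooth at the origin, and the hypothesis on $\nabla_\xi^k p$ for $k>4$ only controls $p$ by negative powers of $|\xi|$, so one cannot integrate by parts over the whole ball — this is exactly why the dyadic cutoff at scale $\delta=|x|^{-1}$ is indispensable. The delicate point is then the bookkeeping: balancing the $\delta^{-1}$-powers produced by $\nabla\psi(\cdot/\delta)$, by $\nabla_\xi^k(\xi^m|\xi|^j)$, and by the singular low-order pieces of $\nabla_\xi^k e^{pt}$ against the $t$-powers produced by differentiating $e^{pt}$ and by the preliminary integration by parts, so that the self-similar combination $|x|(1+t)^{-1/4}$ emerges to a power exceeding $d$. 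For $K_2$ this bookkeeping is easier because the factor $e^{-c_0 t/2}$ absorbs all the $t$-growth.
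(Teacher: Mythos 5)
Your proposal is correct and follows essentially the same route as the paper's proof: a direct $L^\infty$ bound by the Gaussian (quartic for $K_1$, quadratic with the extra $e^{-c_0t/2}$ for $K_2$), plus spatial decay obtained by splitting at the scale $\delta=|x|^{-1}$ and integrating by parts on the high-frequency piece, the cutoff being needed exactly because $|\xi|^j$ and the bounds $|\nabla^k p|\lesssim|\xi|^{4-k}$ are singular at the origin. The only cosmetic difference is that you package the two bounds into a single self-similar pointwise estimate $(1+t)^{-(d+m+j)/4}\bigl(1+|x|(1+t)^{-1/4}\bigr)^{-(d+1)}$ before integrating, whereas the paper keeps them as a minimum of $|x|^{-(d+m+j)}$ and the $L^\infty$ bound; both yield the stated $L^p$ decay.
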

		\begin{proof}
			It is easy to check that
			\begin{align}\label{linf}
				&	\|K_1(t,\cdot )\|_{L^\infty}\leq \int_{\mathbb{R}^d} \exp(-c_0 |\xi|^4t)|\xi|^{m+j}\mathbf{1}_{|\xi|\leq 2}d\xi\leq C(1+t)^{-\frac{d+m+j}{4}},\\
				&		\|K_2(t,\cdot )\|_{L^\infty}\leq \int_{\mathbb{R}^d} \exp(-c_0 |\xi|^2t)|\xi|^{m}\mathbf{1}_{|\xi|\geq 1}d\xi\leq Ct^{-\frac{d+m}{2}}e^{-\frac{c_0t}{2}}.\label{linf1}
			\end{align}
			We claim that $|K_1(t,x)|\leq C|x|^{-(d+m+j)}$. If $|x|\leq 1$, the proof is trivial. It suffices to consider $|x|\geq 1$. We further decompose
			\begin{align*}
				K_1(t,x)&=\int_{\mathbb{R}^d}\xi^m|\xi|^j\exp(p(\xi)t+i\xi\cdot x)\chi(\xi)\chi(\xi/\delta)d\xi\\
				&\quad+\int_{\mathbb{R}^d}\xi^m|\xi|^j\exp(p(\xi)t+i\xi\cdot x)\chi(\xi)(1-\chi(\xi/\delta))d\xi\\
				&=:K_{11}(t,x)+K_{12}(t,x),
			\end{align*}
			where $\delta=|x|^{-1}$. Then
			\begin{align*}
				|K_{11}(t,x)|\leq C \int_{\mathbb{R}^d}|\xi|^{m+j}\chi(\xi/\delta)d\xi\leq C|x|^{-(d+m+j)}.
			\end{align*}
			Using integration by parts, we have
			\begin{align*}
				|K_{12}(t,x)|&\leq C|x|^{-(d+m+3)}\int_{\mathbb{R}^d}\left|\nabla_\xi^{d+m+3}\(\xi^m|\xi|^j\exp(p(\xi)t)\chi(\xi)(1-\chi(\xi/\delta))\)\right|d\xi\\
				&\leq C|x|^{-(d+m+j)}.
			\end{align*}
			Hence, we obtain
			\begin{align*}
				|K_1(t,x)|\leq C|x|^{-(d+m+j)}.
			\end{align*}
			Combining this with \eqref{linf}, we obtain
			\begin{align*}
				\|K_1(t,\cdot)\|_{L^p}\leq C(1+t)^{-\frac{d}{4}{(1-\frac{1}{p})}-\frac{m+j}{4}}.
			\end{align*}
			Similarly, for $K_2$, denote $\delta=|x|^{-1}$ for any $x\in\mathbb{R}^d$, we have
			\begin{align*}
				|K_2(t,x)|\leq&C\int_{\mathbb{R}^d}|\xi|^{m}(1-\chi(\xi))\chi(\xi/\delta)d\xi\\
				&+ C|x|^{-d+m+1}\int_{\mathbb{R}^d}\left|\nabla_\xi^{d+m+1}\(\xi^m\exp(p(\xi)t)(1-\chi(\xi))(1-\chi(\xi/\delta))\)\right|d\xi\\
				&\leq C|x|^{-(d+m)}e^{-\frac{c_0t}{2}}.
			\end{align*}
			Combining this with \eqref{linf1}, we obtain
			\begin{align*}
				\|K_2(t,\cdot)\|_{L^p}\leq Ct^{-\frac{d}{2}(1-\frac{1}{p})-\frac{m}{2}}e^{-\frac{c_0t}{2}}.
			\end{align*}
			This completes the proof of the lemma.
		\end{proof}
		\begin{lem} [\cite{Pazy}]\label{S_1-1}
			Let $A$ be a densely defined closed linear operator on the Hilbert space $H$. If both
			$A$ and its adjoint operator $A^*$ are dissipative, then $A$ is the
			infinitesimal generator of a $C_0$-semigroup  on $H$.
		\end{lem}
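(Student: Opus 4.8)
The plan is to reduce the statement to the Lumer--Phillips characterization of generators of contraction semigroups, so that the only substantive point to check is that $\mathrm{Range}(\lambda I-A)=H$ for some $\lambda>0$. Once $A$ is dissipative and $\lambda_0 I-A$ is surjective for a single $\lambda_0>0$, the Lumer--Phillips theorem (a corollary of the Hille--Yosida theorem, see \cite{Pazy}) yields at once that $A$ generates a $C_0$-semigroup of contractions on $H$. Dissipativity of $A$ is assumed, so everything hinges on the surjectivity.

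First I would record the a priori estimate coming from dissipativity of $A$: for every $u\in D(A)$ and $\lambda>0$,
$$
\|(\lambda I-A)u\|\,\|u\|\ge \mathrm{Re}\,\big((\lambda I-A)u,u\big)=\lambda\|u\|^2-\mathrm{Re}\,(Au,u)\ge \lambda\|u\|^2,
$$
so $\|(\lambda I-A)u\|\ge\lambda\|u\|$. Hence $\lambda I-A$ is injective with a bounded inverse on its range. Using that $A$ is closed, this forces $\mathrm{Range}(\lambda I-A)$ to be a closed subspace of $H$: if $(\lambda I-A)u_n\to y$, the estimate shows $(u_n)$ is Cauchy, say $u_n\to u$, whence $Au_n=\lambda u_n-(\lambda I-A)u_n\to \lambda u-y$, and closedness of $A$ gives $u\in D(A)$ with $(\lambda I-A)u=y$.

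Next I would show the range is dense, which is where the dissipativity of $A^*$ is used. Suppose $w\in H$ satisfies $\big((\lambda I-A)u,w\big)=0$ for all $u\in D(A)$; then $u\mapsto (Au,w)=\lambda(u,w)$ is a bounded functional on the dense subspace $D(A)$, so $w\in D(A^*)$ and $(\lambda I-A^*)w=0$. Applying the dissipativity of $A^*$,
$$
0\ge \mathrm{Re}\,(A^*w,w)=\lambda\|w\|^2,
$$
so $w=0$. Therefore $\mathrm{Range}(\lambda I-A)$ is both dense and closed, hence equal to $H$; combined with the dissipativity of $A$, the Lumer--Phillips theorem gives the conclusion.

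I expect the surjectivity of $\lambda I-A$ to be the only delicate step --- it cannot follow from dissipativity of $A$ alone, and it is precisely there that the two remaining hypotheses enter in tandem: closedness of $A$ to make the range closed, and dissipativity of $A^*$ to rule out a nonzero orthogonal complement. The rest is the routine verification of the hypotheses of Lumer--Phillips.
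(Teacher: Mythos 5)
Your proof is correct and is precisely the standard argument from the cited reference (Pazy, Ch.~1, Corollary 4.4): dissipativity of $A$ plus closedness gives the a priori bound $\|(\lambda I-A)u\|\ge\lambda\|u\|$ and hence a closed range, dissipativity of $A^*$ kills the orthogonal complement of the range, and Lumer--Phillips concludes. The paper states this lemma without proof, simply citing Pazy, so there is nothing further to compare.
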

		\begin{lem}[\cite{Pazy}]\label{semigroup-1}
			Let $A$ be the infinitesimal generator of a $C_0$-semigroup $T(t)$ satisfying $\|T(t)\|\le Me^{\kappa t}$. Then, it holds for $f\in D(A^2)$ and $\sigma>\max(0,\kappa )$ that
			$$
			T(t)f =\frac1{2\pi \i}\int^{\sigma+\i\infty}_{\sigma-\i\infty} e^{\lambda t}(\lambda-A)^{-1}f d\lambda.
			$$
		\end{lem}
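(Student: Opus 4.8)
The plan is to recognise the contour integral on the right-hand side as the complex inversion (Bromwich) integral of a Laplace transform --- precisely, the Laplace transform of the orbit $s\mapsto T(s)f$ --- and to use the hypothesis $f\in D(A^2)$ only to produce enough decay of the resolvent along the vertical line $\mathrm{Re}\,\lambda=\sigma$ so that the integral, and all the manipulations with it, are justified in the Hilbert space $H$. First I would recall the standard identity: since $\|T(s)\|\le Me^{\kappa s}$, for every $\lambda$ with $\mathrm{Re}\,\lambda>\kappa$ the $H$-valued integral $\int_0^\infty e^{-\lambda s}T(s)f\,ds$ converges absolutely and equals $(\lambda-A)^{-1}f$, as one checks by verifying directly that it is a two-sided inverse of $\lambda-A$. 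Thus $\lambda\mapsto(\lambda-A)^{-1}f$ is exactly the Laplace transform of the continuous, exponentially bounded function $g(s):=T(s)f$, extended by $g(s)=0$ for $s<0$.

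Next I would quantify the decay of the resolvent using the extra regularity. Applying the resolvent identity twice gives
$$
(\lambda-A)^{-1}f=\frac1\lambda f+\frac1{\lambda^2}Af+\frac1{\lambda^2}(\lambda-A)^{-1}A^2f ,
$$
and combining this with the Hille--Yosida bound $\|(\lambda-A)^{-1}\|\le M(\mathrm{Re}\,\lambda-\kappa)^{-1}$ one obtains, on the fixed line $\mathrm{Re}\,\lambda=\sigma$ with $\sigma>\max\{0,\kappa\}$, the estimate $\|(\lambda-A)^{-1}f\|\le C(\sigma,f)(1+|\lambda|)^{-2}$. Hence the Bromwich integral $\frac1{2\pi i}\int_{\sigma-i\infty}^{\sigma+i\infty}e^{\lambda t}(\lambda-A)^{-1}f\,d\lambda$ converges absolutely in $H$ for every $t$, and in particular defines an element of $H$.

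Then I would reduce to the scalar complex inversion theorem for Laplace transforms by testing against an arbitrary $\phi\in H^{*}$: the function $s\mapsto\langle g(s),\phi\rangle$ is continuous and $O(e^{\kappa s})$, and its scalar Laplace transform is $\lambda\mapsto\langle(\lambda-A)^{-1}f,\phi\rangle$, which by the previous step is absolutely integrable on $\mathrm{Re}\,\lambda=\sigma$. The classical scalar inversion theorem then yields, for $t>0$,
$$
\langle T(t)f,\phi\rangle=\frac1{2\pi i}\int_{\sigma-i\infty}^{\sigma+i\infty}e^{\lambda t}\langle(\lambda-A)^{-1}f,\phi\rangle\,d\lambda=\Big\langle\frac1{2\pi i}\int_{\sigma-i\infty}^{\sigma+i\infty}e^{\lambda t}(\lambda-A)^{-1}f\,d\lambda,\ \phi\Big\rangle ,
$$
the last equality holding by the absolute convergence of the $H$-valued integral. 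Since $\phi$ is arbitrary, the two vectors coincide, which is the assertion of the lemma.

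The only genuinely delicate point --- which I would either quote from a standard reference or spell out --- is the limiting step inside the scalar inversion theorem: one truncates to $\frac1{2\pi i}\int_{\sigma-iN}^{\sigma+iN}e^{\lambda t}\hat g(\lambda)\,d\lambda$, inserts $\hat g(\lambda)=\int_0^\infty e^{-\lambda u}g(u)\,du$, applies Fubini (legitimate once $\hat g$ is absolutely integrable), and is left with the Dirichlet-type kernel $\frac1{2\pi i}\int_{\sigma-iN}^{\sigma+iN}e^{\lambda(t-u)}\,d\lambda=\frac{e^{\sigma(t-u)}\sin(N(t-u))}{\pi(t-u)}$, whose action on $g$ converges to evaluation at $u=t$ as $N\to\infty$ by a localisation argument together with the Riemann--Lebesgue lemma. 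This is entirely classical, so the whole statement follows from standard Laplace-transform theory combined with the two elementary resolvent estimates above; the main obstacle, if one insists on a self-contained argument, is precisely making this last limit rigorous.
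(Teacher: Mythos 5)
The paper itself gives no proof of this lemma---it is quoted from Pazy---so I can only assess your argument on its own terms. Your overall strategy (the resolvent as the Laplace transform of the orbit, plus extra decay from $f\in D(A^2)$, plus the scalar complex inversion theorem via duality) is exactly the standard route, and most of the individual ingredients are right: the identity $(\lambda-A)^{-1}f=\int_0^\infty e^{-\lambda s}T(s)f\,ds$, the twice-iterated resolvent expansion, the Fubini step on the truncated contour, and the Dirichlet-kernel computation are all correct.

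However, there is a concrete error at the pivot of the argument. From your own displayed expansion
$(\lambda-A)^{-1}f=\tfrac1\lambda f+\tfrac1{\lambda^2}Af+\tfrac1{\lambda^2}(\lambda-A)^{-1}A^2f$,
the leading term $\tfrac1\lambda f$ is only $O(|\lambda|^{-1})$ on the line $\mathrm{Re}\,\lambda=\sigma$, so the bound $\|(\lambda-A)^{-1}f\|\le C(1+|\lambda|)^{-2}$ is false (indeed $\|(\lambda-A)^{-1}f\|\ge \|f\|/(2|\lambda|)$ for large $|\mathrm{Im}\,\lambda|$ unless $f=0$), and the Bromwich integral is \emph{not} absolutely convergent. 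This invalidates, as written, the two places where you lean on absolute convergence: the appeal to the $L^1$ version of the scalar inversion theorem, and the interchange $\int\langle\cdot,\phi\rangle\,d\lambda=\langle\int\cdot\,d\lambda,\phi\rangle$. The integral in the lemma must be read as the symmetric improper integral $\lim_{N\to\infty}\int_{\sigma-iN}^{\sigma+iN}$. The standard repair is small: split off $\tfrac1\lambda f$, whose principal-value Bromwich integral equals $f$ for $t>0$ by direct computation, and observe that the remainder $\tfrac1{\lambda^2}Af+\tfrac1{\lambda^2}(\lambda-A)^{-1}A^2f$ genuinely is $O((1+|\lambda|)^{-2})$, so that piece converges absolutely and your duality argument applies to it; alternatively, run the Dirichlet-kernel/localization argument directly on the truncated integrals (using that $s\mapsto T(s)f$ is $C^1$ since $f\in D(A)$), which is Pazy's own route. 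With that correction the proof goes through.
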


		\bigskip

        \noindent {\bf Data availability:}
		No data was used for the research described in the article.

        \bigskip
        
		\noindent {\bf Acknowledgements:}
		K. Chen is supported by the  the Research Centre for Nonlinear Analysis at The Hong Kong Polytechnic University. M. Zhong was supported by the special foundation for Guangxi Ba Gui Scholars,  and the National Natural Science Foundation of China  grants No. 12171104.

		

\begin{thebibliography}{99}
			\setlength{\itemsep}{-4pt}
			\renewcommand{\baselinestretch}{1}
			\small
			
			\bibitem{Arsenio} Arsénio, D., Saint-Raymond, L.  \emph{From the Vlasov-Maxwell-Boltzmann system to incompressible viscous electro-magneto-hydrodynamics},
			European Mathematical Society Press, 2016.
			
			\bibitem{FL-1} Bardos, C., Golse F., and Levermore, D.  Fluid dynamic limits of kinetic equations I: Formal
			derivations, \emph{J. Statist. Phys.}, 63 (1991), 323-344.
			
			\bibitem{FL-2} Bardos, C.,   Golse, F. and Levermore, D.  Fluid dynamic limits of kinetic equations II: Convergence proofs for the Boltzmann equation, \emph{Comm. Pure Appl. Math.}, 46 (1993),  667-753.
			
			\bibitem{FL-3} Bardos C., and  Ukai, S. The classical incompressible Navier-Stokes limit of the Boltzmann
			equation, \emph{Math. Models Methods Appl. Sci.}, 1 (1991), 235-257.
			
			\bibitem{Cercignani} Cercignani C., Illner  R., Pulvirenti M.: \emph{The Mathematical Theory of Dilute Gases}, AMS. Vol. 106. Springer-Verlag, New York (1994)
			
			\bibitem{ChapmanCowling} Chapman S.,  Cowling T.G.: \emph{The mathematical theory of non-uniform gases}, 3rd edition. Cambridge University Press, London (1970)
			
			\bibitem{Duan1} Duan,  R.J., Liu, S., Yang, T., Zhao, H.:  Stability of the nonrelativistic Vlasov-Maxwell-Boltzmann system for angular non-cutoff potentials,
			\emph{Kinet. Relat. Models}, \textbf{6}(1) 159-204, 2013
			
			
			
			
			
			\bibitem{Duan2} Duan,  R.J., Lei, Y., Yang, T., Zhao, H.: The Vlasov-Maxwell-Boltzmann System Near
			Maxwellians in the Whole Space with Very Soft Potentials, \emph{Comm. Math. Phys.} \textbf{351}, 95-153, 2017
			
			
			\bibitem{Duan5} Duan,  R.J.:  Dissipative property of the Vlasov-Maxwell-Boltzmann System with a uniform ionic background. \emph{SIAM J. Math. Anal.} \textbf{43}(6), 2732-2757 (2011)
			
			\bibitem{Duan4} Duan, R.J., Strain R.M.: Optimal large-time behavior of the Vlasov-Maxwell-Boltzmann system in the whole space. \emph{Comm. Pure Appl. Math.} \textbf{64}, 1497-1546 (2011)
			
			
			\bibitem{Golse-SR} Golse, F.  and Saint-Raymond, L.:  The Navier-Stokes limit of the Boltzmann equation for bounded collision kernels. {\it Invent. Math. } 155(2004), no. 1, 81-161.
			
				\bibitem{Grafakos}  Grafakos, L.: Classical Fourier Analysis. Springer. (2008).
			\bibitem{Guo2} Guo, Y.  and Jang, J.: Global Hilbert Expansion for the Vlasov-Poisson-Boltzmann System. \emph{Comm. Math. Phys.}, 299 (2010), 469-501.
			
			\bibitem{Guo4} Guo, Y.: The Vlasov-Maxwell-Boltzmann system near Maxwellians. \emph{Invent. Math.}, \textbf{153}(3), 593-630 (2003)
			
			\bibitem{Guo5} Guo, Y.:  Boltzmann diffusive limit beyond the Navier-Stokes approximation, \emph{Comm. Pure Appl. Math.}, 59 (2006),  626-687.
			
			
			
			\bibitem{Jang} Jang, J.: Vlasov-Maxwell-Boltzmann diffusive limit. \emph{Arch. Ration. Mech. Anal.} \textbf{194},  531-584 (2009).
			
			\bibitem{JiangLei}Jiang, N., and Lei, Y.: The incompressible Navier-Stokes-Fourier-Maxwell system limits of the Vlasov-Maxwell-Boltzmann system for soft potentials: the noncutoff cases and cutoff cases. preprint, arXiv: 2308.12607.
			
			
			\bibitem{Jiang2} Jiang, N., Luo, Y.-L.: From Vlasov-Maxwell-Boltzmann system to two-fluid incompressible Navier-Stokes-Fourier-Maxwell system with Ohm's law: convergence for classical solutions. \emph{Annals of PDE} (2022) 8:4
			https://doi.org/10.1007/s40818-022-00117-6.

            \bibitem{Jiang3} Jiang, N., Luo, Y.-L., Zhang, T.: Hydrodynamic limit of the incompressible Navier-Stokes-Fourier-Maxwell system with Ohm's law from the Vlasov-Maxwell-Boltzmann system: Hilbert expansion approach.
            Arch. Rational Mech. Anal. (2023) 247:55.
			
			\bibitem{Kato} Kato, T.: \emph{Perturbation Theory of Linear Operator}. Springer, New York (1996)
			
			\bibitem{Li1} Li, H.-L., Yang T., Zhong M.-Y.: Diffusion limit of the Vlasov-Poisson-Boltzmann system. \emph{Kinet. Relat. Models}, 14(2), 2022: 211-255. Doi: 10.3934/krm.2021003.
			
			\bibitem{Li2} Li, H.-L., Yang T., Zhong M.-Y.: Spectral analysis for the Vlasov-Poisson-Boltzmann system.
				Arch. Ration. Mech. Anal. 241 (2021), no. 1, 311-355.
			
			\bibitem{Li3} Li, H.-L., Yang T., Zhong M.-Y.: Spectrum analysis and optimal Decay rates of the bipolar Vlasov-Poisson-Boltzmann equations. \emph{Indiana Univ. Math. J.}, \textbf{65} (2016), 665--725.
			
			\bibitem{Li4} Li, H.-L., Yang, T., Zhong, M.-Y.: Spectrum Structure and  Behaviors of the Vlasov-Maxwell-Boltzmann Systems. \emph{SIAM J. Math. Anal.}, \textbf{48}(1), 595-669, 2016
			
			
			
			
			
			
			
			\bibitem{FL-4}  De Masi, A. Esposito, R.  and  Lebowitz, J. L.:  Incompressible Navier-Stokes and Euler limits
			of the Boltzmann equation, \emph{Comm. Pure Appl. Math.}, 42, (1989),  1189-1214.
			
			\bibitem{FL-5}  Nishida, K.: Fluid dynamical limit of the nonlinear Boltzmann equation to the level of the compressible Euler equation, \emph{Comm. Math. Phys.}, 61, (1978), 119-148.
			
		
			
			\bibitem{Nel}Nelson, S.:  On Some Solutions to the Klein-Gordon Equation Related to an Integral of Sonine. \emph{Transactions of the American Mathematical Society,} 154, (1971). 227–237.
			
			
			\bibitem{Pazy} Pazy A.: \emph{Semigroups of Linear Operators and
				Applications to Partial Differential Equations}. AMS Vol. 44. Springer-Verlag, New York, (1983).
			
			\bibitem{Strain} Strain  R. M.: The Vlasov-Maxwell-Boltzmann system in the whole space. \emph{Comm. Math. Phys.}
			\textbf{268}(2), 543-567 (2006).
			
			\bibitem{Triebel} Triebel, H.: Interpolation theory, function spaces, differential operators. \emph{JA Barth}. (1995).
			
			
			
			
			
			\bibitem{Wang1}  Wang,   Y.J.:  The Diffusive Limit of the Vlasov-Boltzmann System for Binary Fluids, \emph{SIAM J. Math. Anal.},  43(1), (2011), 253-301.
			
			
			
			
			
			
			
			\bibitem{WZGX} Wu, W., Zhou, F., Gong, W., and Xu, Y.: Diffusive Limit of the One-species Vlasov-Maxwell-Boltzmann System for Cutoff Hard Potentials.  \emph{SIAM J. Math. Anal.}, 57 (2) (2025) 1526-1585.
			
			
			\bibitem{YZ}Yang, T., Zhong, M.-Y.: Diffusion limit with optimal convergence rate of classical solutions to the Vlasov–
			Maxwell–Boltzmann system. Preprint (2024), \emph{arXiv:2404.18389.}
			
			
			
		\end{thebibliography}
	\end{document}